\documentclass{amsart}

\usepackage{anyfontsize,fix-cm}
\usepackage[T1]{fontenc}
\usepackage{comment}
\usepackage[vietnamese,english]{babel}
\usepackage{amssymb, amsthm, enumerate, mathtools, microtype}

\usepackage{enumitem}
\usepackage{mathabx}

\usepackage[backref, pdfencoding=auto,  psdextra]{hyperref}
\usepackage[abbrev, nobysame]{amsrefs} 
\makeatletter
\def\bibtex@style{amsrn-mod}
\makeatother

\renewcommand{\MR}[1]{} 
\renewcommand{\PrintDOI}[1]{}
\BibSpec{arxiv}{%
    +{}  {\PrintAuthors}                {author}
    +{,} { \textit}                     {title}
    +{:} { \textit}                     {subtitle}
    +{,} { \PrintContributions}         {contribution}
    +{.} { \PrintPartials}              {partial}
    +{,} { }                            {journal}
    +{}  { \PrintDatePV}                {date}
     +{,} { \eprint}        {url}
}

\usepackage{tikz-cd}
\usetikzlibrary{babel, shapes.geometric}
\usetikzlibrary {arrows.meta,positioning}
\setcounter{tocdepth}{1}

\newtheorem{theorem}{Theorem}[section]
\newtheorem{lemma}[theorem]{Lemma}
\newtheorem{question}[theorem]{Question}
\newtheorem{corollary}[theorem]{Corollary}
\newtheorem{proposition}[theorem]{Proposition}

\newtheorem{Theorem}[theorem]{Theorem}

\newtheorem{thmx}{Theorem}

\newtheorem*{thm}{\header}
\newcommand{\header}{Theorem}
\newenvironment{prevthm}[1]
 {\renewcommand\header{Theorem #1}\begin{thm}}
 {\end{thm}}

\theoremstyle{definition}

\newtheorem*{example}{Example}

\theoremstyle{remark}

\newtheorem*{remark}{Remark}
\newtheorem*{remarks}{Remarks}
\newtheorem*{Induced representation}{Induced representation}
\newtheorem*{Change of coefficients}{Change of coefficients}

\let\mge\gg
\newcommand{\zz}{\Z} 
\renewcommand{\qq}{\Q} 
\newcommand{\hh}{\mathbb{H}} 

\newcommand{\onto}{\twoheadrightarrow}

\newcommand{\CD}{CD}
\newcommand{\W}{W}
\newcommand{\M}{\mathcal{M}}

\newcommand{\ub}{\overline{\gb}}
\newcommand{\lb}{\underline{\gb}}
\newcommand{\nb}{\gb}

\DeclareMathOperator{\tr}{tr}
\DeclareMathOperator{\logtor}{logtor}

\DeclareMathOperator{\Stab}{Stab}
\DeclareMathOperator{\Out}{Out}
\DeclareMathOperator{\Ore}{Ore}

\DeclareMathOperator{\Lk}{Lk}

\DeclareMathOperator{\St}{St}
\DeclareMathOperator{\rk}{rk}
\DeclareMathOperator{\Th}{Th}

\def\R{\mathbb{R}}
\def\Z{\mathbb{Z}}

\def\Q{\mathbb{Q}}

\newcommand{\cn}{\mathcal{N}}
\newcommand{\cu}{\mathcal{U}}
\newcommand{\cl}{\mathcal{L}}
\newcommand{\co}{\mathcal{O}}

\newcommand{\rr}{\mathbb{R}} 
\newcommand{\nn}{\mathbb{N}}

\newcommand{\gb}{\beta} 
\newcommand{\gd}{\delta} 
\newcommand{\gD}{\Delta} 

\newcommand{\gf}{\varphi} 
\newcommand{\gF}{\Phi}

\newcommand{\gs}{\sigma} 
\newcommand{\gS}{\Sigma} 

\newcommand{\gt}{\tau}

\def\gG{\Gamma}
\def\gg{\gamma}
\renewcommand{\ge}{\epsilon} 

\newcommand{\F}{\mathbb{F} }
\newcommand{\Fp}{\F_{p}} 

\newcommand{\Cone}[1]{K_{{#1}}}
\newcommand{\hc}[1]{h\Cone{#1}}

\newcommand{\phc}[1]{\pi_{1}(\hc{#1})}

\newcommand{\Lhc}[2]{\hc{#2}^{#1}}
\newcommand{\pLhc}[2]{\pi_{1}(\Lhc{#1}{#2})}

\renewcommand{\d}{\partial} 

\DeclarePairedDelimiter\abs{\lvert}{\rvert}

\newcommand{\into}{\hookrightarrow}

\title{Homology growth, hyperbolization, and fibering}

\author{Grigori Avramidi}
\address{Max Planck Institute for Mathematics\\
Bonn\\
Germany, 53111}
\email{gavramidi@mpim-bonn.mpg.de}

\author{Boris Okun}
\address{Department of Mathematical Sciences\\
University of Wisconsin--Milwaukee\\
Milwaukee, WI 53201}
\email{okun@uwm.edu}

\author{Kevin Schreve}
\address{Department of Mathematics\\Louisiana State University\\Lockett Hall\\Baton Rouge, LA 70806}
\email{kschreve@lsu.edu}

\begin{document}\begin{abstract}
    We introduce a hyperbolic reflection group trick which builds closed aspherical manifolds out of compact ones and preserves hyperbolicity, residual finiteness, and---for almost all primes $p$---$\Fp$-homology growth above the middle dimension.
    We use this trick, embedding theory and manifold topology to construct Gromov hyperbolic $7$-manifolds that do not virtually fiber over a circle out of graph products of large finite groups.
\end{abstract}

\maketitle

\section{Introduction}

By the Chern--Gauss--Bonnet theorem, an even dimensional hyperbolic manifold always has non-zero Euler characteristic.
One geometric consequence of this is that such manifolds cannot fiber over the circle.
On the other hand, all odd-dimensional manifolds have vanishing Euler characteristic, so, at least in principle, odd-dimensional hyperbolic manifolds can fiber.
A remarkable discovery of the last few decades is that the later possibility is actually realized in dimension three: All closed, hyperbolic $3$-manifolds have a finite cover that fibers over a circle \cite{a13}.
There has been some recent progress towards determining whether or not some analogue of this phenomenon persists in higher dimensions.
In a geometric direction, Italiano, Martelli, and Migliorini constructed the first finite volume hyperbolic $5$-manifolds that fiber over a circle \cite{imm22}.
These examples are non-compact and no closed $5$-dimensional hyperbolic manifolds are known to fiber, see however \cite{fujiwara21} for an example which is closed nonpositively curved with isolated flats.
In a more algebraic direction, Kielak \cite{k20a} and Fisher \cite{f21} showed that for a large class of groups, the existence of a virtual $\F$-homological fibering is controlled by vanishing of certain skew field Betti numbers.
For $\F=\Q$ these Betti numbers are the $L^2$-Betti numbers, and for general fields they have an interpretation as a measure of $\F$-homology growth of finite covers.
But, there is a curious\footnote{Curious, because the fiber of a hypothetical fibration would be a closed, aspherical manifold with infinite $\Out(\pi_1)$ and no $\zz^2$-subgroups.
There are no known examples of such manifolds (note that the fibers in [33] have $\zz^2$-subgroups).}
dearth of closed, odd dimensional examples that do not virtually fiber, even if one passes from the hyperbolic to the more flexible Gromov hyperbolic setting.
The goal of the present paper is to address this.
We use $\Fp$-homology growth (for odd $p$) to prove:
\begin{thmx}\label{nofibertheorem}
    There exists a closed, odd-dimensional, aspherical manifold $\M$ with word hyperbolic fundamental group that does not virtually fiber over a circle.
\end{thmx}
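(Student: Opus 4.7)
The plan is to produce, in dimension $7$, a closed aspherical manifold $\M$ with word hyperbolic fundamental group whose $\Fp$-homology growth is nonzero in some degree strictly above the middle dimension $3$; by the Kielak--Fisher characterization of virtual fibering in terms of vanishing of skew field Betti numbers, such nonvanishing growth obstructs virtual fibering over $S^1$. So the task splits into (i) building a source of nonzero $\Fp$-homology growth above the middle dimension and (ii) packaging it inside a closed aspherical hyperbolic manifold.

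For (i) I would start from a graph product of large finite groups, i.e.\ a right-angled-type group associated to a flag complex $L$. These groups are known to have explicit $\Fp$-homology growth that can be computed from reduced $\Fp$-homology of links/missing simplices in $L$; by choosing $L$ to be a flag triangulation of a space with nonzero $\Fp$-homology in the appropriate high degree (and large enough finite vertex groups so torsion-freeness and residual finiteness are easy to arrange), one arranges non-vanishing of the relevant growth invariants in some degree above $3$. The graph-product group $G$ acts on a contractible cube-like (Davis) complex, which is naturally a compact aspherical space but not yet a manifold.

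For (ii) I would use embedding theory / manifold thickenings to realize a relevant compact aspherical $K(G',1)$ as a compact aspherical manifold-with-boundary $N$ of dimension $7$ (the dimension is forced by codimension considerations for thickening the Davis complex to a PL/smooth manifold while keeping hyperbolicity after closing up). Then the hyperbolic reflection group trick promised in the abstract is applied to $N$: it produces a closed aspherical manifold $\M$ that (a) is word hyperbolic, (b) is residually finite, and, crucially, (c) inherits nontrivial $\Fp$-homology growth above the middle dimension from $N$ for almost all odd primes $p$. Combining (a)--(c) with Kielak--Fisher then immediately yields the conclusion.

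The main obstacle is step (ii), in two related respects. First, one needs the reflection group trick to actually preserve $\Fp$-homology growth above the middle dimension (not just some cruder invariant), which requires a careful analysis of how homology of finite covers of the double/reflection closure relates to homology of covers of $N$; this is where the "above the middle dimension" hypothesis enters, presumably via a Poincar\'e--Lefschetz duality argument on $N$ together with a Mayer--Vietoris computation across the mirrored boundary. Second, one must simultaneously preserve Gromov hyperbolicity through the closing-up procedure, which constrains the combinatorics of the reflection group and the geometry of $\d N$. Getting both of these compatible with the dimension-$7$ embedding of the graph-product thickening is the core technical hurdle; once that package is in place, the non-fibering conclusion is a direct application of the Kielak--Fisher criterion to the non-vanishing $\Fp$-growth provided by the graph product input.
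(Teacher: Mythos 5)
Your high-level plan is the paper's: graph products of large finite groups over a $3$-complex $L$ with $\tilde b_3(L;\Fp)\neq 0$ give a hyperbolic group with $\lb_4>0$; octahedralization embedding theory plus the $\pi$-$\pi$ compactification theorem thicken a finite-index subgroup to a compact aspherical $7$-manifold $(N,\d N)$; the hyperbolic reflection group trick then closes it up; and non-vanishing homology growth above the middle dimension obstructs virtual fibering (via the mapping torus theorem for $\ub$, not literally Kielak--Fisher, which the paper instead uses to reconcile $\lb$ and $\ub$). One detail you do not spell out but that forces the choice of $L$: the embedding theory (Theorem~\ref{t:manifold}) needs $H_3(L;\F_2)=0$, while the growth estimate needs $H_3(L;\Fp)\neq 0$, so $L$ is a Moore space $D^3\cup_p S^2$; and $\dim L=3$ is forced because flag no-square triangulations are only known to exist through dimension~$3$.

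The real gap is in your step (ii), where you assert the reflection group trick ``inherits nontrivial $\Fp$-homology growth above the middle dimension from $N$.'' The Mayer--Vietoris argument you gesture at (cutting $\M$ along walls) needs the walls---which are strict hyperbolizations $hP_L$ of pieces of Davis complexes---to have \emph{vanishing} $\ub$ above the middle dimension, i.e.\ to satisfy the upper $\Fp$-Singer property. That is not known and cannot be assumed: as the introduction explains, strict hyperbolization can destroy $L^2$/homology-growth information in subtle ways. The paper's resolution is a dichotomy (Proposition~\ref{p:singerdavistrick} plus the inductive analysis of Section~\ref{s:induction}): either every wall $hP_L$ appearing in the cutting has vanishing $\ub$ above the middle dimension, in which case the $7$-manifold $\M$ inherits $\lb_4(N;\Fp)>0$; or some wall fails, and by further cutting (through $hP_{S^{n-1}}$'s and then arithmetic hyperbolic manifolds of simplest type) one extracts a closed aspherical \emph{odd}-dimensional manifold of dimension $5$ with hyperbolic virtually special fundamental group and non-vanishing $\ub$ above its middle dimension. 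That is exactly why the theorem is stated as ``odd-dimensional'' rather than ``$7$-dimensional,'' and why Theorem~\ref{maintheorem}(1) is a disjunction $n=5$ or $n=7$. To actually guarantee the $7$-dimensional example (and only for primes outside a finite exceptional set), the paper needs a more refined version of the trick (Theorem~\ref{t:barytrick}) using barycentrically subdivided boundaries so that the walls fall in a fixed finite family (hyperbolized Tomei manifolds), then Ontaneda's smooth hyperbolization with pinched negative curvature, Donnelly--Xavier to kill $L^2$-Betti numbers of the walls outside the middle, and Corollary~\ref{specialp} to transfer this to $\Fp$ for large~$p$. So the step you flag as the ``core technical hurdle'' is not a Poincar\'e duality bookkeeping issue but a genuinely open-ended question about the $\F$-Singer property of the hyperbolized walls, and the paper's cutting-dichotomy (rather than a direct inheritance argument) is the missing idea.
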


It is generally difficult to construct exotic, high dimensional, closed aspherical manifolds with word hyperbolic fundamental group and this paper is no exception; we can't produce any examples above dimension $7$.
We have two (very similar) constructions of such $\M$.
The first is conceptually simpler, but has the downside of not producing anything explicit (we don't know if the non-fibering example is in dimension $5$ or $7$).
The second uses a few more tools, but has the advantage of producing explicit $7$-dimensional examples.

We will refer to closed, aspherical manifolds with word hyperbolic fundamental groups as \emph{Gromov hyperbolic manifolds}, and from now on will say such groups are \emph{hyperbolic}.
In dimension three they are precisely the closed manifolds that can be given a hyperbolic structure, while in higher dimensions they form a larger class that includes all the locally CAT$(-1)$ manifolds.
Whether every Gromov hyperbolic manifold has a locally CAT$(-1)$ metric seems to be an interesting open question.
We do not know the answer for our examples.

\subsection*{Relation to previous non-fibering results}

Higher rank, irreducible, locally symmetric spaces do not virtually fiber over a circle because their fundamental groups do not surject onto $\zz$ by the Margulis normal subgroup theorem.
This gives many nonpositively curved (but no negatively curved) examples of closed aspherical manifolds that do not fiber (e.g.
the $5$-manifolds obtained as finite volume quotients of $SL(3,\R)/SO(3)$).

Another way to see that many of these groups do not surject onto $\zz$ is to observe that they have Kazhdan's Property (T).
The fundamental groups of some even (but not odd) dimensional, negatively curved, locally symmetric spaces also have this property.
Moreover, there are additional random constructions of hyperbolic groups with Property (T) \cite{z03} and also less random ones \cite{lmw19}, but we are not aware of any that produce fundamental groups of odd dimensional, closed, aspherical manifolds.

In a more combinatorial direction (more relevant for the present paper), in \cite{aos21} we constructed closed, locally CAT(0) $n$-manifolds $M_0$ that do not virtually fiber in all odd dimensions $n\geq 7$.
The fundamental groups of these manifolds are finite index subgroups of right-angled Coxeter groups and the fibering obstruction is fast $\Fp$-homology growth.
The manifolds $M_0$ are not Gromov hyperbolic because their fundamental groups contain $\zz^2$.

Theorem \ref{nofibertheorem} provides the first examples of odd dimensional, Gromov hyperbolic manifolds that do not virtually fiber.
Moreover, we can arrange $\pi_1(\M)$ to be special in Haglund and Wise's sense \cite{hw08}*{Section 3}.
In particular, this implies $\pi_1(\M)$ is residually finite, does not have Property (T), and in fact has finite index subgroups with arbitrarily large first Betti number.

\subsection*{Homology growth and virtual fibering}

In the examples we produce for Theorem \ref{nofibertheorem} the mechanism obstructing virtual fibering is, again, fast homology growth.
To keep track of it, fix a field $\F$, look at the infimum of normalized $\F$-Betti numbers of all finite covers of $\M' \to \M$ (normalized by the degree $\abs{\M'\to \M}$ of the cover)
\[
\beta^{\inf}_{k}(\M;\F):=\inf_{\substack{\M'\to \M,\\
\abs{\M'\to \M}<\infty}} \frac{b_{k}(\M';\F)}{ \abs{\M'\to \M}}
\]
and note that virtually fibering $\M$ over a circle would give covers $\M'\to \M$ of arbitrarily large degree whose $\F$-Betti numbers $b_{k}(\M';\F)$ are bounded by a uniform constant (the sum of the $\F$-Betti numbers of the fiber), which would imply $\beta_{k}^{\inf}(\M;\F)=0$.
So, positivity of this number obstructs virtual fibering.

\subsection*{Relation to $L^2$-Betti numbers}

For a finite complex $X$ with residually finite fundamental group and coefficient field $\F =\Q$, the numbers $\beta^{\inf}_k(X;\Q)$ are closely related to the analytically defined $L^2$-Betti numbers $b_{k}^{(2)}(X)$.
In that situation, the proof of L\"uck's approximation theorem implies (see Theorem \ref{quantlueck})
\[
b_k^{(2)}(X)=\sup_{X'\to X}\frac{\beta_k^{\inf}(X';\Q)}{\abs{X'\to X}},
\]
where the supremum is taken over all finite covers.
\subsection*{Pinching homology growth}

For other fields $\F $, we replace $\Q$ by $\F $ on the right hand side, take the resulting quantity
\[
\lb_{k}(X;\F) := \sup_{X'\to X}\left(\inf_{X''\to X'} \frac{b_k(X'';\F)}{\abs{X''\to X}}\right),
\]
where the sup is over finite covers $X'\to X$ and the inf is over further finite covers $X''\to X'$, and call it the \emph{lower $\F $-homology growth}.
It follows directly from this max-min definition that the quantity $\lb$ is multiplicative in finite covers, and that it obstructs virtual fibering.
One also has the multiplicative quantity $\ub$ obtained by interchanging the roles of inf and sup in the definition, which we call the \emph{upper $\F $-homology growth}.
For $\F =\Q$ it gives the same answer (by Theorem \ref{quantlueck}) but in general we only know that $\beta^{\inf}\leq \lb\leq \ub$, although we are not aware of any examples in which the second inequality is strict.
\begin{question}
    Is there a finite complex $X$ with $\lb_k(X;\Fp ) \neq \ub_k(X;\Fp )$ for some prime $p$?
\end{question}

\subsection*{Special fundamental groups}

In \cite{aos21} we used specific properties of right-angled Coxeter groups to help compute $\Fp$-homology growth for the manifolds $M_0$.
The fundamental groups of the manifolds $\M$ we produce for Theorem \ref{nofibertheorem} are not commensurable to right-angled Coxeter groups, but they are special, hence embed in right-angled Artin groups, and this turns out to be good enough to estimate the homology of covers of $\M$.
For such groups, the numbers $\beta^{\inf}$ have another, more algebraic, ``skew field'' interpretation that is convenient for doing Mayer--Vietoris computations and, in particular, shows that they are integers.
Namely, the group ring $\F \pi_1 (\M)$ embeds in a nice\footnote{Obtained by picking a bi-invariant order on the group and taking the division closure of the group ring in the Malcev--Neumann skew field of formal power series with well-ordered support.}
skew field $D_{\F \pi_1 (\M)}$, one can do all the Mayer--Vietoris arguments for homology with coefficients in that skew field, and the Betti numbers $b_{k}(\M;D_{\F\pi_1(\M)})$ obtained from this homology coincide with the infimum of the normalized $\F$-Betti numbers.
Moreover, these algebraically defined skew field Betti numbers are multiplicative in finite covers, which implies \emph{for subgroups of right-angled Artin groups} that we don't need to take the sup in the definition of $\lb$, i.e. 
\[
\lb_{k}(\M;\F)=\beta^{\inf}_k(\M;\F)=b_{k}(\M;D_{\F\pi_1(\M)})\in\Z.
\]

So, the quantity $\lb$ can be thought of as a multiplicative extension of the skew field Betti number from this special setting to situations where (nice enough) skew fields do not exist.
Using skew fields to study $L^2$-Betti numbers (and vice-versa) originated in work of Linnell \cite{l93} and has been recently developed by Henneke--Kielak \cite{hk21}, Jaikin-Zapirain \cites{j19,j21}, and others.
Though we haven't seen the above equality before in the literature, we prove it by combining a number of previously known results, see Section \ref{s:skewfields} for more details.
\begin{remark}
    In more general situations, the inequality $\beta^{\inf}_k(X;\F ) \leq \lb_{k}(X;\F )$ is often strict.
    For instance, a wedge sum $X$ of two hyperbolic homology $3$-spheres has $\lb_1(X;\Q)=b^{(2)}_{1}(X) =1$ but $b_1(X;\Q) = 0$, and hence $\beta^{\inf}_1(X;\Q) = 0$.
\end{remark}

\subsection*{Homology growth without chains}

Previous works on homology growth of a space $X$ usually consider descending chains of subgroups $\pi_1(X) = G > G_1 > G_2 > \dots $ and the homology of the corresponding covers $X_i$.
Typically, additional assumptions are placed on the subgroups $G_i$, such as normality, $\bigcap_i G_i = 1$, or $[G:G_i]$ being some prime power.
It was surprising to us that $\lb_k(X)$ and $\ub_k(X)$ worked just as well while avoiding many of the headaches that come with using chains; with $\F_p$-coefficients it is generally unknown whether the normalized homology growth of such a chain has a limit, or depends on the choice of chain, etc.
Even if $G$ is a subgroup of a right-angled Artin group, the identification $\beta^{\inf}_k(X;\F)=b_{k}(X;D_{\F\pi_1(X)})$ has no analogue for residual chains of normal finite index subgroups, though some approximation results are known, see e.g.
\cite{blls14}*{Theorem 4.3}.

\subsection*{The only $\F $-homological, virtual fibering obstructions in a special setting}

A special case of a result of Fisher \cite{f21}, building on work of Kielak \cite{k20a}, shows---for a finite aspherical complex $X$ whose fundamental group embeds in a right-angled Artin group---that if the lower $\F $-homology growth (he uses the skew field definition) in dimensions $\leq k$ vanishes then $X$ has a finite cover $X'$ which maps to a circle with $FP_{k}(\F)$ homotopy fiber.
So, non-vanishing of the lower $\F $-homology growth is the \emph{only} $\F$-homological obstruction in dimensions $\leq k$ to virtual fibering in this setting.
We observe that non-vanishing of the upper $\F $-homology growth is also an $\F $-homological virtual fibering obstruction and hence Fisher's result leads to
\begin{thmx}\label{p:infsup}
    \label{only if}
    If $X$ is a finite, aspherical complex whose fundamental group embeds in a right-angled Artin group, then $\lb_{\leq k}(X;\F )=0$ if and only if $\ub_{\leq k}(X;\F )=0$.
\end{thmx}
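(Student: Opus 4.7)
The ``only if'' direction is immediate from the inequality $\lb \le \ub$ stated in the excerpt. For the converse, the plan is to combine Fisher's theorem with the observation, mentioned just before the theorem, that a virtual $\F$-homological fibering forces $\ub$ to vanish. Specifically, assuming $\lb_{\le k}(X;\F) = 0$, Fisher's theorem provides a finite cover $X' \to X$ together with a map $X' \to S^1$ whose homotopy fiber is $FP_k(\F)$. Writing $G = \pi_1(X')$, $\phi\colon G \onto \Z$ for the induced surjection, and $N = \ker\phi$, the $FP_k(\F)$ hypothesis gives a complex of finitely generated projective $\F N$-modules computing $H_*(N;\F)$ in dimensions $\le k$, and hence constants $c_j$ depending only on $X'$ such that $b_j(L;\F) \le [N:L]\, c_j$ for every finite-index subgroup $L \le N$ and every $j \le k$ (by restricting the resolution from $\F N$ to $\F L$).

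For each integer $n \ge 1$, let $X'_n \to X'$ be the degree-$n$ cyclic cover pulled back from the $n$-fold cover $S^1 \to S^1$. I would show that $b_k(Z;\F)/\abs{Z \to X}$ is bounded by $(c_k + c_{k-1})/(n \abs{X' \to X})$, uniformly over all finite covers $Z \to X'_n$. Given such $Z$, let $H = \pi_1(Z) \le \phi^{-1}(n\Z)$ have index $d$, set $a = [n\Z:\phi(H)]$ and $M = H \cap N$, so that $[N:M] = d/a$ and $\abs{Z \to X} = d\, n \abs{X' \to X}$. The Hochschild--Serre spectral sequence for $1 \to M \to H \to \Z \to 1$ collapses at $E^2$ and yields
\[
    b_k(Z;\F) \le b_k(M;\F) + b_{k-1}(M;\F) \le [N:M](c_k + c_{k-1}).
\]
Dividing by $\abs{Z \to X}$ and using $[N:M] = d/a \le d$ gives the uniform bound, and letting $n \to \infty$ forces $\ub_k(X;\F) = 0$. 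Running the same argument for each $j \le k$ (using that $FP_k(\F)$ implies $FP_j(\F)$) then produces $\ub_{\le k}(X;\F) = 0$.

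The main obstacle is the uniform estimate in the previous paragraph: $Z$ ranges over \emph{all} finite covers of $X'_n$, not merely cyclic ones, so a priori both the degree $d$ and the index $[N:M]$ can grow without control. The crucial point is that the $FP_k(\F)$-structure of the fiber transfers linearly to every finite-index subgroup of $N$, while the residual base image contributes a matching factor $a$ in the denominator, so the growth $[N:M] = d/a$ in the numerator is exactly cancelled by the degree $d$ in $\abs{Z \to X}$, leaving only the factor of $n$ to drive the bound to zero.
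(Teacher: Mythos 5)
Your proposal is correct and follows the paper's overall strategy: apply Fisher's theorem to get a finite cover $X'\to X$ fibering over $S^1$ with $FP_k(\F)$ homotopy fiber, then show such a fibration forces $\ub_{\leq k}$ to vanish. The difference is in how the latter step is implemented. The paper packages it as a standalone result (Theorem \ref{mappingtorustheorem}), proved via an algebraic mapping telescope together with Lemma \ref{l:bound}, and then appeals to multiplicativity of $\ub$. You instead re-derive the needed estimate inline: restrict the finite $\F N$-free resolution in degrees $\leq k$ to the finite-index subgroups $M = H\cap N$, and run the collapsed Hochschild--Serre spectral sequence for the extension $1\to M\to H\to\phi(H)\to 1$. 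These are essentially the same computation in different clothing --- the Wang/Hochschild--Serre sequence for a $\Z$-extension is precisely what the algebraic mapping telescope encodes --- and both land on the identical bound $b_k(Z;\F)\leq [N:M](c_k+c_{k-1})$ with the factor of $n$ from the cyclic covers $X'_n$ driving $\ub_k$ to zero. Your index bookkeeping ($[N:M]=d/a$, $\abs{Z\to X}=dn\abs{X'\to X}$, so the $d$'s cancel and $a\geq 1$) is correct. The paper's phrasing has the small advantage of isolating the mapping torus estimate as a reusable tool; yours is more self-contained. Either is a valid proof.
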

This theorem may seem quite formal, but it is useful in practice because it relates two numbers $\lb$ and $\ub$ that have very different advantages.
\begin{itemize}
    \item If the fundamental group embeds in a right-angled Artin group, then $\lb_k(X;\Fp)$ are integers, and as a consequence of this integrality differ from the $L^2$-Betti numbers $b^{(2)}_k(X)=\lb_k(X;\Q)$ at only finitely many exceptional primes, while
    \item vanishing of $\ub_k(X;\F )$ controls homology of virtually all finite covers: it says that for any $\gd>0$ there is a ``$\gd$-good'' finite cover $X'\to X$ such that \emph{all} further finite covers $X''\to X'\to X$ have normalized Betti number $\frac{b_k(X'';\F )}{\abs{X''\to X}}$ bounded by $\gd$.
\end{itemize}
\begin{remark}
    It is tempting to wonder if some of these phenomena hold for more general fundamental groups: Higher rank, irreducible locally symmetric spaces do not fiber.
    Do they have fast $\Fp$-homology growth (in either the $\lb$ or the $\ub$ sense) for some prime $p$?
\end{remark}

\subsection*{The Singer conjecture and the $\F$-Singer property}

For closed aspherical $n$-manifolds $M^n$, the Singer conjecture predicts that the $L^2$-Betti numbers $b^{(2)}(M)$ vanish outside the middle ($=n/2$) dimension, and in particular that all the $L^2$-Betti numbers of a closed, odd-dimensional, aspherical manifold vanish.
This conjecture suggests that rational homology growth shouldn't give virtual fibering obstructions in odd dimensions.
But, the situation is different for $\Fp$-homology growth.
Let us single out the following homology growth vanishing property for $n$-manifolds
\begin{itemize}
    \item \emph{upper $\F$-Singer property}: $\ub_k(M^n;\F)=0$ for $k> n/2$,
\end{itemize}
and similarly define the {\emph{lower $\F$-Singer property}} using lower homology growth.
For closed manifolds these properties imply by Poincar\'e duality that upper (or lower) homology growth is concentrated in the middle dimension.
If $M$ is a closed aspherical manifold with special fundamental group then, by Theorem \ref{only if} and Poincar\'e duality, the upper and lower properties are equivalent and in this situation we will refer to both as the $\F$-Singer property.

In \cite{aos21} we built---for every odd prime $p$---closed, locally CAT(0) manifolds with special fundamental groups that do not satisfy the $\Fp$-Singer property in all odd dimensions $\geq 7$ (and all even dimensions $\geq 14$).
Much of the mathematical content of the present paper amounts to producing Gromov hyperbolic ones.
Our main result is
\begin{thmx}\label{maintheorem}
    \leavevmode
    \begin{enumerate}
        \item For any odd prime $p$, there is a closed, aspherical, $n$-manifold $\M^n$ of dimension either $n=5$ or $n=7$ with special hyperbolic fundamental group such that $\lb_k(\M;\Fp)>0$ for some $k$.
        \item For large primes, such $7$-manifolds definitively exist.
    \end{enumerate}
\end{thmx}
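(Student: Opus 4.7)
The plan is to combine three ingredients flagged in the introduction and abstract: a compact manifold-with-boundary building block $N$ with fast $\Fp$-homology growth above the middle dimension, an embedding of $N$ as an actual manifold of dimension $5$ or $7$, and the paper's new hyperbolic reflection group trick which closes $N$ up into a closed aspherical manifold $\M$ while preserving hyperbolicity, specialness, and---for almost all primes $p$---$\Fp$-homology growth above the middle dimension.

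First I would construct $N$. In \cite{aos21} non-hyperbolic blocks with fast $\Fp$-homology growth above the middle dimension were produced using right-angled Coxeter groups; the obstruction to hyperbolicity there is the presence of $\zz^2$ subgroups. To remove this obstruction I would replace right-angled Coxeter groups by graph products of large finite groups on a suitable flag complex $L$ (as the abstract advertises). Graph products of nontrivial finite groups contain no $\zz^2$ subgroups, and when $L$ is a flag no-square complex the associated Salvetti/Davis-type thickening is CAT$(-1)$, cubulable, and virtually special (so its fundamental group embeds in a right-angled Artin group and the skew field identification $\lb_k = b_k(N; D_{\Fp \pi_1(N)})$ of Section \ref{s:skewfields} applies). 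Choosing $L$ from the families used in \cite{aos21} supplies a prime $p$ and a degree $k > n/2$ for which $\lb_k(N;\Fp) > 0$, via essentially the same combinatorial computation.

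Second I would realize $N$ as a compact manifold with boundary of dimension $n = 5$ or $n=7$. The Salvetti/Davis thickening of a graph product naturally lives in a much higher dimension, so some embedding theory is needed to cut it down. A general PL embedding argument combined with regular neighborhoods should produce an aspherical manifold thickening whose fundamental group is commensurable with a finite-index subgroup of the graph product and whose dimension is either $5$ or $7$ depending on $L$; this yields part (1). For large primes $p$, the required $L$ is combinatorially simple enough that one can write down an explicit codimension-zero realization in a smooth $7$-manifold (using manifold topology), pinning the dimension to exactly $n=7$ and delivering part (2).

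Finally I would apply the hyperbolic reflection group trick to $N$. The trick glues copies of $N$ along its boundary according to a hyperbolic reflection group, producing a closed aspherical $n$-manifold $\M$ with hyperbolic, virtually special fundamental group. The $\Fp$-homology growth of $\M$ is then computed by a Mayer--Vietoris argument in the skew field $D_{\Fp \pi_1(\M)}$, expressing $\lb_k(\M;\Fp)$ in terms of $\lb_k(N;\Fp)$ and boundary contributions. By choosing the reflection group so that $p$ does not divide the orders of its finite stabilizers, a standard transfer/averaging argument forces the boundary terms to vanish in the relevant degrees $k > n/2$ (this is the source of the ``almost all $p$'' qualification in the abstract), so the positive contribution $\lb_k(N;\Fp) > 0$ survives and $\lb_k(\M;\Fp) > 0$ as required. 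The main obstacle, and the technical heart of the argument, is setting up this Mayer--Vietoris and transfer calculation rigorously in the skew field setting and verifying the non-cancellation of the block's contribution; this is where the specific structure of the reflection trick and the integrality of $\lb_k$ for special groups (Section \ref{s:skewfields}) really get used.
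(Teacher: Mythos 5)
Your high-level plan gets the ingredients right (graph products of large finite groups over a flag no-square complex, a manifold thickening, the hyperbolic reflection group trick), but several of the key mechanisms are misidentified, and the dichotomy that makes the argument close is missing.

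First, the ``dimension $5$ or $7$'' in part (1) is \emph{not} about the seed manifold $N$ being sometimes $5$-- and sometimes $7$--dimensional. In the paper $N$ is always $7$--dimensional: $L$ is a flag no-square triangulation of the $3$--dimensional Moore space $S^2\cup_p D^3$, which is chosen precisely so that $b_3(L;\F_2)=0$ (needed for the octahedralization $OL$ to embed in $S^6$ via the embedding theory of \cite{ados16}) while $b_2(L;\Fp)\neq 0$ (so that $\lb_4(G_L;\Fp)>0$). A general-position PL embedding of a $3$--complex lands in $\R^7$ only up to codimension; what actually pins down dimension $7$ is this $\F_2$--homological vanishing plus the $\pi$--$\pi$ compactification theorem, which requires codimension $\geq 3$. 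The ``$5$'' in the statement arises from an entirely different place: the Mayer--Vietoris cutting argument on $\M = hP_\d^N$ yields a \emph{dichotomy} (Proposition~\ref{p:singerdavistrick}) --- either $\lb_4(\M;\Fp)\geq\lb_4(N;\Fp)>0$ and we are done in dimension $7$, or some wall $hP_L$ (with $L$ a full subcomplex of a flag $S^5$) has $\ub_4(hP_L;\Fp)>0$, in which case Corollary~\ref{odddim} produces a closed aspherical $5$--manifold without the upper $\Fp$--Singer property. Your proposal has no mechanism for this split and therefore cannot reproduce part (1).

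Second, the ``standard transfer/averaging argument'' for killing the boundary terms is the wrong tool. The reflecting group is $(\Z/2)^{|\d^0|}$, so for any odd $p$ it automatically has no $p$--torsion, yet this gives nothing: the Mayer--Vietoris ``boundary'' terms are not finite-group cohomology classes but the homology growth of the walls $hP_{L'}$, which are highly nontrivial aspherical spaces (strict hyperbolizations). Transfer cannot make $\ub_k(hP_{L'};\Fp)$ vanish, and indeed the paper does not assert it vanishes in part (1) --- it only knows that if it fails to vanish, a lower-dimensional example exists. The exceptional-prime phenomenon (your ``almost all $p$'') comes from a completely different source, which is the content of part (2): using Ontaneda's smooth Riemannian hyperbolization to equip the walls with pinched negatively curved metrics, Donnelly--Xavier to conclude their $L^2$--Betti numbers vanish above the middle dimension, and then Corollary~\ref{specialp} (integrality of skew-field Betti numbers for special groups) to pass from $\Q$ to $\Fp$ for $p$ outside a finite exceptional set $S_n$. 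None of this is a transfer argument, and without it you do not get an explicit $7$--manifold for large $p$. In short, the skeleton of the construction is right but the two technical cores --- the embedding/compactification step that fixes $n=7$, and the Singer-type vanishing for walls that drives the cutting argument --- are not addressed by what you wrote.
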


Proving it turned out to be more subtle (and interesting) than we initially expected, because applying strict hyperbolization procedures (e.g.
the Charney--Davis strict hyperbolization) directly to our previous examples kills the golden goose: the homology cycles responsible for fast growth get hyperbolized in the process and, as a result, instead of getting a linear number of homology cycles in covers one gets a sublinear number of more complicated cycles.
To get an idea of how hyperbolization can destroy $L^2$-Betti numbers, note that strict hyperbolization applied to a $2$-dimensional cube complex (e.g.
the complex $8\times 8$) amounts to connect-summing each square with a higher genus surface.
A Mayer--Vietoris argument shows that this process has the same effect on the second $L^2$-Betti number as removing the squares, leaving a $1$-dimensional complex with vanishing second $L^2$-Betti number.
There are a number of elements that go into our construction of $\M$, which we now describe.
We shall see how ensuring that $\M$ is Gromov hyperbolic prevents us from making examples in dimensions $>7$.

\subsection*{Graph products}

In the construction, the starring role is played by graph products $G_L$ of groups modeled on a flag complex $L$.
For each vertex $v$ of $L$, pick a group $G_v$ and define
\[
G_L:=\bigast_v G_v/\langle [g,g']=1\text{ if } g\in G_v, g'\in G_{v'} \text{ and } v \text{ is adjacent to } v'\rangle.
\]

Graph products of $\Z/2$'s are right-angled Coxeter groups $W_L$, graph products of $\Z$'s are right-angled Artin groups $A_L$, but what we use are graph products of $\Z/m$'s for large $m$.
They virtually embed in the corresponding right-angled Artin group $A_L$, and can either be thought of as deformations of $W_L$ whose homology growth can be estimated, or deformations of $A_L$ that have a chance of being hyperbolic.

\subsection*{Hyperbolicity}

A graph product of finite groups $G_L$ acts properly, cocompactly on a CAT(0) cubical complex of dimension $\dim L+1$.
If the triangulation of $L$ has no empty squares, then this complex can be given a $G_L$-invariant CAT$(-1)$ metric, so in that case $G_L$ is CAT$(-1)$ and hence hyperbolic.

\subsection*{Homology}

In \cite{aos21} we computed the $\F $-homology growth of right-angled Artin groups.
The computation given there works identically for $\lb$ and $\ub$ and shows
\[
\lb_k(A_L;\F )=\tilde b_{k-1}(L;\F )=\ub_k(A_L;\F ),
\]
where $\tilde b$ denotes the reduced Betti number of $L$.
In particular, $\lb$ and $\ub$ agree for right-angled Artin groups and differ from the usual $L^2$-Betti number at finitely many primes determined by the topology of the underlying complex $L$.

A graph product $G_L$ of large $\zz/m$'s has approximately the same homology growth as the corresponding right-angled Artin group $A_L$, i.e. 
\[
\lb_k(G_L;\F )\sim \lb_k(A_L;\F ),
\]
where the error is on the order of $\abs{L}/m$ (Corollary \ref{c:modpl2}.)

Somewhat surprisingly, the argument in this paper is conceptually simpler; our computation relies on cell counting, whereas in \cite{aos21} we needed to compute some homology.
Of course, in this paper we don't get (or need) an exact computation of $\lb_k(G_L;\F )$.
\subsection*{Embedding theory}

One can construct non-compact aspherical $n$-manifolds that have a specified fundamental group by embedding that group in a right-angled Coxeter group of the form $W_{S^{n-1}}$, as the groups $W_{S^{n-1}}$ all act properly on (topological) $\rr^n$.

This method works well for graph products of finite groups $G_L$ because by commensurability results of \cite{js01} they virtually embed in the right-angled Coxeter group $W_{OL}$, where $OL$ is a more complicated flag complex called the \emph{octahedralization} of $L$, obtained from $L$ by doubling the set of vertices and replacing each $k$-simplex $v_0*\dots*v_k$ by $2^{k+1}$ $k$-simplices $v_0^{\pm}*\dots*v_k^{\pm}$.
With Davis, in \cite{ados16} we determined, for a $d$-dimensional ($d\neq 2$) flag complex $L$, that $OL$ embeds as a full subcomplex of some flag triangulation of $S^{2d}$ if and only if $H_d(L;\F_{2})=0$.
This implies that $W_{OL}$ is a subgroup of $W_{S^{2d}}$.
In summary, if $d\neq 2$ and $b_d(L;\F_{2})=0$, then $G_L$ virtually embeds in some $W_{S^{2d}}$.

\subsection*{Construction of a $7$-manifold with boundary}

We now exploit the fact that embedding theory for $OL$ only depends on $\F_{2}$-homology of $L$ while $\Fp$-homology growth of $G_L$ is sensitive to the prime $p$.
The $3$-dimensional Moore space $L=D^3\cup_p S^2,$ ($p$ odd) has a flag no-square triangulation by \cite{ps09}, and for any such triangulation it follows from what we have said that the graph product $G_L$ of large $\zz/m$'s
\begin{itemize}
    \item is hyperbolic,
    \item has $\lb_{4}(G_L;\Fp)>0$,
    \item virtually embeds in some $W_{S^6}$, and hence
    \item has a finite index subgroup $\Gamma$ that acts properly and freely on $\rr^7$.
\end{itemize}
The quotient manifold $\rr^7/\Gamma$ has finite type but is not compact.
What saves us is that the construction of the manifold also produces, as a byproduct, a codimension three spine.
This codimension three lets us compactify a regular neighborhood of this spine using a $\pi$-$\pi$ version of Siebenmann's thesis (Theorem \ref{t:pi-pi}.) The upshot is a compact aspherical $7$-manifold $(N,\d N)$ with hyperbolic fundamental group and $\Fp$-homology growth in dimension four.
It remains to produce a closed aspherical $7$-manifold with these properties.

\subsection*{On dimensions}

Why did we start with a $3$-complex? On one hand, \foreignlanguage{vietnamese}{Nguy\~\ecircumflex{}n Phan} and the first author recently constructed examples \cite{ap21} showing that the assumption $d\neq 2$ in the embedding theory cannot, in general, be avoided: there are flag $2$-complexes $L$ with $b_2(L;\F_2)=0$ whose octahedralizations do not PL embed in $S^4$.
On the other hand, flag no-square triangulations of arbitrary polyhedra are only known to exist in dimensions at most three \cite{ps09}.
While there are constructions \cites{js03, o13} of flag no-square $d$-complexes $L$ in all dimensions $d$, we do not know how to arrange these to have $b_d(L;\F_{2})=0$ and $b_d(L;\Fp)\neq 0$.
So, we only know how to make the method work in one dimension!
\begin{question}
    Are there flag no-square $d$-complexes $L$ for $d>3$ which have $b_d(L;\F_{2})=0$ and $b_d(L;\Fp)\neq 0$?
\end{question}

\subsection*{A hyperbolic reflection group trick}

To obtain a closed manifold we do a combination of the Davis reflection group trick and the Charney--Davis strict hyperbolization.
This hyperbolic reflection group trick works in any dimension, and may be of independent interest, because it preserves hyperbolicity, residual finiteness and other pleasant features.
The input to this trick is a compact $n$-manifold $(N,\d N)$ with a flag triangulation $\d$ of the boundary $\d N$, and a choice of Charney--Davis hyperbolized $n$-cube $\CD^{n}$.
The output is a closed manifold $hP_{\d}^{N}$, which we will sometimes simply denote by $\M$, obtained as follows.
First, build the right-angled Coxeter group corresponding to the flag triangulation $\d$, and take the commutator quotient of the corresponding Davis complex.
This results in a finite cube complex, which is a manifold except at finitely many singular points, which have links isomorphic to $\d$.
Now replace the cubes of this cube complex by $\CD^{n}$, and then replace small neighborhoods of the singular points by copies of $N$.

We summarize some of the properties of the hyperbolic reflection group trick:
\begin{thmx}\label{t:hypdavistrick}
    Given a compact $n$-manifold $(N,\d N)$ with a flag triangulation $\d$ of the boundary, the hyperbolic reflection group trick produces a closed $n$-manifold $\M=hP^N_{\d}$ satisfying:
    \begin{enumerate}
        \item $N$ is a retract of $\M$,
        \item If $N$ is aspherical then $\M$ is aspherical,
        \item If $N$ is $\F $-aspherical\footnote{A space is \emph{$\F $-aspherical} if its universal cover has the same $\F $-homology as a point.}
        then $\M$ is $\F $-aspherical,
        \item If $\pi_1(N)$ is hyperbolic then $\pi_1(\M)$ is hyperbolic,
        \item If $\pi_1(N)$ is virtually special hyperbolic then $\pi_1(\M)$ is virtually special hyperbolic,
        \item If $\pi_1(N)$ is residually finite then $\pi_1(\M)$ is residually finite.
    \end{enumerate}
\end{thmx}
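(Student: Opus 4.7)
Proof proposal.

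The plan is to unpack the construction and verify each of the six properties using standard reflection-group-trick and hyperbolization machinery. To fix notation, let $W_\d$ denote the right-angled Coxeter group on the vertices of $\d$ with commutations from its edges, $\Sigma_\d$ its Davis complex, and $K := \Sigma_\d/[W_\d,W_\d]$ the finite commutator-quotient cube complex; its finitely many vertices all have link $\d$ and constitute the singular set. Charney--Davis hyperbolization applied cube by cube produces $h(K)$, still singular only at these vertices. Write $h(K)_0$ for $h(K)$ with small open cones at the singular points removed, so $h(K)_0$ is an $n$-manifold with boundary $\bigsqcup \d N$, and $\M = h(K)_0 \cup \bigsqcup N$ with gluing along the $\d N$ components.

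For properties (1)--(3), the reflection action of $(\Z/2)^{\abs{V(\d)}}$ on $K$ extends to $h(K)$ using the mirror symmetries built into $\CD^n$, and then to $\M$ via Davis's reflection trick applied to each $N$-piece. A single distinguished copy $N \hookrightarrow \M$ is a fundamental domain for the $N$-pieces, and folding along the reflections gives a retraction $\M \to N$, yielding (1). For (2), Charney--Davis show that $\CD^n$ is aspherical with aspherical, $\pi_1$-injective boundary faces, so $h(K)_0$ is aspherical with $\pi_1$-injective peripheral $\d N$'s; combined with $N$ aspherical, the standard graph-of-aspherical-spaces argument shows $\widetilde{\M}$ is contractible. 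The same Mayer--Vietoris argument with $\F$-chain complexes replacing $\pi_1$ proves (3).

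For property (4), Charney--Davis hyperbolization endows $h(K)$ with a locally CAT$(-1)$ metric away from the singular vertices, so $\pi_1(h(K)_0)$ is hyperbolic and the copies of $\pi_1(\d N)$ sit as quasiconvex peripheral subgroups. Then $\pi_1(\M)$ is the fundamental group of a finite graph of hyperbolic groups with quasiconvex edge groups, and the Bestvina--Feighn combination theorem gives hyperbolicity. For (5), $h(K)_0$ inherits a natural wall structure from $\CD^n$ and is virtually special; the Haglund--Wise / Wise / Agol combination machinery on amalgams over quasiconvex specially embedded subgroups then produces virtual specialness of $\pi_1(\M)$. Property (6) follows from (5) since virtually special groups are residually finite. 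The main technical obstacle will be verifying quasiconvexity of $\pi_1(\d N)$ on both sides of the gluing together with the acylindricity hypothesis of Bestvina--Feighn (the former automatic in $h(K)_0$ by the totally geodesic boundary, but requiring hypotheses on $N$); once these are in place, each remaining step reduces to invoking a known combination theorem.
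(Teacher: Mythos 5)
Your blueprint for (2)--(4) hinges on the claim that $h(K)_0$ (the hyperbolized complex with open cone neighborhoods removed) is aspherical and that the copies of $\partial N$ sit $\pi_1$-injectively on both sides. Neither is granted by the hypotheses, and the first is false in general. Since each singular link is a copy of the closed $(n-1)$-manifold $\partial N$ (not a sphere), removing the $\varepsilon$-cone from the locally CAT$(-1)$ polyhedron $hP_\partial$ does not change $\pi_1$, so the universal cover of $h(K)_0$ is just the universal cover of $hP_\partial$ with contractible cone lifts deleted --- and deleting contractible codimension-$0$ pieces from a contractible space produces nontrivial higher homotopy (compare $\mathbb{R}^n$ minus a ball). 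Moreover, the theorem makes no assumption that $\partial N$ is aspherical or that $\pi_1(\partial N)\to\pi_1(N)$ is injective, which the graph-of-aspherical-spaces argument needs. The paper sidesteps all of this by working in the universal cover $\widetilde{\M}$ and replacing each lift $\widetilde N$ by a cone on $\partial\widetilde N$; the resulting branched cover $\overline{hP}_\partial$ is simply connected and piecewise hyperbolic with flag links at the new cone points, hence CAT$(-1)$ and contractible, and the collapse $\widetilde{\M}\to \overline{hP}_\partial$ is a homotopy ($\F$-homology) equivalence when $N$ is aspherical ($\F$-aspherical). This argument imposes no condition on $\partial N$.

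The same issue undermines your route to (4). Bestvina--Feighn requires the edge groups to be quasiconvex \emph{and} the gluing to be acylindrical, and $\pi_1(\partial N)\to\pi_1(N)$ quasiconvex/malnormal is not among the hypotheses; you flag this yourself, but it is not a detail to be filled in later --- it is exactly the hypothesis the theorem does not have. The paper instead proves that $\pi_1(\M)$ is hyperbolic relative to the copies of $\pi_1(N)$ via Bowditch's fine-graph criterion: the cone neighborhoods are convex in the CAT$(-1)$ space $\overline{hP}_\partial$, so nearest-point projection is distance-decreasing and the deleted graph metric on each link dominates the intrinsic metric on $\partial\widetilde N$, on which $\pi_1(N)$ acts geometrically; fineness follows, and then Farb's theorem upgrades relative hyperbolicity (relative to a hyperbolic group) to hyperbolicity.

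For (5) you invoke Wise/Agol combination machinery on the amalgam, but the real content in the paper is elsewhere: $\pi_1(\M)$ is exhibited as acting cocompactly on a CAT(0) cube complex (the Davis complex $\widetilde\Sigma$ of the auxiliary infinitely-generated RACG $\widetilde W$), and Groves--Manning is applied once one shows the vertex stabilizer $\pi_1(hK_\partial^N)$ is virtually special. That last step requires a separate inductive cutting argument along unnaturally embedded hyperbolized link subcomplexes, using Theorem~\ref{t:gm} at every stage; your proposal does not address it. Finally, for (6) you deduce residual finiteness from (5), but the theorem's hypothesis for (6) is only that $\pi_1(N)$ is residually finite --- no hyperbolicity or specialness is assumed --- so that deduction does not prove the stated implication. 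The paper gives an independent argument (Appendix~B) via a separability criterion for basic constructions, showing that the stabilizers of the lifted mirrors have separable double cosets in $\pi_1(hK_\partial^N)$ and then concluding residual finiteness of $\widetilde W\rtimes\pi_1(hK_\partial^N)$.
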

Note that for $n>5$ any flag triangulation of $\d N$ always has empty squares, so the usual reflection group trick never produces a Gromov hyperbolic manifold.

The first part of Theorem \ref{maintheorem} follows by applying the hyperbolic reflection group trick to our seed manifold $N^{7}$ with hyperbolic fundamental group and $\lb_4(N; \F_p) \ne 0$.
The resulting manifold $\M$ can be cut along walls down to Charney--Davis pieces and copies of $N$.
The walls have virtually special fundamental groups which lets us relate their homology growth in all finite covers to homology growth in the restricted class of finite covers induced from $\M$.
This lets us do a Mayer--Vietoris type argument and show that either $\M$ has non-vanishing $\lb$ or a lower odd-dimensional closed, locally CAT$(-1)$ manifold appearing as an intersection of walls in the construction has non-vanishing $\ub$ (and hence $\lb$ by Theorem \ref{only if}).
In either case, we get a non-fibering example.

For the second part of Theorem \ref{maintheorem} we need better control on homology growth of the walls.
We can achieve it by restricting our triangulation $\d$ to be a barycentric subdivision of a triangulation of the boundary.
This lets us arrange so that our walls are themselves Charney--Davis hyperbolizations from a finite set of cubical complexes that depends only on the dimension $n$.
Then choosing an appropriate $\CD^{n}$ and applying recent work of Ontaneda shows that these walls admit Riemannian metrics of sufficiently pinched negative curvature, which by a result of Donnelly--Xavier implies that $L^{2}$-Betti numbers of the walls vanish outside of the two middle dimensions.
Since the fundamental groups of these walls are virtually special, their $\lb$ differs from $b^{(2)}$ at only finitely many primes, and the cutting argument proves the following theorem.
\begin{thmx}\label{t:barytrick}
    For each dimension $n$ there is a choice of Charney--Davis piece $CD^n$ and a corresponding finite collection of exceptional primes $S_n$, such that for any compact $n$-manifold with boundary $(N,\d N)$ and any triangulation $\d$ which is a barycentric subdivision of a triangulation of the boundary, the result of the hyperbolic reflection group trick $\M=hP_{\d}^{N}$ satisfies the following inequalities for $k>n/2$:
    \begin{enumerate}
        \item\label{i:b2}
        $b_k^{(2)}(N)\leq b_k^{(2)}(\M)$,

        \item\label{i:lbQ}
        $\lb_{k}(N;\Q)\leq \lb_k(\M;\Q)$ and $\ub_{k}(N;\Q)\leq \ub_k(\M;\Q)$,

        \item\label{i:lb}
        $\lb_k(N;\Fp)\leq \lb_k(\M;\Fp)$ and $\ub_k(N;\Fp)\leq \ub_k(\M;\Fp)$ for $p \notin S_n$.
    \end{enumerate}
\end{thmx}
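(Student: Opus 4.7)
The plan is to execute the strategy outlined in the introduction: decompose $\M = hP_\partial^N$ along the walls coming from the reflection group structure, control the homology growth of these walls using Ontaneda's pinching theorem and Donnelly--Xavier's vanishing result, and then combine everything via a Mayer--Vietoris argument over arbitrary finite covers.

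First I would set up the wall structure of $\M$. By construction, $\M$ decomposes along the images of the reflecting walls of the ambient Davis complex into pieces, each of which is either a copy of $N$ or a copy of $CD^n$. The walls and all their iterated intersections inherit a Charney--Davis hyperbolization structure from cubical subcomplexes of $\partial$. Here the barycentric subdivision hypothesis plays the essential role: near each vertex of $\partial$ the local wall pattern is controlled by the combinatorial type of the link in $\partial$, and links arising as barycentric subdivisions belong to a finite family depending only on $n$. Consequently, walls and their intersections form a single finite family of model manifolds $\mathcal{W}_n$ depending only on $n$.

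For the metric and homology input, Ontaneda's theorem produces, for any prescribed $\varepsilon > 0$, a choice of $CD^n$ such that the Charney--Davis hyperbolization of any cube complex carries a Riemannian metric of sectional curvature in $(-1-\varepsilon,-1+\varepsilon)$. Taking $\varepsilon$ small enough to satisfy the pinching hypothesis of the Donnelly--Xavier vanishing theorem, uniformly over the finite family $\mathcal{W}_n$, yields $b_k^{(2)}(W) = 0$ for each $W \in \mathcal{W}_n$ and each $k$ outside the two middle dimensions of $W$; in particular $b_k^{(2)}(W) = 0$ for all $k > n/2$, since $\dim W \leq n-1$. To pass from $\Q$ to $\Fp$ I invoke Theorem \ref{t:hypdavistrick}: each $W \in \mathcal{W}_n$ has virtually special hyperbolic fundamental group, so the skew-field machinery of Section \ref{s:skewfields} identifies $\lb_k(W;\Fp) = \ub_k(W;\Fp)$ with an integer equal to $b_k^{(2)}(W)$ for all but finitely many primes. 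Because $\mathcal{W}_n$ is finite, the union $S_n$ of these exceptional sets of primes is itself finite and depends only on $n$; hence for $p \notin S_n$ and $k > n/2$ one has $\lb_k(W;\Fp) = \ub_k(W;\Fp) = 0$.

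Finally I would run the Mayer--Vietoris cutting argument over arbitrary finite covers $\M' \to \M$. Lifting the wall decomposition to $\M'$, the pieces are covers of $N$, of $CD^n$, and of members of $\mathcal{W}_n$, glued along covers of walls in $\mathcal{W}_n$. In degrees $k > n/2$ the wall and intersection terms vanish by the previous step, the $CD^n$ pieces contribute nothing since $CD^n$ is aspherical with contractible universal cover, and only the lifted copies of $N$ survive in this range. Taking the appropriate infima and suprema over all finite covers, and using multiplicativity of $\lb$ and $\ub$ for virtually special fundamental groups to transport the wall vanishing coherently through every cover, produces the three claimed inequalities. The hardest part is precisely this last step of Mayer--Vietoris bookkeeping: keeping the vanishing of wall homology growth uniform enough across all finite covers to extract a clean lower bound in terms of $N$, for which the identification of $\lb = \ub$ with an integer skew-field Betti number in the virtually special setting is the decisive input.
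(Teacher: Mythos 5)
Your proposal follows essentially the same route as the paper: observe that the barycentric subdivision hypothesis forces the walls and their iterated intersections to come from a finite family of combinatorial types depending only on $n$ (the paper identifies them as hyperbolizations of boundaries of barycentrically subdivided simplices, i.e.\ Tomei manifolds), invoke Ontaneda's smooth hyperbolization to pinch the curvature, invoke Donnelly--Xavier to kill $L^2$-Betti numbers outside the middle dimensions, use the skew-field integrality (Corollary~\ref{specialp}) to get a finite exceptional prime set $S_n$, and then run the iterated Mayer--Vietoris cutting argument (Corollary~\ref{homologygrowthmv}) with the walls playing the role of the $\ub$-acyclic gluing pieces.

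One small but real misstep: you assert that ``the $CD^n$ pieces contribute nothing since $CD^n$ is aspherical with contractible universal cover.'' Asphericity alone does not make upper or lower $\F$-homology growth vanish---the seed manifold $N$ itself is aspherical with contractible universal cover and the whole point of the construction is that $\lb_k(N;\Fp) > 0$ above the middle dimension. Fortunately this claim is not needed: Corollary~\ref{homologygrowthmv} gives $\lb_k(A_0) \le \lb_k(X)$ provided only the gluing pieces $B_i$ (the walls) have vanishing $\ub_k$; the other vertex pieces $A_i$ of the filtration (which in the paper's cutting are resolved hyperbolized chambers $hP_{b\sigma}$, not literally $CD^n$) are allowed to have arbitrary homology growth. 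You should also make explicit the ingredient that lets the cutting lemma be applied, namely that the unnaturally embedded walls are virtual retracts of $\M$ (via Corollary~\ref{c:vretract}), since this is what converts the restricted vanishing $\ub_k^{\M}(B_i)=0$ into the absolute vanishing $\ub_k(B_i)=0$ that you established from Donnelly--Xavier and integrality.
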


The second part of Theorem \ref{maintheorem} follows from Theorem \ref{t:barytrick} applied to our seed manifold $N^{7}$.
\begin{remark}
    When the fundamental group of the input $\pi_1(N)$ is virtually special hyperbolic, then the fundamental group of the output $\pi_1(\M)$ is, as well, so we have access to the skew field $D_{\F \pi_1 (\M)}$.
    Then, the entire cutting argument can be carried out using this ambient skew field (and its sub-skew fields corresponding to subgroups of $\pi_1(\M)$) and leads to an alternate proof of the first inequality in Theorem \ref{t:barytrick}(3).
\end{remark}

As explained in \cite{os16}, the (usual) reflection group trick implies that the Singer conjecture is equivalent to the statement that $L^{2}$-Betti numbers of a compact aspherical manifold, possibly with boundary, vanish above the middle dimension.
The hyperbolic reflection group trick recovers this and also shows that the Singer conjecture for Gromov hyperbolic manifolds is equivalent to the statement that $L^2$-Betti numbers of compact aspherical manifolds with hyperbolic fundamental groups vanish above the middle dimension.

\subsection*{Rationally aspherical manifolds}

When $L$ is a flag triangulation of the $3$-sphere the graph product of large finite groups $G_{S^3}$ has $b_4^{(2)}(G_{S^3})>0$.
In this case, the van Kampen embedding theory method does not produce a $7$-dimensional thickening since $b_3(S^3;\F _2)\neq 0$, and we suspect that no such thickening exists.
However, since finite index torsion free subgroups $\Gamma$ of $G_{S^3}$ are duality groups \cites{dm02, ddjmo10}, we can use the rational homotopy method from \cite{a18} to at least produce a rational thickening, i.e. a rationally aspherical, compact $7$-manifold with boundary $(N,\d N)$, non-vanishing $b_4^{(2)}$ and fundamental group $\Gamma$.

Moreover, if we start with a flag no-square triangulation of $S^3$ (which do exist) then the resulting group will be hyperbolic.
Feeding this seed manifold $N^7$ into the hyperbolic reflection group trick (with barycentrically subdivided boundary) gives
\begin{thmx}\label{rational}
    There is a closed, rationally aspherical $7$-manifold $\M$ with special hyperbolic fundamental group and $b_4^{(2)}(\M)\neq 0$.
\end{thmx}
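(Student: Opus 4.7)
The plan is to produce a compact $7$-manifold-with-boundary seed $(N,\d N)$ whose fundamental group is hyperbolic, which is rationally aspherical, and which has $b_{4}^{(2)}(N) \neq 0$, and then feed it into the hyperbolic reflection group trick with barycentrically subdivided boundary triangulation as packaged by Theorem \ref{t:barytrick}. To build the seed \emph{group}, I would start from a flag no-square triangulation $L$ of $S^{3}$ (available by \cite{ps09}) and form the graph product $G_{L}$ of copies of $\Z/m$ for a large integer $m$. The no-square condition makes the associated Davis cube complex admit a $G_{L}$-invariant CAT$(-1)$ metric, so $G_{L}$ is hyperbolic; virtual embedding into the right-angled Artin group $A_{L}$ shows $G_{L}$ is virtually special. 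Combining Corollary \ref{c:modpl2} with the right-angled Artin computation $\lb_{k}(A_{L};\Q)=\tilde b_{k-1}(L;\Q)$ and $\tilde b_{3}(S^{3};\Q)=1$ yields $\lb_{4}(G_{L};\Q)>0$ for all sufficiently large $m$, which by Theorem \ref{quantlueck} is exactly $b_{4}^{(2)}(G_{L})>0$.

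Next I would pass to a torsion-free, special hyperbolic, finite-index subgroup $\Gamma \leq G_{L}$; multiplicativity of $L^{2}$-Betti numbers under finite covers preserves $b_{4}^{(2)}(\Gamma)>0$. By the commensurability and cohomological results of \cites{dm02, ddjmo10}, $\Gamma$ is a $4$-dimensional duality group. This is the decisive hypothesis for the rational homotopy construction of \cite{a18}, which I would invoke to produce a compact, rationally aspherical $7$-manifold with boundary $(N,\d N)$ with $\pi_{1}(N)=\Gamma$ and $b_{4}^{(2)}(N)\neq 0$.

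To finish, I would choose any triangulation of $\d N$, replace it by its barycentric subdivision $\d$, and feed $(N,\d N, \d)$ into the hyperbolic reflection group trick using the Charney--Davis piece $\CD^{7}$ furnished by Theorem \ref{t:barytrick}. Let $\M = hP_{\d}^{N}$. Parts (3)--(5) of Theorem \ref{t:hypdavistrick} give that $\M$ is $\Q$-aspherical with virtually special hyperbolic fundamental group; passing to a finite cover makes $\pi_{1}(\M)$ honestly special hyperbolic while preserving rational asphericity (which is a property of the common universal cover) and nonvanishing of $L^{2}$-Betti numbers (which multiply under finite covers). Theorem \ref{t:barytrick}(1), applied with $k=4>7/2$, then gives $b_{4}^{(2)}(\M)\geq b_{4}^{(2)}(N)>0$, completing the construction.

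The hard part is the second paragraph. One genuinely cannot produce an aspherical $7$-manifold thickening of $K(\Gamma,1)$ via the embedding-theoretic route used elsewhere in the paper, because $b_{3}(S^{3};\F_{2})\neq 0$ prevents the octahedralization $OL$ from PL-embedding as a full subcomplex of any flag triangulation of $S^{6}$. So one is forced to settle for a \emph{rational} thickening, which in turn forces reliance on the duality-group/rational homotopy machinery of \cite{a18}; verifying that its hypotheses apply to $\Gamma$, and in particular producing a codimension-three rational spine that can be compactified to a $7$-manifold with the stated properties, is where all the subtlety lies. The remaining steps are essentially a direct application of Theorems \ref{t:hypdavistrick} and \ref{t:barytrick}.
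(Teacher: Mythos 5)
Your proposal is correct and mirrors the paper's own proof almost exactly: flag no-square triangulation of $S^3$, graph product of large finite groups, virtual $4$-dimensional duality group via Theorem \ref{dualitycriterion}, rational thickening to a compact $7$-manifold with boundary via \cite{a18}, and then the barycentric hyperbolic reflection group trick with Theorem \ref{t:barytrick}. One small point the paper makes explicit that you leave implicit is why $b_4^{(2)}(N)=\lb_4(N;\Q)$ equals $\lb_4(\Gamma;\Q)$ even though $N$ is only rationally aspherical: the rational homology growth can be computed from the $\Gamma$-action on the $\Q$-acyclic universal cover $\widetilde N$ rather than a contractible complex; you may want to spell that out when asserting $b_4^{(2)}(N)\neq 0$.
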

Theorems \ref{maintheorem} and \ref{rational} are quite different both in the input used to obtain the examples and in their conclusions.
The first produces genuinely aspherical manifolds, while the second produces examples in which the actual $L^2$-Betti numbers are not concentrated in the middle dimension.
Nonetheless, forgetting some of the information they provide, we can put them in a single context.
We have obtained for $\F =\Q$ or $\Fp$ for \emph{odd} primes $p$ an example of a closed, $\F $-aspherical manifold that does not satisfy the $\F $-Singer property.
The remaining case is $p=2$.
\begin{question}

    Does every $\F _2$-aspherical manifold with residually finite fundamental group satisfy the (upper or lower) $\F _2$-Singer property?
\end{question}
\subsection*{Acknowledgements}

We thank the referee for a very detailed report and useful comments which improved the paper, in particular they found a gap in our arguments in Section \ref{s:induction} which has now been fixed.
We thank Andrei Jaikin-Zapirain for answering multiple questions of ours concerning the skew field approach to homological growth.
Most of Section \ref{s:skewfields} came from trying to understand his answers.
The first author thanks \foreignlanguage{vietnamese}{T\^am Nguy\~\ecircumflex{}n Phan} for explaining van Kampen's approach to embedding theory and asking for an interpretation of embedding theory for octahedralizations in terms of it, which lead to Appendix \ref{a:embedding}.
We thank Sam Fisher and Igor Belegradek for helpful comments on an earlier draft, and Andrei Jaikin-Zapirain and Clara L{\"o}h for providing useful references.
The first and second authors thank the Max Planck Institut f\"ur Mathematik for its support and excellent working conditions.
The third author was supported by the NSF grant DMS-2203325. \tableofcontents
\subsection*{Plan of the paper}

Section \ref{s:homologygrowth} assembles some basic facts on homology growth.
Section \ref{s:skewfields} sets up the skew field theory we need and Section \ref{s:applications} gives consequences of this theory for homology growth and proves Theorem \ref{only if}.
We estimate homology growth of graph products in Section \ref{s:graphproducts}, thicken their classifying spaces to manifolds in Section \ref{s:thickenings}, construct closed aspherical manifolds via a hyperbolic reflection group trick in Section \ref{s:hrt} (proving Theorem~\ref{t:hypdavistrick}(1)--(4)), and show the results have virtually special fundamental groups (Theorem \ref{t:hypdavistrick}(5)) in Section \ref{s:special}.
In Section \ref{mvsection}, we derive the Mayer--Vietoris inequalities needed for our induction arguments.
Section \ref{s:induction} carries out the basic inductive cutting argument and proves Theorem \ref{nofibertheorem} and Theorem \ref{maintheorem}(1).
Section \ref{s:proofs} deals with the barycentric version of the hyperbolic reflection group trick, proves Theorem \ref{t:barytrick} and uses it to establish Theorems \ref{maintheorem}(2) and \ref{rational}.
The appendices discuss residual finiteness of the hyperbolic (and the usual) reflection group trick (proving Theorem \ref{t:hypdavistrick}(6)), the relation between $L^2$-Betti numbers and rational homology growth, and the embedding theory for octahedralizations.

\section{Upper and lower homology growth}\label{s:homologygrowth}

Our first goal is to understand the normalized Betti numbers of finite covers of a complex $X$ and how they vary as we pass to further covers.
Somewhat surprisingly, a number of basic but useful properties of this can be established by thinking of the normalized Betti numbers as a function on the partially ordered\footnote{The partial order on covers of $X$ is defined by $X''> X'$ whenever $X''$ is a cover of $X'$.}
set of all finite covers, looking at upper and lower limits over this set, and using the fact that any two finite covers have a further finite cover lying above both of them.
In this section we record the basic properties of such limits.

\subsection*{Limits over directed posets}

Let $(C, <)$ be a partially ordered set.
Suppose it is directed (for any $x, y \in C$ there exists $z \in C$ such that $x <z $ and $y < z$).
A subset $D$ is \emph{cofinal} if for any $x \in C$ there exists $y \in D$ such that $x \leq y $.
We want to define various notions of limits of bounded real-valued functions on $C$.
The basic building blocks are taking $\inf$ or $\sup$ over a subset.
There are two basic observations: smaller subsets produce smaller $\sup$ and if the function is decreasing then $\inf$ can be computed over any cofinal subset.

Applying $\sup$ to tails (subsets of the form $C_{\geq x}=\{y \in C \mid y\geq x \}$) defines an operation on bounded functions $f \mapsto f^{\sup}$:
\[
f^{\sup}(x)= \sup_{C_{\geq x}} f.
\]
This converts any function to a decreasing one.

Define $f^{\inf}:=-(-f)^{\sup}$.
The following is immediate from the observations:
\begin{lemma}
    $ \overline f:= (f^{\sup})^{\inf } $ and $ \underline f := (f^{\inf})^{\sup}$ are constant functions.
\end{lemma}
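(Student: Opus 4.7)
The plan is to combine the two stated observations with the directedness hypothesis, the essential point being that in a directed poset every tail is cofinal. More precisely, for any $x \in C$ the tail $C_{\geq x}$ is a cofinal subset of $C$: given any $z \in C$, directedness supplies $w \in C$ with $x < w$ and $z < w$, and such a $w$ lies in $C_{\geq x}$ while satisfying $z \leq w$.

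Next I would record that $f^{\sup}$ is a decreasing function on $C$. This is immediate from the first observation: if $x \leq y$ then $C_{\geq y} \subseteq C_{\geq x}$, so $f^{\sup}(y) \leq f^{\sup}(x)$. Applying the same argument to $-f$ yields the dual statement that $f^{\inf}$ is increasing.

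With these two ingredients in hand, the claim for $\overline f$ is settled by applying the second observation to the decreasing function $f^{\sup}$ and the cofinal subset $C_{\geq x}$:
\[
\overline{f}(x) = (f^{\sup})^{\inf}(x) = \inf_{y \in C_{\geq x}} f^{\sup}(y) = \inf_{y \in C} f^{\sup}(y),
\]
a value independent of $x$. The statement for $\underline f = (f^{\inf})^{\sup}$ then follows either by symmetry, applying the dual form of the second observation (the $\sup$ of an increasing function can be computed over any cofinal subset) to the increasing function $f^{\inf}$, or directly from the identity $\underline f = -\overline{(-f)}$ together with the already-established case.

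There isn't really a hard step here; the only subtlety is recognizing that directedness forces every tail to be cofinal, after which one application of $\sup$ renders the function monotone and the second application of $\inf$ over any tail automatically sees the global $\inf$.
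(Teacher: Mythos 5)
Your proposal is correct and spells out exactly the argument the paper calls ``immediate from the observations'': tails are cofinal (this is where directedness enters), $f^{\sup}$ is decreasing, and the second observation then shows $\inf_{C_{\geq x}} f^{\sup}$ is independent of $x$, with $\underline f$ following by duality.
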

We will call $\overline f$ and $ \underline f$ the upper and lower limits of $f$.

We collect in the following lemma all the properties of upper and lower limits that we need.
They will be primarily used in obtaining the Mayer--Vietoris inequalities in Section \ref{mvsection}.
\begin{lemma}\label{l:posetineq}
    Let $f, g$ be bounded functions.
    \begin{enumerate}
        \item If $f \leq g$, then $\underline f\leq\underline g$ and $ \overline f \leq \overline g$.
        \item(Almost additivity)\label{i:sub}
        \[
        \underline f+\underline g\leq\underline{f+g} \leq \underline f+\overline g\leq\overline {f+g} \leq \overline{f} +\overline{g}.
        \]

        \item(Restriction)\label{i:cofinal}
        Let $D$ be a cofinal subset of $C$.
        Then
        \[
        \underline f\leq\underline{f|_{D}}\leq \overline{f|_{D}}\leq \overline f.
        \]
        \item Let $T$ be a tail of $C$.
        Then
        \[
        \underline f=\underline{f|_{T}}\leq \overline{f|_{T}}= \overline f.
        \]
    \end{enumerate}
\end{lemma}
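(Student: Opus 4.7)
The plan is to derive everything from the two basic observations stated before the lemma, together with the fact (already noted) that $f^{\sup}$ is decreasing and $f^{\inf}$ is increasing on $C$, and the identity $\underline{-h}=-\overline h$. Part (1) is immediate: the pointwise inequality $f\leq g$ propagates through both layers of $\sup$ and $\inf$. A preliminary sanity check used throughout is $\underline h\leq\overline h$ for any bounded $h$: given $x,y\in C$, directedness produces $z\geq x,y$ with $h^{\inf}(x)\leq h(z)\leq h^{\sup}(y)$, and taking $\sup_x$ then $\inf_y$ yields the inequality.

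For part (2), I would establish the outer inequalities $\underline{f+g}\geq\underline f+\underline g$ and $\overline{f+g}\leq\overline f+\overline g$ first. The pointwise bound $(f+g)^{\inf}(x)\geq f^{\inf}(x)+g^{\inf}(x)$ is standard; because $f^{\inf}$ and $g^{\inf}$ are both increasing on the directed set $C$, a short directedness argument (pick $\epsilon$-approximate witnesses $x_1,x_2$ for the two suprema, combine them into $z\geq x_1,x_2$, use monotonicity) gives $\sup_x\bigl(f^{\inf}(x)+g^{\inf}(x)\bigr)\geq\underline f+\underline g$, hence $\underline{f+g}\geq\underline f+\underline g$. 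Dually, $\overline{f+g}\leq\overline f+\overline g$. The crossover inequalities $\underline{f+g}\leq\underline f+\overline g$ and $\underline f+\overline g\leq\overline{f+g}$ then follow formally by substitution: applying the first outer inequality to the pair $(f+g,-g)$ in place of $(f,g)$ gives $\underline f=\underline{(f+g)+(-g)}\geq\underline{f+g}+\underline{-g}=\underline{f+g}-\overline g$, which rearranges to the first crossover; applying the second outer inequality to the pair $(f+g,-f)$ gives $\overline g=\overline{(f+g)+(-f)}\leq\overline{f+g}+\overline{-f}=\overline{f+g}-\underline f$, rearranging to the second. Chaining these four inequalities produces the claimed chain.

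For parts (3) and (4) the key uniform observation is: for any tail $T=C_{\geq a}$ and $x\in T$, one has $T\cap C_{\geq x}=C_{\geq x}$, so $(f|_T)^{\sup}(x)=f^{\sup}(x)$ and $(f|_T)^{\inf}(x)=f^{\inf}(x)$. Since tails are cofinal in $C$ and $f^{\sup}$ (resp.\ $f^{\inf}$) is decreasing (resp.\ increasing), the second basic observation gives $\overline{f|_T}=\inf_{x\in T}f^{\sup}(x)=\overline f$ and dually $\underline{f|_T}=\underline f$, which is (4). For (3), the outer inequalities use only cofinality of $D$: for $x\in D$, $\{y\in D:y\geq x\}\subseteq C_{\geq x}$, so $(f|_D)^{\sup}(x)\leq f^{\sup}(x)$; taking $\inf$ over $x\in D$ and invoking that $D$ is cofinal in $C$ (plus $f^{\sup}$ decreasing) yields $\overline{f|_D}\leq\overline f$, while $\underline f\leq\underline{f|_D}$ is dual. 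The middle term $\underline{f|_D}\leq\overline{f|_D}$ is the sanity check applied to $f|_D$ on the directed poset $D$.

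I do not expect a substantial obstacle. The only nonroutine step is the claim that $\sup_x\bigl(A(x)+B(x)\bigr)\geq\sup A+\sup B$ when $A,B$ are both increasing on a directed poset, an inequality that is false without the monotonicity hypothesis and is the sole place where directedness is used seriously; all other assertions are bookkeeping around the two basic observations and the constancy of $\overline f$ and $\underline f$ established in the preceding lemma.
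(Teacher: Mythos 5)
Your proof is correct and takes essentially the same approach as the paper: prove one pair of the inequalities in (2) via sub/superadditivity of $\sup/\inf$ combined with monotonicity, derive the remaining ones by the formal substitution $(f+g,-g)$ (resp.\ $(f+g,-f)$) together with the identity $\underline{-h}=-\overline{h}$, and handle (3)--(4) by comparing tails/cofinal subsets. The only cosmetic differences are that you prove both outer inequalities in (2) directly and derive both crossovers, whereas the paper proves the last inequality directly, gets the third by the same substitution trick, and recovers the first two by flipping signs; and that you establish $\underline h\leq\overline h$ by a separate directedness argument, while the paper reads it off from the third inequality with $g=0$.
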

\begin{proof}
    (1) is immediate.
    The last inequality in (2) follows from general subadditivity of $\sup$ and additivity of $\inf$ for decreasing functions.
    Using $\underline f=-\overline{(-f)}$ the following trick then proves the third inequality in (2):
    \[
    \underline f + \overline g=\overline{f+g-f}-\overline{(-f)}\leq\overline{f+g}+\overline{(-f)}-\overline{(-f)}=\overline{f+g}.
    \]
    Note that setting $g=0$ in this inequality gives $\underline f\leq\overline f$, which justifies the upper and lower notation and proves the middle inequalities in (3) and (4).

    Since for a subset $D$ of $C$, $D_{\geq x} = C_{\geq x} \cap D$, the basic observations give the last inequality in (3).
    If $T$ is itself a tail of $C$, then its tails are tails of $C$, hence
    \[
    ( f|_{T})^{\sup}= f^{\sup}|_{T},
    \]
    and we obtain the last equality in (4).
    The remaining (in)equalities are obtained from these by flipping signs.
\end{proof}

\subsection*{Normalized Betti numbers as a function on a set of covers}

For a complex $X$, let $C_{X}$ denote the poset of finite covers of $X$.
The normalized $k$-th $\F$-Betti numbers of such covers define a function on this poset, which we will denote by $\nb_k(X; \F)$, i.e. if $X' \to X$ is a finite cover, then
\[
\nb_k(X; \F)(X'):=\frac{b_k(X';\F )}{\abs{X'\to X}}.
\]
For much of this section, $k$ and $\F $ will be unimportant, and then we will omit one (or both) of them from the notation.
The function $\nb_{k}( X; \F)$ is bounded by the number of $k$-cells in $X$.

The \emph{upper} and \emph{lower $\F $-homology growth}, $\ub_{k}(X; \F)$ and $\lb_{k}(X; \F)$ are the upper and lower limits over $C_{X}$ of this function, more explicitly:
\begin{align*}
    \ub_{k}(X;\F )&:=\inf_{X'\to X}\left(\sup_{X''\to X'} \frac{b_k(X'';\F )}{\abs{X''\to X}}\right),\\
    \lb_{k}(X;\F )&:=\sup_{X'\to X}\left(\inf_{X''\to X'} \frac{b_k(X'';\F )}{\abs{X''\to X}}\right).
\end{align*}
More generally, given a map $h: X \to Y$, and a finite cover $\pi: Y' \rightarrow Y$, the pullback
\[
h^*(Y') = \{(x,y'): h(x) = \pi(y')\} \subset X \times Y'
\]
is a finite cover of $X$ of the same degree.
We define the \emph{restricted} homology growths of $X$, $\ub_{k}^{Y}(X; \F)$ and $\lb_{k}^{Y}(X; \F)$, by taking the above limits over the subset of covers pulled back from $Y$.
Note that for $Y=X$, $\ub^{X}(X)=\ub(X)$ and $\lb^{X}(X)=\lb(X)$.

\subsection*{Rational homology growth and $\gd$-good covers}

We next describe a $\gd$-pinching theorem for normalized rational Betti numbers.
It is a consequence of the proof of L\"uck's approximation theorem in \cite{l94} and can be thought of as a quantitative variant of that theorem.
Other quantitative versions of L\"uck approximation also appear in \cite{cw03} and \cite{lu22}.

We need the following basic linear algebraic lemma.
\begin{lemma}[\cite{l94}*{Theorem 3.4(1)}]\label{smalleigs}
    Suppose $\gD$ is an $N\times N$ matrix with integer entries.
    Let $N_{\ge}$ be the number of eigenvalues $\lambda$ with $\abs{\lambda}\in(0,\ge]$, counted with multiplicity.
    If $\ge<1$ then
    \[
    \frac{N_{\ge}}{N} \leq \frac{\log\abs{\gD}}{\log(\ge^{-1})}
    \]
\end{lemma}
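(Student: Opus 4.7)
The plan is to exploit integrality of the characteristic polynomial of $\gD$. Since $\gD$ has integer entries, the polynomial $p(t) = \det(tI - \gD)$ lies in $\Z[t]$. Writing $p(t) = t^{N-r} q(t)$ where $r$ is the number of nonzero eigenvalues (with multiplicity) and $q(t) \in \Z[t]$ has nonzero constant term, this constant term equals $\pm \prod_{\lambda_i \neq 0} \lambda_i$. Being a nonzero integer, its absolute value is at least $1$, so
\[
\prod_{\lambda_i \neq 0} \abs{\lambda_i} \geq 1.
\]

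Next I would split the nonzero eigenvalues into small ones, with $\abs{\lambda} \in (0,\ge]$ (there are $N_\ge$ of these by definition), and large ones, with $\abs{\lambda} > \ge$ (there are $r - N_\ge$ of these). Each small eigenvalue contributes at most $\ge$ to the product, and each large eigenvalue contributes at most $\abs{\gD}$ in absolute value, since any reasonable matrix norm dominates the spectral radius. The claim is trivial when $\abs{\gD} < 1$ (which forces $\gD = 0$ and hence $N_\ge = 0$), so we may assume $\abs{\gD} \geq 1$ and weaken the bound on the large part to $\abs{\gD}^N$. Combining gives
\[
1 \leq \ge^{N_\ge}\, \abs{\gD}^N.
\]

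Taking logarithms, and using the hypothesis $\ge < 1$ so that $\log(\ge^{-1}) > 0$, rearranges to the desired inequality $N_\ge/N \leq \log\abs{\gD}/\log(\ge^{-1})$. There is no serious obstacle in this argument; the only thing to pin down is the meaning of $\abs{\gD}$, which in L\"uck's formulation is a matrix norm bounding the spectral radius (e.g.\ the operator norm).
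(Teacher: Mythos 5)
Your proposal takes essentially the same approach as the paper: both factor the zero eigenvalues out of the integer characteristic polynomial, use integrality to conclude the constant term of the remaining factor has absolute value at least $1$, then bound the nonzero eigenvalues by splitting into those of modulus at most $\ge$ and those of modulus at most $\abs{\gD}$, and rearrange after taking logs. You are slightly more careful than the paper in spelling out the degenerate case $\abs{\gD}<1$ and in using $\abs{\lambda_i}$ rather than assuming the spectrum is real and positive, but the argument is the same.
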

\begin{proof}
    Look at the characteristic polynomial $\det(t-\gD)=t^b q(t)$ where $q(t)=\prod_{0<\mu_i\leq\ge} (t-\mu_i)\prod_{\ge<\mu_i\leq \abs{\gD}}(t-\mu_i)$.
    Since $\gD$ has integer entries, the number $q(0)$ is a non-zero \emph{integer}, hence
    \[
    1\leq \abs{q(0)} \leq \ge^{N_{\ge}}\abs{\gD}^N.
    \]
    Rearranging to $(\ge^{-1})^{N_{\ge}}\leq\abs{\gD\abs{^N$ and taking logs gives $N_{\ge}\log(\ge^{-1})\leq N\log}\gD}$.
\end{proof}
\begin{theorem}[$\gd$-pinching $\qq$-homology growth]\label{quantlueck}
    Let $X$ be a finite complex.
    Given $\gd>0$ there is a finite cover $X_{\gd}$ such that the function $\beta_k(X;\qq)$ is $\gd$-pinched above $X_{\gd}$.
    More explicitly, if $X'$ and $X''$ are finite covers of $X_{\gd}$ then
    \[
    \abs*{\frac {b_k(X';\Q)}{\abs{X'\to X} }- \frac{b_k(X'';\Q)}{\abs{X''\to X} } } \leq\gd.
    \]
\end{theorem}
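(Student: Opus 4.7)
The plan is to follow the spectral approach of Lück's original proof. For any finite cover $X'$ of $X$, let $F_{X'}(\gl)$ be the normalized eigenvalue counting function of the combinatorial $k$-Laplacian $\gD^{X'}$ acting on $C_k(X';\Q)$, so that $F_{X'}(\gl)\cdot\abs{X'\to X}$ counts eigenvalues of $\gD^{X'}$ with $\abs{\mu}\leq\gl$. Then $F_{X'}(0)=b_k(X';\Q)/\abs{X'\to X}$ is exactly the quantity we must pinch. The cellular matrix of $\gD^{X'}$ has integer entries and operator norm bounded by some $K=K(X)$ independent of $X'$, so Lemma~\ref{smalleigs} applied to $\gD^{X'}$ yields the uniform jump bound
\[
F_{X'}(\ge)-F_{X'}(0)\;\leq\;\frac{n_k\log K}{\log(\ge^{-1})},
\]
where $n_k$ is the number of $k$-cells of $X$. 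Given $\gd>0$, pick $\ge>0$ so this bound is at most $\gd/3$.

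The main work is to show that $F_{X'}(\ge)$ is almost the same across sufficiently deep covers. For this I would use the trace identity central to \cite{l94}: the diagonal entry of $p(\gD^Y)$ at a $k$-cell $\gs$ of $Y$ is a sum over closed combinatorial walks of length $\leq \deg p$ based at $\gs$, and therefore depends only on the combinatorial $(\deg p)$-ball around $\gs$ in $Y$. Once the combinatorial injectivity radius of $Y$ exceeds $\deg p$, every such ball is isomorphic to its lift in the universal cover $\tilde X$, and grouping cells of $Y$ into fibers over cells of $X$ gives
\[
\frac{\tr p(\gD^Y)}{\abs{Y\to X}}\;=\;\sum_{\gs\in X}\bigl\langle e_{\tilde\gs},\,p(\gD^{\tilde X})\,e_{\tilde\gs}\bigr\rangle\;=:\;\tau(p),
\]
a quantity depending only on $p$ and $X$. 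By Weierstrass, pick a polynomial $p$ of some degree $N$ which uniformly approximates on $[0,K]$ to within $\gd/(12 n_k)$ some continuous function $g$ with $\chi_{[0,\ge/2]}\leq g\leq\chi_{[0,\ge]}$. Then for every cover $X'$ of combinatorial injectivity radius at least $N$, the trace bracket gives $F_{X'}(\ge/2)\leq\tau(p)+\gd/12$ and $F_{X'}(\ge)\geq\tau(p)-\gd/12$.

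Now take $X_\gd$ to be any finite cover of $X$ of combinatorial injectivity radius at least $N$; such a cover exists whenever finite covers of arbitrarily large injectivity radius exist (e.g.\ when $\pi_1(X)$ is residually finite), and in the remaining case the injectivity radius of all finite covers is bounded and the theorem can be arranged by a direct argument on the finitely many local isomorphism classes. Any further cover $X'\supset X_\gd$ inherits injectivity radius $\geq N$, and combining the preceding estimates with the jump bound yields
\[
\tau(p)-\tfrac{\gd}{12}-\tfrac{\gd}{3}\;\leq\; F_{X'}(0)\;\leq\;\tau(p)+\tfrac{\gd}{12},
\]
so $\abs{F_{X'}(0)-\tau(p)}\leq\gd/2$ for every such $X'$. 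Hence for any two covers $X',X''$ of $X_\gd$ we obtain $\abs{F_{X'}(0)-F_{X''}(0)}\leq\gd$, which is the desired pinching.

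The principal obstacle is the trace identity in the second paragraph: isolating exactly which local combinatorial data determine $\tr p(\gD)$ and verifying that only the $N$-neighborhood of each cell is relevant is the technical heart of \cite{l94}. Everything else---the jump estimate from Lemma~\ref{smalleigs}, the Weierstrass approximation of $g$, and the final two-sided combination---is then essentially bookkeeping.
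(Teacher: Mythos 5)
Your proposal follows essentially the same spectral approach as the paper: bound the normalized number of small nonzero eigenvalues via Lemma~\ref{smalleigs} (using integrality of the combinatorial Laplacian), then show the normalized trace of a fixed polynomial in the Laplacian stabilizes across sufficiently deep covers by locality of $p(\gD)$. The minor cosmetic difference is that you invoke Weierstrass to choose $p$, whereas the paper uses the explicit family $f(x)=(1-x/D)^r$; this changes nothing of substance.

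The one genuine gap is in your trace localization. You claim that once the injectivity radius of $Y\to X$ exceeds $\deg p$, the normalized trace equals $\sum_{\gs}\langle e_{\tilde\gs}, p(\gD^{\tilde X}) e_{\tilde\gs}\rangle$, i.e.\ a quantity computed in the \emph{universal cover} $\tilde X$. This is only correct when $\pi_1(X)$ is residually finite, which is exactly when finite covers of arbitrarily large injectivity radius exist. The theorem, however, is stated for an arbitrary finite complex $X$. In the non-residually-finite case the injectivity radius of finite covers (measured against $\tilde X$) is uniformly bounded, and the local picture of a deep finite cover around a cell does \emph{not} match $\tilde X$. Your escape clause---``the theorem can be arranged by a direct argument on the finitely many local isomorphism classes''---is not a proof; spelling it out correctly forces you to identify the complex whose local geometry is the eventual local geometry of deep finite covers, and that complex is precisely the \emph{universal residually finite cover} $\hat X$ (the cover corresponding to the intersection of all finite-index subgroups of $\pi_1(X)$). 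The paper's proof bypasses the dichotomy entirely by working with $\hat X$ from the start: choose $X_\gd$ so that $\hat X\to X_\gd$ is injective on $R$-balls, where $R$ is the propagation radius of $f(\gD)$, and observe that this property passes to all further finite covers $X'\to X_\gd$, giving $\tr f(\gD')=\tr(f(\gD_\gd))\,\abs{X'\to X_\gd}$ unconditionally. Replacing $\tilde X$ by $\hat X$ in your trace identity and dropping the case split would make the argument complete.
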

\begin{proof}
    Let $N$ be the number of $k$-cells of $X$.
    There is a finite constant $D$ such that the norm of the combinatorial Laplacian $\gD'$ acting on $C_k(X';\qq)$ of finite covers of $X$ is uniformly bounded by $D$ (see \cite{l94}*{Lemma 2.5}) independent of the cover.
    Choose $0<\ge<1$ satisfying
    \[
    \ge N < \gd/2, \qquad \ge + \frac{\log D}{\log (\ge^{-1})} < \gd/N.
    \]
    Next choose $r$ so that polynomial $f(x) =(1-x/D)^{r}$ satisfies
    \[
    f(\ge)< \ge.
    \]
    Then, since $f$ is monotone decreasing on $[0,D]$,
    \[
    \chi_{0}\leq f\leq \chi_{[0,\ge]}+\ge \text{ on $[0,D]$}.
    \]
    So, for any finite cover $X'\to X$ we have
    \[
    b_{k}(X';\Q)=\tr \chi_{0}(\gD') \leq \tr f(\gD') \leq \tr (\chi_{[0,\ge]}+\ge)(\gD')= b_k(X';\Q)+N'_{\ge}+ \ge N',
    \]
    where $N'$ is the number of $k$-cells of $X'$, and $N'_{\ge}$ is the number of eigenvalues in the interval $(0,\ge]$ of the Laplacian $\gD'$ acting on $C_k(X';\qq)$.
    Since this combinatorial Laplacian has integer entries, Lemma \ref{smalleigs} implies
    \[
    N'_{\ge} \leq \frac{\log D}{\log (\ge^{-1})}N'.
    \]
    Hence,
    \[
    b_{k}(X';\Q)\leq \tr(f(\gD'))\leq b_k(X';\Q)+ \gd \abs{X'\to X}.
    \]
    Since $f$ is a polynomial, there is a radius $R$ such that the support of $f(\gD') e$ is in the $R$-neighborhood of $e$ for each cell $e$ in $X'$.

    Let $\hat X\to X$ denote the universal residually finite cover and choose its finite quotient $X_{\gd}$, so that $\hat X\to X_{\gd}$ is injective on $R$-balls.
    Then any finite cover $X'\to X_{\gd}$ is also injective on $R$-balls, hence
    \[
    \tr f(\gD') = \tr(f(\gD_{\gd}))\abs{X' \to X_{\gd}}.
    \]
    Combining this with the above inequality shows that for any such cover $X'$ the normalized Betti numbers lie in the $\gd$-interval $\left[ \frac {\tr(f(\gD_{\gd}))} {\abs{X_{\gd}\to X} } -\gd, \frac{ \tr(f(\gD_{\gd}))} {\abs{X_{\gd}\to X}} \right]$, which proves the claim.
\end{proof}
\begin{corollary}
    For any finite complex $X$ we have $\lb_*(X;\Q) = \ub_*(X;\Q)$.
\end{corollary}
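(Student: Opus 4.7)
The plan is to deduce the corollary directly from Theorem \ref{quantlueck} together with the tail property from Lemma \ref{l:posetineq}(4). The pinching theorem already says that for every $\gd>0$ there is a finite cover $X_{\gd}$ above which the normalized $\Q$-Betti number function $\nb_{k}(X;\Q)$ varies by at most $\gd$. So morally $\ub_{k}-\lb_{k}\leq\gd$ for every $\gd$, and we just need to marry this to the formalism of upper/lower limits over the cover poset.

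More concretely, I would fix $k$ and $\gd>0$ and let $T := (C_{X})_{\geq X_{\gd}}$, the tail of covers of $X$ that factor through $X_{\gd}$. Theorem \ref{quantlueck} says there is a constant $c=c(X_{\gd})$ with
\[
c-\tfrac{\gd}{2}\ \leq\ \nb_{k}(X;\Q)(X')\ \leq\ c+\tfrac{\gd}{2}
\]
for every $X'\in T$ (take $c$ to be, say, the midpoint of the $\gd$-interval produced in the proof of \ref{quantlueck}). Therefore the restriction $\nb_{k}(X;\Q)|_{T}$ is bounded between two constants differing by $\gd$, and hence its upper and lower limits over $T$ differ by at most $\gd$.

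Now apply Lemma \ref{l:posetineq}(\ref{i:cofinal}) in its tail form, i.e.\ part (4): since $T$ is a tail of the directed poset $C_{X}$, the upper and lower limits of $\nb_{k}(X;\Q)$ over $C_{X}$ agree with those over $T$. Consequently
\[
\ub_{k}(X;\Q)-\lb_{k}(X;\Q)\ \leq\ \gd.
\]
As $\gd>0$ was arbitrary and we always have $\lb_{k}\leq\ub_{k}$ by Lemma \ref{l:posetineq}(\ref{i:sub}) (setting $g=0$), this forces $\lb_{k}(X;\Q)=\ub_{k}(X;\Q)$ for every $k$.

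The only potential obstacle is a bookkeeping one: making sure that $X_{\gd}$ actually sits inside the poset $C_{X}$ used to define $\ub$ and $\lb$ (rather than, say, only in a chain of characteristic covers), so that one is entitled to use the tail $(C_{X})_{\geq X_{\gd}}$. This is fine because $X_{\gd}$ is constructed in the proof of Theorem \ref{quantlueck} as an ordinary finite cover of $X$ (a finite quotient of the universal residually finite cover), and every finite cover of $X$ admits a common finite cover with $X_{\gd}$ by directedness of $C_{X}$, so the tail is cofinal and nonempty. No further work is required.
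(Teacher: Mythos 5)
Your proposal is correct and is exactly the argument the paper leaves implicit: the corollary is stated immediately after Theorem \ref{quantlueck} with no further justification precisely because the $\gd$-pinching on the tail $(C_X)_{\geq X_\gd}$, together with Lemma \ref{l:posetineq}(4), is all that is needed. Your bookkeeping remark about $X_\gd$ living in $C_X$ is also the right sanity check, and it holds for the reason you give.
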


If we fix lifts $\hat e$ in $\hat X$ of $k$-cells $e$ in $X$, then the injectivity on $R$-balls implies that $\tr(f(\gD_{\gd}))/\abs{X_{\gd} \to X}= \sum_{e \in X^{(k)}} \langle f(\hat \gD)\hat e,\hat e\rangle$.
As we vary the polynomial $f$, letting $r \to \infty$, the quantity on the right of this equation converges to the von Neumann dimension of the space of $L^2$-harmonic $k$-cycles on $\hat X$ (see \cite{l94}*{Lemma 2.7}).
Therefore the $k$-th rational homology growth can be identified with the von Neumann dimension of this space.
In particular, if $\pi_1(X)$ is residually finite, then $\hat X$ is the universal cover, and we have
\begin{corollary}
    For any finite complex $X$ with residually finite fundamental group we have
    \[
    \lb_*(X;\Q)= b_*^{(2)}(X)= \ub_*(X;\Q).
    \]
\end{corollary}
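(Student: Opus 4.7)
The plan is to combine the preceding corollary with the identification, sketched in the paragraph just above, of the common value of $\lb_*(X;\Q)$ and $\ub_*(X;\Q)$ as a von Neumann dimension, and then observe that residual finiteness of $\pi_1(X)$ collapses the universal residually finite cover $\hat X$ to the genuine universal cover $\tilde X$.

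In more detail, I would first invoke the previous corollary to reduce the claim to proving that this common value equals $b_k^{(2)}(X)$. The paragraph preceding the statement already describes how to produce, for each $\gd>0$, a cover $X_\gd$ and a polynomial $f$ of sufficiently high degree $r$ so that
\[
\frac{\tr(f(\gD_{\gd}))}{\abs{X_\gd\to X}}\;=\;\sum_{e\in X^{(k)}}\langle f(\hat\gD)\hat e,\hat e\rangle,
\]
and so that this quantity sandwiches $\nb_k(X;\Q)(X')$ within $\gd$ for every further finite cover $X'\to X_\gd$. Letting $r\to\infty$, the right hand side converges to $\sum_{e\in X^{(k)}}\langle p_{\ker}(\hat\gD)\hat e,\hat e\rangle$, which is precisely the von Neumann dimension (with respect to the group of deck transformations of $\hat X\to X$) of the space of $L^2$-harmonic $k$-cycles on $\hat X$, by Lück's Lemma 2.7 in \cite{l94}. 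Taking $\gd\to 0$ therefore identifies $\ub_k(X;\Q)=\lb_k(X;\Q)$ with this von Neumann dimension.

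Finally, when $\pi_1(X)$ is residually finite, the intersection of its finite index subgroups is trivial, so the universal residually finite cover $\hat X$ coincides with the genuine universal cover $\tilde X$, and the deck group is all of $\pi_1(X)$. The von Neumann dimension of the space of $L^2$-harmonic $k$-cycles on $\tilde X$ with respect to $\pi_1(X)$ is, by definition, the $L^2$-Betti number $b_k^{(2)}(X)$, which gives the equality in the corollary.

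I do not expect any real obstacle here: the heavy lifting is already done in Theorem \ref{quantlueck} and in the remarks immediately following its corollary. The only points requiring care are the standard facts that the approximating traces $\sum_e\langle f(\hat\gD)\hat e,\hat e\rangle$ actually converge to the projection trace as $r\to\infty$ (a citation to \cite{l94}*{Lemma 2.7} suffices), and the trivial but essential observation that residual finiteness is exactly what forces $\hat X=\tilde X$.
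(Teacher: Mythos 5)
Your proposal is correct and follows essentially the same approach as the paper: the paper does not give a separate proof of this corollary but treats it as an immediate consequence of the preceding corollary together with the paragraph of discussion just before it (the normalized trace identity from the proof of Theorem \ref{quantlueck}, the $r\to\infty$ convergence to the von Neumann dimension via L\"uck's Lemma 2.7, and the observation that residual finiteness makes $\hat X$ the universal cover). Your write-up is a faithful and accurate reconstruction of exactly that argument.
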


For other coefficients, we only have the inequality $\lb_*(X;\F)\leq\ub_*(X;\F)$.
It follows directly from the definitions that the interval $[\lb(X;\F),\ub(X;\F)]$ has the following interpretation: It is the smallest closed interval $[a,b]$ such that for every $\gd>0$ there is a finite cover $X'\to X$ such that for any further finite cover $X''\to X'$ the normalized Betti numbers $b(X'';\F)/\abs{X''\to X}$ lie in the interval $[a-\gd, b+\gd]$.

\subsection*{Connectedness}

If $X$ is disconnected, then its homology growth is the sum of the homology growth of its components, as the following lemma shows.
\begin{lemma}
    If $X=Y\amalg Z$, then
    \[
    \lb(X)=\lb(Y)+\lb(Z),
    \]
    \[
    \ub(X)=\ub(Y)+\ub(Z).
    \]
\end{lemma}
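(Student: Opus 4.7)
The plan is to identify every finite cover $X' \to X$ of $X = Y \amalg Z$ (so that $\abs{X'\to X} = d$ is a well-defined integer) with the pair $(Y', Z')$ obtained by restricting $X' = Y' \amalg Z'$ to the two components; both $Y' \to Y$ and $Z' \to Z$ then have degree $d$. This gives a poset isomorphism between $C_X$ and the subposet of $C_Y \times C_Z$ consisting of equal-degree pairs. Since the $k$-th $\F$-Betti number is additive under disjoint union, the normalized Betti numbers satisfy
\[
\nb_k(X;\F)(X') = \frac{b_k(Y';\F)+b_k(Z';\F)}{d} = \nb_k(Y;\F)(Y') + \nb_k(Z;\F)(Z'),
\]
so the lemma reduces to computing the upper and lower limits of this additive function on the equal-degree subposet.

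The next step is to show that the equal-degree constraint imposes no essential restriction. Given any pair $(Y',Z') \in C_Y \times C_Z$ of (possibly unequal) degrees $d_Y, d_Z$, set $d = \operatorname{lcm}(d_Y,d_Z)$ and replace $Y'$ by a disjoint union of $d/d_Y$ copies of itself, and similarly for $Z'$. The resulting pair has equal degree $d$ and the same normalized Betti numbers, since taking disjoint copies multiplies both $b_k$ and the degree by the same factor. Hence the equal-degree pairs are cofinal in $C_Y \times C_Z$, and---more importantly---the inf and sup of $\nb_k(Y)(Y')+\nb_k(Z)(Z')$ taken over equal-degree pairs (with prescribed lower bounds in each coordinate) agree with those taken over all such pairs without the equal-degree constraint.

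Once we unconstrain the coordinates, the sum decouples: for any $Y_0 \in C_Y$ and $Z_0 \in C_Z$,
\[
\sup_{(Y',Z')\geq(Y_0,Z_0)}\bigl[\nb_k(Y)(Y')+\nb_k(Z)(Z')\bigr] = \sup_{Y'\geq Y_0}\nb_k(Y)(Y') + \sup_{Z'\geq Z_0}\nb_k(Z)(Z'),
\]
and the analogous identity holds with $\inf$ in place of $\sup$. Applying this inner decoupling and then taking the outer inf (for $\ub$) or sup (for $\lb$) over $X_0 = Y_0 \amalg Z_0$, which decouples by exactly the same principle, yields $\ub(X)=\ub(Y)+\ub(Z)$ and $\lb(X)=\lb(Y)+\lb(Z)$. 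The only real obstacle is bookkeeping for degrees across the two components, and it is handled entirely by the balancing procedure of the second paragraph.
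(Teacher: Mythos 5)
Your proof is correct and follows essentially the same approach as the paper: identifying finite covers of $X$ with equal-degree pairs in $C_Y \times C_Z$, noting additivity of normalized Betti numbers, and equalizing degrees by passing to disjoint copies, which does not change normalized Betti numbers. The paper uses the product of the two degrees rather than the lcm and leaves the decoupling of the nested sup/inf implicit, but the key balancing idea is identical.
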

\begin{proof}
    Clearly, the normalized Betti numbers of a finite cover of $X$ is the sum of the normalized Betti numbers of its restrictions to $Y$ and $Z$.
    The issue is that in general finite covers $Y' \to Y$ and $Z' \to Z$ do not combine to a cover of $X$ unless they have same degree, as our definition requires the degree to be constant.
    However, we can equalize degrees by replacing $Y'$ with $\abs{Z' \to Z}$ disjoint copies of $Y'$ and replacing $Z'$ with $\abs{Y' \to Y}$ disjoint copies of $Z'$.
    This replacement does not change the normalized Betti numbers, and the Lemma follows.
\end{proof}

It is sometimes useful to keep in mind that we can compute homology growth of a connected finite complex either using all covers, or just the connected ones.
We record this observation here as a lemma.
\begin{lemma}\label{l:con}
    For a connected finite complex $X$, the upper and lower homology growth can be computed using connected covers.
\end{lemma}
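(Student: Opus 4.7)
The plan is to reduce the outer and inner operations in the definitions of $\lb_k$ and $\ub_k$ to connected covers separately. Let $D \subset C_X$ denote the subset of connected finite covers, and write $\lb_k^D, \ub_k^D$ for the corresponding quantities obtained by restricting both the outer and inner extrema to $D$. The key observation is that every finite cover $X' \to X$ of the connected complex $X$ decomposes as $X' = \bigsqcup_i X'_i$ into its connected components, each itself a connected finite cover of $X$, and
\[
\nb_k(X;\F)(X') \;=\; \frac{\sum_i b_k(X'_i;\F)}{\sum_i \abs{X'_i \to X}}
\]
is a weighted average of the $\nb_k(X;\F)(X'_i)$; in particular it lies between the minimum and maximum of these values.

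I would first fix a connected $X' \to X$ and handle the inner operations. Since $X'$ is connected, each component of any further cover $X'' \to X'$ still surjects onto $X'$, hence is a connected finite cover of $X'$ (and of $X$). The weighted-average bound then forces
\[
\inf_{X'' \geq X'} \nb_k(X'') \;=\; \inf_{X'' \geq X',\, X'' \in D} \nb_k(X''),
\]
and analogously with $\sup$ in place of $\inf$.

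The main obstacle is the outer operation, because $D$ is \emph{not} cofinal in $C_X$ in the usual sense: a disconnected finite cover cannot be dominated by a connected one via an honest covering map, so Lemma \ref{l:posetineq}(3) does not apply directly. I would get around this with a disjoint-copies trick. Given any $X' = \bigsqcup_i X'_i \in C_X$, pick a connected cover $Y_i \to X'_i$ and a positive integer $n_i$ for each $i$, and form the disjoint union $X''$ of $n_i$ copies of each $Y_i$. Then $X'' \geq X'$, and its normalized Betti number is a weighted average of the $\nb_k(Y_i)$'s; sending one $n_{i_0} \to \infty$ with the remaining $n_i$ fixed drives this average to $\nb_k(Y_{i_0})$. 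Hence
\[
\inf_{X'' \geq X'} \nb_k(X'') \;\leq\; \inf_{Y \geq X'_{i_0}} \nb_k(Y) \;\leq\; \sup_{Z \in D}\, \inf_{Y \geq Z} \nb_k(Y) \;=\; \lb_k^D,
\]
and taking $\sup$ over all $X' \in C_X$ gives $\lb_k(X;\F) \leq \lb_k^D(X;\F)$; the reverse inequality is immediate since $D \subset C_X$. For $\ub_k$ the argument is dual: the inner sup over $X'' \geq X'$ is at least $\sup_{Y \geq X'_{i_0}} \nb_k(Y)$, hence at least $\ub_k^D$, for every $X' \in C_X$, giving $\ub_k(X;\F) \geq \ub_k^D(X;\F)$ with the reverse inequality again trivial.
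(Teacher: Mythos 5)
Your treatment of the inner operations (convexity for a connected $X'$) is correct and is essentially the paper's observation. But the ``disjoint-copies trick'' for the outer operations does not work as stated, and the failure is tied to the paper's conventions. The poset $C_X$ consists of \emph{constant-degree} covers: the proof of the preceding lemma on disjoint unions says explicitly that ``our definition requires the degree to be constant,'' and the identification $C_{X'} = (C_X)_{\geq X'}$ used for multiplicativity presumes that $X''\geq X'$ means $X''$ is a constant-degree cover of $X'$. For $X''=\bigsqcup_i n_i Y_i$ to cover $X'=\bigsqcup_i X'_i$ with some constant degree $d$, you need $n_i\abs{Y_i\to X'_i}=d$ for every $i$. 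Writing $d_i=\abs{Y_i\to X'_i}$ and $e_i=\abs{X'_i\to X}$, the weight of $Y_i$ in the convex combination $\nb(X'')=\sum_i w_i\,\nb(Y_i)$ comes out to
\[
w_i=\frac{n_id_ie_i}{\sum_j n_jd_je_j}=\frac{d\,e_i}{d\sum_j e_j}=\frac{e_i}{\abs{X'\to X}},
\]
which is determined by $X'$ alone, independent of the $Y_i$ and the $n_i$. So ``sending one $n_{i_0}\to\infty$ with the remaining $n_i$ fixed'' is not a legal move --- it destroys the constant-degree property and hence the relation $X''\geq X'$ --- and along sequences that \emph{do} respect the constraint the weights never concentrate on a single component, so $\nb(X'')$ does not converge to $\nb(Y_{i_0})$.

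The argument can be salvaged by varying the $Y_i$ rather than the $n_i$: for $\lb$, choose each $Y_i$ to approximate $\inf_{Y\geq X'_i,\,Y\in D}\nb(Y)$, which is $\leq\lb^D$ because $X'_i\in D$; then $\nb(X'')$ is a convex combination of numbers $\leq\lb^D+\varepsilon$, giving $\inf_{X''\geq X'}\nb(X'')\leq\lb^D$, and dually for $\ub$. The paper takes a cleaner route that sidesteps the weight issue entirely: the covers of $X$ all of whose components are isomorphic to one connected $Y$ form a cofinal subset of $C_X$; on such a cover $\nb^{\sup}$ equals its value at $Y$, and since $\nb^{\sup}$ is decreasing, its infimum may be taken over this cofinal family, which immediately reduces the computation to connected covers.
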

\begin{proof}
    We give the proof for upper homology growth.
    Suppose $X$ is a connected complex, let $X'\to X$ be a finite cover and denote by $X_i'$ its components.
    Then the normalized Betti numbers of this cover are a convex combination
    \[
    \nb(X)(X')=\sum\frac{|X_i'\to X|}{|X'\to X|} \nb(X)(X_i')
    \]
    of the normalized Betti numbers of the components.
    Since the coefficients sum to one, we conclude that $\nb(X)(X')\leq \nb(X)(X'_i)$ for some $i$.
    So $\nb(X)^{\sup}$ can be computed over connected covers of $X$.
    Since $\nb(X)^{\sup}$ is decreasing, its $\inf$ can be computed over any cofinal subset.
    In particular, it can be computed over covers that have identical components.
    Clearly, the answer of this computation is the same as that for any of the components.
    Therefore $\ub(X)$ of a connected complex can be computed using only connected covers.
\end{proof}
\begin{remark}
    The regular covers of $X$ form a cofinal subset of $C_{X}$, so Lemma~\ref{l:posetineq}\eqref{i:cofinal} provides bounds for homology growth in terms of the limits over regular covers.
    We don't know whether regular covers give an exact computation.
\end{remark}

\subsection*{Finiteness}

Since connected covers correspond to subgroups of the fundamental group, we can relax finiteness assumptions on $X$.
Recall that a connected complex $X$ is of type $FP_{n}(\F )$ if the chain complex of the universal cover $C_{*}(\widetilde X; \F)$ is $\F\pi_{1}X$-chain homotopy equivalent to a complex $P_{*}$ of free $\F\pi_{1}X$-modules which have finite rank in degrees $\leq n$.
The following lemma shows that homology growth is well defined and finite in degrees $\leq n$ for such complexes.
\begin{lemma}\label{l:bound}
    Let $X$ be a connected complex so that the chain complex of the universal cover $C_{*}(\widetilde X; \F)$ is $\F\pi_{1}X$-chain homotopy equivalent to a complex $P_{*}$ of $\F\pi_{1}X$ modules, where $P_{k}$ is a free module of finite rank.
    Then the normalized Betti function $\nb_{k}(X; \F)$ is bounded:
    \[
    \nb_{k}(X; \F) \leq \rk_{\F\pi_{1}X} P_{k}.
    \]
\end{lemma}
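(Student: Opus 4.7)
The plan is to reduce to connected finite covers and then exhibit, for each such cover, an explicit finite-rank complex of $\F$-vector spaces computing its homology, whose rank in degree $k$ is exactly $d \cdot \rk_{\F\pi_1 X} P_k$, where $d$ is the degree of the cover.

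\textbf{Step 1 (reduction to connected covers).} Suppose $X' \to X$ is a finite cover of degree $d$ with connected components $X'_i \to X$ of degrees $d_i$, so $\sum_i d_i = d$. Since $b_k(X';\F)=\sum_i b_k(X'_i;\F)$, once we prove $b_k(X'_i;\F)\leq d_i\cdot \rk_{\F\pi_1 X}P_k$ for every connected cover, summing yields $b_k(X';\F)\leq d\cdot \rk_{\F\pi_1 X}P_k$, which after normalizing by $d$ gives the desired bound.

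\textbf{Step 2 (chain complex of a connected cover).} Let $X' \to X$ be a connected finite cover corresponding to a subgroup $H\leq G:=\pi_1 X$ of index $d$. Since $\widetilde{X'}=\widetilde X$, we have $C_*(X';\F)\cong \F\otimes_{\F H} C_*(\widetilde X;\F)$ as chain complexes of $\F$-vector spaces. The given $\F G$-chain homotopy equivalence $C_*(\widetilde X;\F)\simeq P_*$ is in particular an $\F H$-chain homotopy equivalence (restricting scalars from $\F G$ to $\F H$), so applying the exact functor $\F\otimes_{\F H}(-)$ produces a chain homotopy equivalence
\[
C_*(X';\F) \;\simeq\; \F\otimes_{\F H} P_*.
\]

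\textbf{Step 3 (rank bound).} A free $\F G$-module of rank $r$ restricts to a free $\F H$-module of rank $r\cdot[G:H]=rd$ (one may see this by choosing coset representatives for $H\backslash G$). Consequently $\F\otimes_{\F H} P_k$ is an $\F$-vector space of dimension $d\cdot\rk_{\F\pi_1 X}P_k$. Since $H_k$ of a chain complex has dimension at most the dimension of its degree-$k$ term,
\[
b_k(X';\F)\;=\;\dim_\F H_k(\F\otimes_{\F H}P_*)\;\leq\;\dim_\F(\F\otimes_{\F H}P_k)\;=\;d\cdot \rk_{\F\pi_1 X}P_k.
\]
Dividing by $d$ finishes the connected case, and Step 1 then concludes the proof.

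There is no real obstacle here; the only point requiring a little care is the compatibility of the chain homotopy equivalence $C_*(\widetilde X;\F)\simeq P_*$ with restriction of scalars from $\F G$ to $\F H$, which is automatic since every $\F G$-linear map is $\F H$-linear. All other steps are standard manipulations with induced modules and covering spaces.
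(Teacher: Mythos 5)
Your proof is correct and follows essentially the same route as the paper: reduce to connected covers, replace $C_*(\widetilde X;\F)$ by $P_*$ via the $\F G$-chain homotopy equivalence, identify $C_*(X';\F)$ with the induced complex, and bound the Betti number by the $\F$-dimension of the degree-$k$ chain group. One small slip worth flagging: the functor $\F\otimes_{\F H}(-)$ is not exact in general, but you don't need exactness — what you actually use is that any \emph{additive} functor preserves chain homotopy equivalences, so the conclusion stands.
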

\begin{proof}
    By the proof of the previous Lemma it is enough to check the inequality for connected covers.
    Denote $G:=\pi_{1}X$.
    Let $X' \to X$ be a finite connected cover, and let $G' < G$ be the corresponding subgroup.
    Then we have
    \[
    \nb_k(X;\F)(X') = \frac{ \dim_{\F} H_k(X; \F [G/G'] )}{[G:G']} = \frac{ \dim_{\F} H_k(P_{*}\otimes_{\F G} \F [G/G'] )}{[G:G']} \leq \rk_{\F G}(P_k).
    \]
\end{proof}

\subsection*{Multiplicativity}

If $X' \to X$ is a finite cover, then $C_{X'}$ is naturally identified with the tail $(C_{X})_{\geq X'}$ of $C_{X}$, and on this tail we have $\nb(X) \abs{X' \to X} = \nb(X')$.
Therefore, since by Lemma~\ref{l:posetineq}(4) the limits can be computed over tails, the homology growth is multiplicative in covers of $X$:
\begin{align*}
    \ub(X')&= \ub(X) \abs{ X' \to X},\\
    \lb(X')&= \lb(X) \abs{ X' \to X}.
\end{align*}

\subsection*{Homology growth as a fibering obstruction}

Multiplicativity implies a variant of L{\"u}ck's mapping torus theorem \cite{l94a} for homology growth.
\begin{theorem}[$\F $-homology mapping torus theorem for $\ub$]\label{mappingtorustheorem}
    Let $X$ be a complex of type $FP_n(\F )$, $f:X\to X$ a self-homotopy equivalence and $T_f$ its mapping torus.
    Then for $k\leq n$
    \[
    \ub_k(T_f;\F )=0.
    \]
\end{theorem}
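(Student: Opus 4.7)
The plan is, for each $m\geq 1$, to exhibit the cyclic cover $T_{f^m}\to T_f$ as a witness for $\ub_k(T_f;\F)\leq C/m$, where the constant $C$ depends only on $X$. Sending $m\to\infty$ then forces $\ub_k(T_f;\F)=0$. In other words, we never need to take an infimum over weird covers; the family of cyclic covers alone suffices, provided we can control the Betti numbers of \emph{arbitrary} further covers of $T_{f^m}$.

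The structural step is to identify any finite cover $X''\to T_f$ (up to homotopy) as a mapping torus itself. Its fundamental group $H\leq \pi_1(T_f)=\pi_1(X)\rtimes_{f_*}\Z$ has finite index, and setting $K:=H\cap\pi_1(X)$, the normality of $\pi_1(X)$ provides a short exact sequence $1\to K\to H\to m'\Z\to 1$, where $m'$ is the index of the image of $H$ in $\Z$. The index formula then reads
\[
\abs{X''\to T_f}=[\pi_1(X):K]\cdot m'.
\]
Picking a preimage $(g,m')\in H$ of a generator of $m'\Z$, conjugation defines an automorphism $\phi(k)=gf_*^{m'}(k)g^{-1}$ of $K$, and after a standard basepoint adjustment (composing the lift of $f^{m'}$ with the homotopy equivalence between the two conjugate covers $X_{g^{-1}Kg}\simeq X_K$), the cover $X''$ becomes homotopy equivalent to the mapping torus of a self-homotopy equivalence $\hat f\colon X_K\to X_K$ realizing $\phi$ on $\pi_1$.

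The analytic input is the Wang sequence of the resulting fibration $X_K\hookrightarrow X''\to S^1$, giving
\[
b_k(X'';\F)\leq b_k(X_K;\F)+b_{k-1}(X_K;\F),
\]
with finiteness of the right-hand side for $k\leq n$ following from Lemma~\ref{l:bound} applied to the finite cover $X_K\to X$:
\[
b_j(X_K;\F)\leq [\pi_1(X):K]\cdot \rk_{\F\pi_1 X}P_j \qquad (j\leq n).
\]
Combining the two inequalities with the index formula yields the universal estimate
\[
\frac{b_k(X'';\F)}{\abs{X''\to T_f}}\leq \frac{\rk P_k+\rk P_{k-1}}{m'}.
\]

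To conclude, observe that if $X''$ lies over $T_{f^m}$ then $\pi_1(X'')\subseteq\pi_1(T_{f^m})$, so $m'\Z\subseteq m\Z$ and hence $m'\geq m$. The normalized $k$-th Betti numbers of \emph{all} finite covers of $T_{f^m}$ are therefore uniformly bounded by $(\rk P_k+\rk P_{k-1})/m$, which gives $\ub_k(T_f;\F)\leq (\rk P_k+\rk P_{k-1})/m$ for every $m$. The only subtlety I expect to manage carefully is the mapping-torus identification of $X''$ up to homotopy, i.e.\ bookkeeping with the basepoint and the conjugate subgroup $g^{-1}Kg$; once that is in place, the rest is a direct count.
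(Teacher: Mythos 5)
Your proof is correct, and it takes a route whose machinery is genuinely different from the paper's, though both rest on the same observation: finite covers of a mapping torus are again mapping tori with controlled fiber, so $T_{f^m}$ is a good witness cover for every $m$. The paper does this algebraically: it forms the algebraic mapping telescope $\widetilde T_{\hat g}$ of a small chain-level representative $\hat g$ of $f^m$, observes that this is a free $\F\pi_1(T_{f^m})$-complex of rank $\rk P_k+\rk P_{k-1}$ in degree $k$, applies Lemma~\ref{l:bound} directly to $T_{f^m}$ to bound the normalized Betti function of \emph{all} its covers uniformly by this rank, and then uses multiplicativity of $\ub$ to conclude $m\,\ub_k(T_f;\F)\le\rk P_k+\rk P_{k-1}$. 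You instead unfold this topologically cover by cover: the short exact sequence $1\to K\to H\to m'\Z\to 1$ exhibits any finite cover $X''\to T_f$ as a mapping torus over $X_K$, the Wang sequence gives $b_k(X'')\le b_k(X_K)+b_{k-1}(X_K)$, and you apply Lemma~\ref{l:bound} to the finite cover $X_K\to X$ rather than to $T_{f^m}$. Your explicit index bookkeeping $\abs{X''\to T_f}=[\pi_1(X):K]\cdot m'$ together with $m'\ge m$ replaces the multiplicativity step. What this buys: the suppression of growth by the cyclic factor $m'$ becomes completely concrete, and the algebraic mapping telescope is avoided. What it costs: you need the slightly more delicate check (which you flag, and which is fine) that each cover is, up to homotopy, a mapping torus of a deck transformation on $X_K$ --- note that since $K$ is normal in $H$, the deck transformation is well defined regardless of whether $K$ is normal in $\pi_1(X)$, so the basepoint adjustment is a non-issue for the Betti number bound.
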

\begin{proof}
    We can assume that $f$ is a cellular map.
    Let $K=\pi_1(X)$ and let $G=\pi_1(T_f)=K\rtimes\Z$.
    The assumption means that the chain complex $C:=C_*(\widetilde X;\F )$ is $\F K$-chain homotopy equivalent to a complex of free $\F K$-modules $P$ which have finite rank in degrees $\leq n$.
    Let $h:C\to P$ and $l:P\to C$ be the chain homotopy equivalence and its inverse.
    Consider the map $g=f^{m}$ for some positive integer $m$.
    The map $g$ induces a chain map which we will also call $g:C\to C$.

    Let $\hat g=h f l: P\to P$.
    The algebraic mapping telescope $\widetilde T_{\hat g}$ of $\hat g$ is $\F G$-chain homotopy equivalent to $C_*(\widetilde T_g;\F )$.
    Thus, by Lemma~\ref{l:bound}
    \[
    \nb_k(T_{g};\F) \leq \rk_{\F G} (\widetilde T_{\hat g})_{k} = \rk_{\F K}(P_k\oplus P_{k-1}).
    \]

    The mapping torus $T_{g}$ is homotopy equivalent to a degree $m$ cover $T'\to T_f$, so by multiplicativity
    \[
    m\ub_{k}(T_{f}; \F)= \ub_{k}(T_{g}; \F) \leq \rk_{\F K}(P_k\oplus P_{k-1}).
    \]
    Since $m$ can be picked arbitrarily large, we are done.
\end{proof}

\section{\texorpdfstring{$\beta^{\inf}$}{β\^inf} via skew fields}\label{s:skewfields}
In this section we will give a skew field description of $\beta^{\inf}$ for complexes with residually torsion-free nilpotent fundamental group.
The proof goes by first approximating the residually torsion-free nilpotent group by torsion-free nilpotent groups, and then approximating those by finite groups.
Since groups are central to this argument, we will use equivariant notation to highlight the role of the groups involved, rather than relegating it to a subscript in a coefficient module.

\subsection*{Skew field Betti numbers}

Let $G$ be a group, let $Y$ be a free cocompact $G$-CW complex, and suppose we have a homomorphism $\phi: \zz G \to D$ to a skew field.
The homomorphism makes $D$ into a $\zz G$-bimodule, so we can take equivariant homology of $Y$ with coefficients in $D$
\[
H_{*}^{G}(Y ; D)=H_{*}( D \otimes_{\zz G} C_*(Y) ),
\]
and define the \emph{equivariant Betti numbers with coefficients in $D$ of $Y$} by taking its dimension over $D$:
\[
b^G_{*}(Y; D)= \dim_{D} H_{*}^{G}( Y; D ).
\]
More explicitly,
\[
b^G_{i}(Y; D)= \abs{Y^{(i)}} - \big( \rk_{D} \phi(\d_{i}) + \rk_{D} \phi( \d_{i+1}) \big),
\]
where $\phi(\d_{i})$ denotes the image of the matrix of the differential in $D$ and $ \abs{Y^{(i)}} $ is the number of $G$-orbits of $i$-cells in $Y$.
\begin{remark}
    When $Y$ is the universal cover of a connected finite complex $Y/G$ with fundamental group $G$, then this definition coincides with the usual (unequivariant) homology of $Y/G$ with local coefficients in the $\zz G$-module $D$,
    \[
    H_*^G(Y;D)=H_*(Y/G;D).
    \]
    On the level of skew field Betti numbers, $b_*^G(Y;D)=b_*(Y/G;D)$.
\end{remark}
\subsection*{Local homomorphisms}

A nontrivial homomorphism between skew fields is necessarily injective, however there is a more general notion of morphisms between skew fields.
It leads to an inequality between Betti numbers.

A \emph{local homomorphism} (or \emph{subhomomorphism}) between two skew fields $D$ and $D'$ is a homomorphism from a subring $L$ of $D$ to $D'$, $f: L \to D'$ whose kernel is precisely the set of non-units of $L$.
It follows that $L$ is a local ring, $J:=\ker f$ is its unique maximal ideal, and $J\backslash L$ is a sub-skew field of $D'$.
If $M$ is a finitely generated $L$-module, then by Nakayama's lemma, cf. \cite{fd93}*{Corollary 2.13}, any lift of a basis of $J\backslash L \otimes_{L} M= JM\backslash M $ to $M$ is a generating set for $M$, therefore $\dim_{D'} D' \otimes_{L} M \geq \dim_{D} D \otimes_{L} M$.
In terms of ranks we have an opposite inequality, for any $L$-matrix $A$, $\rk_{D'} f(A) \leq \rk_{D} A$.

Thus we have the following: suppose $\zz G \to D'$ extends to a local homomorphism from $D$ to $D'$, then
\[
b^G_*(Y;D) \leq b^G_*(Y;D').
\]

If $D$ is a sub-skew field of $D'$, then we have obvious local homomorphisms from $D$ to $D'$ and vice versa extending the map $\zz G \to D$, which gives us
\begin{Change of coefficients}
    If $D$ is contained in another skew field $D'$, then
    \[
    b^{G}_{*}(Y;D)=b^{G}_{*}(Y;D'),
    \]
    where the latter is computed using the composition $\zz G \to D \into D'$.
    (This is also a consequence of $C_*(Y)\otimes_{\F G}D'=(C_*(Y)\otimes_{\F G}D)\otimes_DD'$.)
\end{Change of coefficients}

In particular, if $\phi:\zz G \to D$, we can always replace $D$ with the skew field generated by the $\phi(\zz G)$, i.e. the \emph{division closure} of $\phi(\zz G)$, without changing Betti numbers.

\subsection*{Epic $\F G$ fields}

There are two classical constructions (for certain amenable groups, and for bi-orderable groups) of $\zz G$ skew fields.
Both constructions depend on the choice of a base field $\F$ (we will be mostly concerned with $\F=\Q$ or $\F=\Fp$) and produce canonical epic embeddings $\F G \into D_{\F G}$.
(A homomorphism of $\F G$ into a skew field $D$ is \emph{epic} if the image of $\F G$ generates $D$.) Moreover, both constructions behave nicely with respect to subgroups.
If $H<G$ then the division closure of $\F H$ in $D_{\F G}$ coincides with $D_{\F H}$.
If $H$ happens to be finite index in $G$, then in both cases $D_{\F G} \cong \oplus_{[G:H]} D_{\F H}$, which implies a multiplicativity formula for the corresponding Betti numbers.
We shall use the same notation for both constructions, and let the context distinguish them.
This does not lead to confusion, as the constructions agree when both are defined.

Of course the existence of such an embedding requires $\F G$ to have no zero divisors.
Conjecturally, $ \F G$ has no zero divisors for any torsion-free group.
It is known for many classes of groups, in particular for left-orderable groups and for torsion-free elementary amenable groups \cite{klm88}*{Theorem 1.4}.

We now discuss both constructions.

\subsection*{Amenable groups}

Suppose $R$ is a ring without zero divisors, and $S$ is a multiplicatively closed subset of nonzero elements.
The pair $(R,S)$ satisfies the (right) Ore condition if for each $r \in R$ and $s \in S$ there are $r' \in R$ and $s' \in S$ with
\[
rs' = sr'.
\]
If the pair $(R,S)$ satisfies the Ore condition, then one can form a ring called the Ore localization $RS^{-1}$.
The elements of $RS^{-1}$ are equivalence classes of fractions $r/s$, $r \in R$, $s \in S$; the Ore condition allows one to add and multiply these expressions.
There is a natural injection $R \to RS^{-1}$, given by $r \mapsto r/1$.
If $S$ is the set of all nonzero elements of $R$, then $RS^{-1}$ is a skew field, and we get an epic embedding $R \into RS^{-1}$.
This embedding is a unique epic embedding, since any other embedding factors through it.

For a group ring, $R = \F G$ without zero divisors, the pair $(\F G ; \F G- \{0\} )$ satisfies the Ore condition if and only if $G$ is amenable \cite{b19}*{Theorem A.1}.
In this case we shall denote the localization by $D_{\F G}$.
So, to summarize, for amenable $G$ with $ \F G$ having no zero divisors we have a unique epic embedding $\F G \into D_{\F G}$.
If $G$ is amenable and $\F G$ has no zero divisors, then this also holds for all subgroups of $G$ and their group rings.
Furthermore, if $H < G$ then we can identify $D_{\F H}$ with the division closure of $\F H$ inside of $D_{\F G}$.
If $G$ is in addition residually finite, then we also have a version of L\"uck's Approximation theorem, which is the main result of Linnell, L\"uck, and Sauer \cite{lls11}*{Theorem 0.2}.
\begin{theorem}[\cite{lls11}*{Theorem 0.2}]\label{t:approx}
    Suppose $G$ is amenable and $\F G$ has no zero divisors.
    Let $G_{i} \lhd G$ be a residual sequence of finite index normal subgroups of $G$.
    Then for any cocompact free $G$-CW complex $Y$
    \[
    b^G_{*}(Y; D_{\F G}) = \lim_{i \to \infty} \frac{b_{*}(Y/G_{i}; \F)}{[G:G_{i}] }.
    \]
\end{theorem}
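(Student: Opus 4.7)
\textbf{The plan} is to reduce the theorem to a statement about convergence of matrix ranks, then identify the limiting rank function with the Ore-field rank via the universal property of Ore localization.

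Because $Y$ is a cocompact free $G$-CW complex, $C_*(Y)$ is a complex of finitely generated free $\F G$-modules and each boundary $\partial_k$ is represented by an $\F G$-matrix. Both sides of the asserted equality expand as ``number of orbit $k$-cells minus the (normalized) ranks of the incoming and outgoing boundaries,'' so the theorem is equivalent to the claim that for every $\F G$-matrix $A$
\[
\rk_{D_{\F G}}(A) \;=\; \lim_{i \to \infty} \frac{\rk_\F(A_i)}{[G:G_i]},
\]
where $A_i$ is the image of $A$ under $\F G \to \F[G/G_i]$ (an $\F$-linear map of $\F$-vector spaces of dimension $[G:G_i]$ times the number of rows/columns of $A$).

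To establish this rank-convergence statement I would introduce the candidate rank function $\rho(A) := \limsup_i \rk_\F(A_i)/[G:G_i]$ on $\F G$-matrices. Using amenability---a Følner sequence together with an Ornstein--Weiss quasi-tiling argument---one first shows that the limsup is actually a limit, and then that $\rho$ satisfies the Sylvester matrix-rank axioms: additivity on block diagonals, submultiplicativity under composition, and the inner-rank inequality for $2\times 2$ block matrices. Faithfulness, $\rho(a)>0$ for every nonzero $a\in \F G$, is forced by the no-zero-divisors hypothesis: left multiplication by $a$ is injective on $\F G$, and a Følner averaging argument then produces a lower bound of the form $\rk_\F(a_i) \geq (1-o(1))[G:G_i]$ on the finite quotients.

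The final step is to match $\rho$ with $\rk_{D_{\F G}}$. Since $G$ is amenable and $\F G$ is a domain, $D_{\F G}$ is the Ore localization of $\F G$; by its universal property, a faithful Sylvester rank function on $\F G$ extends uniquely to $D_{\F G}$ (every nonzero $s\in \F G$ becomes a unit, and one checks that $\rho$ already assigns the maximal value to such $s$). But $D_{\F G}$ is a skew field, and a skew field admits a unique Sylvester rank function, namely the ordinary matrix rank; hence the extension of $\rho$ is exactly $\rk_{D_{\F G}}$, proving the required identity. The main obstacle is the convergence and additivity of $\rho$ in positive characteristic: L\"uck's original argument for $\F=\Q$ relies on spectral theory of integer Laplacians (and controls the count of small eigenvalues via Lemma \ref{smalleigs}), which has no analogue over $\F_p$. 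Replacing the spectral input by purely combinatorial Følner/quasi-tiling estimates on $\F G$-modules---the approach pioneered by Elek and refined by Linnell--L\"uck--Sauer---is the key technical innovation that makes the proof go through in arbitrary characteristic.
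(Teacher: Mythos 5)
The paper does not prove this statement; it is quoted verbatim as \cite{lls11}*{Theorem 0.2} and used as a black box. So there is no ``paper's own proof'' to compare against, only the cited reference of Linnell--L\"uck--Sauer, which in turn builds on Elek's work on rank functions for amenable group algebras.

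With that caveat, your sketch is a reasonable high-level account of how that reference actually proceeds: reduce to a matrix-rank convergence statement, define a candidate normalized rank function $\rho$ via the tower, use amenability (F{\o}lner sequences plus Ornstein--Weiss style quasi-tiling) to get existence of the limit and the Sylvester rank axioms, then invoke the universal property of the Ore localization and the uniqueness of the Sylvester rank function on a skew field to identify $\rho$ with $\rk_{D_{\F G}}$. Two things are worth flagging. First, for the extension to $D_{\F G}$ you need $\rho(a)=1$ for every nonzero $a\in\F G$, not merely $\rho(a)>0$; you do correct this later (``assigns the maximal value''), but the earlier phrasing should match. Second, that faithfulness statement is really the technical heart of the argument, and your justification --- ``left multiplication by $a$ is injective on $\F G$, and a F{\o}lner averaging argument then produces a lower bound $\rk_\F(a_i)\ge(1-o(1))[G:G_i]$'' --- compresses what is genuinely the hardest step into one line. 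Injectivity on $\F G$ does not by itself descend to the finite quotients $\F[G/G_i]$, and the quasi-tiling machinery is precisely what is needed to control the kernel on those quotients; a rigorous write-up would have to carry this out carefully. So the route is the right one and the conclusion follows, but what you have is an outline of the LLS/Elek proof rather than a proof.
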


\subsection*{Bi-orderable groups}

As another example, suppose the group $G$ is bi--orderable, and fix a particular bi-invariant total order on $G$.
The Malcev--Neumann series are infinite linear combinations of elements in $G$ with $\F$ coefficients, whose support is well ordered in the induced order.
They form a skew field, into which the group ring $\F G$ naturally embeds.
Let $D_{\F G}$ denote the division closure of $\F G$ in this skew field.

One can easily see from the construction of inverses that if $a$ is a non-zero Malcev--Neumann series then the support of $a^{-1}$ is contained in the subgroup generated by the support of $a$.

Therefore, for a subgroup $H$ the set of elements of $D_{\F G}$ supported on $H$ is a sub-skew field, and it follows that the division closure of $\F H$ in $D_{\F G}$ is naturally identified with $D_{\F H}$, coming from the induced order on $H$.

Since the action of $G$ on cosets preserves the induced order, a similar picture holds for the set of elements of $D_{\F G}$ supported on a single coset, it has a natural structure of a vector space over $D_{\F H}$.
This gives an injective homomorphism of $D_{\F H}$-vector spaces: $ \bigoplus_{G/H} D_{\F H} \to D_{\F G} $.

The injectivity of this homomorphism is a (very strong) form of the so called Hughes-free condition, so the embedding $\F G \into D_{\F G}$ is Hughes-free.
Hughes \cite{h70} proved that for a given $\F$, Hughes-free epic embeddings are unique up to an isomorphism over $\F G$, thus $D_{\F G}$ does not depend on the choice of the order, and we obtain well-defined Betti numbers $b^G_*(Y;D_{\F G})$.

We also have an approximation theorem of a different flavor.
\begin{theorem}\label{t:ordapprox}

    Let $K_i$ be a nested residual sequence of normal subgroups in a group $G$.
    Suppose we have bi-invariant orderings on $G$ and on the quotients $G_{i}=G/K_{i}$ such that each quotient map $p_{i}:G \to G_{i}$ is order-preserving.
    Then for any cocompact free $G$-complex $Y$ there exist $i_{0}$ such that for any $i \geq i_{0}$,
    \[
    b^{G}_{*}(Y; D_{\F G})= b^{G_{i}}_{*}(Y/K_{i}; D_{\F G_{i}}).
    \]
\end{theorem}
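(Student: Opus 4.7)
The plan is to reduce the equality of equivariant Betti numbers to an equality of ranks of boundary matrices, and then show those ranks stabilize in $i$ using both the compatibility of the orderings and the residual condition $\bigcap_{i} K_i = \{1\}$.

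First I would unwind the definitions. Using
\[
b^G_k(Y;D) = \abs{Y^{(k)}/G} - \rk_D(\partial_k) - \rk_D(\partial_{k+1})
\]
and noting that $\abs{Y^{(k)}/G}$ equals the number of $G_i$-orbits of $k$-cells of $Y/K_i$, it suffices to prove that for each boundary matrix $\partial$ (viewed as a matrix $A$ over $\F G$) one has $\rk_{D_{\F G}}(A) = \rk_{D_{\F G_i}}(p_i(A))$ for all sufficiently large $i$. Since $Y$ is cocompact there are finitely many such matrices, each with entries of finite support, so the task becomes a finite list of rank equalities.

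For the inequality $\rk_{D_{\F G_i}}(p_i(A)) \leq \rk_{D_{\F G}}(A)$, I would realize $D_{\F G}$ as the division closure of $\F G$ inside the Malcev--Neumann series ring $\F((G))$. Because $p_i$ is order-preserving, cosets of $K_i$ whose images in $G_i$ are larger sit strictly above those with smaller images, so the preimage of a well-ordered subset of $G_i$ is well-ordered in $G$. This allows one to define a subring $L_i \subset \F((G))$ containing $D_{\F G}$ on which $p_i$ extends termwise to a ring homomorphism $L_i \to \F((G_i))$ whose restriction to $D_{\F G}$ is a local homomorphism into $D_{\F G_i}$ agreeing with $p_i$ on $\F G$. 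By the local-homomorphism discussion earlier in the section, this yields the desired rank inequality.

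For the reverse inequality, the strategy is to compute $\rk_{D_{\F G}}(A)$ by Gaussian elimination in $\F((G))$: pivot on a nonzero entry, invert it, clear its row and column, and recurse on the remaining submatrix. The process terminates in at most $\min\{m,n\}$ steps. The main obstacle is that each inversion produces a series of infinite support, so ``$p_i$ is injective on every support appearing in the computation'' is not literally a finite condition. The resolution is that the ranks in question depend only on the Sylvester matrix rank function on $\F G$ induced by the embedding $\F G\into D_{\F G}$, and for Hughes-free Malcev--Neumann skew fields this rank function varies upper-semicontinuously under order-preserving quotients with intersecting-to-trivial kernels --- a result in the spirit of Jaikin-Zapirain's approximation theorems. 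Combined with the previous paragraph this gives the equality of ranks, and hence of Betti numbers, for all $i\geq i_0$.
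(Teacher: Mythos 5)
Your proposal gets the bookkeeping right (reduce to finitely many rank equalities for the boundary matrices) and correctly senses that the argument hinges on extending $p_i$ beyond $\F G$, but both directions have genuine gaps and the central mechanism of the paper's proof is missing.

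For the inequality $\rk_{D_{\F G_i}}(p_i(A)) \leq \rk_{D_{\F G}}(A)$, the claim that ``the preimage of a well-ordered subset of $G_i$ is well-ordered in $G$'' is false: already $p_i^{-1}(\{1\}) = K_i$ is a nontrivial bi-ordered group and thus contains infinite strictly decreasing sequences, so it is not well-ordered. Relatedly, the asserted subring $L_i \supset D_{\F G}$ on which $p_i$ extends termwise does not exist: a series in $D_{\F G}$ can have support meeting a single $K_i$-coset infinitely, and then there is no way to define $p_i$ on it termwise (the coefficients in a fiber need not have a well-defined sum). You do not get a local homomorphism $D_{\F G} \to D_{\F G_i}$ for every fixed $i$ this way. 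The correct object is the \emph{increasing} chain of proper subrings $S_i \subset D_{\F G}$ consisting of elements whose support meets every $K_i$-coset in a finite set; $p_i$ extends termwise to $S_i$, but $S_i \subsetneq D_{\F G}$ in general.

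For the reverse inequality your proposal stops short: the ``obstacle'' you name (infinite supports appearing during Gaussian elimination) is not the real issue, and the appeal to an unnamed ``upper-semicontinuity of the Sylvester rank function under order-preserving quotients'' is essentially restating what needs to be proved. The missing idea is the Lichtman/Malcev--Neumann-style lemma that $\bigcup_i S_i$ is a skew field, and hence (since it contains $\F G$ and sits inside $D_{\F G}$) equals $D_{\F G}$. Once you have $D_{\F G} = \bigcup_i S_i$, the whole theorem follows in one stroke: diagonalize $A$ over $D_{\F G}$ as $CAC' = I_r \oplus 0$ with $C$, $C'$ (and their inverses) having only finitely many entries, note that those finitely many elements of $D_{\F G}$ all lie in some $S_{i_0}$, and then for $i \geq i_0$ apply $p_i$ to the whole identity to see $\rk_{D_{\F G_i}} p_i(A) = r$. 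This gives both inequalities simultaneously and avoids any semicontinuity argument.
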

\begin{proof}
    Following \cites{el87, l00} define a sequence of subrings $\{S_i\}$ of $D_{\F G}$ with $S_i \subset S_{i+1}$ consisting of elements whose support has finite intersection with $K_{i}$-cosets.
    A slight generalization of the Malcev--Neumann argument, cf. \cite{l00}*{Proposition 7.1}, shows that $\bigcup S_{i}$ is a skew field.
    Since each $S_{i}$ contains $\F G$, we have $\bigcup S_{i}=D_{\F G}$.
    Note that the maps $p_{i}$ obviously extend to maps $p_{i}: S_{i} \to D_{\F G_{i}}$.

    Given a matrix $A$ over $\F G$, we can diagonalize $A$ over $D_{\F G}$ by performing column and row operations: there exist $D_{\F G}$-matrices $C$ and $C'$, such that $CAC'=I_{\rk_{D_{\F G}} A}$.
    The entries of $C$ and $C'$ are a finite collection of elements of $D_{\F G}$, so they are all contained in $S_{i_{0}}$ for some $i_0$.
    Then for $i \geq i_{0}$, applying $p_{i}$ to the above diagonalization shows that $\rk_{D_{\F G}} A = \rk_{D_{\F G_{i}}} p_{i}(A)$.
    Choosing $i_{0}$ so that the above works for all differentials in $Y$ finishes the proof.
\end{proof}
\begin{remark}
    The skew field $\bigcup S_{i}$ has been recently used by Sikorav in \cite{s23} to give a new fibering criterion for closed aspherical $3$-manifolds.
\end{remark}

\subsection*{Finite index subgroups}

\label{s:mult}
It turns out that in both cases the equivariant Betti numbers have an additional nice property satisfied by the usual $L^2$-Betti numbers, namely multiplicativity for finite index subgroups.
\begin{lemma}\label{l:mult}
    Suppose $G$ is bi-orderable or amenable with $\F G$ having no zero-divisors, and $H < G$ is a finite index subgroup.
    Then for any $G$-complex $Y$
    \[
    b^{H}_{*}(Y; D_{\F H})=[G:H] b^{G}_{*}(Y;D_{\F G}).
    \]
\end{lemma}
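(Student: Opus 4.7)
The plan is to reduce the lemma to the direct sum decomposition
\[
D_{\F G} \;\cong\; \bigoplus_{[G:H]} D_{\F H}
\]
of left $D_{\F H}$-modules, asserted in both the amenable and bi-orderable cases in the discussion preceding the lemma. More precisely, I would promote this to the bimodule identification
\[
D_{\F G} \;\cong\; D_{\F H} \otimes_{\F H} \F G
\]
of $(D_{\F H}, \F G)$-bimodules (where the right $\F G$-action on the right-hand side is on the $\F G$-factor), corresponding to an explicit choice of right coset representatives $\{g_i'\}$ of $H\backslash G$.

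Granted this bimodule identification, the lemma follows from tensor associativity plus a dimension calculation. The isomorphism of right $\F G$-modules $D_{\F G} \cong D_{\F H} \otimes_{\F H} \F G$ yields
\[
D_{\F G} \otimes_{\F G} C_*(Y) \;\cong\; (D_{\F H} \otimes_{\F H} \F G) \otimes_{\F G} C_*(Y) \;=\; D_{\F H} \otimes_{\F H} C_*(Y),
\]
so $H_*^G(Y; D_{\F G}) \cong H_*^H(Y; D_{\F H})$ as left $D_{\F H}$-modules. The tower formula $\dim_{D_{\F H}} M = [G:H] \dim_{D_{\F G}} M$ for any left $D_{\F G}$-module $M$ (immediate from freeness of $D_{\F G}$ of rank $[G:H]$ over $D_{\F H}$) then gives $b_*^H(Y; D_{\F H}) = \dim_{D_{\F H}} H_*^G(Y; D_{\F G}) = [G:H]\cdot b_*^G(Y; D_{\F G})$.

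The remaining work is promoting the bare direct sum decomposition to the bimodule identification. The natural candidate is the left-$D_{\F H}$-, right-$\F G$-linear map
\[
\nu : D_{\F H} \otimes_{\F H} \F G \longrightarrow D_{\F G}, \qquad d \otimes g \longmapsto d\cdot g,
\]
well-defined since $D_{\F H}$ embeds in $D_{\F G}$ as a subring containing $\F H$. Source and target are both left $D_{\F H}$-vector spaces of dimension $[G:H]$ (the source from $\F G = \bigoplus_i \F H g_i'$, the target from the asserted decomposition), so it suffices to prove $\nu$ is injective. In the bi-orderable case, a relation $\sum d_i g_i' = 0$ in $D_{\F G}$ with $d_i \in D_{\F H}$ has summands $d_i g_i'$ whose Malcev--Neumann supports lie in the pairwise disjoint right cosets $H g_i'$, forcing each summand, and hence each $d_i$, to vanish. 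In the amenable case, use left Ore in $\F H$ (two-sided Ore is available for amenable $\F H$) to clear a common left denominator: write $d_i = s^{-1} r_i$ with $s \in \F H \setminus \{0\}$ and $r_i \in \F H$, so $s^{-1}\sum r_i g_i' = 0$ and hence $\sum r_i g_i' = 0$ in $\F G$, which forces each $r_i = 0$ by the unique-sum decomposition $\F G = \bigoplus_i \F H g_i'$, and therefore each $d_i = 0$. The main obstacle is the careful tracking of left--right module structures, necessary because $H$ need not be normal in $G$, so conjugation by elements of $G$ does not preserve $D_{\F H}$ inside $D_{\F G}$; this is what forces the identification to be phrased with $\F G$ on the right of the tensor product and with right coset representatives throughout, and it is why establishing surjectivity directly (rather than via dimension-matching against the asserted decomposition) would be the genuinely delicate step.
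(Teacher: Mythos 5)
Your reduction of the lemma to the bimodule identification $D_{\F G}\cong D_{\F H}\otimes_{\F H}\F G$, followed by tensor associativity and the dimension-tower formula, is the right skeleton and is essentially what the paper uses implicitly, and your injectivity arguments (Malcev--Neumann supports in disjoint right cosets; common left denominator via the Ore condition in $\F H$) are fine. But the surjectivity of $\nu$ is a genuine gap, and you've flagged it yourself: you deduce it by matching dimensions against "the asserted decomposition'' $D_{\F G}\cong D_{\F H}^{[G:H]}$, yet that decomposition is precisely what this lemma's proof is responsible for establishing --- the earlier passage in the paper states it without proof and points to this lemma for the argument. So the step is circular.

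The paper handles surjectivity in two different ways depending on the case, neither of which needs the dimension of the target as an input. In the bi-orderable case it sidesteps $D_{\F G}$ entirely: the \emph{full} Malcev--Neumann skew field of $G$ manifestly decomposes over cosets as a free module of rank $[G:H]$ over the full Malcev--Neumann skew field of $H$ (every well-ordered series splits along cosets, so surjectivity is immediate), and then change of coefficients identifies the Betti numbers computed with the full skew fields with those computed with $D_{\F G}$ and $D_{\F H}$. In the amenable case the paper first reduces to $H$ normal (this matters: it is what makes the image of $\nu$, equivalently $(\F G)S^{-1}$ with $S=\F H\setminus\{0\}$, a subring of $D_{\F G}$ rather than merely a subspace), and then shows this subring is a skew field by a finite-dimensional linear algebra trick: a nonzero $t\in\F G$ acts by left multiplication as an injective linear self-map of the $[G:H]$-dimensional $D_{\F H}$-vector space $(\F G)S^{-1}$, hence surjectively, so $t$ is invertible; a skew field containing $\F G$ and contained in the epic hull $D_{\F G}$ must equal $D_{\F G}$. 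If you want to keep your tensor-product formulation, you should either transfer to the full Malcev--Neumann field (bi-orderable case) or add the normality reduction plus the division-ring argument (amenable case); dimension-matching alone assumes the conclusion.
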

\begin{proof}
    For bi-orderable groups, if $H<G$ has finite index, then the full Malcev--Neumann skew field of $G$ is a vector space of dimension $[G: H]$ over the full Malcev--Neumann skew field of $H$, and these skew fields give the same Betti numbers as $D_{\F H}$ and $D_{\F G}$.
    In particular, it follows that $D_{\F G} \cong D_{\F H}^{[G:H]}$ as $D_{\F H}$-vector spaces.

    This also holds in the amenable case, see e.g.
    \cite{lls11}*{Equation 5.2} for a more general statement.
    For convenience, we give the argument here assuming that there are no zero-divisors.
    It is enough to consider normal subgroups.
    The main point is that the pair $(\F G, S = \F H - \{0\})$ satisfies the Ore condition \cite{p77}*{ Lemma 13.3.5 (ii)}, so we can form the localization $(\F G) S^{-1}$.
    This is a $D_{\F H}$-vector space of dimension $[G: H]$ which naturally injects into $D_{\F G}$.
    We claim that this is onto; it suffices to show that each nonzero $t$ in $\F G$ is invertible in $(\F G) S^{-1}$.
    Since $t$ is not a zero divisor in $D_{\F G}$, it is not a zero divisor in $(\F G) S^{-1}$, hence the left multiplication by $t$ induces an injective linear self-map of $(\F G) S^{-1}$.
    Therefore, this multiplication is an isomorphism, and the preimage of 1 is the inverse of $t$.
\end{proof}

\subsection*{Finite generation} Note that we did not assume that the group $G$ is finitely generated.
We now show that in both constructions we can always reduce the computation of $b_k^G(Y;D_{\F G})$ to the case of finitely generated $G$.

First, we need the following observation.
\begin{Induced representation}
    If $H < G$ is a subgroup and $Y_{0}$ is an $H$-complex, then for $Y=G \times_{H} Y_{0}$ we have
    \[
    b^{G}_{*}(Y;D) = b^{H}_*(Y_{0};D),
    \]
    where the latter is computed using the composition $\zz H \into \zz G \to D$, since the chain complexes used to compute the two homologies are identical:
    \[
    C_*(Y_{0})\otimes_{\F H}D=C_*(Y_{0})\otimes_{\F H}\F G\otimes_{\F G}D=C_*(Y)\otimes_{\F G}D.
    \]
\end{Induced representation}
\begin{lemma}\label{l:connected}
    Suppose $G$ is a bi-orderable or amenable group with $\F G$ having no zero-divisors, and $Y$ is a cocompact, free $G$-complex.
    For each connected component of $Y/G$ choose its lift $Y_{i}$ to $Y$, and let $G_{i}$ denote the stabilizer of $Y_{i}$ in $G$.
    Then each $G_{i}$ is finitely generated and
    \[
    b_k^G(Y;D_{\F G})=\sum_{i=1}^n b_k^{G_i}(Y_i;D_{\F G_i}).
    \]
\end{lemma}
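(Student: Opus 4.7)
The plan is to decompose $Y$ into $G$-orbits of connected components, apply the Induced representation observation to reduce each orbit to a $G_i$-computation on a single component, and then use a change of coefficients to replace $D_{\F G}$ with $D_{\F G_i}$ inside each summand.

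First I would describe the decomposition. Since $Y/G$ is a finite CW complex it has finitely many connected components, indexed by $i=1,\dots,n$, and by hypothesis each has a chosen lift $Y_i\subset Y$ with stabilizer $G_i$. Because $G$ permutes the components of $Y$ and $G_i$ is the stabilizer of $Y_i$, the $G$-orbit of $Y_i$ is a disjoint union of $[G:G_i]$ copies of $Y_i$ and there is a $G$-equivariant isomorphism
\[
Y \;\cong\; \coprod_{i=1}^{n} G\times_{G_i} Y_i.
\]
Finite generation of $G_i$ follows immediately: $Y_i/G_i$ is a connected component of the finite complex $Y/G$, hence is a finite connected CW complex, and $Y_i\to Y_i/G_i$ is a regular cover with deck group $G_i$ (as $G$ acts freely on $Y$). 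Thus $G_i$ is a quotient of the finitely generated group $\pi_1(Y_i/G_i)$. Note also that the hypotheses (bi-orderable, or amenable with zero-divisor-free group ring) pass to the subgroup $G_i$, so $D_{\F G_i}$ is defined.

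For the Betti number identity, I would combine additivity of the chain functor over disjoint unions with the Induced representation observation applied to each factor $G\times_{G_i}Y_i$:
\[
C_*(Y)\otimes_{\F G} D_{\F G} \;=\; \bigoplus_{i=1}^{n} C_*(G\times_{G_i}Y_i)\otimes_{\F G} D_{\F G} \;=\; \bigoplus_{i=1}^{n} C_*(Y_i)\otimes_{\F G_i} D_{\F G},
\]
where on the right $D_{\F G}$ is viewed as an $\F G_i$-module through $\F G_i \into \F G \to D_{\F G}$. Taking $D_{\F G}$-dimensions gives
\[
b_{k}^{G}(Y;D_{\F G}) \;=\; \sum_{i=1}^{n} b_{k}^{G_i}(Y_i; D_{\F G}).
\]

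The remaining step is to rewrite each summand with its own skew field $D_{\F G_i}$. In both constructions used in this section, the division closure of $\F G_i$ inside $D_{\F G}$ was identified with $D_{\F G_i}$ itself: in the Malcev--Neumann case this comes from the support being contained in the subgroup $G_i$, and in the amenable case it follows from the Ore property. Hence the composition $\F G_i \into \F G \to D_{\F G}$ factors through $D_{\F G_i}\into D_{\F G}$, and the Change of coefficients observation gives
\[
b_{k}^{G_i}(Y_i; D_{\F G}) \;=\; b_{k}^{G_i}(Y_i; D_{\F G_i}).
\]
Substituting into the previous display yields the stated equality. There is no real obstacle here; the only content-heavy input is the identification of $D_{\F G_i}$ with the division closure of $\F G_i$ in $D_{\F G}$, and that has already been recorded earlier in the section.
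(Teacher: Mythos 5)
Your proof is correct and follows the same route as the paper: decompose $Y$ as $\coprod_i G\times_{G_i} Y_i$, apply the Induced Representation observation to reduce each orbit to a $G_i$-computation on $Y_i$, and then invoke the fact (recorded earlier in the section for both the bi-orderable and the amenable constructions) that the division closure of $\F G_i$ inside $D_{\F G}$ is $D_{\F G_i}$, so a change of coefficients identifies $b_k^{G_i}(Y_i;D_{\F G})$ with $b_k^{G_i}(Y_i;D_{\F G_i})$. The paper's proof is more terse but identical in substance; your elaboration of why each $G_i$ is finitely generated (via $G_i$ being a quotient of $\pi_1(Y_i/G_i)$, which is finitely generated because $Y_i/G_i$ is a finite connected complex) is exactly the content behind the paper's one-line assertion.
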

\begin{proof}
    Since each $Y_i$ is a connected cocompact free $G_{i}$-complex, each $G_{i}$ is finitely generated.
    $Y$ is a disjoint union of $G$-orbits of $Y_{i}$, i.e. $Y=\coprod_{i=1}^n G\times_{G_i} Y_i$.
    In both constructions the division closure of each $\F G_i$ in $D_{\F G}$ is $D_{\F G_i}$.
    Therefore
    \[
    b_k^G(Y;D_{\F G})=\sum_{i=1}^nb_k^{G_i}(Y_i;D_{\F G})=\sum_{i=1}^n b_k^{G_i}(Y_i;D_{\F G_i}).
    \]
\end{proof}

\subsection*{Torsion-free nilpotent groups}

Let $N$ be a torsion-free nilpotent group.
Then $N$ is both bi-orderable and amenable, and both constructions produce the same skew field $D_{\F N}$.
It will be most useful here to think of $D_{\F N}$ as an Ore localization.
\begin{lemma}\label{l:univ}
    Let $N$ be a torsion-free nilpotent group.
    Then for any free, cocompact $N$-CW complex $Y$
    \[
    b^{N}_{*}(Y; D_{\F N}) \leq b_{*}(Y/N; \F).
    \]
\end{lemma}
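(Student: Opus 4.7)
My plan is to reduce the lemma to the matrix rank inequality $\rk_{D_{\F N}}(A)\geq\rk_\F(A^\F)$ for every matrix $A$ with entries in $\F N$, where $A^\F$ is the image of $A$ under the augmentation $\F N\to\F$. Combined with the formula $b^N_i(Y;D_{\F N})=\abs{Y^{(i)}}-\rk_{D_{\F N}}(\d_i)-\rk_{D_{\F N}}(\d_{i+1})$ recorded in the excerpt and its analogue with $\F$-coefficients (which has the same leading term $\abs{Y^{(i)}}$, since this counts $i$-cells of $Y/N$), the rank inequality yields the lemma at once.

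I reduce to finitely generated $N$ via Lemma~\ref{l:connected} and induct on the Hirsch length $h(N)$. The case $h(N)=0$ is trivial. For the inductive step, pick $z\in Z(N)$ whose image generates a direct summand of the nontrivial finitely generated free abelian group $Z(N)$. Since $N/Z(N)$ is torsion-free for every torsion-free nilpotent $N$, one checks that $\langle z\rangle$ is isolated in $N$, so the quotient $N'=N/\langle z\rangle$ is torsion-free nilpotent of Hirsch length $h(N)-1$. The augmentation factors as $\F N\twoheadrightarrow\F N'\to\F$, and the induction hypothesis applied to $N'$ gives $\rk_{D_{\F N'}}(A^{N'})\geq\rk_\F(A^\F)$. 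It therefore suffices to prove the one-step inequality $\rk_{D_{\F N}}(A)\geq\rk_{D_{\F N'}}(A^{N'})$.

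To prove the one-step inequality, set $r=\rk_{D_{\F N'}}(A^{N'})$, pick an $r\times r$ submatrix $B$ of $A^{N'}$ invertible over $D_{\F N'}$, and let $\tilde B$ be the corresponding submatrix of $A$ over $\F N$. If $\tilde B$ were not invertible over $D_{\F N}$, then clearing denominators via the Ore condition would produce $u\in(\F N)^r\setminus\{0\}$ with $\tilde B u=0$. The key observation is that $\F N$ is a free $\F[z,z^{-1}]$-module on any set of coset representatives for $N/\langle z\rangle$, so $\bigcap_k(z-1)^k\F N=0$ by the Krull intersection for the PID $\F[z,z^{-1}]$; hence there is a maximal $k$ with $u\in(z-1)^k(\F N)^r$. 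Writing $u=(z-1)^ku'$ and using that $z-1$ is central and a nonzero-divisor in the domain $\F N$, we obtain $\tilde B u'=0$ and that the reduction $(u')^{N'}$ is nonzero in $(\F N')^r$. Reducing modulo $z-1$ then gives $B\cdot(u')^{N'}=0$, contradicting the injectivity of $B$ on $(\F N')^r$ coming from its invertibility over $D_{\F N'}$. The main technical point to verify carefully is the existence of a central $z$ with $N'$ torsion-free, which reduces to the standard fact that the upper central series of a torsion-free nilpotent group has torsion-free quotients.
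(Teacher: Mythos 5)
Your proof is correct. It shares the paper's overall skeleton: reduce to finitely generated $N$, choose a central infinite-cyclic $\langle z\rangle$ with torsion-free nilpotent quotient $N'=N/\langle z\rangle$, iterate, and at the crucial step exploit that $\F N$ is a free $\F[z,z^{-1}]$-module to control powers of $z-1$. Where the two arguments diverge is in packaging. The paper proves that $(\F N,\ \F N\setminus\ker p)$ satisfies the Ore condition, obtaining a local homomorphism $D_{\F N}\to D_{\F N'}$, and then invokes the general Nakayama-based fact that local homomorphisms do not increase ranks; composing gives a local homomorphism $D_{\F N}\to\F$ extending the augmentation. You bypass that machinery and prove the needed rank inequality $\rk_{D_{\F N}}(A)\geq\rk_{D_{\F N'}}(A^{N'})$ directly: pick a maximal invertible $r\times r$ submatrix $B$ of $A^{N'}$, assume its lift $\tilde B$ is singular over $D_{\F N}$, clear denominators to get a nonzero $u\in(\F N)^r$ killed by $\tilde B$, factor out the maximal power of $z-1$ (which exists by the free-module/Krull argument), and reduce mod $z-1$ to exhibit a nonzero kernel vector for $B$, a contradiction. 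Your route is more elementary for the purpose at hand, trading the structural statement (existence of a local homomorphism $D_{\F N}\to\F$, which the paper reuses in the remarks closing Section~\ref{s:skewfields}) for a hands-on matrix argument; at bottom both reduce to the same cancellation of central $(z-1)$-factors, applied to Ore denominators in the paper and to kernel vectors in your version.
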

\begin{proof}
    By Lemma~\ref{l:connected} we can assume that $N$ is finitely generated.
    Let $C = \langle t \rangle$ be a normal, infinite cyclic subgroup of $N$ with $H=N/C$ torsion-free nilpotent.
    The existence of such a subgroup follows from the fact that the center $Z(N)$ of $N$ is infinite and $N/Z(N)$ is torsion-free nilpotent \cite{p77}*{Lemma 11.1.3, p.~470}.

    The quotient map $N \to H$ induces a map $p: \F N \to \F H$ and $\ker p$ is the two-sided principal ideal generated by $(1-t)$.
    We claim that $p$ extends to a local homomorphism $D_{\F N} \to D_{\F H}$ where the domain consists of elements which have a representation with denominator not in $\ker p$.
    The only nontrivial part of this claim is that the domain is a subring of $D_{\F N}$, or equivalently, that $(\F N,S:=\F N-\ker p)$ satisfies the Ore condition.

    To see this, take $r \in \F N$ and $s \in S$.
    Since $(\F N; \F N - \{0\} )$ satisfies the Ore condition, there are $r', s' \in \F N$ with $rs' = sr'$, and we need to show that $s'$ can be chosen in $S$.

    The key point is that there is a bound on the powers of $(1-t)$ that divide $s'$.
    Indeed, take a coset of $C$ which intersects the support of $s'$ nontrivially; for a suitable choice of $g \in N$ the restriction of $s'$ to this coset has the form $gP(t)$ where $P(t)$ is a polynomial in $t$.
    The right multiplication by $(1-t)$ preserves the coset decomposition, hence the power of $(1-t)$ dividing $s'$ on the right is bounded above by the degree of $P(t)$.

    Now, if $s' \in \ker p$, then $s'r=sr' \in \ker p$, and hence $r' \in \ker p$ as $\F H$ has no zero divisors.
    Therefore, both $s'$ and $r'$ are divisible on the right by $(1-t)$, and we can cancel to get a new $s'$ and $r'$.
    So, we can keep cancelling powers of $(1-t)$ until $s' \in S$.

    Applying the same procedure to the quotient group $H$ in place of $N$ and composing local homomorphisms eventually produces a trivial quotient and therefore a local homomorphism $D_{\F N} \to \F$ extending the augmentation map.
\end{proof}

\subsection*{Residually torsion-free nilpotent groups}

From now on, we suppose that the group $G$ is residually torsion-free nilpotent.
Then $G$ is bi-orderable, and in fact as explained in \cite{el87}*{Corollary to Lemma 4.1}, any torsion-free nilpotent approximation can be made into an bi-orderable one by a suitable choice of orders.
Then the equivariant Betti numbers of a $G$-complex $Y$ are approximated by the equivariant Betti numbers of the torsion-free nilpotent quotients, which in turn are approximated by normalized usual Betti numbers of finite quotients.
This leads to equivalence between the skew field definition and the infimum definition for simply connected $Y$.
\begin{theorem}\label{t:equiv}
    Let $G$ be a residually torsion-free nilpotent group and let $Y$ be a free cocompact $G$-complex.
    Then
    \[
    b^{G}_{*}(Y; D_{\F G}) = \inf_{\substack{H <G \\
    [G:H] < \infty}} \frac{b_{*}(Y/H; \F)}{[G:H]}.
    \]
\end{theorem}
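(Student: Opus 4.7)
The plan is to bracket the skew field Betti number $b^{G}_{*}(Y; D_{\F G})$ between normalized Betti numbers of finite covers by a two-step approximation: first, replace $G$ by a carefully chosen torsion-free nilpotent quotient using Theorem~\ref{t:ordapprox}, and then exploit the fact that torsion-free nilpotent groups are amenable and residually finite to invoke Theorem~\ref{t:approx}. As a preliminary I would apply Lemma~\ref{l:connected} to reduce to the case where $Y$ is connected and $G$ is finitely generated, so that both sides of the claimed equality split as sums over connected components.

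For the inequality $b^{G}_{*}(Y; D_{\F G}) \leq \inf_H b_{*}(Y/H;\F)/[G:H]$, I would fix a finite-index $H < G$ and apply multiplicativity (Lemma~\ref{l:mult}) to reduce to showing $b^{H}_{*}(Y; D_{\F H}) \leq b_{*}(Y/H;\F)$. Since $H$ is again residually torsion-free nilpotent, I would choose a residual chain $K_i \lhd H$ with every $H/K_i$ torsion-free nilpotent and, using the Elliott--Linnell refinement of bi-orderability invoked earlier in the excerpt, arrange bi-invariant orders on $H$ and on each $H/K_i$ so that every quotient map is order preserving. Theorem~\ref{t:ordapprox} then yields $b^{H}_{*}(Y; D_{\F H}) = b^{H/K_i}_{*}(Y/K_i; D_{\F (H/K_i)})$ for $i$ sufficiently large, and Lemma~\ref{l:univ} applied to the torsion-free nilpotent group $H/K_i$ acting on $Y/K_i$ bounds this by $b_{*}((Y/K_i)/(H/K_i);\F) = b_{*}(Y/H;\F)$.

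For the reverse inequality, I would exhibit finite-index subgroups $H_j < G$ whose normalized Betti numbers converge to $b^{G}_{*}(Y; D_{\F G})$. Applying Theorem~\ref{t:ordapprox} to $G$ gives a single torsion-free nilpotent quotient $N = G/K$ for which $b^{G}_{*}(Y; D_{\F G}) = b^{N}_{*}(Y/K; D_{\F N})$. Since $N$ is amenable and residually finite and $\F N$ has no zero divisors, Theorem~\ref{t:approx} applied to any residual chain $N_j \lhd N$ of finite-index normal subgroups identifies this value with $\lim_j b_{*}((Y/K)/N_j;\F)/[N:N_j]$. Pulling these back through $p \colon G \to N$ to $H_j := p^{-1}(N_j) < G$, which satisfy $[G:H_j] = [N:N_j]$ and $Y/H_j = (Y/K)/N_j$, realizes $b^{G}_{*}(Y; D_{\F G})$ as a limit of values of the function being infimized, so the infimum is at most $b^{G}_{*}(Y; D_{\F G})$.

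The main obstacle I anticipate is verifying the compatible-orders hypothesis of Theorem~\ref{t:ordapprox}: one must be able to bi-order $G$ together with each of its torsion-free nilpotent quotients so that the projections are order preserving. This is exactly what the Elliott--Linnell refinement of bi-orderability for residually torsion-free nilpotent groups provides, and it is the linchpin that lets a single nilpotent quotient compute $b^{G}_{*}(Y; D_{\F G})$ on the nose, thereby bridging the two approximation theorems.
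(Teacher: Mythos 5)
Your proposal matches the paper's argument essentially step for step: for the upper bound, multiplicativity (Lemma~\ref{l:mult}) reduces to a single finite-index $H$, Theorem~\ref{t:ordapprox} (with the Elliott--Linnell choice of compatible orders) passes to a torsion-free nilpotent quotient, and Lemma~\ref{l:univ} bounds that by $b_*(Y/H;\F)$; for the lower bound, Theorem~\ref{t:ordapprox} fixes a nilpotent quotient $N=G/K$ computing $b^G_*(Y;D_{\F G})$ exactly, and Theorem~\ref{t:approx} approximates via a residual chain in $N$, pulled back to $G$. The only cosmetic difference is your explicit preliminary reduction via Lemma~\ref{l:connected}, which the paper leaves implicit.
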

\begin{proof}
    Let $H< G$ be a finite index subgroup of $G$.
    Choose a torsion free nilpotent approximation $K_{i} \lhd H \onto N_{i}$.
    Then, using Theorem~\ref{t:ordapprox} and Lemma~\ref{l:univ}, for sufficiently large $i$ we have,
    \[
    b^{H}_{*}(Y; D_{\F H}) = b^{N_{i}}_{*}(Y/K_{i}; D_{\F N_{i}}) \leq b_{*}(Y/H ; \F).
    \]
    The multiplicativity now implies $\leq$ inequality in the desired formula.

    For the opposite inequality we first find, similar to the above, a torsion free nilpotent quotient $K \lhd G \onto N$ with $b^{G}_{*}(Y; D_{\F G}) = b^{N}_{*}(Y/K; D_{\F N})$ and then apply Theorem~\ref{t:approx} to the $N$-complex $Y/K$ to find a further finite quotient so that $b^{N}_{*}(Y/K; D_{\F N}) $ is approximated within any given $\ge$ by the normalized usual Betti numbers.
\end{proof}

\subsection*{Simply connected components}

When the components of the $G$-complex $Y$ are simply connected, then we can use Theorem~\ref{t:equiv} to relate skew field Betti numbers to homology growth of the quotient $Y/G$ (note that if $Y$ is not simply connected then the right-hand term in Theorem~\ref{t:equiv} is generally not equal to $\beta^{\inf}_{\ast}(Y/G)$).
\begin{corollary}\label{c:sccomp}

    Suppose $G$ is a residually torsion-free nilpotent group and $Y$ is a cocompact, free $G$-complex with simply connected components.
    Then
    \[
    b_k^G(Y;D_{\F G})=\beta^{\inf}_k(Y/G;\F).
    \]
\end{corollary}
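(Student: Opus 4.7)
The plan is to reduce to the connected setting, apply Theorem~\ref{t:equiv} component-by-component, and reassemble using additivity of $\beta^{\inf}$ over disjoint unions.

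First, I would invoke Lemma~\ref{l:connected} to decompose
\[
b_k^G(Y;D_{\F G})=\sum_{i=1}^n b_k^{G_i}(Y_i;D_{\F G_i}),
\]
where $Y_i$ is a chosen lift of the $i$-th component of $Y/G$ and $G_i$ is its stabilizer. Each $G_i$ is a subgroup of $G$, hence also residually torsion-free nilpotent, and $Y_i$ is a free cocompact $G_i$-complex. Crucially, the hypothesis that components of $Y$ are simply connected means $Y_i$ is simply connected, so $Y_i\to Y_i/G_i$ is the universal cover and $G_i=\pi_1(Y_i/G_i)$.

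Next, I would apply Theorem~\ref{t:equiv} to each summand:
\[
b_k^{G_i}(Y_i;D_{\F G_i})=\inf_{\substack{H<G_i\\ [G_i:H]<\infty}}\frac{b_k(Y_i/H;\F)}{[G_i:H]}.
\]
Because $Y_i$ is the universal cover of $Y_i/G_i$, the finite covers $Y_i/H\to Y_i/G_i$ run through all connected finite covers of $Y_i/G_i$. By the same averaging argument used in Lemma~\ref{l:con}, the infimum of normalized Betti numbers over all finite covers of a connected complex is attained on the subfamily of connected finite covers (a disconnected cover is a convex combination of its connected components, so one component does no worse). Thus the right-hand side equals $\beta^{\inf}_k(Y_i/G_i;\F)$.

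Finally, I would combine the components via additivity of $\beta^{\inf}$ over finite disjoint unions: if $X=\coprod_i X_i$, then a finite cover of $X$ of degree $d$ is exactly a choice of degree-$d$ covers of each $X_i$, and (possibly after replicating covers to equalize degrees) one checks
\[
\beta^{\inf}_k(X;\F)=\sum_i \beta^{\inf}_k(X_i;\F).
\]
Applying this to $Y/G=\coprod_i Y_i/G_i$ yields the desired equality. The only subtle point is the passage from the subgroup infimum to $\beta^{\inf}$ of a connected base, but this reduces to the elementary convex-combination observation already exploited in Lemma~\ref{l:con}, so no real obstacle arises; the heavy lifting was done in Theorem~\ref{t:equiv}.
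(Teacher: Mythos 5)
Your proof is correct and follows essentially the same route as the paper: decompose via Lemma~\ref{l:connected}, apply Theorem~\ref{t:equiv} to each summand (using simple connectivity to identify finite-index subgroups of $G_i$ with connected finite covers of $Y_i/G_i$), invoke the convex-combination argument of Lemma~\ref{l:con} to pass to all finite covers, and reassemble by additivity of $\beta^{\inf}$ over disjoint unions.
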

\begin{proof}
    By Lemma~\ref{l:connected}
    \[
    b_k^G(Y;D_{\F G})=\sum_{i=1}^n b_k^{G_i}(Y_i;D_{\F G_i}).
    \]
    where each $Y_i$ is a connected component of $Y$ and $G_{i}$ is its stabilizer.
    By assumption each $Y_i$ is simply connected, so finite connected covers of $Y_{i}/G_{i}$ correspond to finite index subgroups of $G_{i}$.

    Therefore, Theorem \ref{t:equiv} and Lemma~\ref{l:con} imply that right hand side equals $\sum_i\beta^{\inf}_k(Y_i/G_i;\F)$.
    Finally, additivity of $\beta^{\inf}$ in disjoint unions identifies this with $\beta^{\inf}_k(Y/G;\F)$.
\end{proof}
\begin{remark}
    Droms \cite{d83a} and Duchamp--Krob \cite{dk92} independently showed that RAAG's are residually torsion-free nilpotent.
    Since this property passes to subgroups, the fundamental group of any compact special cube complex in Haglund and Wise's sense is residually torsion-free nilpotent.
\end{remark}
\begin{remarks}

    Many of the results in this section (and stronger versions) were previously known.
    The fact that for a torsion-free nilpotent group $N$ there is a local homomorphism $D_{\F N} \to \F$ extending the augmentation map follows from \cite{s71}*{Theorem 2.2}, which implies that the complement of the augmentation ideal in $\F N$ satisfies the Ore condition.
    The local ring in this case then consists of fractions with representatives $f/g$ where $g$ is not in the augmentation ideal.

    More generally, a skew field $D$ containing and generated by $\F G$ is called \emph{universal} if any homomorphism $\alpha: \F G \to D'$ can be extended to a local homomorphism $\alpha: D \to D'$.
    Of course, if $D$ is universal, there is a local homomorphism $D \to \F$ extending the augmentation map, so we obtain the same statement as in Lemma \ref{l:univ}.
    For instance, Passman \cite{p82} showed that if $G$ is poly-$\zz$, then $D_{\F G}$ is a universal division ring for $\F G$.

    Jaikin-Zapirain \cite{j21}*{Corollary 1.3} proved that a residually (amenable locally indicable) group has a Hughes-free epic embedding, and this embedding is universal.
    The core of his construction is a very general form of the approximation theorem by locally indicable quotients.
    He also observed \cite{j21}*{Proposition 2.2} that a recent result of Gr\"ater \cite{g20}*{Corollary 8.3} that Hughes-free embeddings are strongly Hughes-free implies multiplicativity of the equivariant Betti numbers, see also \cite{f21}*{Lemma 6.3} for more details.
    Since the Linnell--L\"uck--Sauer theorem applies to amenable locally indicable groups, it follows that Theorem~\ref{t:equiv} holds for residually (amenable locally indicable) groups.
    For these groups, Fisher, Hughes and Leary in a recent paper independently proved one inequality in Theorem~\ref{t:equiv} in \cite{fhl23}*{Theorem D}, and applied this to show non-vanishing homology growth of non-virtually fibered groups in \cite{fhl23}*{Theorem 5.1}.

    For $\F = \Q$, there is another canonical construction due to Linnell of $D_{\Q G}$ that (conjecturally) works for all torsion-free groups $G$.
    The von Neumann algebra $\cn(G)$ is known to satisfy the Ore condition with respect to the set of non-zero divisors, and $D_{\Q G}$ is the division closure of $\zz G$ inside of $\Ore(\cn(G))$.
    The ring $\Ore(\cn(G))$ can be identified with the ring $\cu(G)$ of affiliated operators on $\ell^2(G)$.
    Since $\cn(G)$ has zero-divisors, $\Ore(\cn(G))$ is not a skew field, so it is not obvious that $D_{\Q G}$ is one.
    On the other hand, Linnell showed that $D_{\Q G}$ being a skew field is equivalent to Atiyah's conjecture on integrality of $L^2$-Betti numbers for torsion-free groups, and this is known for many classes of groups.
    If $H < G$ and the Atiyah conjecture holds for $G$, then it also holds for $H$, $D_{\Q H}$ naturally embeds as a sub-skew field of $D_{\Q G}$, and $D_{\Q G}$ is strongly Hughes-free.
    For general fields $\F$, Jaikin-Zapirain and Linton have conjectured that for any torsion-free group $G$ there is an epic, strongly Hughes-free, skew field $D_{\F G}$ containing $\F G$ which is unique up to $\F G$-isomorphism \cite{jl23}*{Conjecture 1, p.7}.
\end{remarks}

\section{Applications of the skew field theory}\label{s:applications}
Let us collect some consequences of the skew field theory from the last section.

\subsection*{Lower homology growth as skew field Betti number}
\begin{corollary}\label{nosup}
    If $X$ is a finite complex with residually torsion-free nilpotent fundamental group $G$, then
    \[
    \lb_*(X;\F )=\beta_*^{\inf}(X;\F )=b^G_*(\widetilde X;D_{\F G})\in\Z.
    \]
\end{corollary}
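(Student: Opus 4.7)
The plan is to deduce all three equalities from two ingredients already in the excerpt: Corollary \ref{c:sccomp}, which identifies the skew field Betti number with $\beta^{\inf}$ for free cocompact $G$-complexes with simply connected components, and Lemma \ref{l:mult}, which gives multiplicativity of the equivariant skew field Betti numbers in finite covers. No new approximation argument is needed.

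The middle equality is essentially immediate. The universal cover $\widetilde X$ of the connected finite complex $X$ is a free cocompact $G$-complex whose components (there is only one) are simply connected, and $G$ is residually torsion-free nilpotent by hypothesis, so Corollary \ref{c:sccomp} applied to $Y=\widetilde X$ yields
\[
b^G_k(\widetilde X;D_{\F G})=\beta^{\inf}_k(X;\F).
\]
Integrality is automatic: cocompactness of $\widetilde X$ makes the chain complex $C_*(\widetilde X)\otimes_{\F G}D_{\F G}$ consist of finite dimensional $D_{\F G}$-vector spaces, so the dimension of its homology is a non-negative integer.

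The first equality is the main content. Using multiplicativity of degree, $|X''\to X|=|X''\to X'|\cdot|X'\to X|$, the inner infimum in
\[
\lb_k(X;\F)=\sup_{X'\to X}\inf_{X''\to X'}\frac{b_k(X'';\F)}{|X''\to X|}
\]
simplifies to $\beta^{\inf}_k(X';\F)/|X'\to X|$. Let $X'\to X$ be a \emph{connected} finite cover, with subgroup $G'=\pi_1(X')<G$ of index $d=[G:G']$. The subgroup $G'$ inherits residual torsion-free nilpotence, so the middle equality applied to $X'$ combined with Lemma \ref{l:mult} gives
\[
\beta^{\inf}_k(X';\F)=b^{G'}_k(\widetilde X;D_{\F G'})=d\cdot b^G_k(\widetilde X;D_{\F G})=|X'\to X|\cdot\beta^{\inf}_k(X;\F).
\]
Hence the inner infimum equals $\beta^{\inf}_k(X;\F)$ independently of the connected cover $X'$. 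Taking $X'=X$ shows $\lb_k(X;\F)\geq\beta^{\inf}_k(X;\F)$, and by a Lemma \ref{l:con} style convex combination argument (applied to the inner-inf function rather than $\nb_k$ itself) the outer supremum may be restricted to connected covers, giving the reverse inequality and thus equality.

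I do not anticipate any real obstacle: the statement is in essence a repackaging of Corollary \ref{c:sccomp} together with Lemma \ref{l:mult}. The only mildly delicate bookkeeping is the reduction to connected covers in the outer supremum, but this is handled by the same convex-combination argument as in the proof of Lemma \ref{l:con}.
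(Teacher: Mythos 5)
Your proposal is correct and uses the same two key ingredients as the paper's proof: the identification $b^G_*(\widetilde X;D_{\F G})=\beta^{\inf}_*(X;\F)$ (you cite Corollary \ref{c:sccomp}; the paper cites Theorem \ref{t:equiv} directly, which for connected $X$ is the same statement) and multiplicativity from Lemma \ref{l:mult}. The only minor imprecision is in the final step: what is needed to control the outer supremum over possibly disconnected covers $X'$ is not quite a Lemma \ref{l:con}-style reduction (connected covers are not cofinal in $C_X$), but rather additivity of $\beta^{\inf}$ over disjoint unions via the degree-equalization trick from the unnamed lemma on disconnected spaces; this, together with the connected case, shows $\nb^{\inf}$ is the constant function $\beta^{\inf}(X)$ on all of $C_X$, whence the sup equals $\beta^{\inf}(X)$.
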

\begin{proof}
    First, note that $b^G_*(\widetilde X;D_{\F G})$ is a dimension of a vector space over a skew field, so it is an integer.
    Second, this dimension is equal to $\beta_*^{\inf}(X;\F )$ by Theorem \ref{t:equiv}.
    Third, since $b^G_*(\widetilde X;D_{\F G})$ is multiplicative by Lemma~\ref{l:mult}, the normalized $\beta_*^{\inf}$ of finite covers of $X$ are all equal to each other.
    Therefore $\lb_*=\beta_*^{\inf}$.
    This finishes the proof.
\end{proof}
So, since the lower homology growth $\lb$ is multiplicative, it can be thought of as a multiplicative extension of the skew field Betti number from residually torsion-free nilpotent fundamental groups to more general settings where there is no nice skew field around.
\subsection*{Relation between $\Q$ and $\Fp$}

For a finite complex $X$ let
\[
\gt_{k}(X)(X'):=\frac{\logtor H_k(X')}{\abs{X' \to X}}
\]
denote the normalized log of the cardinality of the torsion of the integral homology as a function on the poset of covers.
By a lemma of Gabber (\cite{abfg21}*{Proposition 9.1}) $\gt_{k}(X)$ is a bounded function.

The universal coefficient theorem implies that
\begin{equation}\label{e:uct}
    0 \leq \left( \nb_{k}(X ;\Fp)-\nb_{k}(X ; \Q) \right) \log p \leq \gt_{k}(X) + \gt_{k-1}(X).
\end{equation}

The integrality of the skew field Betti number leads to the following corollary.
\begin{corollary}\label{specialp}
    Suppose $X$ is a finite complex with virtually residually torsion-free nilpotent fundamental group $G$.
    Then for sufficiently large primes $p$ we have
    \[
    \lb_k(X;\Fp)=\lb_k(X;\Q)=b_k^{(2)}(X).
    \]
\end{corollary}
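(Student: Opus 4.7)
The plan is to first reduce to the case where $\pi_1(X)$ is itself residually torsion-free nilpotent via a finite cover, then exploit integrality of skew field Betti numbers (Corollary~\ref{nosup}) together with the universal coefficient bound \eqref{e:uct} and Gabber's lemma to conclude that the $\Fp$- and $\Q$-valued lower growths coincide once $p$ is large enough for their (bounded) difference to be strictly less than one.

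First I would let $H \leq G$ be a finite index subgroup with $H$ residually torsion-free nilpotent, and let $X' \to X$ be the associated finite cover. Since finitely generated torsion-free nilpotent groups are residually finite, $H$ is residually finite, and hence so is $G$; in particular $\lb_k(X;\Q)=b_k^{(2)}(X)$ by the corollary to Theorem~\ref{quantlueck}. By multiplicativity of $\lb_k(-;\F)$ in finite covers and of $b_k^{(2)}$, it suffices to prove $\lb_k(X';\Fp)=\lb_k(X';\Q)$ for all sufficiently large $p$.

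Next I would invoke Corollary~\ref{nosup} applied to $X'$ (whose fundamental group $H$ is residually torsion-free nilpotent) to get that $\lb_k(X';\Fp)$ and $\lb_k(X';\Q)$ are both \emph{integers}. The universal coefficient inequality \eqref{e:uct}, applied on the poset of finite covers of $X'$, gives
\[
0 \leq \nb_k(X';\Fp) - \nb_k(X';\Q) \leq \frac{\gt_k(X')+\gt_{k-1}(X')}{\log p},
\]
and by Gabber's lemma the right-hand side is bounded by some constant $B/\log p$ independent of the cover. Almost additivity of lower/upper limits (Lemma~\ref{l:posetineq}\eqref{i:sub}), applied with $f=\nb_k(X';\Q)$ and $g=\nb_k(X';\Fp)-\nb_k(X';\Q)$, yields
\[
\lb_k(X';\Q) \;\leq\; \lb_k(X';\Fp) \;\leq\; \lb_k(X';\Q) + \frac{B}{\log p}.
\]
Choosing $p$ large enough that $B/\log p < 1$ and using integrality forces equality. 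Multiplying through by $\lvert X'\to X\rvert$ and dividing gives $\lb_k(X;\Fp)=\lb_k(X;\Q)=b_k^{(2)}(X)$, as desired.

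The only real obstacle is the bookkeeping with the poset-limit operations: one must be careful to use almost additivity in the correct direction (only the lower bound on $g$ does not immediately combine with $\lb$), but the combination $\underline{f+g} \leq \underline f + \overline g$ in Lemma~\ref{l:posetineq}\eqref{i:sub} handles exactly this case. Once that is noted, integrality from the skew-field description pinches two real numbers differing by $o(1)$ to be equal, which is what makes the argument work.
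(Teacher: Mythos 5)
Your proof is correct and follows the same strategy as the paper: reduce to the residually torsion-free nilpotent case via a finite cover, use Corollary~\ref{nosup} for integrality, and combine the universal coefficient bound~\eqref{e:uct} with Gabber's lemma to pinch $\lb_k(X';\Fp)$ against $\lb_k(X';\Q)$ once $p$ is large. The paper states this in one line (phrased as $\lb_k \in \frac{1}{[G:G']}\Z$), whereas you spell out the poset-limit bookkeeping explicitly via Lemma~\ref{l:posetineq}\eqref{i:sub}; both are the same argument.
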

\begin{proof}
    This follows immediately from inequalities \eqref{e:uct} and the fact that $\lb_k \in \frac{1}{[G:G']}\zz$ for some finite index residually torsion-free nilpotent subgroup $G'$.
\end{proof}

\subsection*{Relation between $\lb$ and $\ub$}

We now explain how to use a result of Fisher \cite{f21} to reconcile vanishing of upper and lower $\F $-homology growth for finite aspherical complexes whose fundamental groups embed in right-angled Artin groups.
More precisely, Fisher needs the groups to be residually finite rationally solvable (RFRS), a condition used in Agol's fibering criterion for $3$-manifolds, see \cite{a08}*{Definition 2.1}.

The main result in Fisher's paper \cite{f21} is:
\begin{theorem}[\cite{f21}*{Theorem 6.6}] Suppose $X$ is a finite aspherical complex whose fundamental group $G$ is RFRS. Then there is a finite cover $X'\to X$ and a map $X'\to S^1$ with homotopy fibre\footnote{In this case, this is just the infinite cyclic cover of $X'$ induced by the map to $S^1$.}
    of type $FP_n(\F )$ if and only if $b^G_{k}(\widetilde X; D_{\F G})=0$ for all $k\leq n$.
\end{theorem}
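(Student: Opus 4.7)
The plan is to treat the two directions separately. The forward implication fits cleanly into the skew-field machinery developed in Section \ref{s:skewfields} via a mapping torus/Wang argument, while the reverse implication is the deep content of the theorem and combines Sikorav's Novikov homology criterion for algebraic fibering with a measure-theoretic analysis of the character sphere using the RFRS structure.

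For the forward direction, suppose a finite cover $X' \to X$ maps to $S^1$ with $FP_n(\F)$ homotopy fibre $F$; write $G' = \pi_1(X')$ and $K = \pi_1(F)$, so that $G' \cong K \rtimes \langle t \rangle$. The mapping torus description of $C_*(\widetilde{X'})$ as an $\F G'$-complex gives, after tensoring with $D_{\F G'}$, a Wang-type long exact sequence whose coefficient homologies $H_k^K(\widetilde F; D_{\F G'})$ are finite-dimensional $D_{\F G'}$-vector spaces for $k \leq n$ by the $FP_n(\F)$ hypothesis on $K$. Since $G$, and hence $G'$ and $K$, is RFRS and in particular bi-orderable, strong Hughes-freeness identifies $D_{\F G'}$ with the Ore skew field of the twisted Laurent ring $D_{\F K}[t^{\pm 1}; \sigma]$; the endomorphism $t f_* - 1$ is a non-zero element of this ring and hence invertible on any finite-dimensional $D_{\F G'}$-vector space. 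The Wang sequence then forces $H_k^{G'}(\widetilde{X'}; D_{\F G'}) = 0$ for $k \leq n$, and multiplicativity (Lemma \ref{l:mult}) gives $b_k^G(\widetilde X; D_{\F G}) = 0$.

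The reverse direction uses Sikorav's theorem: a character $\phi : G' \to \Z$ induces an algebraic fibration with $FP_n(\F)$ fibre if and only if the Novikov homology $H_k(G'; \widehat{\F G'}^{\phi})$ vanishes for $k \leq n$. RFRS provides a cofinal chain $G > G_1 > G_2 > \cdots$ of finite-index subgroups with each successive quotient abelian. The plan is to locate, for some deep enough $G_i$, a rational character $\phi_i$ whose Novikov homology is killed through degree $n$; the corresponding finite cover and map to $S^1$ then furnish the required data. For each rational character $\psi$ on $G_i$, strong Hughes-freeness embeds the Novikov completion $\widehat{\F G_i}^\psi$ into a twisted completion of $D_{\F G_i}$, and the hypothesised vanishing $b_k^{G_i} = [G:G_i] \cdot b_k^G = 0$ pinches these Novikov homologies. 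A BNSR-style measure on the rational character sphere $S(G_i) = (H^1(G_i; \R) \setminus \{0\})/\R_{>0}$ is then defined whose total mass is controlled by the skew-field Betti numbers and whose support is the complement of $\Sigma^n(\F; G_i)$.

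The main obstacle is this last measure-theoretic comparison: passing from the global vanishing of $b_k^G(\widetilde X; D_{\F G})$ to the existence of an integral character of some $G_i$ lying in $\Sigma^n$. Descending the RFRS tower forces these measures to concentrate on rational (hence, up to positive scaling, integral) characters, so once their total masses are shown to vanish, a compactness argument inside the rational character sphere of some $G_i$ produces the sought-after integral $\phi_i$. This is the technical heart of Kielak's argument for $\F = \Q$ and of Fisher's extension to arbitrary fields, and it hinges on a careful approximation relating Novikov homology of the $G_i$ to skew-field homology of $G$ through the RFRS chain.
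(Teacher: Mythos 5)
The paper cites this as \cite{f21}*{Theorem 6.6} without reproducing a proof, so there is no internal argument to compare against; I evaluate your sketch on its own terms.

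Your forward direction is correct in outline. The mapping torus presentation of $C_*(\widetilde{X'})$, the resulting Wang sequence with $D_{\F G'}$ coefficients, the Hughes-free identification of $D_{\F G'}$ with the Ore localization of $D_{\F K}[t^{\pm 1};\sigma]$, and the invertibility of $1 - t f_*$ on finite-dimensional $D_{\F G'}$-modules together kill $H_k^{G'}(\widetilde{X'}; D_{\F G'})$ for $k \le n$, and multiplicativity (Lemma~\ref{l:mult}) descends the vanishing to $G$. This is the standard argument and is closely related to the proof of Theorem~\ref{mappingtorustheorem}.

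Your reverse direction names the right ingredients -- Sikorav's Novikov-homology criterion, the RFRS tower, an invariant attached to the character sphere controlled by the skew-field Betti numbers -- but the two claims on which the sketch turns are not correct as stated. The invariant Kielak constructs, and Fisher adapts to arbitrary $\F$, is not a measure whose support is the complement of $\Sigma^n(\F; G_i)$; it is a polytope (equivalently a Thurston-type seminorm), and vanishing of the skew-field Betti numbers is used to show that after descending the RFRS tower the Novikov homology vanishes for an open, nonempty set of directions on the character sphere of some $G_i$, from which one extracts a rational (and hence, after clearing denominators, integral) character. And the RFRS tower does not cause any measure to ``concentrate on rational characters''; rational directions are dense but have measure zero under any reasonable measure, and the argument simply combines density of rational directions with openness of the good set. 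The actual mechanism -- the inductive descent through the RFRS chain that transfers vanishing over $D_{\F G}$ into Novikov vanishing in some direction -- is exactly what your sketch gestures at but does not supply, as you yourself acknowledge. As written, the reverse direction is a list of ingredients with two of them mischaracterised rather than a proof.
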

In other words, the numbers $b^G_k(\widetilde X;D_{\F G})$ are the \emph{only} $\F $-homological virtual fibering obstructions for a finite aspherical complex $X$ in this setting.
By Corollary 2.3 of \cite{a08}, RAAGs are RFRS and it is not hard to see that the RFRS property passes to subgroups.
This along with Theorem \ref{mappingtorustheorem} implies:
\begin{prevthm}
    {\ref{only if}} Suppose $X$ is a finite aspherical complex whose fundamental group $G$ embeds in a right-angled Artin group.
    Then $\lb_k(X;\F )=0$ for all $k\leq n$ if and only if $\ub_k(X;\F )=0$ for all $k\leq n$.
\end{prevthm}
\begin{proof}
    By Corollary~\ref{nosup} we have $b^G_k(\widetilde X;D_{\F G})=\lb _k(X;\F )\leq\ub_k(X;\F )$.
    Suppose that $b^G_k(\widetilde X;D_{\F G})=0$ for all $k\leq n$.
    Then, by Fisher's theorem there is a finite cover $X'\to X$ and a further regular infinite cyclic cover $\hat X'\to X'$ such that $\hat X'$ is of type $FP_n(\F )$.
    In other words, $X'$ is homotopy equivalent to the mapping torus $T_g$ of the covering translation $g:\hat X' \to \hat X'$.
    Therefore for $k\leq n$ we have, by Theorem \ref{mappingtorustheorem}, $0=\ub_k(T_g;\F )=\ub_k(X';\F )$, and by multiplicativity of $\ub$ we conclude that $\ub_k(X;\F )=0$.
\end{proof}
Poincar\'e duality implies:
\begin{corollary}\label{c:Babove}
    If $M$ is a closed aspherical manifold whose fundamental group embeds in a right-angled Artin group, then $\ub_{>k}(M;\F )=0$ if and only if $\lb_{>k}(M;\F )=0$.
\end{corollary}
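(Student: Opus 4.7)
The plan is to derive both directions from the symmetric statements that $\lb$ and $\ub$ satisfy Poincaré duality on $M$ (at the level of the poset of covers) and then feed the result into Theorem~\ref{only if}. One direction is essentially free: since $\lb\leq\ub$ by Lemma~\ref{l:posetineq}, vanishing of $\ub_{>k}(M;\F)$ forces vanishing of $\lb_{>k}(M;\F)$. For the converse, granting the two dualities $\lb_j(M;\F)=\lb_{n-j}(M;\F)$ and $\ub_j(M;\F)=\ub_{n-j}(M;\F)$ with $n=\dim M$, the chain
$$
\lb_{>k}(M;\F)=0 \;\Longleftrightarrow\; \lb_{\leq n-k-1}(M;\F)=0 \;\Longleftrightarrow\; \ub_{\leq n-k-1}(M;\F)=0 \;\Longleftrightarrow\; \ub_{>k}(M;\F)=0
$$
finishes the proof: the outer equivalences are the two Poincaré dualities and the middle one is Theorem~\ref{only if} applied with $X=M$, which is a finite aspherical complex whose fundamental group embeds in a RAAG.

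The main step is therefore the Poincaré duality for $\lb$ and $\ub$, which I would handle uniformly at the level of the normalized Betti function $\nb$ on the poset $C_M$. Let $M_0\to M$ be $M$ itself when $M$ is $\F$-orientable (automatic for $\mathrm{char}\,\F=2$) and the orientable double cover otherwise, so that $M_0\to M$ is a finite cover and every further finite cover $M''\to M_0$ is a closed $\F$-orientable $n$-manifold. Classical Poincaré duality then gives $b_j(M'';\F)=b_{n-j}(M'';\F)$, and after dividing by $\abs{M''\to M}$ this shows that the functions $\nb_j(M;\F)$ and $\nb_{n-j}(M;\F)$ agree on the tail $T\subset C_M$ of covers factoring through $M_0$. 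Lemma~\ref{l:posetineq}(4) asserts that the upper and lower limits over $C_M$ are computed by restriction to any tail, so $\lb_j(M;\F)=\lb_{n-j}(M;\F)$ and $\ub_j(M;\F)=\ub_{n-j}(M;\F)$ both follow at once.

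The only mildly delicate point I anticipate is the orientation character: for $\F=\Q$ or $\F=\Fp$ with $p$ odd, one cannot simply quote ordinary Poincaré duality on $M$, which is why passage to $M_0$ is necessary. This is handled cleanly by the tail statement in Lemma~\ref{l:posetineq}(4), with no need to invoke twisted coefficients or a twisted skew-field Poincaré duality. One could alternatively prove the $\lb$-duality by combining Corollary~\ref{nosup} with Poincaré duality over the involutive skew field $D_{\F G}$, but the poset-limit argument above treats $\lb$ and $\ub$ in parallel and is the route I would take. Overall I expect the whole proof to fit in just a few lines.
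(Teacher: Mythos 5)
Your proposal is correct and takes essentially the same route as the paper, whose entire proof of this corollary is the phrase ``Poincar\'e duality implies,'' read against Theorem~\ref{only if}. You have simply made explicit what that phrase elides: passing to the $\F$-orientation double cover $M_0$, noting that covers factoring through $M_0$ form a tail of $C_M$ on which classical Poincar\'e duality equates $\nb_j$ with $\nb_{n-j}$, and invoking Lemma~\ref{l:posetineq}(4) to conclude $\lb_j(M;\F)=\lb_{n-j}(M;\F)$ and $\ub_j(M;\F)=\ub_{n-j}(M;\F)$ before feeding the translation into Theorem~\ref{only if}.
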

\begin{remark}
    It follows from Jaikin-Zapirain's work in \cite{j21} that Theorem \ref{p:infsup} holds more generally for finite aspherical complexes $X$ with RFRS fundamental group, as RFRS groups are residually (amenable locally indicable).
\end{remark}

\section{Graph products}\label{s:ht}
\label{s:graphproducts}

In this section, we estimate the homology growth of graph products of finite groups.
In low dimensions, this will give us many examples of hyperbolic groups where we have good control over the homology growth; in particular we can construct hyperbolic groups where the homological growth depends on the coefficient field.

\subsection*{Graph products of finite groups}

Let $L$ be a flag complex with vertex set $S$, and suppose $\{G_s\}_{s \in S}$ is a collection of nontrivial groups indexed by $S$.
Let $G_L$ be the corresponding graph product.
Given a simplex $\gs$ in $L$, we let $G_\gs = \prod_{s \in \gs} G_s$ denote the corresponding special subgroup of $G_L$.
Any graph product $G_L$ acts naturally on a right-angled building of type $(W_L,S)$, which we now describe.

Let $\Cone{L}$ be the geometric realization of the poset of simplices of $L$.
Then $\Cone{L}$ is isomorphic to the cone on the barycentric subdivision of $L$, with the empty simplex corresponding to the cone point.
Let $\d \Cone{L}$ be the geometric realization of the poset of nonempty simplices of $L$, which corresponds to simplices in $\Cone{L}$ not containing the cone point.
Recall that a \emph{mirrored complex} consists merely of a complex $X$ and a collection of subcomplexes $\{X_s\}_{s \in S}$ for some index set $S$.
There is a canonical mirror structure on $\Cone{L}$ with mirrors $\{K_s\}_{s \in S}$; the $s$-mirror $K_s$ is the geometric realization of the subposet of simplices containing the vertex $s$.
This is isomorphic to the star of $s$ in the barycentric subdivision of $L$.
\begin{figure}
    \begin{tikzpicture}[>=Stealth]
        \node[name=s, shape=regular polygon, minimum size=2cm, draw ] {}; \foreach \anchor/\placement/\a in {corner 1/above/b, corner 2/above/a, corner 3/left/e\vphantom{d}, corner 4/right/d, corner 5/above/c }
        \draw[shift=(s.\anchor),fill] circle(1pt) node[\placement] {${}_{\a}$};
        \node[below=1.5cm] at (s) {$L$};
        \begin{scope}[xshift=4cm]
            \node[name=s, shape=regular polygon, minimum size=2cm, draw, fill=gray!30, very thick, outer sep=0 ] {}; \foreach \anchor/\placement/\a in {corner 1/above/b, corner 2/above/a, corner 3/left/e\vphantom{d}, corner 4/right/d, corner 5/above/c, side 1/above/ab, side 2/left/ae, side 3/below/de, side 4/right/cd, side 5/above/bc, center/above/} {
            \draw[shift=(s.\anchor),fill] circle(1pt) node[\placement] {${}_{\a}$};
            \draw (s.center)--(s.\anchor); }
            \node[outer sep=0pt, inner sep=1pt, minimum size=0, name =d] at (s.144){};
            \node[above left=.5cm] at (d) {$\d K_{L}$} edge[ ->, bend left, >=Stealth] (d);

            \node[below=1.5cm] at (s) {$K_{L}$};
        \end{scope}
        \begin{scope}[xshift=8cm]
            \node[name=s, shape=regular polygon, minimum size=2cm, draw, rotate=36 ] {}; \foreach \anchor/\placement/\a in {corner 1/above/ab, corner 2/left/ae, corner 3/below/de, corner 4/right/cd, corner 5/above/bc, side 1/left/a, side 2/left/e, side 3/right/d, side 4/right/c, side 5/above/b, center/above/} {
            \draw[shift=(s.\anchor),fill] circle(1pt) node[\placement] {${}_{\a}$}; } \foreach \anchor/\placement/\a in { side 1/left/a, side 2/left/e, side 3/right/d, side 4/right/c, side 5/above/b, center/above/}
            \draw (s.center)--(s.\anchor);
            \draw [very thick] (s.corner 1) -- (s.corner 2) ;
            \node[inner sep=1pt, minimum size=0, name =d] at (s.105){};
            \node[left=.5cm] at (d) {$K_{a}$} edge[ ->, bend left] (d);

            \draw [very thick] (s.side 2) -- (s.center) -- (s.side 5) ;
            \node[above=.1cm, inner sep=1pt, minimum size=0, name =v] at (s.center){};
            \node[right=1.3cm] at (v) {$K_{\Lk(a)}$} edge[ ->, bend right] (v);

            \node[below=1.5cm] at (s) {Cubical structure on $K_{L}$};
        \end{scope}
    \end{tikzpicture}
    \caption{The Davis chamber $K_L$ and its cubical structure.
    The cone vertex in $K_L$ corresponds to the empty simplex in $L$, and $\partial K_L$ consists of simplices/cubes not containing the cone vertex.}
\end{figure}
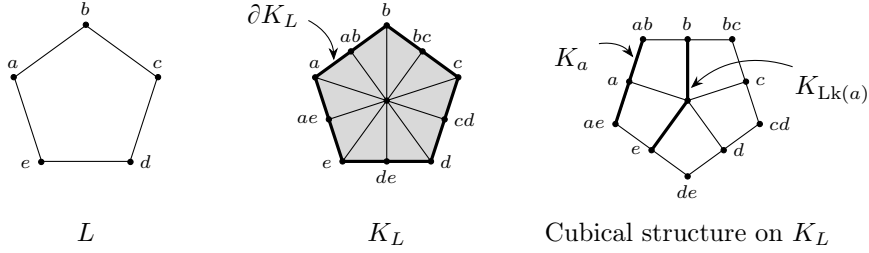

Let $x$ be a point in $\Cone{L}$, and let $\tau$ be a simplex containing $x$.
Then $\tau$ corresponds to a chain of simplices in $L$; let $\gs(x)$ be the smallest element in this chain.
We define
\[
\cu(G_L, \Cone{L}) = G_L \times \Cone{L}/ \sim
\]
where $(g,x) \sim (g', x')$ if and only if $x = x'$ and $gG_{\gs(x)} = g'G_{\gs(x)}$.

If $W \cong G_L$ is a right-angled Coxeter group, then $\cu(G_L, \Cone{L})$ is the Davis complex \cite{d08}*{Chapter 7}, which we denote by $\gS_L$.
If $G_L$ is any graph product, then $\cu(G_L, \Cone{L})$ is a right-angled building with apartments isomorphic to $\gS_L$.
In general, $G_L$ acts on $\cu(G_L, \Cone{L})$ with strict fundamental domain $\Cone{L}$.
The stabilizers of simplices are conjugates of $G_\gs$ for $\gs \subset L$.

From now on, we assume that the groups $G_s$ are all finite.
Then the right-angled building admits the structure of a locally finite CAT(0) cube complex, which we now describe.
Firstly, $\Cone{L}$ can be naturally identified with a CAT(0) cubical subcomplex of $[0,1]^{S}$; $\Cone{L}$ is precisely the union of subcubes of $[0,1]^{S}$ containing $(0,0, \dots, 0)$ corresponding to the collections of vertices which span simplices in $L$.
Note that the link of the vertex $(0,0,\dots,0)$ is isomorphic to $L$, and the vertices of each cube can be identified with vertices of the barycentric subdivision of $L$.

For a vertex $s$ the mirror $K_{s}$ is the intersection of $\Cone{L}$ with the hyperplane $x_{s}=1$.
It is naturally isomorphic to the cubical complex $\Cone{\Lk(s)}$.
There is another parallel embedding of $\Cone{\Lk(s)}$ into $\Cone{L}$ as a subchamber, given by the intersection with the coordinate hyperplane $x_{s}=0$.
We will use both embeddings in the paper.

This cubical structure extends to $\cu(G_L, \Cone{L})$.
The link of any vertex in $\cu(G_L, \Cone{L})$ is isomorphic to the multiple join $\Lk(\gs) * F_1 * \dots * F_{\dim \gs+1}$ of a link of some simplex $\gs$ in $L$ with finite discrete sets $F_i$ (it suffices to consider vertices in $K_L$, in which case the vertex is the barycentre of $\gs$ and the sets $F_i$ come from translates of $K_L$ by the local group $G_\gs = \prod_{s \in \gs} G_s$).
These are all flag complexes since $L$ is flag.
By \cite{d08}*{Theorem 18.3.1}, $\cu(G_L,\Cone{L})$ is simply connected, hence $\cu(G_L, \Cone{L})$ is CAT(0) by Gromov's criterion.

\subsection*{Estimating homology growth of graph products}
\begin{Theorem}\label{t:modpl2}
    Let $G_{L}$ be a graph product of finite groups.
    Suppose $\gG < G_{L}$ is a torsion free finite index subgroup and let $n$ denote its index.
    Then
    \[
    \abs{b_{i}(\gG; \F)/n -\tilde b_{i-1}(L; \F)} \leq 2\abs{\d \Cone{L}}/\min \abs{G_{s}}
    \]
    where $\abs{\d \Cone{L}}$ is the number of cubes in $\d \Cone{L}$ and $\tilde b_*$ are the reduced Betti numbers.
    Furthermore, if $i = \dim L + 1$, then
    \[
    b_{i}(\gG; \F)/n \leq b_{i-1}(L; \F)
    \]
\end{Theorem}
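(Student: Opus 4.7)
The plan is to model $B\Gamma$ by a finite cube complex and then decompose its cellular chain complex along the mirrors of the building $\cu(G_L,\Cone{L})$. Since $\Gamma$ is torsion free and acts freely on this CAT(0) building, the quotient $K_\Gamma:=\cu(G_L,\Cone{L})/\Gamma$ is a finite $K(\Gamma,1)$ and $b_i(\Gamma;\F)=b_i(K_\Gamma;\F)$. Using the embedding $\Cone{L}\subset[0,1]^S$, cubes of $\Cone{L}$ are indexed by pairs of disjoint simplices $(\sigma_0,\sigma_1)$ with $\sigma_0\cup\sigma_1\in L$ (with ``free'' coordinates $\sigma_0$ and ``fixed-at-$1$'' coordinates $\sigma_1$); such a cube has dimension $|\sigma_0|$ and $G_L$-stabiliser $G_{\sigma_1}$. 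Because $\Gamma$ is torsion free and $G_{\sigma_1}$ is finite, the right $G_{\sigma_1}$-action on $\Gamma\backslash G_L$ is free, so the $(\sigma_0,\sigma_1)$-cube of $\Cone{L}$ contributes exactly $n/|G_{\sigma_1}|$ cubes to $K_\Gamma$.

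Next, I separate the interior cubes ($\sigma_1=\emptyset$, indexed by simplices of $L$ together with the empty simplex) from the boundary cubes ($\sigma_1\neq\emptyset$). The latter span a subcomplex $K_\Gamma^\partial\subset K_\Gamma$, namely the image of all $G_L$-translates of $\d\Cone{L}$, and its complement is the disjoint union of exactly $n$ copies of $\Cone{L}\setminus\d\Cone{L}$. This produces a short exact sequence of chain complexes
\[
0\to C_*(K_\Gamma^\partial;\F)\to C_*(K_\Gamma;\F)\to\bigoplus_{j=1}^n C_*(\Cone{L},\d\Cone{L};\F)\to 0.
\]
Since $\Cone{L}$ is a cone (contractible) and $\d\Cone{L}\simeq L$ (standard cubical picture), $H_i(\Cone{L},\d\Cone{L};\F)\cong\tilde H_{i-1}(L;\F)$. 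The induced long exact sequence then yields
\[
|b_i(\Gamma;\F)-n\,\tilde b_{i-1}(L;\F)|\leq b_i(K_\Gamma^\partial;\F)+b_{i-1}(K_\Gamma^\partial;\F).
\]

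For the first inequality, I bound the right-hand side by cell counts: the number of $i$-cubes in $K_\Gamma^\partial$ equals $\sum_{|\sigma_0|=i,\sigma_1\neq\emptyset} n/|G_{\sigma_1}|\leq (n/\min_s|G_s|)\cdot|\{i\text{-cubes of }\d\Cone{L}\}|$. Summing in degrees $i$ and $i-1$ and dividing by $n$ produces the bound $2|\d\Cone{L}|/\min_s|G_s|$. For the top-dimensional case, the key point is that $\dim\d\Cone{L}=\dim L$, so $H_{\dim L+1}(K_\Gamma^\partial;\F)=0$; the long exact sequence then gives an injection $H_{\dim L+1}(K_\Gamma;\F)\hookrightarrow\bigoplus^n H_{\dim L}(L;\F)$, i.e. $b_{\dim L+1}(\Gamma;\F)/n\leq b_{\dim L}(L;\F)$. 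The main obstacle is the chain-level splitting: verifying that the interior cubes form a quotient chain complex isomorphic to $n$ copies of $C_*(\Cone{L},\d\Cone{L})$. This reduces to the observations that interior cubes of $\Cone{L}$ have trivial $G_L$-stabiliser (hence give $n$ disjoint copies in $K_\Gamma$) and that their boundary incidences in $K_\Gamma$, modulo $K_\Gamma^\partial$, are exactly the relative incidences in $(\Cone{L},\d\Cone{L})$.
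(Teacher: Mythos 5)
Your proof is correct and essentially coincides with the paper's: both identify $K_\Gamma=\cu(G_L,\Cone{L})/\Gamma$ as the relevant finite $K(\Gamma,1)$, use that a cube with "fixed-at-1" set $\sigma_1$ has $n/\abs{G_{\sigma_1}}$ preimages, and run the long exact sequence of the pair $(K_\Gamma, p^{-1}(\d\Cone{L}))$, bounding the outer terms by cube counts and identifying the relative term with $n$ copies of $H_*(\Cone{L},\d\Cone{L};\F)\cong \tilde H_{*-1}(L;\F)$. Your phrasing via a short exact sequence of chain complexes is just a reformulation of the same pair sequence, and the top-degree argument ($\dim \d\Cone{L}=\dim L$ forces $H_{\dim L+1}(Y)=0$) is exactly the paper's.
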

\begin{proof}

    Since all $G_{L}$ stabilizers are finite, $\gG$ acts freely on $\cu$, so we want to estimate the homology of $X=\cu/\gG$.
    Let $p: X \to \Cone{L}$ denote the projection and let $Y=p^{-1}(\d \Cone{L})$.
    $X$ is tiled by copies of $\Cone{L}$ intersecting along $Y$.

    For a cube $\gs$ in $\Cone{L}$ the number of preimages $\abs{p^{-1}(\gs)}$ is $n/\abs{G_{\min \gs}}$, where $\min \gs$ is the smallest element in the chain corresponding to $\gs$, and $G_{\min \gs}$ is the corresponding special subgroup.
    In particular, the cubes in $\Cone{L} - \d \Cone{L}$ have $n$ preimages, and the cubes in $\d \Cone{L}$ have at most $n/\min \abs{G_{s}}$ preimages.
    Thus in the long exact sequence
    \[
    \dots \to H_{i}(Y;\F) \to H_{i}(X;\F) \to H_{i}(X, Y;\F) \to H_{i-1}(Y;\F) \to \dots
    \]
    the dimensions of the first and the last terms are bounded by $\abs{Y}\leq n \abs{\d \Cone{L}}/\min \abs{G_{s}}$ and the relative term is isomorphic to $\oplus_{n} H_{i}(\Cone{L}, \d \Cone{L}; \F)$, so its dimension is $n \tilde b_{i-1}(L; \F)$.
    Therefore,
    \[
    \abs{b_{i}(X; \F)/n -\tilde b_{i-1}(L; \F)} \leq 2\abs{\d \Cone{L}}/\min \abs{G_{s}}.
    \]
    If $i = \dim L + 1$, then $H_{i}(Y;\F) = 0$ so we get $b_{i}(\gG; \F)/n \leq b_{i-1}(L; \F)$.
\end{proof}
\begin{corollary}\label{c:modpl2}
    If $G_{L}$ is a graph product of finite groups, then
    \[
    \abs{\lb_{i}(G_L; \F) -\tilde b_{i-1}(L; \F)} \leq 2\abs{\d \Cone{L}}/\min \abs{G_{s}}.
    \]
\end{corollary}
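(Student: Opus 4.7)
The strategy is a short limiting argument directly from Theorem~\ref{t:modpl2}, which already supplies the key uniform pinching estimate: for \emph{every} torsion-free finite index subgroup $\Gamma<G_{L}$, the normalized Betti number $b_{i}(\Gamma;\F)/[G_{L}:\Gamma]$ lies within $2|\d\Cone{L}|/\min|G_{s}|$ of $\tilde b_{i-1}(L;\F)$. The crucial feature is that this bound does not depend on $\Gamma$, which is exactly what allows passage to the infima and suprema in the definition of $\lb$.

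First I would fix a torsion-free finite index subgroup $\Gamma_{0}<G_{L}$; one exists because $G_{L}$ virtually embeds in the right-angled Artin group $A_{L}$ via the commensurability results of \cite{js01} recalled in the introduction. Let $X:=\cu/\Gamma_{0}$, a finite aspherical complex. Under the standard convention $\lb_{i}(G_{L};\F):=\lb_{i}(X;\F)/[G_{L}:\Gamma_{0}]$, which is independent of $\Gamma_{0}$ by multiplicativity of $\lb$ in finite covers, it suffices to bound $\lb_{i}(X;\F)/[G_{L}:\Gamma_{0}]$.

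Next I would recast Theorem~\ref{t:modpl2} as a uniform bound on the normalized Betti number function on covers of $X$. Every finite cover $X''\to X$ corresponds to a further finite index subgroup $\Gamma''<\Gamma_{0}$, which is again torsion-free of finite index in $G_{L}$ with $[G_{L}:\Gamma'']=[G_{L}:\Gamma_{0}]\cdot|X''\to X|$. Applying Theorem~\ref{t:modpl2} to $\Gamma''$ and rearranging yields
\[
\left|\frac{1}{[G_{L}:\Gamma_{0}]}\cdot\frac{b_{i}(X'';\F)}{|X''\to X|}-\tilde b_{i-1}(L;\F)\right|\leq\frac{2|\d\Cone{L}|}{\min|G_{s}|},
\]
so the rescaled normalized Betti function of $X$ is pinched in a fixed interval around $\tilde b_{i-1}(L;\F)$ on every finite cover.

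Finally, invoking the sup-inf definition of $\lb$, every inner infimum (and hence every outer supremum) over finite covers of $X$ lands inside this same pinched interval; the division by $[G_{L}:\Gamma_{0}]$ is absorbed into $\lb_{i}(G_{L};\F)$ and gives the claimed estimate. I do not anticipate a serious obstacle here---all of the substance of the corollary is already contained in Theorem~\ref{t:modpl2}, and what remains is purely bookkeeping with the indices.
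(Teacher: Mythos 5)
Your proposal is correct, and it is essentially the argument the paper leaves implicit (the corollary is stated without a separate proof, precisely because it follows from Theorem~\ref{t:modpl2} by the uniformity-in-$\Gamma$ of the estimate). The key observation you correctly isolate is that every finite cover $X''\to X=\cu/\Gamma_{0}$ corresponds to a torsion-free finite-index subgroup $\Gamma''<G_{L}$, so Theorem~\ref{t:modpl2} pinches the (rescaled) normalized Betti function uniformly over the entire poset of covers, and the $\sup$-$\inf$ defining $\lb$ must therefore land in the same interval; the multiplicativity of $\lb$ in finite covers makes the normalization by $[G_{L}:\Gamma_{0}]$ consistent with the notation $\lb_{i}(G_{L};\F)$.
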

\begin{remarks}
    A similar estimate in terms of thickness holds for torsion free uniform lattices in chamber transitive buildings.
    For sufficiently large thickness $L^{2}$-Betti numbers of such buildings were computed in \cite{ddjo07}*{Theorems 10.4(ii) and 13.8}.
    The exact formula there has extra terms involving homology of certain subcomplexes of $\d \Cone{L}$, however their contribution is of order 1/thickness.

    For right-angled Artin groups $A_L$, \cite{aos21} gives the exact computation $\lb_i(A_L;\F)=\tilde b_{i-1}(L;\F)$, so $|\lb_i(G_L;\F)-\lb_i(A_L;\F)|\leq 2|\d \Cone{L}|/\min|G_s|$.
    We conclude that the homology growth of a graph product of finite groups converges to the homology growth of the corresponding right-angled Artin group as the minimum orders of the groups go to infinity.
\end{remarks}

\subsection*{Flag no-square triangulations}

A flag simplicial complex $L$ is \emph{no-square} if each simplicial cycle of length four has a diagonal.
If $L$ is equipped with a piecewise spherical metric with all edge lengths = $\pi/2$ (i.e. $L$ is \emph{all right}), then the flag no-square condition is equivalent to $L$ not having any closed geodesics of length $\le 2\pi$.
Suppose $L$ is a flag no-square simplicial complex, and $G_L$ is a graph product of finite groups based on $L$.
In this case, the right-angled building $\cu(G_L, \Cone{L})$ admits a $G_L$-equivariant CAT$(-1)$ metric, and in particular, $G_L$ is a hyperbolic group \cite{m96}, see also \cite{d08}*{Corollary 18.3.10}.
Note that any graph product of nontrivial groups based on a $4$-cycle contains free abelian subgroups of rank $2$, hence if $L$ is not no-square then $G_L$ is not hyperbolic.
The construction of the flag no-square triangulations we require is due to Przytycki and \'Swi\k{a}tkowski.
\begin{Theorem}[\cite{ps09}*{Corollary 2.14}]\label{t:nosquare}
    Let $L$ be a simplicial complex of dimension $\le 3$.
    Then there is a subdivision $L'$ of $L$ which is flag no-square.
\end{Theorem}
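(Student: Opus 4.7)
The natural strategy is to construct, by induction on dimension, a canonical flag no-square triangulation $T^d$ of the standard $d$-simplex $\Delta^d$ (for $d \le 3$) satisfying the following compatibility property: the restriction of $T^d$ to any $k$-face of $\Delta^d$ agrees with $T^k$ under the obvious affine identification. Having built such a family $\{T^d\}_{d\le 3}$, a compatible subdivision $L'$ of $L$ can be produced by fixing a linear order on the vertices of $L$ and applying $T^{\dim \sigma}$ to each simplex $\sigma$ of $L$ in the order-respecting way; the compatibility of the $T^d$ on faces ensures that these local pieces glue together to a simplicial subdivision of $L$.

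For dimension $0$, $T^0$ is trivially the single vertex. For dimension $1$, let $T^1$ subdivide an edge into $N$ sub-edges for a constant $N$ to be chosen after analyzing the higher-dimensional cases. For dimension $2$, take a triangulation of the triangle that extends the prescribed subdivision of its boundary---e.g.\ a barycentric-type refinement fine enough that no two vertices at combinatorial distance $2$ are joined by additional short paths---and check by direct inspection of its $1$-skeleton that every $4$-cycle admits a diagonal. For dimension $3$, extend the chosen triangulation of $\partial \Delta^3 \cong S^2$ to the solid tetrahedron by coning to the barycentre and then applying a further subdivision to destroy any empty squares that arise from interactions between the coning rays and edges on the boundary.

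The principal obstacle is the dimension $3$ step: empty squares can form in subtle ways that mix boundary edges with interior edges produced by the coning. The key combinatorial point is to fix $N$ (the edge-subdivision parameter) large enough that any cycle of length $\le 4$ in the subdivided complex must lie inside the union of stars of a single vertex, reducing the no-square check to a finite list of local configurations. The flag and no-square properties are both local conditions on vertex links, so once the local models pass inspection, both conditions hold simplex-by-simplex in $L'$. The final check is that $4$-cycles in $L'$ which cross between adjacent simplices of $L$ still have diagonals; this again reduces to a local check on vertex links, and follows from compatibility of the $T^d$ on shared faces together with the trapping of short cycles inside stars of single vertices.
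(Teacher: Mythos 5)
The paper cites this result from Przytycki--\'Swi\k{a}tkowski without reproducing a proof, so there is no internal argument to compare against; I will instead assess the proposal on its own terms.

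Your outline has the right shape in spirit (compatible local models glued via a vertex order), which is indeed the general flavor of such extension arguments, but as written it has genuine gaps that sit exactly at the mathematical core of the theorem. The most serious one is the dimension-$3$ step, where you write that after coning to the barycentre you then ``apply a further subdivision to destroy any empty squares.'' This is circular: the entire content of the theorem is that such a subdivision exists. Subdivision does \emph{not} in general destroy empty squares --- it routinely creates them. Links of new vertices can be $4$-cycles, and barycentric-type subdivisions of a $2$-sphere boundary together with coning rays are exactly the kind of configuration in which empty squares appear. That this problem can be solved only in dimension $\le 3$ (there is \emph{no} flag no-square triangulation of $S^4$, as the paper itself notes, quoting Moussong/Vinberg) shows that the existence of a fine enough subdivision is a delicate, dimension-dependent fact, not something that ``further subdivision'' handles generically.

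The local-to-global reduction is also asserted rather than proved. You claim that taking the edge-subdivision parameter $N$ large enough traps any combinatorial $4$-cycle inside the star of a single vertex, but a $4$-cycle lives in the $1$-skeleton and its length is measured in edges, not in the ambient metric --- increasing $N$ makes edges shorter but does not by itself prevent a $4$-cycle from running around the subdivided boundary of a $2$-face or straddling a shared $2$-face $\tau$ between two tetrahedra. For a cycle $v_1 v_2 v_3 v_4$ with $v_1, v_3 \in \tau$ and $v_2, v_4$ on opposite sides of $\tau$, the diagonal $v_2 v_4$ cannot exist, so you are forced to show $v_1 v_3$ is an edge; this is a nontrivial coherence condition on the interaction between the interior-coning edges of $T^3$ and the restriction of $T^3$ to a $2$-face, and nothing in the proposal establishes it. Finally, even if short cycles were trapped in vertex stars of $L'$, such a star typically spans several simplices of $L$, so the check is not ``simplex-by-simplex.'' What is missing, in short, is the actual construction of the local models and a proof that they fit together without creating empty squares across face boundaries; these are precisely the steps that Przytycki and \'Swi\k{a}tkowski carry out, and they are the hard part.
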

\begin{remark}
    Theorem \ref{t:nosquare} only holds in dimensions $\le 3$, for instance there is no flag no-square triangulation of any $4$-dimensional homology sphere \cite{d08}*{Proposition I.6.6}.
    In this generality, this is due to Moussong.
    Vinberg had earlier shown the non-existence of compact right-angled convex polytopes in hyperbolic $n$-space for $n > 4$ (the dual of the boundary would be a flag no-square triangulation of $S^n$ for $n > 3$).
\end{remark}

\subsection*{Dependence on the coefficient field}

Note that combining Theorem \ref{t:modpl2} with Theorem \ref{t:nosquare} immediately gives some low-dimensional hyperbolic groups with $\lb_{i}(G; \F_p) > \lb_{i}(G; \Q)$ (for example for $p=2$, we can take $G_L$ to be a graph product of large finite groups over a flag no-square triangulation of $\rr P^2$).
In fact, a construction of flag no-square triangulations due to Osajda \cite{o13} gives examples in all dimensions:
\begin{Theorem}
    For any prime $p$ and $i \geq 2$ there is a hyperbolic right-angled Coxeter group $W$ with
    \[
    \lb_i(W; \F_p) > \lb_i(W; \Q).
    \]
\end{Theorem}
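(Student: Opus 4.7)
The plan is to combine Osajda's construction of flag no-square complexes with the graph-product estimate of Corollary~\ref{c:modpl2} to build a hyperbolic graph product of large cyclic groups whose $\F_p$- and $\Q$-homology growth differ in degree $i$, and then transfer this example to a right-angled Coxeter group via the commensurability results of \cite{js01}.

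First I would produce a flag no-square simplicial complex $L$ with
\[
\tilde b_{i-1}(L;\F_p) > \tilde b_{i-1}(L;\Q).
\]
A concrete source of such a Betti-number gap is a Moore space $M(\Z/p, i-2)$: its integral homology has a $\Z/p$ in degree $i-2$ and is zero elsewhere, so the universal coefficient theorem gives $\tilde b_{i-1}(M;\F_p) = 1$ while $\tilde b_{i-1}(M;\Q) = 0$. When $i-1 \leq 3$, Theorem~\ref{t:nosquare} of Przytycki--\'Swi\k{a}tkowski subdivides such a polyhedron into a flag no-square complex without changing the topology, and hence preserves the gap. When $i-1 \geq 4$ this subdivision freedom disappears, and here I would invoke Osajda's construction \cite{o13} of flag no-square $d$-complexes of arbitrary dimension, applied with enough topological flexibility (e.g.\ realizing a prescribed finite group or torsion class in a chosen degree) to yield an $L$ with the required $\F_p$ versus $\Q$ gap in degree $i-1$. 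This is the main obstacle and I expect it to require a careful tweaking of Osajda's construction rather than a black-box application.

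Given such $L$, I would form the graph product $G_L$ with $G_v = \Z/m$ for every vertex and $m$ chosen large. Because $L$ is flag no-square, the right-angled building $\cu(G_L, \Cone{L})$ supports a $G_L$-equivariant CAT$(-1)$ metric by Moussong's criterion, so $G_L$ is hyperbolic. Corollary~\ref{c:modpl2} applied to $\F = \F_p$ and to $\F = \Q$ yields
\[
\bigl|\lb_i(G_L;\F) - \tilde b_{i-1}(L;\F)\bigr| \leq \frac{2|\d \Cone{L}|}{m}.
\]
Choosing $m$ so that $4|\d \Cone{L}|/m < \tilde b_{i-1}(L;\F_p) - \tilde b_{i-1}(L;\Q)$ forces $\lb_i(G_L;\F_p) > \lb_i(G_L;\Q)$.

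Finally, by \cite{js01} the graph product $G_L$ of finite groups is commensurable with the right-angled Coxeter group $W_{OL}$ attached to the octahedralization of $L$. Hyperbolicity is a commensurability invariant, so $W := W_{OL}$ is hyperbolic. Moreover, $\lb$ is multiplicative in finite covers, so for a common finite-index subgroup $\Gamma < G_L, W_{OL}$ the ratio $\lb_i(W;\F)/\lb_i(G_L;\F) = [G_L:\Gamma]/[W_{OL}:\Gamma]$ is a positive commensurability constant independent of $\F$. Hence the strict inequality $\lb_i(G_L;\F_p) > \lb_i(G_L;\Q)$ transfers to $\lb_i(W;\F_p) > \lb_i(W;\Q)$, completing the proof.
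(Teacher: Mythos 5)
Your overall architecture (flag no-square $L$ with a Betti gap $\Rightarrow$ graph product $G_L$ of large finite groups $\Rightarrow$ Corollary~\ref{c:modpl2} transfers the gap $\Rightarrow$ pass to a RACG) is sound, and the commensurability bookkeeping at the end is correct. The gap is exactly where you flag it, and it is not a small one that ``careful tweaking of Osajda's construction'' is known to fill: for a fixed $i\geq 5$ there is, as far as the paper is aware, no known direct construction of a flag no-square complex $L$ with $\tilde b_{i-1}(L;\F_p) > \tilde b_{i-1}(L;\Q)$. Osajda's paper does produce flag no-square complexes in all dimensions with nonzero homology in any prescribed degree, but the resulting homology does not sit in top degree and is not engineered to be $p$-torsion, so invoking it here is a wish, not a step. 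As written, your proof only establishes the cases $i\in\{2,3,4\}$.

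The paper's proof takes a genuinely different route that sidesteps the hard construction problem entirely. Rather than building a high-dimensional $L$ from scratch, it runs an inductive bootstrap. Start (base case $i=2$) with a flag no-square $2$-complex $L$ coming from a Przytycki--\'Swi\k{a}tkowski triangulation of a Moore space $S^1\cup_p D^2$, so that $b_2(L;\F_p)>b_2(L;\Q)$. Take the graph product $G_L$ with vertex groups $(\Z/2)^N$: since each vertex group is itself a RACG, $G_L$ is already a right-angled Coxeter group with a flag no-square nerve $L'$ (replace each vertex of $L$ by an $(N-1)$-simplex), so no commensurability transfer to $W_{OL}$ is needed. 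The commutator subgroup of $G_L$ is $\pi_1(X_{L'})$ for a finite locally CAT$(-1)$ cube complex $X_{L'}$ whose links are copies of $L'$, and Theorem~\ref{t:modpl2} pushes the Betti gap up one degree: $b_{i+1}(X_{L'};\F_p) > b_{i+1}(X_{L'};\Q)$ for $N\gg 0$. Now the key trick you are missing is to pass to the \emph{simplicial thickening} $\Th(X_{L'})$: it is homotopy equivalent to $X_{L'}$ (so keeps the Betti gap), Osajda's Lemma~3.2 shows its vertex links are flag no-square, and after passing to a finite cover of large injectivity radius the whole complex is flag no-square. This produces a new flag no-square $L$ with the gap now in degree $i+1$, and the induction continues. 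So the role of Osajda \cite{o13} in the paper is the thickening lemma, not a black-box existence result for complexes with prescribed torsion, and the step up in dimension is manufactured from the graph-product machinery rather than found.
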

\begin{proof}
    Suppose that we have a flag no-square $L$ with $b_i(L; \F_p) > b_i(L; \Q)$.
    If $G_L$ is a graph product of $(\zz/2)^N$ over $L$, then for $N \mge 0$ we have $\lb_{i+1}(G_L; \F_p) > \lb_{i+1}(G_L; \Q)$ by Theorem \ref{t:modpl2}.
    The group $G_L$ is also a right-angled Coxeter group with flag no-square nerve $L'$ (obtained from $L$ by replacing each vertex with an $(N-1)$-simplex).
    The commutator subgroup of $G_L$ is the fundamental group of a locally CAT$(-1)$ cube complex $X_{L'}$ where the links are all isomorphic to $L'$.
    By Theorem \ref{t:modpl2} we have that $b_{i+1}(X_{L'}; \F_p) > b_{i+1}(X_{L'}; \Q)$.

    Now, the ``simplicial thickening'' of a cube complex $C$ is a simplicial complex $\Th(C)$ with the same vertex set, where vertices span a simplex if and only if they are contained in the same cube.
    It is easy to see that $\Th(C)$ is homotopy equivalent to $C$.
    Osajda showed that if the link of each vertex in $C$ is flag no-square, then the link of each vertex in $\Th(C)$ is flag no-square \cite{o13}*{Lemma 3.2}.
    Therefore, the thickening $\Th(X_{L'})$ has flag no-square links.
    By passing to a further finite cover, we can assume that the injectivity radius is large, which implies it is flag no-square.
    Therefore, we can use $\Th(X_{L'})$ as our next nerve to get dependence on field coefficients in one higher dimension.
\end{proof}
\begin{remark}
    Theorem \ref{t:nosquare} only holds in dimensions $\le 3$, for instance there is no flag no-square triangulation of any $4$-dimensional homology sphere \cite{d08}*{Proposition I.6.6}.
    In this generality, this is due to Moussong.
    Vinberg had earlier shown the non-existence of compact right-angled convex polytopes in hyperbolic $n$-space for $n > 4$ (the dual of the boundary would be a flag no-square triangulation of $S^n$ for $n > 3$).

    This is a barrier to producing higher dimensional hyperbolic examples without the $\Fp$-Singer property.
    On the other hand, in \cite{js03}, Januszkiewicz and \'Swi\k{a}tkowski constructed flag no-square triangulations of $n$-dimensional pseudomanifolds for any degree $n$.
    A corollary is the existence of word-hyperbolic right-angled Coxeter groups of arbitrarily high cohomological dimension.

    The top-dimensional homology of the examples in \cite{js03} does not depend on the coefficient field.
    It would be interesting if one could construct $d$-dimensional, flag no-square $L$ with $H_d(L; \F_p) \ne 0$ and $H_d(L, \Q) = 0$.
    Osajda \cite{o13} describes a simple construction of flag no-square $L$ with $H_k(L) \ne 0$ for any given $k$, however this homology does not occur in the top dimension of $L$.
\end{remark}

\subsection*{Virtual duality groups}

We also recall the criterion for a graph product of finite groups to be a virtual duality group.
This will be used in the proof of Theorem \ref{rational}.
\begin{theorem}[\cite{dm02}*{Corollary 6.4}]\label{dualitycriterion}
    A graph product $G_L$ of finite groups over a flag complex $L$ is a virtual duality group of dimension $n$ if and only if for every simplex $\gs$ of $L$ (including the empty simplex), $\overline H_{*}(L-\gs;\Z)$ is torsion-free and concentrated in dimension $n-1$.
\end{theorem}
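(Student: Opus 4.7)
The plan is to reduce virtual duality of $G_L$ to cohomological duality of a finite-index torsion-free subgroup $\Gamma$, and then compute $H^*(\Gamma;\Z\Gamma)$ by decomposing the compactly supported cohomology of the right-angled building $\cu=\cu(G_L,\Cone{L})$. Since each $G_s$ is finite, a standard argument produces a torsion-free finite-index subgroup $\Gamma<G_L$. Because $\cu$ is a contractible CAT(0) cube complex of dimension $\dim L+1$ on which $\Gamma$ acts freely and cocompactly, $\cu/\Gamma$ is a finite $K(\Gamma,1)$, so $H^i(\Gamma;\Z\Gamma)\cong H^i_c(\cu;\Z)$, and ``$G_L$ is a virtual duality group of dimension $n$'' is equivalent to this compactly supported cohomology being concentrated in degree $n$ and torsion-free as an abelian group.

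The technical heart of the proof is a Davis--Meier style decomposition of $H^*_c(\cu;\Z)$. The fundamental chamber $\Cone{L}$ comes equipped with its mirror structure $\{K_s\}_{s\in S}$, and for each subset $T\subseteq S$ one sets $\Cone{L}^T:=\bigcup_{s\in T}K_s$. Partitioning cells of $\cu$ by the type of their ``deepest'' residue (indexed by simplices $\gs$ of $L$, including $\gs=\emptyset$) and collecting contributions, in the spirit of Davis's compactly supported cohomology formula for Coxeter groups adapted to graph products of finite groups, one should obtain an identification
\[
H^i_c(\cu;\Z)\;\cong\;\bigoplus_{\gs}\, N_\gs\cdot H^i\bigl(\Cone{L},\Cone{L}^{S-\gs};\Z\bigr),
\]
where $N_\gs$ is a free abelian group counting residues of type $\gs$ and the sum is over simplices of $L$ (including the empty one). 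A nerve-style deformation retraction, combined with the long exact sequence of the pair and contractibility of $\Cone{L}$, should identify $H^i(\Cone{L},\Cone{L}^{S-\gs};\Z)\cong \widetilde H^{i-1}(L-\gs;\Z)$.

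Combining, $H^i_c(\cu;\Z)\cong \bigoplus_\gs N_\gs\cdot \widetilde H^{i-1}(L-\gs;\Z)$, so the duality property in dimension $n$ translates termwise into each $\widetilde H^*(L-\gs;\Z)$ being torsion-free and concentrated in degree $n-1$, which is the stated criterion. The main obstacle is establishing the decomposition with integer coefficients: over $\Q$ (or modulo primes not dividing any $|G_\gs|$) it follows from a Mayer--Vietoris spectral sequence on the chamber decomposition of $\cu$, but over $\Z$ one must carefully track how $\Z[G_L/G_\gs]$ contributes through the projective resolutions of the finite stabilizers, or alternatively choose $\Gamma$ cleverly enough to kill the problematic differentials geometrically. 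A secondary subtlety is aligning the nerve identification of $H^*(\Cone{L},\Cone{L}^{S-\gs};\Z)$ with $\widetilde H^{*-1}(L-\gs;\Z)$ to the precise interpretation of ``$L-\gs$'' used in the theorem statement.
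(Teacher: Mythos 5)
The paper does not prove this statement; it is cited verbatim from Davis--Meier \cite{dm02}*{Corollary 6.4} and used as a black box. Your sketch is, however, an accurate reconstruction of the Davis--Meier argument: reduce to $H^*(\Gamma;\Z\Gamma)\cong H^*_c(\cu;\Z)$ for a torsion-free finite-index $\Gamma<G_L$ acting freely cocompactly on the contractible building $\cu$, decompose $H^*_c(\cu)$ as a direct sum indexed by simplices $\gs$ of $L$ (Davis--Meier's formula, the building analogue of Davis's formula $H^n_c(\gS)\cong\bigoplus_{w\in W}H^n(K,K^{S-\mathrm{In}(w)})$, grouped by $\gs=\mathrm{In}(w)$), and identify $H^*(K,K^{S-\gs})\cong\widetilde H^{*-1}(L-\gs)$ via the nerve lemma, since the mirror $K_s$ is the closed star of $s$ in the barycentric subdivision and the nerve of the cover $\{K_s\}_{s\notin\gs}$ of $K^{S-\gs}$ is exactly the full subcomplex $L-\gs$.

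Two of the ``obstacles'' you flag are not real ones, though. First, the integral decomposition of $H^*_c(\cu)$ is not obtained from a Mayer--Vietoris spectral sequence that degenerates only rationally, nor by tracking projective resolutions of the finite stabilizers; it is established directly over $\Z$ by Davis's induction on chambers using the compactly supported long exact sequence, which splits because the relevant restriction maps are split injections (Davis--Meier carry this over to buildings). The finite stabilizers never enter once you pass to $\Gamma$ acting freely---the decomposition is a purely topological statement about $\cu$. Second, the identification of $H^*(K,K^{S-\gs})$ with $\widetilde H^{*-1}(L-\gs)$ is already the standard one; the only point to note is that $L-\gs$ must be read as the full subcomplex on the vertex set $S\setminus\gs$, and that for a finite complex the torsion-free-and-concentrated condition for $\widetilde H^*$ and for $\widetilde H_*$ are equivalent by the universal coefficient theorem, so your cohomological formulation matches the homological one in the statement. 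Also worth recording explicitly: the free abelian multiplicities $N_\gs$ are nonzero for every simplex $\gs$ (including $\emptyset$), because for a graph product of nontrivial finite groups there are residues of every type; this is needed to turn the direct sum decomposition into the stated ``for every $\gs$'' criterion rather than merely a sufficient condition.
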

In particular, for any flag triangulation of $S^3$, a graph product of finite groups $G_{S^3}$ is a virtual duality group of dimension $4$.

\section{Thickenings}\label{s:thickenings}
In this section we construct our seed manifold $N$ by building a manifold model for the classifying space of a sufficiently deep torsion free finite index subgroup in an appropriately chosen graph product.
We begin with a general statement, which tells us when we can thicken these classifying spaces to manifolds of less than twice their dimension.
\begin{Theorem}\label{t:manifold2}
    Let $G_L$ be a graph product of finite groups based on a $(d-1)$-dimensional flag complex $L$.
    If $d>3$ and $H_{d-1}(L; \F_{2}) = 0$ then $G_L$ has a finite index subgroup $\gG_L$ which is the fundamental group of a compact aspherical $(2d-1)$-manifold with boundary.
\end{Theorem}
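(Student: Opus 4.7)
The overall strategy is to produce a torsion-free finite index subgroup $\gG_L \leq G_L$ that acts freely and properly on $\R^{2d-1}$, and then build the compact manifold by thickening a low-dimensional invariant spine. The main ingredient is a chain of two embeddings. By the Januszkiewicz--\'Swi\k{a}tkowski commensurability theorem \cite{js01}, $G_L$ virtually embeds in the right-angled Coxeter group $W_{OL}$, where $OL$ is the octahedralization of $L$; note that $OL$ is still $(d-1)$-dimensional. The hypotheses $d-1 \neq 2$ (from $d>3$) and $H_{d-1}(L;\F_{2})=0$ are exactly those needed to apply the Ayala--Davis--Okun--Schreve embedding theorem \cite{ados16} to our $(d-1)$-dimensional $L$, so $OL$ embeds as a full subcomplex of some flag triangulation of $S^{2d-2}$. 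Since a full subcomplex inclusion of flag complexes induces a subgroup inclusion of right-angled Coxeter groups, this yields $W_{OL} \hookrightarrow W_{S^{2d-2}}$, and $W_{S^{2d-2}}$ acts properly cocompactly on its Davis complex, a contractible PL $(2d-1)$-manifold, hence PL homeomorphic to $\R^{2d-1}$. Composing and passing to a torsion-free finite index subgroup $\gG_L$ of $G_L$ (which exists because graph products of finite groups are residually finite) produces a free proper action of $\gG_L$ on $\R^{2d-1}$.

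The catch is that the virtual embedding $G_L \hookrightarrow W_{S^{2d-2}}$ is generally of infinite index, so the quotient $\R^{2d-1}/\gG_L$ is only a non-compact, finite-type aspherical manifold. What saves us is that $\gG_L$ has an explicit compact classifying space of much lower dimension: the quotient $X = \cu(G_L,K_L)/\gG_L$ of the cubical Davis realization is a compact aspherical cube complex of dimension $d = \dim L + 1$, and the cubical embedding $\cu(G_L,K_L) \hookrightarrow \cu(W_{S^{2d-2}}, K_{S^{2d-2}})$ realizes $X$ as a compact spine onto which the ambient open manifold $\R^{2d-1}/\gG_L$ deformation retracts. The crucial numerics: the spine has codimension $(2d-1)-d = d-1 \geq 3$, since $d\geq 4$.

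Finally, I would thicken $X$ to a compact regular neighborhood $N \subset \R^{2d-1}/\gG_L$. The codimension-three condition is exactly what is needed to invoke the $\pi$-$\pi$ version of Siebenmann's thesis (Theorem~\ref{t:pi-pi}) and conclude that $N$ is a compact $(2d-1)$-manifold with boundary containing $X$ as a deformation retract; asphericity of $N$ and the identification $\pi_1(N)=\gG_L$ then follow from those of $X$. I expect this compactification step to be the main obstacle: in the non-compact ambient manifold $\R^{2d-1}/\gG_L$ a regular neighborhood of a compact spine does not automatically have well-controlled boundary, and codimension at least three is the critical input needed both to arrange a $\pi_1$-surjection onto the boundary and to meet the hypothesis of Siebenmann's $\pi$-$\pi$ theorem. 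This is why the statement includes $d>3$ rather than $d\geq 3$; the borderline case $d=3$ both sits in the excluded dimension of the embedding theorem and collapses the codimension to two.
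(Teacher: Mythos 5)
Your overall blueprint matches the paper's: embed a torsion-free finite-index subgroup $\gG_L < G_L$ into $W_{OL}$, use the octahedralization embedding theorem to get $W_{OL} < W_{S^{2d-2}}$, exploit a codimension-$\geq 3$ spine in the resulting free proper action on a contractible $(2d-1)$-manifold, and finish with the $\pi$-$\pi$ compactification theorem. The gap is in the thickening step. You assert a ``cubical embedding'' $\cu(G_L,\Cone{L}) \into \cu(W_{S^{2d-2}}, \Cone{S^{2d-2}})$ realizing the compact classifying space $X = \cu(G_L,\Cone{L})/\gG_L$ as a spine inside $\gS_{S^{2d-2}}/\gG_L$. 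No such embedding exists: the right-angled building $\cu(G_L,\Cone{L})$ (chambers $\cong \Cone{L}$) and the Davis complex $\gS_{OL}$ (chambers $\cong \Cone{OL}$) are different cube complexes, and the virtual embedding $\gG_L \into W_{OL}$ coming from the commensurability theorem is a purely group-theoretic statement, not a map of complexes. Moreover, were a compact $X$ actually embedded in codimension $\geq 3$, a PL regular neighborhood of $X$ would already be the desired compact aspherical manifold with boundary, and Siebenmann's theorem would be superfluous; the fact that you still reach for the $\pi$-$\pi$ theorem is a symptom that something is off.

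The spine that genuinely embeds is $Y := \gS_{OL}/\gG_L \subset \gS_{S^{2d-2}}/\gG_L$. It has the right dimension $d$, hence codimension $d-1 \geq 3$, but it is \emph{not compact}: $\gG_L$ has infinite index in $W_{OL}$ (since already $G_L$ has infinite index in the graph product of $G_v\times D_\infty$, which retracts onto $W_{OL}$ killing $G_L$). The paper's fix is an exhaustion argument: exhaust $Y$ by finite subcomplexes $C_i$, choose nested closed PL regular neighborhoods $U_i$ in the ambient manifold, and set $N' = \bigcup_i U_i$. Codimension $\geq 3$ forces $(N'-C_i) \to N'$ and $(N'-U_i) \to N'$ to be $\pi_1$-isomorphisms, so the end of $N'$ is stable and satisfies the $\pi$-$\pi$ hypothesis; since $N' \simeq Y \simeq \cu(G_L,\Cone{L})/\gG_L$ has finite type and $2d-1 > 5$, Theorem~\ref{t:pi-pi} applies to the open manifold $N'$ and produces the compactification. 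Your diagnosis of the numerics — that $d>3$ is needed both for the embedding theorem and to make the codimension at least three — and your identification of the $\pi$-$\pi$ theorem as the finishing tool are both correct; what is missing is the exhaustion argument that mediates between a non-compact embedded spine and the open manifold to which Siebenmann's theorem is actually applied.
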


Before proving this theorem, we need to recall a number of previous results and constructions.
We will construct aspherical manifolds using the reflection group method.
To that end, the first step is to embed a finite index subgroup of the graph product $G_L$ into a right-angled Coxeter group based on a flag complex $OL$, the octahedralization of $L$ as in the Introduction.
This leads to the first key ingredient in our construction, which is commensurability between different graph products over the same flag complex.
Recall that two groups $G$ and $H$ are commensurable if they have isomorphic finite index subgroups.
They are \emph{strongly commensurable} if these finite index subgroups have the same index in $G$ and $H$.
Januszkiewicz and \'Swi\k{a}tkowski proved the following:
\begin{Theorem}[\cite{js01}*{Theorem 1}]\label{t:commensurable}
    Suppose that $G_L$ and $G_L'$ are two graph products of groups over the same flag complex $L$.
    Suppose that for all $v \in S$, the group $G_v$ is strongly commensurable to $G_v'$.
    Then $G_L$ is strongly commensurable to $G_L'$.
\end{Theorem}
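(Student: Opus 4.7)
The plan is to construct a common finite-index subgroup of $G_L$ and $G_L'$ via Bass--Serre theory applied to the amalgamated free product decomposition of a graph product along a vertex star.

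First, I would argue by induction on the cardinality of $\{v \in S : G_v \neq G_v'\}$, reducing to the base case in which $G_v = G_v'$ for every $v$ except a single vertex $v_0$, with $G_{v_0}$ strongly commensurable to $G_{v_0}'$. In this reduced setting, both graph products admit the decomposition
\[
G_L \;=\; \bigl(G_{v_0} \times G_{\Lk(v_0)}\bigr) \ast_{G_{\Lk(v_0)}} G_{L \setminus \{v_0\}}
\]
(and the analogous one for $G_L'$), differing only in the first factor. After a standard refinement of subgroups, I may fix a common subgroup $H$ that is normal in both $G_{v_0}$ and $G_{v_0}'$ with the same finite index $n$.

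Next, let $K$ be the kernel of the surjection $G_L \twoheadrightarrow G_{v_0}/H$ obtained by sending $G_{L \setminus \{v_0\}}$ and $G_{\Lk(v_0)}$ to the identity; then $[G_L : K] = n$. A Bass--Serre analysis of the $K$-action on the Bass--Serre tree $T$ of the above amalgam shows that the quotient graph of groups $K \backslash T$ is a \emph{star}: one central type-$1$ vertex stabilized by $H \times G_{\Lk(v_0)}$ and $n$ peripheral type-$2$ vertices each stabilized by a conjugate of $G_{L \setminus \{v_0\}}$, joined by $n$ edges stabilized by conjugates of $G_{\Lk(v_0)}$. This identifies $K$ as an iterated amalgamated free product whose isomorphism type depends only on $H$, $G_{\Lk(v_0)}$, and $G_{L \setminus \{v_0\}}$---none of which involve the ambient $G_{v_0}$. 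Consequently, the same recipe applied to $G_L'$ yields an isomorphic subgroup $K' \leq G_L'$ of index $n$, completing the base case and hence the theorem by induction.

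The main obstacle I expect is the \emph{standard refinement} arranging that $H$ be simultaneously normal in $G_{v_0}$ and $G_{v_0}'$ while maintaining matching indices. Starting from an isomorphism $\phi\colon H_1 \xrightarrow{\cong} H_2$ of subgroups of common finite index, I would handle this by iteratively passing to normal cores on alternating sides and transporting via $\phi$: replace $H_1$ by its normal core in $G_{v_0}$ and shrink $H_2$ to the corresponding image under $\phi$, then replace $H_2$ by its normal core in $G_{v_0}'$ and pull back via $\phi^{-1}$, and so on. For the paper's target application (each $G_v$ finite) this descending chain stabilizes at a common normal subgroup of finite index on both sides.
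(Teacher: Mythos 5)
The paper cites this statement from \cite{js01} without reproving it, so there is no in-paper proof to compare against. Your route is a clean alternative: induct on the number of vertices where $G_v\ne G_v'$, decompose $G_L$ along a vertex star as the amalgam $(G_{v_0}\times G_{\Lk(v_0)})*_{G_{\Lk(v_0)}}G_{L\setminus\{v_0\}}$, and extract a common finite-index subgroup via Bass--Serre theory; the cited source instead builds the common subgroup globally as a complex of groups over the Davis chamber rather than peeling off one vertex at a time. The Bass--Serre bookkeeping you sketch is correct: the quotient graph of groups really is a star with center $H\times G_{\Lk(v_0)}$, and the edge-group inclusions on both ends come out to be the canonical ones (one does need to check this, since the isomorphism type of a graph of groups depends on the edge inclusions, not just on the vertex and edge groups), so the isomorphism type of $K$ depends only on $H$, $G_{\Lk(v_0)}$, $G_{L\setminus\{v_0\}}$, and $n=[G_{v_0}:H]$. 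One should also record that strong commensurability is transitive---intersect the intermediate subgroups and compute indices---since that is what the inductive reduction quietly uses.

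The genuine problem is the normality detour, which is both unnecessary and, as written, a gap. You take $K$ to be the kernel of $G_L\twoheadrightarrow G_{v_0}/H$, which forces $H\lhd G_{v_0}$, and propose an alternating normal-core refinement to arrange this. That refinement has no reason to terminate when $G_{v_0}$ is infinite, and---contrary to your parenthetical---the paper does apply Theorem~\ref{t:commensurable} with infinite vertex groups: in Corollary~\ref{c:emb} the vertex groups are $G_v\times D_\infty$ and $D_\infty$. (There $\Z$ happens to already be normal on both sides, so that one application survives, but the theorem as stated is not covered by your argument.) The fix is that normality is never actually needed: define $K$ to be the \emph{preimage of $H$ under the retraction $G_L\to G_{v_0}$}, a subgroup of index $[G_{v_0}:H]$ for any $H$. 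The Bass--Serre analysis then goes through unchanged---there is still a single type-$1$ vertex orbit with stabilizer $H\times G_{\Lk(v_0)}$, the $n$ edge orbits are still indexed by $H\backslash G_{v_0}$, and every conjugate of $G_{L\setminus\{v_0\}}$ and of $G_{\Lk(v_0)}$ still lies in $K$, since the retraction kills it. With that substitution your proof establishes the theorem in its stated generality and the normal-core headache disappears.
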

We shall also need the following lemma, which follows immediately from the normal form for graph products.
\begin{lemma}\label{l:subgroup}
    Suppose that $G_L$ is a graph product of groups over a flag complex $L$, with vertex set $S$.
    For each $s \in S$, choose a subgroup $H_s$ of $G_s$.
    Then the graph product $H_L$ corresponding to the $H_s$ embeds as a subgroup of $G_L$.
\end{lemma}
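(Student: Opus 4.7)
My plan is to build the candidate embedding $\phi: H_L \to G_L$ by the universal property of graph products and then verify injectivity using the normal form theorem.

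First, I would construct $\phi$. The inclusions $H_s \hookrightarrow G_s$ satisfy both types of defining relations of $H_L$: the internal relations of each $H_s$ hold in $G_s \subseteq G_L$, and the commutation relations $[h,h']=1$ for $h \in H_s$, $h' \in H_{s'}$ with $s$ adjacent to $s'$ hold because the stronger commutation relations between all of $G_s$ and $G_{s'}$ hold in $G_L$. Thus the partial homomorphisms assemble into a well-defined group homomorphism $\phi: H_L \to G_L$ extending the inclusions on each vertex group.

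Next, I would invoke Green's normal form theorem: every element $g$ of a graph product $G_L$ has an expression $g = g_1 g_2 \cdots g_n$ with each $g_i$ a nontrivial element of some vertex group $G_{s_i}$, subject to a reducedness condition (one cannot shuffle the word via commutations of adjacent vertices to combine two letters sharing the same vertex), and $g = 1$ if and only if the normal form is the empty word. Crucially, the reducedness condition depends only on the sequence of vertices $s_1, \ldots, s_n$ and the combinatorics of $L$, not on which specific group elements are used.

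Now, suppose $h \in H_L$ is nontrivial. Choose a normal form $h = h_1 \cdots h_n$ in $H_L$ with each $h_i \in H_{s_i} \setminus \{1\}$. Since $H_{s_i}$ is a subgroup of $G_{s_i}$, each $h_i$ is still nontrivial as an element of $G_{s_i}$. The image word $\phi(h) = h_1 \cdots h_n \in G_L$ has the same sequence of vertices $s_1, \ldots, s_n$, so it satisfies the reducedness condition in $G_L$ as well. Hence $\phi(h) \neq 1$, and $\phi$ is injective.

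I do not expect any real obstacle; the whole argument is essentially a bookkeeping check, and the only nontrivial input is Green's normal form theorem, which is standard. The key observation to articulate cleanly is that reducedness of a word in a graph product depends only on its vertex-support pattern and the adjacency structure of $L$, so it transfers between $H_L$ and $G_L$ without modification.
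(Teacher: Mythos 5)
Your proof is correct and uses exactly the argument the paper has in mind: the paper states only that the lemma ``follows immediately from the normal form for graph products,'' and your write-up is precisely the straightforward normal-form verification, with the key point (reducedness depends only on the vertex pattern, not on the particular vertex-group elements) correctly identified.
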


These imply the following corollary.
\begin{corollary}\label{c:emb}
    A graph product $G_L$ of finite groups over a flag complex $L$ has a torsion free finite index subgroup $\gG_L$ which embeds into $W_{OL}$.
\end{corollary}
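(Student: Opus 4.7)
The plan is to first identify $W_{OL}$ structurally as a graph product over $L$, and then apply Theorem~\ref{t:commensurable} together with Lemma~\ref{l:subgroup}.

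The first step I would take is to observe that $W_{OL} \cong (D_\infty)_L$, the graph product of infinite dihedral groups $D_\infty = \zz/2 * \zz/2$ over $L$. The reason is purely combinatorial. By the construction of the octahedralization, for each vertex $v$ of $L$ the two copies $v^+, v^-$ are non-adjacent in $OL$, so the corresponding pair of Coxeter generators of $W_{OL}$ has no commutation relation and hence generates a copy of $D_\infty$. Moreover, for every simplex $[v_0, \dots, v_k]$ of $L$ each sign-choice $[v_0^{\varepsilon_0}, \dots, v_k^{\varepsilon_k}]$ spans a simplex of $OL$, so the dihedral subgroups corresponding to adjacent vertices of $L$ commute elementwise. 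Since the relations in the right-angled Coxeter group $W_{OL}$ are exactly the involution relations together with commutations along edges of $OL$, this identifies $W_{OL}$ with $(D_\infty)_L$ as a graph product over $L$.

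With $W_{OL}$ and $G_L$ both realized as graph products over the same flag complex $L$, I would then invoke Theorem~\ref{t:commensurable} to match them up via vertex-wise strong commensurability of $G_v$ with $D_\infty$, applied through a carefully chosen intermediate graph product. Once a common isomorphic finite-index subgroup is identified on both sides, Lemma~\ref{l:subgroup} ensures that the matching respects the graph product decomposition, so the resulting finite-index subgroup of $G_L$ embeds into $(D_\infty)_L = W_{OL}$ as a graph product of chosen vertex subgroups of $D_\infty$. Intersecting with a torsion-free finite-index subgroup of $W_{OL}$—which exists because right-angled Coxeter groups are virtually torsion-free—yields the desired $\gG_L$.

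The hardest part will be handling the vertex-wise commensurability itself, since a finite $G_v$ of order greater than two is not literally strongly commensurable to $D_\infty$ in the strictest sense: the only finite subgroups of $D_\infty$ are trivial or $\zz/2$, so $G_v$ and $D_\infty$ do not share finite-index subgroups of matching index directly. The Januszkiewicz--\'Swi\k{a}tkowski approach circumvents this geometrically—working with the right-angled building $\cu(G_L, \Cone{L})$ on the $G_L$ side and the Davis complex $\gS_{OL}$ on the $W_{OL}$ side, one produces a common compact cube-complex quotient whose fundamental group is a torsion-free finite-index subgroup sitting inside both $G_L$ and $W_{OL}$. This is where the delicate input of Theorem~\ref{t:commensurable} is really used.
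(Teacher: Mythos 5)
Your opening step is exactly the paper's: identifying $W_{OL}$ with the graph product $(D_\infty)_L$ over $L$ itself, so that both groups are graph products over the same flag complex and Theorem~\ref{t:commensurable} and Lemma~\ref{l:subgroup} are in scope. You also correctly flag the central obstruction, namely that a finite $G_v$ is not strongly commensurable to $D_\infty$ (their finite-index subgroups live in different cardinality classes).

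However, the proposal then leaves a genuine gap rather than closing it. You say the commensurability is handled "through a carefully chosen intermediate graph product" and later defer to "the Januszkiewicz--\'Swi\k{a}tkowski approach" circumventing things geometrically, but you never name the intermediate or explain why it resolves the problem. The paper's actual move is concrete and purely group-theoretic: by Lemma~\ref{l:subgroup}, $G_L$ embeds into the graph product $(G_v \times D_\infty)_L$ over $L$ (take the subgroup $G_v \times \{1\}$ at each vertex). Now the vertex groups $G_v \times D_\infty$ and $D_\infty$ \emph{are} strongly commensurable: both contain $\mathbb{Z}$ as a subgroup of index $2\lvert G_v \rvert$ (inside $D_\infty$ one takes the subgroup $\lvert G_v\rvert\mathbb{Z}$ of the translation $\mathbb{Z}$). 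Theorem~\ref{t:commensurable} then gives a common finite-index subgroup of $(G_v \times D_\infty)_L$ and $W_{OL}$, which one intersects with the (torsion-free, finite-index) commutator subgroup of $W_{OL}$ and with $G_L$ to finish. Your proposal also misapplies Lemma~\ref{l:subgroup} when you claim it "ensures that the matching respects the graph product decomposition"; that lemma is an embedding statement for smaller vertex groups, and its role here is to produce the embedding $G_L \hookrightarrow (G_v \times D_\infty)_L$, not to control a commensuration. As written, the blind proof identifies the right ingredients and the right obstacle but does not supply the step that actually overcomes it.
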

\begin{proof}
    By Lemma \ref{l:subgroup} $G_L$ embeds into the graph product over $L$ of direct products $G_v \times D_\infty$, which by Theorem \ref{t:commensurable} is strongly commensurable to the graph product over $L$ of $D_{\infty}$ of infinite dihedral groups, since both $G_v \times D_\infty$ and $D_\infty$ contain $\zz$ as a subgroup of index $2\abs{G_v}$.
    The graph product of $D_{\infty}$ has a natural structure of a right-angled Coxeter group $W_{OL}$, hence its commutator subgroup has finite index and torsion-free.
    Therefore intersecting a common finite index subgroup of the latter two graph products with this commutator subgroup and then further intersecting with $G_L$ produces the desired $\gG_{L}$.
\end{proof}

In order to construct a contractible $n$-manifold on which the reflection group $W_{OL}$ (and hence $\Gamma_L$) acts, we embed this group into the reflection group of a flag triangulation of an $(n-1)$-sphere $W_{S^{n-1}}$.
This brings us to the second key ingredient, which is a van Kampen style embedding theory for octahedralizations.
To construct ``low-dimensional'' manifold models for $B\gG_L$, we use the ``if'' direction of the following embedding theorem for octahedralizations that we proved with Davis in \cite{ados16}:
\begin{Theorem}[\cite{ados16}]\label{t:manifold}
    Let $L$ be a $(d-1)$-dimensional flag complex.
    If $d \ne 3$, $OL$ embeds as a full subcomplex of a flag triangulation of $S^{2d-2}$ if and only if $H_{d-1}(L; \F_{2}) = 0$.
\end{Theorem}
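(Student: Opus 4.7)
The plan is to prove both directions of Theorem \ref{t:manifold} via a van Kampen--style obstruction analysis adapted to the octahedralization. A useful preliminary observation is that the vertex assignment $v \mapsto v^+$ defines a simplicial inclusion $L \hookrightarrow OL$ that admits the simplicial retraction $OL \to L$ sending $v^\pm \mapsto v$. Consequently $H_{d-1}(L;\F_2)$ splits off as a direct summand of $H_{d-1}(OL;\F_2)$, so any obstruction to realizing $OL$ as a full subcomplex of a flag $(2d-2)$-sphere that detects top-dimensional $\F_2$-cycles will automatically detect those in $L$.

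For the ``only if'' direction, suppose $OL$ is a full subcomplex of a flag triangulation $S$ of $S^{2d-2}$, and derive $H_{d-1}(L;\F_2)=0$. The strategy is Alexander duality in $S^{2d-2}$ combined with the combinatorial consequences of fullness and flagness: one considers the combinatorial Alexander dual $OL^{\ast}\subset S$ and uses that the flag hypothesis forces its cochain structure to match the dual of $OL$ in a computable way. A top-dimensional $\F_2$-cycle $z$ in $L$ lifts via $v\mapsto v^+$ to a cycle in $OL\subset S$ and bounds some $d$-chain in $S$ for dimension reasons. Averaging the bounding chain over the $(\Z/2)^{V(L)}$-symmetry of $OL$ (which flips the $\pm$ decorations and is visible through the Alexander dual) produces a bounding chain supported on the image of $L$ itself, forcing $z$ to already bound in $L$.

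For the ``if'' direction, assume $H_{d-1}(L;\F_2)=0$ and construct a flag triangulation of $S^{2d-2}$ containing $OL$ as a full subcomplex. Place the vertices of $L$ in general position in $\R^{2d-2}\subset S^{2d-2}$ and extend simplexwise-linearly to a PL map $\phi\colon L\to \R^{2d-2}$. By dimension count, the only self-intersections are transverse double points between pairs of top-dimensional simplices, whose $\F_2$-count is van Kampen's obstruction class, which vanishes by hypothesis. Since $d\ne 3$, either a direct low-dimensional argument ($d=1,2$) or the Whitney trick, available since the codimension $d-1\geq 3$ when $d\geq 4$, pairs up and cancels these double points, yielding a PL embedding of $L$. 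The doubled vertex set of $OL$ then allows one to place each $v^-$ antipodally to $v^+$, giving a PL embedding of $OL$ that respects the $(\Z/2)^{V(L)}$-symmetry. Finally, triangulate the complement of $OL$ in $S^{2d-2}$ using iterated barycentric subdivision and appropriate cone extensions to produce a flag triangulation of the sphere in which $OL$ remains a full subcomplex.

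The \textbf{main obstacle} is the final combinatorial step: arranging that the ambient triangulation is \emph{flag} while keeping $OL$ \emph{full}, i.e., avoiding any simplex of $S$ whose vertices all belong to $OL$ but that is not already in $OL$. The octahedral doubling is essential here because splitting each vertex of $L$ into a pair $v^\pm$ provides the combinatorial room needed to place buffer vertices in the complement without spanning such parasitic simplices. The exclusion of $d=3$ is genuinely necessary beyond just the Whitney-trick failure in codimension two: as noted in the introduction, \foreignlanguage{vietnamese}{Nguy\~\ecircumflex{}n Phan} and the first author constructed flag $2$-complexes $L$ with $b_2(L;\F_2)=0$ whose octahedralizations do not PL embed in $S^4$ at all.
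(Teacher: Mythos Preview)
Your ``if'' argument has a fundamental gap. You first embed $L$ in $\R^{2d-2}$ and then assert that placing each $v^-$ antipodally to $v^+$ ``gives a PL embedding of $OL$''. This step is unjustified and is where the entire difficulty lies: $OL$ contains $2^{k+1}$ simplices over each $k$-simplex of $L$, and mixed simplices such as $v_0^+*v_1^-*\cdots$ have no a~priori reason not to cross one another or other simplices of $OL$. An embedding of $L$ controls only the all-$+$ simplices; it says nothing about the rest of $OL$. You have also misstated the earlier step: van Kampen's obstruction for $L$ does not vanish ``by hypothesis,'' since that obstruction lives in (equivariant) cohomology of the deleted product, not in $H^{*}(L;\F_2)$. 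What the hypothesis $H_{d-1}(L;\F_2)=0$ actually buys (via the UCT, as $H_{d-1}(L;\Z)$ is free for a $(d-1)$-complex) is that $H^{d-1}(L;\Z)$ is finite of \emph{odd} order; one then needs the ``odd fact'' that intersection vectors can be multiplied by any odd integer to kill the obstruction by finger moves.

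The paper's argument (Appendix~\ref{a:embedding}) never attempts to embed $L$. It takes a generic linear immersion $L\to\R^{2d-2}$ and perturbs it slightly to a generic immersion $f_\varepsilon:OL\to\R^{2d-2}$ satisfying the invariance property~\eqref{special}: for top simplices $\sigma,\tau,\tau'$ of $OL$ with $\sigma$ disjoint from $\tau,\tau'$ and $\pi(\tau)=\pi(\tau')$, one has $f_\varepsilon(\sigma)\cap f_\varepsilon(\tau)=f_\varepsilon(\sigma)\cap f_\varepsilon(\tau')$. This forces each intersection cocycle $f_\varepsilon(\sigma)\cap f_\varepsilon(-)$ to be pulled back along $\pi:OL\to L$, so the whole intersection vector of $OL$ is governed by the odd-order group $H^{d-1}(L;\Z)$ and can be zeroed out by finger moves; Proposition~\ref{alg} then produces a PL embedding of $OL$. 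The flag/full upgrade you identify as the main obstacle is in fact routine once the PL embedding exists (Akin's codimension-three unknotting plus a partial subdivision). Your ``only if'' sketch also has a gap: the $(\Z/2)^{V(L)}$-symmetry acts on $OL$ but not on the ambient triangulated sphere $S$, so there is no way to average an ambient bounding chain over it.
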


Note that this theorem implies an embedding in one less dimension than general position guarantees.
Since the group $\Gamma_L$ will have infinite index in the Coxeter groups used to build the contractible manifold, the construction naturally leads to non-compact aspherical manifolds.
To perform the reflection trick in the next section, we need to compactify such manifolds by adding a boundary.
A complete obstruction for doing so was developed by Siebenmann in his thesis \cite{s65a}.
Fortunately, when $d>3$, one arrives at a manifold whose end has the same fundamental group as its interior and the following theorem shows that the obstruction vanishes.
\begin{Theorem}[$\pi$--$\pi$ compactification theorem]\label{t:pi-pi}
    Suppose $N^{n}$ is a one-ended PL manifold, $n>5$.
    Also suppose $ \pi_1$ of the end $\ge$ is stable and the natural map $\pi_{1}(\ge) \to \pi_1(N)$ is an isomorphism.
    Then $N^{n}$ is PL homeomorphic to the interior of a compact PL manifold with boundary if and only if it has finite homotopy type.
\end{Theorem}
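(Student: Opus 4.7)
The plan is to recognize this as the classical $\pi$-$\pi$ case of Siebenmann's thesis \cite{s65a}, and to verify that the obstruction machinery there applies and produces a trivial obstruction under our hypotheses.

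First I would dispatch the (easy) forward direction: if $N$ is the interior of a compact PL manifold with boundary $\bar N$, then the inclusion $\bar N \into N$ (or rather a collar retraction) is a homotopy equivalence, and $\bar N$ admits a finite handle decomposition, so $N$ has finite homotopy type. This is independent of dimension and of the $\pi$-$\pi$ hypothesis.

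For the converse, I would set up Siebenmann's framework. Pick a nested sequence of closed neighborhoods of infinity $W_1 \supset W_2 \supset \dots$ with $\bigcap W_i = \emptyset$. Stability of $\pi_1(\ge)$ gives, after passing to a subsequence, that the bonding maps in $\{\pi_1(W_i)\}$ are all isomorphisms, so we may identify $\pi_1(\ge)$ with $\pi_1(W_i)$ for each $i$. The next step is to show the end is \emph{tame}, i.e.\ that (a sufficiently small) $W_i$ is finitely dominated. Since $N$ has finite homotopy type, one has finite domination of $N$ by a compact polyhedron; the $\pi$-$\pi$ condition together with PL transversality allows one to push such a domination to one of a neighborhood of infinity (using that $N-W_i$ is a compact manifold with boundary, hence already a finite polyhedron). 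Once tameness is established, Siebenmann's main theorem in the PL category for $n>5$ produces a single obstruction
\[
\sigma_\infty(N) \in \tilde K_0(\zz[\pi_1(\ge)])
\]
whose vanishing is necessary and sufficient for $N$ to admit a collared compactification. By construction, $\sigma_\infty(N) = [\tilde\sigma(W_i)]$ is the Wall finiteness obstruction of a (small enough) neighborhood of infinity.

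The main step is then the identification $\sigma_\infty(N) = \pm \tilde\sigma(N)$ under the $\pi$-$\pi$ hypothesis, where $\tilde\sigma(N) \in \tilde K_0(\zz[\pi_1(N)])$ is Wall's finiteness obstruction of $N$ itself (and the two $\tilde K_0$-groups agree via the $\pi_1(\ge)\xrightarrow{\sim}\pi_1(N)$ identification). To see this, write $N = K \cup_{\d} W$ where $K := \overline{N-W}$ is a compact codimension-zero submanifold with $\d K = \d W$. Then $K$ is a finite polyhedron, so $\tilde\sigma(K) = 0$, and additivity of the finiteness obstruction along a Mayer--Vietoris pushout (together with $\pi_1(W)\xrightarrow{\sim}\pi_1(N)$, which makes the group ring change-of-base trivial) gives $\tilde\sigma(N) = \tilde\sigma(W) + \tilde\sigma(K) - \tilde\sigma(\d W) = \tilde\sigma(W) - \tilde\sigma(\d W)$; since $\d W$ is a closed manifold, hence a finite polyhedron, $\tilde\sigma(\d W)=0$ and $\sigma_\infty(N) = \tilde\sigma(W) = \tilde\sigma(N)$. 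Our hypothesis that $N$ has finite homotopy type says $\tilde\sigma(N)=0$, so $\sigma_\infty(N)=0$ and Siebenmann's theorem furnishes the required compact PL manifold $\bar N$.

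The main obstacle I expect is the identification $\sigma_\infty(N) = \pm \tilde\sigma(N)$: one has to be careful that Wall's obstruction is additive in the pushout above (this requires the $\pi_1$-pushout condition provided by the $\pi$-$\pi$ hypothesis, otherwise change-of-rings issues intervene), and one has to correctly track signs coming from the collar $W \simeq \d W \times [0,\infty)$ versus the end itself. The dimensional hypothesis $n>5$ is used only in invoking Siebenmann's engulfing/handle-exchange arguments to produce the collar once the obstruction vanishes, exactly as in the original thesis.
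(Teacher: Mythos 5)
Your proposal tracks the paper's proof closely: both dispatch the forward direction trivially, and both run the converse through Siebenmann's thesis, first establishing that a neighborhood of infinity is finitely dominated and then killing its Wall finiteness obstruction via a sum-theorem argument, after which Siebenmann's Main Theorem furnishes the compactification. So the overall route is the same.

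The one place where your sketch misidentifies the mechanism is the tameness step. You attribute finite domination of a neighborhood of infinity to ``the $\pi$--$\pi$ condition together with PL transversality''; PL transversality is not what does the work here. The paper's argument is purely homotopy-theoretic: since $\pi_1(\ge)\cong\pi_1(N)$ and $N$ has finite homotopy type, $\pi_1(\ge)$ is finitely presented, so Siebenmann's Theorem 3.10 produces arbitrarily small $1$-neighborhoods $(V,\d V)$ with connected boundary and $\pi_1(\d V)\cong\pi_1(V)\cong\pi_1(\ge)$; a van Kampen argument on $N=(N-\mathring V)\cup_{\d V}V$ then gives $\pi_1(N-\mathring V)\cong\pi_1(\d V)$ as well, and these $\pi_1$ isomorphisms are exactly the hypotheses required to invoke Siebenmann's Complement 6.6(b) (the complement to the Sum Theorem) to conclude $V$ is finitely dominated. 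You are aware that the sum formula in your step 5 needs the $\pi_1$ conditions to hold, but you should realize that the tameness step has the same issue and that the paper resolves it by explicitly producing $1$-neighborhoods and invoking Complement 6.6(b) rather than by any geometric pushing argument. Once you have finite domination of $V$ and the $\pi_1$ chain, your sum-formula computation $\tilde\sigma(N)=i_{V*}\tilde\sigma(V)$ and the conclusion $\tilde\sigma(V)=0$ matches the paper's appeal to Sum Theorem 6.5.
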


The $\pi$--$\pi$ theorem follows directly from the results in \cite{s65a} but is not explicitly stated there, so we give the derivation below.
\begin{proof}[Proof of Theorem \ref{t:pi-pi}] The only if direction is clear, so we suppose that $N$ has finite homotopy type.
    Then our isomorphism assumption implies that $\pi_{1}(\ge)$ is finitely presented, and by \cite{s65a}*{Theorem 3.10} the end has arbitrarily small 1-neighborhoods.
    This means that we have an arbitrarily small connected submanifold $(V,\d V)$ with connected boundary and compact complement of the interior, and the inclusions induce isomorphisms

    \[
    \pi_{1}(\d V) \cong \pi_{1}(V) \cong \pi_{1}(\ge).
    \]
    Since $\pi_{1}(\ge) \cong \pi_{1}(N)$ by our assumption, the van Kampen theorem applied to the decomposition $N=(N-\mathring{V})\cup_{\d V} V$ shows that also $\pi_{1}(N - \mathring{V}) \cong \pi_{1}(\d V)$.
    Then \cite{s65a}*{Complement 6.6(b)} implies that $V$ is finitely dominated, and \cite{s65a}*{The Sum Theorem 6.5} implies that the finiteness obstruction for $V$ vanishes, and the claim follows from the Main Theorem of \cite{s65a}.
\end{proof}
\begin{remark}
    We want to emphasize that in the $\pi$--$\pi$ situation one does not need to assume that the end is finitely dominated, but gets it for free when the interior is finitely dominated.
\end{remark}

We are now ready for the proof of the thickening theorem.
\begin{proof}[Proof of Theorem \ref{t:manifold2}] By Corollary \ref{c:emb}, we have a torsion free finite index subgroup $\gG_{L}$ of $G_L$ which is also a subgroup of $W_{OL}$.
    Theorem \ref{t:manifold} gives us an embedding of $OL$ as a full subcomplex of a flag triangulated sphere $S^{2d-2}$.
    Since $OL$ is a full subcomplex, $W_{OL}$ is a subgroup of $W_{S^{2d-2}}$ and on the level of Davis complexes $\gS_{OL} \subset \gS_{S^{2d-2}} $ is a $W_{OL}$-stable subspace.

    The quotient $N:=\gS_{S^{2d-2}}/\Gamma_L$ is an aspherical $(2d-1)$-manifold with fundamental group $\Gamma_L$.
    This manifold has finite type since it is homotopy equivalent to the finite complex $\cu(G_L,\Cone{L})/\Gamma_L$, but the manifold is not itself compact.
    To fix this, we will use the fact that $N$ has another $d$-dimensional classifying space for $\Gamma_L$ embedded inside of it, namely the $d$-complex $Y:=\gS_{OL}/\Gamma_L$.

    Pick an exhaustion $C_i$ of $Y$ by finite subcomplexes.
    For each $C_i$ pick a closed PL regular neighborhood $U_i$ in $N$ such that $U_i$ is contained in the interior of $U_{i+1}$.
    Then $N'=\bigcup U_i$ is an open $(2d-1)$-manifold containing $Y$ and the embedding $Y\into N'$ is a homotopy equivalence.
    Since $d>3$, $C_i$ has codimension $\geq 3$ in $N'$, so the map $(N'-C_i)\to N'$ is an isomorphism on $\pi_1$, and the same is true for the map from the complement of the regular neighborhood $(N'-U_i)\to N'$.
    Therefore $\pi_1$ of the end $\ge$ of $N'$ is stable and the natural map $\pi_1(\ge) \to \pi_1(N')$ is an isomorphism.
    Moreover the dimension $2d-1$ is greater than five, and $N'$ is homotopy equivalent to $Y$ and hence to $N$, so it has finite type.
    Therefore, it follows from Theorem \ref{t:pi-pi} that $N'$ is PL homeomorphic to the interior of a compact PL manifold with boundary.
    This finishes the proof.
\end{proof}
The last paragraph of the proof only uses the fact that $Y$ has finite type and has codimension three inside a manifold of dimension greater than five.
So, it gives the following result which may be of independent interest.
\begin{proposition}
    Fix $n>5$.
    Let $Y^d$ be a complex of dimension $d\leq n-3$.
    If $Y$ has finite type and PL embeds in an $n$-manifold, then $Y$ is homotopy equivalent to a compact PL $n$-manifold with boundary.
\end{proposition}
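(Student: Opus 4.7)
The plan is to adapt verbatim the last paragraph of the proof of Theorem~\ref{t:manifold2}, which only used the codimension and finite-type properties of $Y$. Fix a PL embedding $Y^d \hookrightarrow M^n$ into some PL $n$-manifold $M$. First I would exhaust $Y$ by an increasing sequence of finite subcomplexes $C_1 \subset C_2 \subset \cdots$ with $Y = \bigcup_i C_i$, and pick closed PL regular neighborhoods $U_i$ of $C_i$ in $M$ arranged so that $U_i$ lies in the interior of $U_{i+1}$. Set $N' := \bigcup_i U_i$. Then $N'$ is an open PL $n$-manifold containing $Y$, and the inclusion $Y \hookrightarrow N'$ is a homotopy equivalence because $N'$ is an increasing union of regular neighborhoods of subcomplexes of $Y$. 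In particular, $N'$ is of finite homotopy type (since $Y$ is).

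Next I would verify the hypotheses of the $\pi$--$\pi$ compactification theorem (Theorem~\ref{t:pi-pi}) for $N'$. The dimension is $n > 5$ by assumption. The key input is the codimension inequality $d \leq n - 3$: any loop or null-homotopy in $N'$ can be pushed by PL general position off the $d$-dimensional subcomplex $C_i$, so the inclusion $N' - C_i \hookrightarrow N'$ is a $\pi_1$-isomorphism, and the same therefore holds for the inclusion of the complement $N' - U_i$ of the regular neighborhood. Taking the $U_i$ as a neighborhood basis of the end $\varepsilon$ of $N'$, this shows that $\pi_1$ of the end is stable and that the natural map $\pi_1(\varepsilon) \to \pi_1(N')$ is an isomorphism. (If $N'$ is not one-ended, apply the argument to each end separately; equivalently, replace $N'$ by its connected component containing $Y$.)

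Finally, applying Theorem~\ref{t:pi-pi} to $N'$ provides a compact PL $n$-manifold with boundary whose interior is PL homeomorphic to $N'$, and hence homotopy equivalent to $Y$. The only step that requires any thought is the codimension-three general position argument guaranteeing the $\pi$--$\pi$ hypothesis; everything else is a direct quotation of Siebenmann's theorem and standard PL regular neighborhood theory.
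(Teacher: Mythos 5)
Your proposal is correct and is essentially verbatim the paper's own argument: the paper explicitly remarks that the last paragraph of the proof of Theorem~\ref{t:manifold2} only uses finite type and codimension $\geq 3$, and your proof reproduces that paragraph (exhaustion by finite subcomplexes, nested regular neighborhoods, general position to get $\pi_1$-stability of the end, then Theorem~\ref{t:pi-pi}). The only slight difference is that you spell out the general-position step and flag the one-endedness issue, both of which the paper leaves implicit.
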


\subsection*{Construction of the $7$-dimensional seed manifold}

Finally, combining Corollary \ref{c:modpl2}, Theorem \ref{t:nosquare}, and Theorem \ref{t:manifold2} gives the following:
\begin{Theorem}\label{t:hypmanifold}
    For any odd prime $p$, there is a compact, aspherical $7$-manifold with boundary $(N, \d N)$ such that $\pi_1(N)$ is special hyperbolic and $\lb_4(\pi_1(N); \F_p) \ne 0$.
\end{Theorem}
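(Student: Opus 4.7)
The plan is to run the construction outlined in the introduction. Start with the three-dimensional Moore space $L_0:=D^3\cup_p S^2$ obtained by attaching a $3$-cell to $S^2$ via a degree-$p$ map. A direct cellular computation shows $H_3(L_0;\F_2)=0$ (since $p$ is odd, multiplication by $p$ is an isomorphism mod $2$) while $\tilde H_3(L_0;\F_p)\neq 0$. Applying Theorem~\ref{t:nosquare} gives a flag no-square triangulation $L$ of $L_0$, and the homology is unaffected by choice of triangulation.

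Next, form the graph product $G_L$ over $L$ in which each vertex group is $\Z/m$, with $m$ to be chosen large. Because $L$ is flag no-square, the associated right-angled building is CAT$(-1)$ and $G_L$ is word hyperbolic. Corollary~\ref{c:modpl2} yields
\[
\abs{\lb_4(G_L;\F_p)-\tilde b_3(L;\F_p)}\leq 2\abs{\d \Cone{L}}/m,
\]
so taking $m$ large enough forces $\lb_4(G_L;\F_p)>0$.

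Now I apply Theorem~\ref{t:manifold2} with $d=4$: the hypothesis $d>3$ holds, and $H_{d-1}(L;\F_2)=H_3(L;\F_2)=0$, so $G_L$ contains a torsion-free finite index subgroup $\Gamma_L$ which is the fundamental group of a compact aspherical $(2d-1)=7$-manifold $(N,\d N)$. Finite-index subgroups of hyperbolic groups are hyperbolic, so $\pi_1(N)=\Gamma_L$ is hyperbolic; multiplicativity of $\lb$ in finite covers (Section~\ref{s:homologygrowth}) gives $\lb_4(\Gamma_L;\F_p)=[G_L:\Gamma_L]\cdot\lb_4(G_L;\F_p)>0$.

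To upgrade to special, I use Corollary~\ref{c:emb}: $\Gamma_L$ embeds in the right-angled Coxeter group $W_{OL}$, which is virtually special by Haglund--Wise, hence so is $\Gamma_L$. Passing to the finite cover of $N$ corresponding to a finite-index special subgroup of $\Gamma_L$ preserves compactness, asphericity, the $7$-manifold structure, and hyperbolicity, while multiplicativity keeps $\lb_4(\cdot;\F_p)>0$. The main subtle point is threading the constraint of hyperbolicity (which forces $L$ to be flag no-square and hence $\dim L\leq 3$ by Theorem~\ref{t:nosquare}) against the embedding-theoretic requirement $H_{\dim L}(L;\F_2)=0$ with $\dim L\neq 2$; the Moore space in dimension $3$ is essentially the unique place where both conditions are met while still producing nontrivial top-dimensional $\F_p$-homology.
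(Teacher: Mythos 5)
Your approach matches the paper's proof step by step: take a flag no-square triangulation $L$ of the Moore space $D^3\cup_p S^2$ (Theorem~\ref{t:nosquare}), form a graph product of large finite cyclic groups over $L$, use Corollary~\ref{c:modpl2} to force $\lb_4(G_L;\F_p)>0$, and apply Theorem~\ref{t:manifold2} with $d=4$, where the hypothesis $H_3(L;\F_2)=0$ holds precisely because $p$ is odd. The mod-$2$ versus mod-$p$ homology computation for the Moore space and the use of multiplicativity of $\lb$ are both correct.

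One inference is not justified as written: you claim $\Gamma_L$ is virtually special because it embeds in $W_{OL}$ and $W_{OL}$ is virtually special. In this paper ``special'' means \emph{cocompact} special, i.e.\ fundamental group of a \emph{compact} special cube complex, and that property does not pass to arbitrary subgroups. Here $\Gamma_L$ has infinite index in $W_{OL}$ (for $L$ with at least one edge, $W_{OL}$ is not hyperbolic while $\Gamma_L$ is), so the quotient of the Davis complex $\Sigma_{OL}$ by $\Gamma_L$ is a special cube complex that is not compact, and the embedding alone gives no quasiconvexity. The argument the paper actually relies on (Section~\ref{s:special}) is Agol's theorem: since $L$ is flag no-square, $G_L$ is hyperbolic and acts properly cocompactly on the CAT$(-1)$, hence CAT$(0)$, cube complex $\cu(G_L,\Cone{L})$; a hyperbolic group with such a cubulation is virtually special, and this passes to the finite-index subgroup $\Gamma_L$. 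With that fix, intersecting the subgroup from Theorem~\ref{t:manifold2} with a finite-index special subgroup and taking the corresponding finite cover of $N$ gives a compact aspherical $7$-manifold with special hyperbolic fundamental group, and multiplicativity of $\lb$ keeps $\lb_4(\cdot;\F_p)>0$, exactly as you intend.
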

\begin{proof}
    Let $L$ be a flag no-square triangulation of the complex $S^2 \cup_p D^3$, where $D^3$ is glued onto $S^2$ by a degree $p$ map.
    This triangulation exists by Theorem \ref{t:nosquare}.
    Let $G_L$ be a graph product of large finite groups over this $L$.
    Then $G_L$ is virtually special hyperbolic and for any finite index subgroup $\gG_L$ we have $\lb_4(\Gamma_L;\F_p)=\lb_4(G_L; \F_p)|G_L/\Gamma_L| \ne 0$ by Corollary \ref{c:modpl2}.
    We can pick $\gG_L$ to be special and, by Theorem \ref{t:manifold2}, to be $\pi_1$ of a compact aspherical $7$-manifold $N^{7}$.
\end{proof}

\section{A hyperbolic reflection group trick}\label{s:hrt}

In this section, we describe how to build a closed, aspherical manifold $\M$ out of copies of a compact aspherical manifold with boundary $N$.
The idea is to combine the Davis reflection group trick with the strict hyperbolization procedure of Charney--Davis \cite{cd95c}; it has the advantage of preserving hyperbolicity (and as we shall see, a host of other properties including virtual specialness).
The output is very similar to the relative strict hyperbolization procedure of Belegradek \cites{b06,b07}, which combines the relative hyperbolization of Davis--Januszkiewicz--Weinberger \cite{djw01} with strict hyperbolization.
Our hyperbolic reflection group trick has one major difference with Belegradek's procedure: our output has many disjoint copies of the input $N$, whereas his output has only one.
This makes our construction easier to work with, but doesn't give all of the applications in \cites{b06, b07}.

\subsection*{Strict hyperbolization}

Charney and Davis define a hyperbolization procedure that converts any piecewise Euclidean, locally CAT(0), cube complex $K$ of dimension $\le n$ into a piecewise hyperbolic, locally CAT$(-1)$, polyhedron $hK$ (the output $hK$ depends on $n$).
Roughly speaking, this replaces every $n$-cube with a hyperbolic $n$-manifold with boundary so that $k$-cubes are replaced by totally geodesic $k$-dimensional submanifolds.
Charney and Davis construct the hyperbolized $n$-cube $\CD^n$ by cutting a closed arithmetic hyperbolic $n$-manifold $A^n$ along a suitable collection of codimension-one totally geodesic submanifolds.
Let $B_{n}$ denote the finite Coxeter group of type $B_{n}$---the symmetry group of the cube $[0,1]^n$.
More precisely, Charney and Davis prove:
\begin{theorem}[\cite{cd95c}*{Theorem 6.1}]\label{t:arithmetic}
    For each $n > 0$, there is a closed connected hyperbolic $n$-manifold $A^n$, a collection $\W_1, \dots ,\W_n$ of codimension-one closed connected submanifolds, and an isometric action of $B_n$ on $A^n$ satisfying:
    \begin{itemize}
        \item $\W_i$ is a component of the fixed set of a standard generator of $B_n$.
        \item Each $\W_i$ is totally geodesic.
        \item The $\W_i$'s intersect orthogonally.
        \item $A^n$ and $\W_i$ are orientable.
        \item $\W_1 \cap \dots \cap \W_n$ is a single point $\{y\}$.
        \item $B_n$ fixes $y$ and the representation on $T_y A^n$ is equivalent to the standard representation.
    \end{itemize}
\end{theorem}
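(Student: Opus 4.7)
The plan is to build $A^n$ as a torsion-free arithmetic quotient of $\mathbb{H}^n$ coming from a diagonal quadratic form, designed so that $B_n$ acts by signed permutations of coordinates and the walls $\W_i$ descend from coordinate hyperplanes. Concretely, take $K = \mathbb{Q}(\sqrt{2})$ with real embeddings $\sigma_0, \sigma_1$, choose $\lambda \in \mathcal{O}_K$ with $\sigma_0(\lambda) < 0 < \sigma_1(\lambda)$, and consider
\[
q(x_0, x_1, \ldots, x_n) = \lambda x_0^2 + x_1^2 + \cdots + x_n^2.
\]
Under $\sigma_0$ the form has signature $(n,1)$ while under $\sigma_1$ it is positive definite; by Borel--Harish-Chandra, $\Gamma := \mathrm{O}(q; \mathcal{O}_K)$ is a cocompact lattice in $\mathrm{O}(q;\mathbb{R}) \cong \mathrm{O}(n,1)$.

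Identify $\mathbb{H}^n$ with one sheet of $\{q = -1\}$. The point $y = (1/\sqrt{-\sigma_0(\lambda)}, 0, \ldots, 0)$ lies in $\mathbb{H}^n$, and the hyperplanes $P_i := \{x_i = 0\} \cap \mathbb{H}^n$ are totally geodesic, pairwise orthogonal, and meet exactly at $y$. The finite Coxeter group $B_n$ acts on $\mathbb{R}^{n+1}$ by signed permutations of $x_1, \ldots, x_n$, preserving $q$, normalizing $\Gamma$, fixing $y$, and inducing the standard representation on $T_y \mathbb{H}^n$. The $n$ commuting sign-change reflections $r_i : x_i \mapsto -x_i$ in $B_n$ have fixed loci $P_i$ in $\mathbb{H}^n$.

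Next I would pass to a deep $B_n$-invariant principal congruence subgroup $\Gamma(\mathfrak{a}) \triangleleft \Gamma$. For $\mathfrak{a}$ small enough, $\Gamma(\mathfrak{a})$ is torsion-free (Minkowski's lemma), and $\Gamma' := \Gamma(\mathfrak{a}) \cap \mathrm{SO}(q)$ makes $A^n := \mathbb{H}^n/\Gamma'$ an orientable closed hyperbolic $n$-manifold. Because $B_n$ permutes coordinates of $\mathcal{O}_K^{n+1}$, it preserves both $\mathfrak{a}$ and $\mathrm{SO}(q)$, so $\Gamma'$ is normal in $\Gamma \rtimes B_n$ and $B_n$ acts isometrically on $A^n$ fixing $\bar y$ (the image of $y$). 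Define $\W_i$ to be the connected component containing $\bar y$ of the image of $P_i$ in $A^n$; each $\W_i$ is a closed, totally geodesic, codimension-one submanifold and a component of $\mathrm{Fix}(r_i)$, and the $\W_i$ meet orthogonally at $\bar y$ by construction.

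The main hurdle is a simultaneous arrangement of the remaining conditions---orientability of each $\W_i$ and $\W_1 \cap \cdots \cap \W_n = \{\bar y\}$---by a single choice of $\mathfrak{a}$. Orientability of $\W_i$ is ensured by forcing the stabilizer in $\Gamma'$ of $P_i$ to act orientation-preservingly on $P_i$, a condition cut out by a further $B_n$-invariant congruence subgroup. The single-point intersection is the more delicate point: any $\bar p$ in the $n$-fold intersection lifts to a point where $n$ mutually orthogonal $\Gamma'$-translates of the $P_i$ meet, and rigidity of this configuration forces $\bar p$ to be a $B_n$-fixed point of $A^n$. The set of $B_n$-fixed points is finite, and refining $\mathfrak{a}$ in a $B_n$-invariant manner separates them, leaving only $\bar y$. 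Since each required property is carved out by a $B_n$-invariant finite-index subgroup of $\Gamma$, their intersection produces the desired $\Gamma'$ and hence the hyperbolic manifold $A^n$ with walls $\W_1, \ldots, \W_n$ satisfying all six conditions.
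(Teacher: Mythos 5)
Your overall framework is aligned with Charney--Davis: an arithmetic lattice for a diagonal quadratic form over a real quadratic field, $B_n$ acting by signed permutations of $x_1,\dots,x_n$, coordinate hyperplanes $P_i=\{x_i=0\}$ descending to the walls. (They use $\Q(\sqrt5)$ rather than $\Q(\sqrt2)$, but that is cosmetic.) However, the hard content of the theorem is exactly the two conditions you describe as the ``main hurdle,'' and your treatment of them has genuine gaps.

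First, the claim that any $\bar p$ in the $n$-fold intersection lifts to a point where $n$ \emph{mutually orthogonal} $\Gamma'$-translates of the $P_i$ meet is circular: orthogonality of intersecting translates is one of the conclusions, not a given. A priori, two intersecting translates $\gamma_iP_i$ and $\gamma_jP_j$ meet at some angle determined by $q(\gamma_ie_i,\gamma_je_j)$, and one needs the arithmetic of the lattice (integrality of the Gram entries together with bounds at the other archimedean place) to pin this down. This step is entirely missing.

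Second, the rigidity claim that a point of $n$-fold intersection must be a $B_n$-fixed point of $A^n$ is not justified and does not follow. What you can say is that the intersection $\bigcap_i\W_i$ is a finite $B_n$-\emph{invariant} subset (since $B_n$ permutes the $\W_i$ compatibly), but there is no reason its elements should be fixed pointwise by $B_n$; $B_n$ could permute them. Consequently the proposed fix---separating the finitely many $B_n$-fixed points by a deeper $B_n$-invariant congruence subgroup---does not address the actual obstruction. Moreover, passing to a deeper cover does not automatically reduce $\bigcap_i\W_i$ to a single point: new intersection points can appear upstairs, and the component-of-$\bar y$ bookkeeping has to be redone. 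The paper (following Charney--Davis) asserts the stronger and cleaner statement that \emph{every} torsion-free congruence subgroup already works, which suggests that what is needed is an arithmetic argument done once, not an iterated refinement. You also do not verify that the images of the $P_i$ are \emph{embedded} (rather than merely immersed) hypersurfaces, which is again a lattice-arithmetic fact. Finally, a small point: the sign changes $r_i$ are reflections in $B_n$ but are not all standard Coxeter generators (those include transpositions), so the first bullet needs either a different reading of ``standard generator'' or a different argument.
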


Cutting $A^n$ along the $\{W_i\}$ leaves a compact, connected, orientable hyperbolic $n$-manifold with corners $\CD^n$ with isometric $B_{n}$ action.
Each $W_{i}$ is itself cut by other $W_{j}$'s, the boundary of $\CD^n$ contains two disjoint copies of each cut-open $W_{i}$ and is covered by these copies.
A codimension $k$ face of $\CD^n$ is a nonempty $k$-fold intersection of these cut-open copies.
\begin{theorem}[\cite{cd95c}*{Corollary 6.2}]\label{t:CD}
    The manifold $\CD^{n}$ has the following properties:
    \begin{itemize}
        \item There is a degree one map $f: \CD^n \to [0,1]^n$ which induces a $B_n$-equivariant isomorphism between the face posets of $CD^{n}$ and $[0,1]^n$.
        \item Each codimension-one face of $\CD^n$ is connected and totally geodesic, and the faces intersect orthogonally.
        \item Each $0$-dimensional face is a single point.
    \end{itemize}
\end{theorem}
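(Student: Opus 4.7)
The plan is to derive Theorem \ref{t:CD} directly from Theorem \ref{t:arithmetic} by analyzing the manifold-with-corners structure of $\CD^n$ and exploiting the standard $B_n$-representation at the fixed point $y$. All three bullets should emerge from a careful bookkeeping of faces combined with the local geometry at $y$.

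The starting observation is the local picture at $y$. Since $B_n$ fixes $y$ with standard representation on $T_y A^n$, the tangent hyperplanes $T_y \W_i$ are $n$ mutually orthogonal coordinate hyperplanes, and the exponential map identifies a small ball around $y$ in $A^n$ with a hyperbolic ball cut by the $\W_i$'s into $2^n$ orthants. When we form $\CD^n$ by cutting along $\bigcup_i \W_i$, these $2^n$ orthants become $2^n$ disjoint corner neighborhoods, producing $2^n$ preimages of $y$ — the $0$-dimensional faces, one per orthant — which immediately gives bullet three. More globally, this yields a $B_n$-equivariant bijection between the face poset of $\CD^n$ and that of $[0,1]^n$: a codimension-$k$ face is specified by choosing $k$ of the $\W_i$'s and a side ($+$ or $-$) for each, matching the face structure of the cube with its $B_n$-action by sign changes and coordinate permutations.

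With this bijection in hand, I would construct $f : \CD^n \to [0,1]^n$ by first defining it on a small $B_n$-equivariant neighborhood of the vertex orbit as the inverse of the exponential local model (a diffeomorphism, forcing the local degree to be $\pm 1$), and then extending $B_n$-equivariantly face-by-face in decreasing codimension, the face poset bijection guiding the extension; orientations can be chosen so the global degree is $+1$. Orthogonality of the codimension-one faces at their intersections transfers from orthogonality of the $T_y \W_i$ via the totally geodesic hypothesis. The main obstacle is showing that each codimension-one face of $\CD^n$ is \emph{connected}, since cutting $\W_i$ along its intersections with the other $\W_j$ could in principle split it. I would handle this by induction on $n$: each $\W_i$ carries, from the restricted action of the $B_{n-1} \subset B_n$ stabilizing it, the data of Theorem \ref{t:arithmetic} in one dimension lower, with the $\W_i \cap \W_j$ for $j\neq i$ playing the role of the codimension-one submanifolds and $y$ still the fixed point with standard representation. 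By the inductive hypothesis applied to this restricted structure, the cut $\W_i$ has the face poset of an $(n-1)$-cube, and is in particular connected; the base case $n=1$ is trivial since $\CD^1$ is an arc.
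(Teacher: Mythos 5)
The paper does not prove Theorem \ref{t:CD}; it is cited directly from Charney--Davis \cite{cd95c}*{Corollary 6.2}, so there is no paper proof to compare against. Your attempted derivation from Theorem \ref{t:arithmetic} has a genuine gap in the connectedness argument, and in fact the conclusion your induction would produce is too strong. Applying the inductive hypothesis to $W_i$ with walls $\{W_i \cap W_j\}_{j\neq i}$ would assert not only that the cut-open $W_i$ is connected but that it satisfies the \emph{full} conclusion of the theorem one dimension down---in particular that its codimension-one faces, which are precisely the codimension-two faces of $\CD^n$, are connected. But the remark immediately after Theorem \ref{t:CD} in the paper warns that ``lower-dimensional faces of $\CD^n$ are totally geodesic submanifolds, which however are generally not connected.'' So your induction proves too much, and something must break.

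Where it breaks is the hypothesis check: your inductive step needs the restricted data $(W_i, \{W_i\cap W_j\}_{j\neq i}, y, B_{n-1})$ to satisfy the hypotheses of Theorem \ref{t:arithmetic} in dimension $n-1$, which in particular requires each $W_i\cap W_j$ to be a \emph{connected} codimension-one submanifold of $W_i$. Theorem \ref{t:arithmetic} guarantees connectedness only of the $W_i$ themselves, not of their pairwise intersections; since the $W_i$ are totally geodesic but not convex (geodesics in $A^n$ are not unique), $W_i\cap W_j$ has no reason to be connected, so the claim that $W_i$ ``carries the data of Theorem \ref{t:arithmetic} in one dimension lower'' is false in general. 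The delicate point of Charney--Davis's Corollary 6.2 is exactly that codimension-one faces of $\CD^n$ are connected while lower-dimensional ones need not be, and that asymmetry cannot be captured by a uniform induction over the abstract properties in Theorem \ref{t:arithmetic}. Your treatment of bullet three (the $2^n$ preimages of $y$ via the standard $B_n$-representation on $T_yA^n$) and your setup of the face poset bijection via the local model at $y$ are reasonable; it is the connectedness of codimension-one faces that needs a different argument.
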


Note that although $W_{i}$ are totally geodesic and connected, they are not necessarily convex because the geodesics between points in $A^{n}$ are not unique.
So lower-dimensional faces of $\CD^n$ are totally geodesic submanifolds, which however are generally not connected.

Given a cube complex $K$ of dimension $\leq n$, thought of as a collection of cubes glued via isometries between faces, Charney and Davis define the \emph{$n$-dimensional hyperbolization} $hK$ of $K$ by gluing via isometries copies of faces of $CD^{n}$ in the same combinatorial pattern.
In general this relies on the $B_{n}$ isometric action and the equivariance of the face posets isomorphism.
If $K$ is \emph{foldable}, i.e. $K$ admits a cellular map $p: K \to [0,1]^n$ such that the restriction to any cell is a combinatorial isomorphism, then $hK$ is the same piecewise hyperbolic polyhedron as the fiber product
\[
hK = \{(k,x) \in K \times \CD^n \mid p(k) = f(x)\}
\]

Of course, there is a choice of $\CD^n$, but for any such choice the hyperbolization procedure satisfies the following:
\begin{theorem}[Charney--Davis \cite{cd95c}*{Corollary 7.1.}]\label{t:hK}
    Let $K$ be a locally $CAT(0)$ cube complex of dimension $\le n$.
    Then there is a piecewise hyperbolic, locally CAT$(-1)$ space $hK$, and a map $q: hK \to K$ such that:
    \begin{enumerate}
        \item For each $k$-cube $C^k$ in $K$, $q^{-1}(C^k)$ is isometric to a $k$-dimensional face of $\CD^n$.
        If $J$ is a subcomplex of $K$, then $q^{-1}(J)$ is isometric to $hJ$.
        \item The link of $q^{-1}(C^k)$ is isometric to the link of $C^k$ in $K$.
        \item $h$(a point) = $\text{a point}$.
        \item If $J$ is a totally geodesic subcomplex of $K$, then $hJ$ is totally geodesic in $hK$.
    \end{enumerate}
\end{theorem}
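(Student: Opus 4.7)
The plan is to construct $hK$ by replacing cubes with faces of $\CD^n$ in a combinatorially consistent way, and then verify the listed properties one by one. First I would handle the case when $K$ is foldable by a cellular map $p\colon K\to[0,1]^n$: here the fiber product $hK:=\{(k,x)\in K\times\CD^n : p(k)=f(x)\}$ formed using the Charney--Davis map $f\colon\CD^n\to[0,1]^n$ is manifestly piecewise hyperbolic, and the projection onto the first coordinate gives the required map $q\colon hK\to K$. For a general cube complex $K$ I would reduce to the foldable case by either passing to the canonical $B_n$-cover that becomes foldable after collapsing, or equivalently by gluing copies of faces of $\CD^n$ cube-by-cube using the $B_n$-equivariance of the face poset isomorphism from Theorem~\ref{t:CD} to guarantee that the identifications along shared faces are well-defined up to the isometries of $\CD^n$ that permute its faces. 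Statement (1) is then built into the definition: over a $k$-cube $C^k$ in $K$, the fiber $q^{-1}(C^k)$ is the corresponding $k$-face of $\CD^n$, and for a subcomplex $J\subset K$ the preimage $q^{-1}(J)$ is glued from the same combinatorial pattern that defines $hJ$. Statement (3) is immediate since Theorem~\ref{t:CD} says each $0$-face of $\CD^n$ is a single point.

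Next I would verify the link condition (2). The point is that Theorem~\ref{t:CD} provides a $B_n$-equivariant isomorphism of face posets between $\CD^n$ and $[0,1]^n$, so the link of any face of $\CD^n$ inside $\CD^n$ is canonically identified with the link of the corresponding face of $[0,1]^n$ in $[0,1]^n$ (an all-right spherical simplex of the appropriate dimension, since Theorem~\ref{t:CD} says the faces meet orthogonally). Gluing preserves links, so the link of $q^{-1}(C^k)$ in $hK$ decomposes as the corresponding union/join over all cubes of $K$ containing $C^k$, which is exactly the link of $C^k$ in $K$.

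The main step, and the one I would expect to be the real work, is the verification that $hK$ is locally CAT$(-1)$. Away from the singular stratum this is clear because each face of $\CD^n$ is a hyperbolic manifold with totally geodesic corners meeting orthogonally. At a singular point one appeals to Gromov's link condition for piecewise hyperbolic polyhedra: $hK$ is locally CAT$(-1)$ if and only if the link of each vertex is a CAT$(1)$ all-right piecewise spherical complex. By part (2), the link of a vertex $q^{-1}(v)$ in $hK$ is isometric to the link of $v$ in $K$, which is an all-right piecewise spherical flag complex because $K$ was assumed locally CAT$(0)$. Moussong's lemma (or the standard fact that all-right flag complexes are CAT$(1)$) then finishes the job. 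I would also need to do this check at lower-dimensional singular faces, which reduces to the vertex case by joining with spherical simplices corresponding to transverse directions inside the face.

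Finally, for (4) I would use (2) again: a subcomplex $J\subset K$ is totally geodesic iff at every vertex $v\in J$ the link of $v$ in $J$ is a full subcomplex of the link of $v$ in $K$, and this property is transported to $hJ\subset hK$ by the link isomorphism together with the fact that each face of $\CD^n$ is itself totally geodesic in $\CD^n$ (Theorem~\ref{t:CD}); combining these local statements, any local geodesic of $hK$ entering $hJ$ must remain in $hJ$.
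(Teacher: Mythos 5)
The paper states this as a citation of Charney--Davis \cite{cd95c}*{Corollary 7.1.} and gives no proof; the only internal material is the paragraph preceding the theorem, which describes the construction (fiber product with $\CD^n$ when $K$ is foldable, $B_n$-equivariant gluing of faces in general) and records the input properties of $\CD^n$ from Theorems~\ref{t:arithmetic} and~\ref{t:CD}. Your proposal uses exactly that construction, and the verifications you sketch---all-right links from the orthogonal corner structure of $\CD^n$, flagness of those links from local CAT$(0)$-ness of $K$, Gromov's lemma for CAT$(1)$-ness, and transporting fullness of links together with total geodesy of the faces to get (4)---are a faithful reconstruction of the Charney--Davis argument, so there is no substantive discrepancy with anything in this paper. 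Any finer checking (e.g., the exact form of the link lemma for piecewise hyperbolic polyhedra whose cells are manifolds with orthogonal totally geodesic corners rather than polytopes, and how local convexity upgrades to total geodesy in (4)) would have to be done against \cite{cd95c} itself rather than against the present paper, which simply takes the result as given.
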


Recall that if $K$ is a locally CAT(0) cube complex and $J$ a subcomplex, then there is an easy to verify link condition to check that $J$ is totally geodesic.
In particular, $J$ is totally geodesic if for each vertex $v \in K$, $\Lk_J(v)$ is a full subcomplex of $\Lk_K(v)$; i.e. if the vertices of a simplex are contained in $\Lk_J(v)$, then that simplex is contained in $\Lk_J(v)$.

\subsection*{A hyperbolic reflection group trick}

Suppose $(N, \d N)$ is a manifold with boundary, and $\d N$ is triangulated as a flag simplicial complex (this can always be ensured by taking a barycentric subdivision).
For the moment, we forget about the manifold $N$, and concentrate on the flag triangulation of the boundary, which we will denote by $\d$.
It determines a right-angled Coxeter group $W_{\d}$, and hence a locally CAT(0) cube complex $P_{\d}$ whose fundamental group is the commutator subgroup of $W_{\d}$.
The complex $P_{\d}$ can be also described as the basic construction $\cu( (\zz/2)^{\abs{\d^{0}}}, \Cone{\d})$, so it folds onto the Davis chamber $\Cone{\d}$.

If we remove a neighborhood of each cone vertex and replace with a copy of $N$, this is precisely the output of a right-angled Davis reflection group trick.
However, the fundamental group will generally not be hyperbolic.
\begin{figure}

    \tikzset{ house/.pic={
    \draw (0,-1)--(4,-1)--(4,3) to [out=180, in=-45] (2,4) to [out=-135, in=0] (0,3)--cycle;
    \draw (1,1) to [out=0, in=-45] (1.1,2) to [out=135, in=180] (2,3) to [out=0, in=45] (2.9,2) to [out=-135, in=180] (3,1) ;
    \draw (1.5,2) to [out=-30, in=-150] (2.5,2) ;
    \draw (1.6,1.95) to [out=30, in=150] (2.4,1.95) ; }}

    \tikzset{ CD/.pic={
    \draw (-1,5) to [out=0, in=-45] (-.9,7) to [out=135, in=180] (0,8) to [out=0, in=45] (.9,7) to [out=-135, in=180] (1,5) ;
    \draw (-.5,7) to [out=-30, in=-150] (.5,7) ;
    \draw (-.4,6.95) to [out=30, in=150] (.4,6.95) ; } }

    \tikzset{ cube/.pic={
    \draw (0,0) -- (3,-1)-- (5,0) -- (5,3) -- (2,4) -- (0,3) -- cycle;

    \draw (3,2)-- (3,-1);
    \draw (3,2)-- (5,3);
    \draw (3,2)-- (0,3);

    \draw [dashed] (2,1) -- (0,0) ;
    \draw [dashed] (2,1) -- (5,0) ;
    \draw [dashed] (2,1) -- (2,4) ; } } \tikzset{ dN/.pic={
    \draw [fill=white] (0,0) ellipse (1cm and .5cm);
    \draw (-.5,0) to [out=30, in=150] (.5,0) ;
    \draw (-.4,.05) to [out=-30, in=-150] (.4,.05) ; } } \tikzset{ N/.pic={
    \draw[fill=white] (-1,0) to [out=90, in=-85] (-1,1) to [out=95, in=180] (0,2) to [out=0, in=85] (1,1) to [out=-95, in=90] (1,0) ;
    \draw pic[transform shape] {dN} ; } }
    \begin{tikzpicture}[xscale=0.75, yscale = .75]
        \draw pic[rotate=90] {N} ;
        \node at (-.7,1.8) {$(N,\d N)$ };
        \begin{scope}[xshift=5cm, yshift=-1.5cm) ]
            \node at (1, 4) {$P_{\d}$ }; \pic[transform shape] {cube}; \pic[transform shape, yscale=0.5, xscale=0.5, rotate=115] at (0.31, 0.32) {dN} ;
        \end{scope}
        \begin{scope}[xshift=-3cm, yshift=-8cm) ]
            \node at (1, -1) {$hP_{\d}$ }; \pic[transform shape] {cube}; \pic[transform shape, yscale=0.5, xscale=0.5, rotate=115] at (0.31, 0.32) {dN} ; \pic[transform shape, scale=.3] at (2.5,1.3) {CD}; \pic[transform shape, scale=.3, rotate=-115] at (2.5,1.8) {CD}; \pic[transform shape, scale=.3, rotate=125] at (2.8,1.8) {CD};
        \end{scope}
        \begin{scope}[xshift=5cm, yshift=-8cm) ]
            \node at (1, -1) {$hP_{\d}^{N}$ }; \pic[transform shape] {cube}; \pic[transform shape, yscale=0.5, xscale=0.5, rotate=115] at (0.24, 0.27) {N} ; \pic[transform shape, yscale=0.5, xscale=0.5, rotate=-65] at (4.8, 2.7) {N} ; \pic[transform shape, yscale=0.5, xscale=0.5, rotate=60] at (.3, 2.8) {N} ; \pic[transform shape, yscale=0.5, xscale=0.5, rotate=-170] at (3.1, -.76) {N} ; \pic[transform shape, scale=.3] at (2.5,1.3) {CD}; \pic[transform shape, scale=.3, rotate=-135] at (2.5,2.3) {CD}; \pic[transform shape, scale=.3, rotate=115] at (2.5,2) {CD};
        \end{scope}

        \draw[->] (1,0) to [out=-30, in=150] (4,0) ;
        \draw[->] (4.5,-2) to [out=-180, in=0] (0,-4) ;
        \draw[->] (2.5, -6) to [out=-30, in=150] (4,-6) ;
    \end{tikzpicture}

    \caption{The construction of $hP_{\d}^N$.
    Starting with a triangulation of $\d$, we first construct a locally CAT(0) cube complex $P_\partial$ and then a locally CAT(-1) space $hP_\partial$ where the vertices all have links isomorphic to $\d N$.
    Then, each of these links is replaced with a copy of $N$ to form $hP_\partial^N$.}\label{f:hP}
\end{figure}
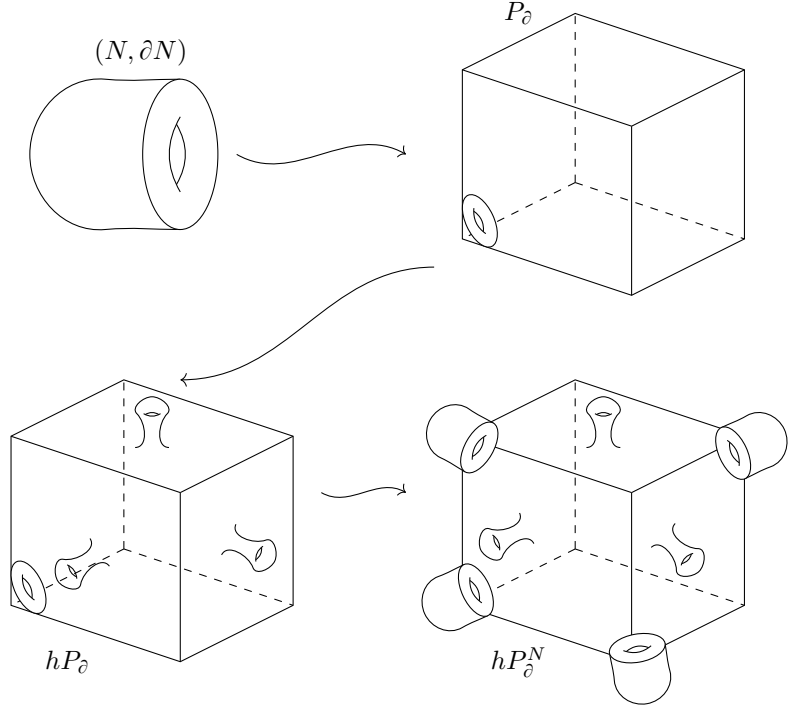

Instead, we first apply the Charney--Davis strict hyperbolization procedure to $P_{\d}$.
This produces a piecewise hyperbolic locally CAT$(-1)$ polyhedron $hP_{\d}$.
Since the strict hyperbolization preserves vertices and links it has the same number of singular vertices as $P_{\d}$ and the link of each one is still isomorphic to $\d$.

We now choose $\ge$ small enough so that in $\CD^{n}$ the $\ge$-ball around a vertex is isometric to the intersection of the $\ge$-ball in $\hh^{n}$ with an octant.
We resolve the singularities by removing $\ge$-neighborhoods of each vertex in $hP_{\d}$ and replacing them with copies of $N$.

This produces a closed manifold $\M$ equipped with a map to $hP_{\d}$.
When we want to emphasize the building blocks involved in the construction of $\M$, we will also denote it by $hP^N_{\d}$.

In some sense, the rest of the paper is concerned with showing that, for an appropriate choice of Charney--Davis piece $CD^n$ and triangulation $\d$, the ambient manifold $\M$ inherits many properties from its seed $N$.

\subsection*{Resolved chambers}

We let $\Lhc{N}{\d}$ denote the hyperbolized Davis chamber $\hc{\d}$ with a $\ge$-neighborhood of the cone point replaced with a copy of $N$, and similarly we denote by $K^{N}_{\d}$ the same procedure applied to the original Davis chamber.
We will refer to these as \emph{resolved} (hyperbolized) Davis chambers.

The symmetry of $P_{\d}$ lifts to $\M$, so $\M$ can be described as a basic construction obtained by reflecting around $\Lhc{N}{\d}$ using $(\zz/2)^{\abs{\d^0}}$.
The mirrors are hyperbolizations of the mirrors $K_{v}$ of the Davis chamber for $v \in \d$, and we will denote them by $h(K_{v})$.
They are isomorphic to $\hc{\Lk(v)}$.

\subsection*{Universal cover of $\M$}

Since $\M$ is a basic construction $\cu( (\zz/2)^{\abs{\d^0}}, \Lhc{N}{\d} )$, its universal cover $\widetilde \M$ is itself a basic construction, where the base space is the universal cover of $\Lhc{N}{\d}$ and the mirror structure is lifted from the mirror structure downstairs.
This is a general fact about basic constructions, and we review the general statement.
Let $(X, \{X_s\}_{s \in S})$ be a mirrored space, $W$ the corresponding RACG, and $\cu(W,X)$ the associated basic construction.
Let $\pi: \widetilde X \to X$ be the universal cover of $X$, and let $\widetilde S$ be the set of components of preimages of $X_s$ over all $s \in S$.
We will denote such a component of $\pi^{-1}(X_s)$ by $\tilde s$.
The map $\pi$ induces a natural map $\widetilde S \to S$ sending each $\tilde s$ to $s$.
We define a RACG $\widetilde W$ with generating set $\widetilde S$ as follows: for $\tilde s \ne \tilde t$, set $m_{\tilde s \tilde t} = 2$ if the lifts $\tilde s$ and $\tilde t$ intersect, and set $m_{\tilde s \tilde t} = \infty$ otherwise; it is clear that $\pi$ determines a group homomorphism $\widetilde W \to W$.
Let $\widetilde \cu = \cu(\widetilde W, \widetilde X)$ be the corresponding basic construction.

By \cite{d08}*{Theorem 9.1.3}, $\widetilde \cu$ is simply connected, and the map $(\pi,\pi): \widetilde \cu \to \cu(W,X)$ is the universal covering map.

The $\pi_1(X)$-action on $\widetilde X$ induces an action of $\pi_1(X)$ on $\widetilde W$ by automorphisms, so we can form the semidirect product $\widetilde W \rtimes \pi_1(X)$.
This semidirect product acts on $\widetilde \cu$ by setting
\[
(\tilde w,g)\cdot(\tilde w',x) = (\tilde w g(\tilde w'), gx).
\]
in fact, the semidirect product is precisely the group of lifts of the $W$-action to $\widetilde \cu$.
Therefore, there is an exact sequence
\[
1 \to \pi_1(\cu(W,X)) \to \widetilde W \rtimes \pi_1(X) \to W \to 1.
\]

Note that for $\M$, the group $W$ is finite, so $\pi_1(\M)$ is finite index in $\widetilde W \rtimes \pLhc{N}{\d}$.

\subsection*{Basic properties of the hyperbolic reflection group trick (Proof of Theorem \ref{t:hypdavistrick}(1)--(4))}

We collect some basic properties of the hyperbolic reflection group trick in the theorem below.
It follows from previous work of Davis--Januszkiewicz--Weinberger \cite{djw01} and Belegradek \cite{b07}, but since our construction is slightly different, we provide a sketch of the proof.
\begin{prevthm}
    {\ref{t:hypdavistrick}(1)--(4)} The manifold $\M$ satisfies the following properties:
    \begin{itemize}
        \item[(1)] $\M$ retracts onto $N$, hence $\pi_1(N)$ injects into $\pi_1(\M)$.
        \item[(2)] If $N$ is aspherical, then $\M$ is aspherical.
        \item[(3)] If $N$ is $\F $-aspherical, then $\M$ is $\F $-aspherical.
        \item[(4)] $\pi_1(\M)$ is relatively hyperbolic relative to the collection of subgroups corresponding to the $2^{\abs{\d^{0}}}$ copies of $N$.
        Therefore, if $\pi_1(N)$ is hyperbolic, then $\pi_1(\M)$ is hyperbolic.
    \end{itemize}
\end{prevthm}
\begin{proof}
    The manifold $\M$ folds onto $\Lhc{N}{\d}$, which maps onto $K^{N}_{\d}$ fixing $N$, which in turn deformation retracts onto $N$.
    This induces a retraction of $\M$ onto $N$, and hence an injection $\pi_1(N) \rightarrow \pi_1(M)$.

    Thus, in the universal cover $\widetilde \M$ of $\M$, $N$ lifts to copies of its universal cover $\widetilde N$.
    If we replace each copy of $\widetilde N$ with a cone on its boundary $\d \widetilde N$, we obtain a branched cover $\overline{hP}_{\d}$ of $hP_{\d}$, and $\pi_1(\M)$ now acts with cone vertex stabilizers equal to conjugates of $\pi_1(N)$.
    Since $\widetilde N$ is simply connected, the map $\widetilde \M\to \overline{hP}_{\d}$ collapsing copies of $\widetilde N$ to cone points is a $\pi_1$-isomorphism, so $\overline{hP}_{\d}$ is simply connected.

    Lifting the metric from $hP_{\d}$ gives $\overline{hP}_{\d}$ the structure of a piecewise hyperbolic space.
    The links of the cone vertices are disjoint copies of $\d \widetilde N$, which covers $\d N$, and hence are flag complexes.
    Therefore, $\overline{hP}_{\d}$ is locally CAT$(-1)$ as away from the cone points $\overline{hP}_{\d}$ is locally isometric to $hP_{\d}$.
    Since $\overline{hP}_{\d}$ it is simply connected it is CAT$(-1)$ and therefore contractible.
    Now, if $N$ is aspherical, then each $\widetilde N$ is contractible and the collapse map is a homotopy equivalence, so $\widetilde \M$ is contractible, and therefore $\M$ is aspherical.

    Similarly, if $N$ is $\F $-aspherical, then each $\widetilde N$ is $\F $-acyclic and the collapse map is an $\F $-homology equivalence, so $\widetilde \M$ is $\F $-acyclic, which means that $\M$ is $\F $-aspherical.

    Next, since the $1$-skeleton of $\overline{hP}_{\d}$ of quasi-isometric to $\overline{hP}_{\d}$, it is a $\gd$-hyperbolic graph.
    To prove relative hyperbolicity we need to show that this graph is \emph{fine} in the sense of Bowditch \cite{b12}.
    This amounts to showing that for each cone, if we delete the cone point from the graph, then in the metric of the deleted graph any bounded subset of the vertices in the link $\overline{hP}_{\d}$ is finite.
    Since the cones are convex in $\overline{hP}_{\d}$ (they are $\ge$-balls), and since the nearest point projection onto a convex subset of a CAT$(-1)$ space is distance decreasing, the distance in the deleted graph is bounded from below by the inner metric on $\d \widetilde N$.
    The claim now follows from the fact that $\pi_{1}(N)$ acts properly and cocompactly on $\d \widetilde N$.

    Finally, if $\pi_1(N)$ is hyperbolic, and $\pi_1(\M)$ is hyperbolic relative to $\pi_1(N)$, it follows \cite{f98}*{p.
    822} that $\pi_1(\M)$ is hyperbolic.
\end{proof}

\section{Virtual specialness of \texorpdfstring{$\pi_1(\M)$}{the fundamental group of M} }\label{s:special}
A group $G$ is \emph{cocompact special} if it is the fundamental group of a compact special cube complex in Haglund and Wise's sense \cite{hw08}.
All our special cube complexes will be compact, so following \cite{gm18} we will drop the word `cocompact'. We will be mostly concerned with virtually special hyperbolic groups.
By Agol's theorem \cite{a13}, these are precisely the hyperbolic groups which admit cocompact proper actions on CAT(0) cube complexes.
For example, the hyperbolic graph products of finite groups considered in Section \ref{s:ht} are virtually special.

In this section, we will show that the fundamental groups of strict hyperbolizations $hP_L$ are virtually special.
We also show that if a seed manifold $N$ has virtually special hyperbolic fundamental group then so does the manifold $\M$ obtained via the hyperbolic reflection group trick.
One utility of this is that a virtually special hyperbolic group $G$ virtually retracts onto any quasi-convex subgroup $H<G$.
This will let us do cutting arguments and estimate the homology growth of $\M$ from that of its constituent pieces in Section \ref{mvsection}.

\subsection*{Virtual specialness of Charney--Davis pieces}

We start by recalling that the arithmetic manifold $A^n$, and hence $\CD^{n}$, has virtually special fundamental group.
The manifold $A^n$ is constructed via arithmetic methods, and involves some number of choices.
We can take it to be the quotient of the hyperbolic space $\hh^{n}$ by a congruence subgroup of the orthogonal group of the quadratic form
\[
-\frac{1+\sqrt{5}}{2} x_0^2 + x_1^2 + x_2^2 + \dots + x_n^2
\]
over the ring $\zz\left [\frac{1+\sqrt{5}}{2} \right ]$.
This is a uniform arithmetic lattice of simplest type (we give a definition and discuss some properties of such lattices in Section~\ref{s:induction}.)

Moreover, as pointed out in \cite{b84}, for $n \leq 7$ the orthogonal group is virtually a Coxeter group.
The manifolds $\W_i$ that we cut $A^n$ along are images under the covering projection of hyperplanes in $\hh^{n}$ given by setting $x_i = 0$ for $1 \le i \le n$.
Charney and Davis showed that if $\gG$ is any torsion-free congruence subgroup of the orthogonal group, then all the conditions of Theorem \ref{t:arithmetic} hold.
In \cite{hw12}, Haglund and Wise showed virtual specialness for such lattices, see also \cite{bhw11} for an alternative approach.
\begin{theorem}[\cite{hw12},\cite{bhw11}]\label{t:arithmeticspecial}
    Uniform arithmetic lattices in $\hh^n$ of simplest type are virtually special.
\end{theorem}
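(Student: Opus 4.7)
The plan is to realize a torsion-free finite-index subgroup $\Gamma$ of the lattice as acting properly cocompactly on a CAT$(0)$ cube complex via Sageev's dual construction applied to a natural $\Gamma$-invariant family of totally geodesic hyperplanes coming from the arithmetic structure, and then verify Haglund--Wise's separability criteria for virtual specialness. Concretely, given the defining quadratic form $q$ of signature $(n,1)$ over the ring of integers $\mathcal{O}$ of a totally real number field $K$, I would consider, for each vector $v\in\mathcal{O}^{n+1}$ with $q(v)>0$, the hyperplane $v^{\perp}\cap \hh^n$ together with its orbit under $\Gamma$. Because the lattice is of simplest type there is (after replacing $v$ by an integer multiple) a reflection in $v^{\perp}$ that lies in the commensurator of $\Gamma$, so the $\Gamma$-stabilizer of such a hyperplane is itself commensurable with a uniform arithmetic lattice of simplest type in $\hh^{n-1}$.

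The next step is to select a finite set of $\Gamma$-orbits of hyperplanes whose union is a $\Gamma$-invariant wall system $\mathcal{H}$ that (i) separates any two distinct points of $\hh^n$, (ii) is locally finite, and (iii) has only finitely many $\Gamma$-orbits of cells when one passes to Sageev's dual cube complex $C(\mathcal{H})$. Density is the main geometric input: by a now-standard argument using the arithmetic density of rational points on the hyperboloid (combined with the strong approximation/absolute density of reflection hyperplanes inside the commensurator), the orbit $\Gamma\cdot v^{\perp}$ of even a single such hyperplane is already dense in the space of hyperplanes. Local finiteness and cocompactness then follow from arithmeticity and discreteness of $\Gamma$. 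Sageev's construction gives a proper, cocompact action of $\Gamma$ on the CAT$(0)$ cube complex $C(\mathcal{H})$.

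To upgrade this to virtual specialness I would apply the Haglund--Wise criterion: it suffices to pass to a finite-index subgroup $\Gamma'<\Gamma$ on which the induced action on $C(\mathcal{H})/\Gamma'$ has no self-intersecting, self-osculating, or inter-osculating hyperplanes and has no two-sided hyperplane inversions. The key algebraic input is that the hyperplane stabilizers $\Stab_{\Gamma}(H)$ are separable in $\Gamma$: this is the content of the Bergeron--Haglund--Wise separability theorem for geometrically finite subgroups of arithmetic lattices of simplest type, proved by induction on dimension using the fact that each $\Stab_{\Gamma}(H)$ is again such a lattice one dimension lower, together with the malnormal separability of elementary (cyclic) subgroups in hyperbolic groups. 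Using separability plus a standard Scott-style argument, one finds a finite-index $\Gamma'$ killing each of the four pathologies simultaneously.

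I expect the hard part to be exactly this last step: arranging the separability and non-osculation conditions uniformly over the finitely many $\Gamma$-orbits of hyperplanes in $\mathcal{H}$. The density/discreteness part of Sageev's input is essentially geometric and comparatively routine given the arithmetic setup, but verifying that wall stabilizers are separable (and that this separability can be combined to kill self- and inter-osculation) is where the combination of Haglund--Wise's cubical machinery with Bergeron--Haglund--Wise's arithmetic separability theorem is essential; an inductive hypothesis on $n$ naturally handles the wall stabilizers, completing the argument.
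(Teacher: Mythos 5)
The paper itself does not prove this theorem; it is cited from \cite{hw12} and \cite{bhw11}, so what you are really being asked to reconstruct is the argument in those references. Your outline is right in its broad strokes---cubulate $\Gamma$ via Sageev's construction applied to the $\Gamma$-orbit of rational hyperplanes, observe the wall stabilizers are themselves uniform arithmetic lattices of simplest type in $\hh^{n-1}$, and then feed separability of wall stabilizers (and their double cosets) into the Haglund--Wise criterion for virtual specialness.

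The concrete gap is in the separability step, which as written is circular. You propose to obtain separability of $\Stab_\Gamma(H)$ in $\Gamma$ ``by induction on dimension using the fact that each $\Stab_\Gamma(H)$ is again such a lattice one dimension lower.'' But the inductive hypothesis gives virtual specialness of $\Stab_\Gamma(H)$ as a group in its own right, which is an intrinsic property of the subgroup; it says nothing by itself about the relative property of $\Stab_\Gamma(H)$ being closed in the profinite topology of $\Gamma$. The Haglund--Wise implication that runs in this direction---hyperbolic and virtually special implies quasiconvex subgroups are separable---requires specialness of the \emph{ambient} group $\Gamma$, which is exactly what you are trying to prove; and the ``malnormal separability of elementary subgroups'' you invoke does not enter, since wall stabilizers are far from elementary. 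In \cite{bhw11} separability of wall stabilizers and their double cosets is established independently of specialness, directly from the arithmetic: congruence subgroups furnish a virtual retraction of $\Gamma$ onto each geometrically finite subgroup, and that retraction argument is what feeds into the specialness criterion---no induction on dimension is used. The other reference \cite{hw12} does run an induction, but through their combination theorem for quasiconvex hierarchies: the hierarchy is cut along the hyperplanes, the pieces are virtually special by induction, and the combination theorem builds specialness of $\Gamma$ upward---a structurally different argument that never needs to know the stabilizers are separable in $\Gamma$ beforehand. Either route repairs the gap; as written, your argument does not.
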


The group $\pi_1(CD^n)$ acts on a right-angled convex polyhedron in $\hh^n$ with infinitely many sides, precisely the intersection of halfspaces bounded by certain translates of the hyperplanes $x_i = 0$.
This implies that $\pi_1(\CD^n)$ is a quasiconvex subgroup of $\pi_1(A^n)$, see also \cite{lr22}*{Lemma 5.8}.
Since quasiconvex subgroups of hyperbolic virtually special groups are virtually special, we have
\begin{corollary}
    The fundamental group of a Charney--Davis piece is virtually special.
\end{corollary}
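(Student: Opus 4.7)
The plan is to derive the corollary as an immediate consequence of three already-established facts: (i) the ambient arithmetic group $\pi_1(A^n)$ is hyperbolic and virtually special; (ii) $\pi_1(\CD^n)$ sits quasiconvexly inside $\pi_1(A^n)$; and (iii) quasiconvex subgroups of hyperbolic virtually special groups are themselves virtually special.

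For step (i), hyperbolicity is immediate since $A^n$ is a closed hyperbolic $n$-manifold, so $\pi_1(A^n)$ is a uniform lattice in $\mathrm{Isom}(\hh^n)$. Virtual specialness is then exactly the content of Theorem~\ref{t:arithmeticspecial} applied to the uniform arithmetic lattice of simplest type used to construct $A^n$ via the quadratic form with coefficients in $\zz\left[\frac{1+\sqrt{5}}{2}\right]$.

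For step (ii), we use the description of $\CD^n$ as a fundamental domain for a right-angled reflection-type action in $\hh^n$. Lifting $\CD^n$ through the covering $\hh^n \to A^n$ and taking the component chosen as fundamental domain, we obtain a convex subset $P \subset \hh^n$ whose bounding hyperplanes are translates of the coordinate hyperplanes $x_i = 0$; these meet orthogonally by the arithmetic construction of $A^n$. Then $\pi_1(\CD^n)$ is the stabilizer of $P$ in $\pi_1(A^n)$ and acts cocompactly on $P$. Since $P$ is convex and the action is cocompact, the orbit of any point is quasiconvex in $\hh^n$, which gives quasiconvexity of $\pi_1(\CD^n)$ in $\pi_1(A^n)$ (this is the content of the cited \cite{lr22}*{Lemma 5.8}).

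Step (iii) is a standard result of Haglund--Wise, which the paper invokes just before the corollary statement. Combining the three steps yields that $\pi_1(\CD^n)$ is virtually special. The only point requiring care is verifying the quasiconvexity in step (ii); but this reduces to checking that $P$ is convex in $\hh^n$, which is built into the Charney--Davis construction via the orthogonal intersection property of the hyperplanes listed in Theorem~\ref{t:arithmetic}. No genuine obstacle arises: the corollary is essentially a formal consequence of the preceding paragraph.
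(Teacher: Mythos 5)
Your proposal is correct and follows the same route as the paper: identify $\pi_1(\CD^n)$ as the stabilizer of a convex right-angled polyhedron in $\hh^n$ (the intersection of half-spaces bounded by translates of the coordinate hyperplanes), deduce quasiconvexity in the virtually special hyperbolic group $\pi_1(A^n)$ from the cocompact action on a convex subset, and invoke that quasiconvex subgroups of hyperbolic virtually special groups are virtually special. The paper cites \cite{lr22}*{Lemma 5.8} for the quasiconvexity step, which you have instead unpacked directly; otherwise the argument is identical.
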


\subsection*{Virtual specialness of hyperbolized cones}

Our manifold $\M$ is constructed by reflecting $\Lhc{N}{\d}$; so we first show that $\pLhc{N}{\d}$ is virtually special.
We begin by showing that the fundamental groups of strictly hyperbolized cones are virtually special, and then do the resolved case.

Let $L$ be a flag complex, and let $\hc{L}$ be the corresponding hyperbolized cone.
If $A \subset L$ is a full subcomplex, then by the Link Condition, $\Cone{A}$ is a totally geodesic cubical subcomplex of $\Cone{L}$.
Hence, by Theorem~\ref{t:hK}(4), $\hc{A}$ is a totally geodesic subspace of $\hc{L}$, and even though $K_A$ and $K_L$ are contractible, the hyperbolizations $hK_A$ and $hK_L$ have quite complicated topology.

Note that if $v$ is a vertex of $L$, then the above subcomplex $\hc{\Lk(v)}$ is different than the mirror $h(K_{v})$ used to reflect the hyperbolized Davis chamber around in the previous section.
Indeed, that subcomplex was a hyperbolization of the cubical star of the vertex $[v]$ in $\d K_L$, whereas this is the hyperbolization of the subcomplex of cubes containing the cone point and corresponding to simplices in $\Lk(v)$.
\begin{theorem}\label{t:hcone special}
    Let $L$ be a finite $(k-1)$-dimensional flag complex, and suppose that $\hc{L}$ is obtained by applying the $n$-hyperbolization procedure to $\Cone{L}$ with $n \geq k$.
    Then the connected components of $\hc{L}$ have virtually special fundamental group.
\end{theorem}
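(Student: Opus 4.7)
The plan is to produce a proper cocompact action of each component's fundamental group on a CAT$(0)$ cube complex and then apply Agol's theorem \cite{a13} to conclude virtual specialness. Hyperbolicity is immediate: since $\Cone{L}$ is a locally CAT$(0)$ cube complex of dimension $k \leq n$, Theorem \ref{t:hK} gives that $\hc{L}$ is piecewise hyperbolic and locally CAT$(-1)$, so the fundamental group of each component is word-hyperbolic.

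To cubulate, I exploit the submanifolds $\W_1, \dots, \W_n \subset A^n$ of Theorem \ref{t:arithmetic}, which are connected, totally geodesic, and codimension-one. They restrict to totally geodesic codimension-one submanifolds of every face of $\CD^n$, and because faces of $\CD^n$ are glued across the faces of $\Cone{L}$ in the combinatorial pattern dictated by the cube complex structure of $\Cone{L}$ (Theorem~\ref{t:hK}), these restrictions match up under the gluings to produce codimension-one totally geodesic subspaces of $\hc{L}$. Lifting to the universal cover $\widetilde{\hc{L}}$ of each component yields a $\pi_1$-invariant family of connected, totally geodesic, codimension-one ``walls'', each of which separates the simply connected CAT$(-1)$ space $\widetilde{\hc{L}}$ into two half-spaces. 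There are finitely many $\pi_1$-orbits of walls.

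Applying Sageev's construction to this wall structure produces a CAT$(0)$ cube complex $X$ on which the fundamental group acts. Properness of the action follows from local finiteness of walls in $\widetilde{\hc{L}}$ (inherited from local finiteness of $\pi_1(A^n)$-translates of the $\W_i$'s in $\hh^n$) together with the fact that sufficiently many walls separate points in distinct orbits. Cocompactness follows from the finitely many wall orbits combined with word-hyperbolicity, via the standard hyperbolic cubulation machinery. Agol's theorem then implies that $\pi_1$ of each component is virtually compact special.

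The main obstacle will be verifying that the $\W_i$'s really produce a valid Sageev wall structure, in the sense that the walls remain connected after the Charney--Davis gluings (so each truly separates $\widetilde{\hc{L}}$ into two half-spaces), that they are locally finite, and that enough walls separate pairs of points for the action on the dual cube complex to be proper. These properties should reduce to corresponding statements about $\W_i$-translates in $A^n$ and $\hh^n$, which hold by the arithmetic construction in Theorem \ref{t:arithmetic} together with virtual specialness of the ambient lattice (Theorem~\ref{t:arithmeticspecial}).
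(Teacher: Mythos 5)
Your proposal takes a genuinely different route from the paper's, and it has a gap at exactly the point you flag as the ``main obstacle.'' Let me explain why that obstacle is not merely technical.

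The paper's proof is an induction on the number of vertices of $L$. It picks a vertex $s$ with $\St(s)\ne L$, decomposes $L=\St(s)\cup_{\Lk(s)}(L-s)$, observes that these are full subcomplexes, and hence that the hyperbolized cones decompose as $\hc{L}=\hc{\St(s)}\cup_{\hc{\Lk(s)}}\hc{L-s}$ along totally geodesic pieces. This gives a graph-of-groups decomposition of $\pi_1$ of each component with quasiconvex, inductively virtually special vertex and edge groups; the conclusion follows from the Groves--Manning combination theorem (Theorem \ref{t:gm}), i.e.\ from the action on the Bass--Serre tree. No CAT$(0)$ cube complex is constructed directly.

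Your approach, by contrast, is in the spirit of Lafont--Ruffoni \cite{lr22}: build a wall space inside $\hc{L}$ from the totally geodesic hypersurfaces $\W_i$, run Sageev's construction, and apply Agol. The problem is that the Davis chamber $\Cone{L}$ is not a pseudomanifold: it has mirror faces, i.e.\ codimension-one cubes contained in only a single top-dimensional cube. The Lafont--Ruffoni cubulation is proved precisely under the hypothesis that the input cube complex is pure of dimension $n$ and every $(n-1)$-cube lies in at least two $n$-cubes, which fails for $\Cone{L}$. Concretely, the walls you propose terminate when they reach a mirror face of the hyperbolized chamber, so in the universal cover $\widetilde{\hc{L}}$ they are totally geodesic submanifolds \emph{with boundary}, and such a hypersurface need not separate the ambient space into two halfspaces. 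Without two-sided separation, there is no Sageev wall space, and the rest of the argument (properness, cocompactness, Agol) has no input to run on. The paper even flags this issue explicitly in its remark at the end of Section~\ref{s:special}, noting that the hyperbolized cones $\hc{L}$ ``may not embed in arithmetic hyperbolic $n$-manifolds,'' and says that this is exactly why the inductive cutting argument via Theorem~\ref{t:gm} is used instead of a wall construction.

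There is also a smaller confusion worth fixing: the $\W_i$'s are codimension-one submanifolds of the closed manifold $A^n$, and the boundary faces of $\CD^n$ \emph{are} the cut-open copies of the $\W_i$'s, so it is not quite right to say the $\W_i$'s ``restrict to totally geodesic submanifolds of every face of $\CD^n$.'' The walls relevant to cubulating $\CD^n$ or its faces are other lattice translates of these hypersurfaces, and keeping track of how they interact with the corner structure of $\CD^n$ and the mirrors of $\Cone{L}$ is essentially the content of the Lafont--Ruffoni paper, under hypotheses that $\Cone{L}$ does not satisfy. So while the ingredients you cite (hyperbolicity from Theorem~\ref{t:hK}, specialness of the arithmetic lattice from Theorem~\ref{t:arithmeticspecial}, Agol's theorem) are all in play, the bridge between them --- the wall space --- is the missing and nontrivial step, and the paper's graph-of-groups route exists precisely to avoid having to build it.
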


To prove this, we use the following result of Groves and Manning, which generalizes Wise's notion of a quasiconvex hierarchy for virtually special groups.
\begin{theorem}[Groves--Manning \cite{gm18}*{Theorems A and D}]\label{t:gm}
    Suppose that $G$ is a hyperbolic group acting cocompactly on a CAT$(0)$ cube complex $X$ so that vertex stabilizers are quasi-convex and virtually special.
    Then $G$ is virtually special.
\end{theorem}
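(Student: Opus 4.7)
The plan is to apply Theorem \ref{t:gm} and proceed by induction on the dimension $k-1$ of $L$. The base case $k=0$ has $L = \emptyset$, so $\hc{L}$ is a point and the claim is trivial. For the inductive step, assume the result for all flag complexes of dimension less than $k-1$, let $L$ be $(k-1)$-dimensional, and let $G$ denote the fundamental group of a component of $\hc{L}$. By Theorem \ref{t:hK}, $\hc{L}$ is locally CAT$(-1)$, so $G$ is hyperbolic, and to apply Theorem \ref{t:gm} I need to produce a cocompact proper $G$-action on a CAT$(0)$ cube complex whose vertex stabilizers are quasi-convex in $G$ and virtually special.

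The cube complex will be the one dual to a wall system on the universal cover $\widetilde{\hc{L}}$, obtained by Sageev's construction. The walls are taken to be the lifts of the codimension-one subspaces $h(K_{v})$ for $v \in L$, where $K_{v} = \Cone{L} \cap \{x_{v} = 1\}$ is the mirror of $\Cone{L}$ at $v$ and is isomorphic as a cube complex to $\Cone{\Lk(v)}$. Since $K_{v}$ is a full coordinate subcomplex of $\Cone{L}$ it is totally geodesic there, so by Theorem \ref{t:hK}(4) each $h(K_{v})$ is totally geodesic in $\hc{L}$, and by Theorem \ref{t:hK}(1) it is itself a hyperbolization of $\Cone{\Lk(v)}$. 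Because $\dim \Lk(v) \leq k-2$, the inductive hypothesis yields that the fundamental groups of the components of $h(K_{v})$ are virtually special; total geodesicity in the CAT$(-1)$ ambient space makes each such subgroup quasi-convex in $G$.

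Sageev's construction then produces a CAT$(0)$ cube complex $X$ carrying a $G$-action. Vertex stabilizers in $X$ are intersections of conjugates of wall stabilizers, and the standard facts that intersections of quasi-convex subgroups of a hyperbolic group are quasi-convex and that quasi-convex subgroups of a virtually special hyperbolic group are virtually special (Haglund--Wise) ensure each vertex stabilizer is quasi-convex and virtually special in $G$. Properness of the action follows from the fact that the mirrors $\{K_{v}\}_{v \in L}$ together with their translates separate points in the universal cover of $\Cone{L}$, a separation property preserved by hyperbolization in view of the CAT$(-1)$ geometry. Theorem \ref{t:gm} then concludes that $G$ is virtually special.

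The main obstacle is verifying that this construction yields a \emph{cocompact} action on a \emph{finite-dimensional} cube complex. Finite-dimensionality reduces to bounding the number of pairwise-crossing walls, which is controlled by $\dim \hc{L} \leq k$ since walls $h(K_{v_0}), \ldots, h(K_{v_j})$ can mutually cross only when $\{v_0, \dots, v_j\}$ spans a simplex of $L$. Cocompactness follows from the finiteness of the cellular structure on $\hc{L}$ together with standard cocompactness criteria for Sageev's construction; the essential input is a uniform local finiteness of the wall system in $\widetilde{\hc{L}}$, itself a consequence of total geodesicity and the CAT$(-1)$ hypothesis on $\hc{L}$.
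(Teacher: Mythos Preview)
Your proposal does not prove the stated theorem. Theorem~\ref{t:gm} is a result of Groves--Manning, quoted from \cite{gm18} and not proved in the paper; it is used as a black box. What you have written is not a proof of Theorem~\ref{t:gm} at all---you invoke it in your first sentence---but rather an attempt to prove Theorem~\ref{t:hcone special} (virtual specialness of components of $\hc{L}$).

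Even viewed as a proof of Theorem~\ref{t:hcone special}, your argument has a genuine gap. You propose to cube $G=\pi_1(\hc{L})$ via Sageev's construction using the mirrors $h(K_v)$ as walls, and you assert properness because these walls ``separate points in the universal cover.'' They do not. Removing all the mirrors $K_v$ from $\Cone{L}$ leaves the open cubes containing the cone point; after hyperbolization, the complement of $\bigcup_v h(K_v)$ in $\hc{L}$ contains an open Charney--Davis piece, whose fundamental group is infinite. An element of that subgroup crosses no wall, so the Sageev action is not proper and Theorem~\ref{t:gm} does not apply. Relatedly, your induction on $\dim L$ never confronts the fact that $\pi_1(\CD^n)$ is virtually special, which is essential input.

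The paper's proof of Theorem~\ref{t:hcone special} avoids this by inducting on the number of vertices rather than the dimension: one removes a single vertex $s$, obtaining a graph-of-groups decomposition of $\hc{L}$ over the totally geodesic $\hc{\Lk(s)}$ with vertex pieces $\hc{\St(s)}$ and $\hc{L-s}$, and applies Theorem~\ref{t:gm} to the Bass--Serre tree. The base case is then $L$ a simplex, where $\hc{L}$ is a Charney--Davis piece (or a face of one), already known to be virtually special via Theorem~\ref{t:arithmeticspecial}.
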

\begin{proof}[Proof of Theorem \ref{t:hcone special}] We induct on the number of vertices of $L$.
    If $L$ is a simplex, then $\Cone{L}$ is a single cube.
    Therefore, the fundamental group of each component of $\hc{L}$ is a Charney--Davis piece (or face of such), and so by assumption is virtually special.
    Otherwise, we can find a vertex $s$ such that $\St(s)$ is not equal to $L$, which gives us a decomposition of $L$ into proper full subcomplexes:
    \[
    L = \St(s) \cup_{\Lk(s)} (L - s).
    \]
    This decomposition induces a decomposition of $\Cone{L}$ into totally geodesic cubical subcomplexes
    \[
    \Cone{L} = \Cone{\St(s)} \cup_{\Cone{\Lk(s)}} \Cone{L-S,}
    \]
    and hence a decomposition of $\hc{L}$ into totally geodesic subcomplexes
    \[
    \hc{L} = \hc{\St(s)} \cup_{\hc{\Lk(s)}} \hc{L - s}.
    \]

    Now, take a connected component of $\hc{L}$.
    This is a union of components of $\hc{\St(s)}$ and $\hc{L-s}$ meeting along components of $\hc{\Lk(s)}$.
    This union gives us a graph of groups decomposition with vertex groups the fundamental groups of components of $\hc{\St(s)}$ and components of $\hc{L-s}$, and edge groups the fundamental groups of components of $\hc{\Lk(s)}$.
    By induction on the number of vertices, these fundamental groups are virtually special, and they are all quasiconvex as they correspond to totally geodesic subcomplexes of $\hc{L}$.
    The action on the associated Bass--Serre tree and Theorem \ref{t:gm} imply that each component of $\hc{L}$ has virtually special fundamental group.
\end{proof}

\subsection*{Unnatural embeddings} We now adapt the argument in the previous subsection to the resolved case.
The idea is exactly the same; we inductively cut along hyperbolized walls $\hc{\Lk(v)}$ for $v$ a vertex of $\d$.

In general, there is no canonical way of embedding $\hc{\Lk(v)}$ into $\Lhc{N}{\d}$.
However, given any vertex $v$ of $\d$, there is an ``unnatural'' embedding $\hc{\Lk(v)} \to \Lhc{N}{\d}$ which sends the cone point of $\hc{\Lk(v)}$ to $v$ (and a neighborhood of the cone point to the star of $v$ in $\d$).
\begin{figure}[!t]\label{f:unnatural}
    \tikzset{ rhouse/.pic={
    \draw (0,0)--++(60:6)--++(120:6)--++(-120:6)--cycle;
    \draw (-1,5) to [out=0, in=-45] (-.9,7) to [out=135, in=180] (0,8) to [out=0, in=45] (.9,7) to [out=-135, in=180] (1,5) ;
    \draw (-.5,7) to [out=-30, in=-150] (.5,7) ;
    \draw (-.4,6.95) to [out=30, in=150] (.4,6.95) ;
    \draw[black, fill = white] (0,0) circle (10mm); } }
    \begin{tikzpicture}[scale=.2] \foreach \i in {0,60,...,360}
        \path (0,0) pic[transform shape, rotate=\i] {rhouse};
        \draw[ultra thick] ([shift=(0:1)]0,0) arc (0:120:1);
        \draw[ultra thick] (6,0)--(1,0);
        \draw[ultra thick] (-3,5.19)--(-.5,.865);
        \draw[fill] ([shift=(60:1)]0,0) circle [radius =0.3] ;
        \node at (1.7, 1.3) {$v$};
    \end{tikzpicture}
    \caption{An ``unnaturally'' embedded $\hc{\Lk(v)}$ inside of $\Lhc{N}{\d}$.}
\end{figure}

To describe the result of the cutting we need to consider a more general situation.
Each full subcomplex $A$ of $\d$ determines a subspace $\Lhc{N}{A}$ of $\Lhc{N}{\d}$, obtained by replacing an $\ge$-neighborhood of the cone point in $\hc{A}$ with a copy of $N$ glued along $A$.

As before, given any vertex $v$ of $A$, there is again an unnatural embedding of $\hc{\Lk_A(v)}$ into $\Lhc{N}{A}$ (and hence into $\Lhc{N}{\d}$) sending the cone point of $\hc{\Lk_A(v)}$ to $v$.
\begin{lemma}\label{l:qclinks}
    For any full subcomplex $A$ of $\d $ and any vertex $v \in A$, the components of the unnaturally embedded walls $\hc{\Lk_A(v)}$ are $\pi_1$-injective in $\Lhc{N}{\d}$, and the corresponding subgroups are quasiconvex.
\end{lemma}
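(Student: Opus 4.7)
The plan is to decompose the wall according to the pushout structure $\Lhc{N}{\d}=N\cup_{\d N}\hc{\d}^{\rm cut}$, where $\hc{\d}^{\rm cut}=\hc{\d}\setminus B_\epsilon$, and then use a graph-of-groups combination theorem to deduce $\pi_1$-injectivity and quasi-convexity from the corresponding statements in each piece.

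The key geometric input is that in the hyperbolized Davis chamber $\hc{\d}$ (before resolution), the mirror $K_v^A$ — the star of $v$ in the barycentric subdivision of $A$ — is a full cubical subcomplex of $\Cone{\d}$. Fullness is immediate from fullness of $A\subset\d$ and the description of $\Cone{\d}$ as a subcomplex of $[0,1]^{S}$, and the Link Condition then shows $K_v^A$ is totally geodesic in $\Cone{\d}$. By Theorem~\ref{t:hK}(4), its hyperbolization $h(K_v^A)\cong\hc{\Lk_A(v)}$ is totally geodesic in the locally CAT$(-1)$ space $\hc{\d}$, so each connected component is convex, $\pi_1$-injective, and has $\pi_1$ embedding as a quasi-convex subgroup of the hyperbolic group $\pi_1(\hc{\d})$. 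Taking $\epsilon$ small enough that the mirror avoids $B_\epsilon$, the same conclusion holds in $\pi_1(\hc{\d}^{\rm cut})$. The unnatural embedding into $\Lhc{N}{\d}$ referred to in the statement is homotopic rel the rest of the wall to this mirror embedding; the discrepancy is a collar which gets mapped into the contractible star of $v$ in $\d N\subset N$, and so $\pi_1$-injectivity and quasi-convexity of the subgroup are unaffected by which of the two we work with.

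To pass from quasi-convexity in $\pi_1(\hc{\d}^{\rm cut})$ to quasi-convexity in $\pi_1(\Lhc{N}{\d})$, I would apply a standard combination theorem for hyperbolic groups (Bestvina--Feighn) to the graph-of-groups decomposition coming from the pushout — with vertex groups $\pi_1(N)$ and fundamental groups of components of $\hc{\d}^{\rm cut}$, and edge groups fundamental groups of components of $\d N$. The wall component is itself an amalgam of its contractible $N$-part and its totally geodesic $\hc{\d}^{\rm cut}$-part along the part sitting in $\d N$, all of which embed $\pi_1$-injectively and quasi-convexly into the corresponding ambient vertex/edge groups. The main obstacle I anticipate is verifying quasi-convexity of the edge group $\pi_1(\d N)$ inside $\pi_1(N)$, which is not automatic from hyperbolicity of $\pi_1(N)$ alone; however, for the seed manifolds $N$ produced in Section~\ref{s:thickenings} this comes from the explicit cubical structure on $N$ and virtual specialness of $\pi_1(N)$, both of which are available in our setting. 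Once this input is in place, the combination theorem delivers both $\pi_1$-injectivity and quasi-convexity for each wall component.
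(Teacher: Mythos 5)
Your first paragraph is essentially on target: you correctly identify the mirror $K_v^A$ as a full subcomplex of $\Cone{\d}$, invoke Theorem~\ref{t:hK}(4) to get total geodesicity of the hyperbolized wall in $\hc{\d}$, and observe that the discrepancy between the unnatural embedding and the natural one is swallowed by the contractible star $\St(v)\subset N$. The paper does exactly this.

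Where the proposal diverges — and develops a genuine gap — is the second paragraph. The paper does \emph{not} use a graph-of-groups combination theorem to upgrade quasiconvexity. It instead observes that the collapse map $q\colon \Lhc{N}{\d}\to\hc{\d}$ induces a homomorphism $q_*$ of hyperbolic groups (both groups are hyperbolic, by Theorem~\ref{t:hypdavistrick}(4) and local CAT$(-1)$-ness respectively), and that by the homotopy you already described, $q_*$ restricted to the fundamental group of each wall component is an isomorphism onto $\pi_1$ of the corresponding naturally embedded totally geodesic subspace of $\hc{\d}$, which is quasiconvex. The conclusion then follows from the elementary Lemma~\ref{l:quasiconvex}: if $\phi\colon G\to H$ is a homomorphism of hyperbolic groups, $K<G$, $\phi|_K$ is an isomorphism, and $\phi(K)$ is quasiconvex in $H$, then $K$ is quasiconvex in $G$. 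The proof of that lemma is a three-line Lipschitz estimate. No Bestvina--Feighn, no decomposition of $\Lhc{N}{\d}$.

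The combination-theorem route you sketch has real obstructions. First, the decomposition $\Lhc{N}{\d}=N\cup_{\d N}\hc{\d}^{\rm cut}$ is not a graph-of-groups decomposition in general, because $\pi_1(\d N)\to\pi_1(N)$ need not be injective — there is no such hypothesis on the seed manifold $(N,\d N)$, and the boundary of a compact aspherical manifold is not $\pi_1$-injective in general. Second, even when injectivity holds, quasiconvexity of $\pi_1(\d N)$ in $\pi_1(N)$ is not available: your appeal to ``the explicit cubical structure on $N$'' from Section~\ref{s:thickenings} misreads that construction — $N$ there is a Siebenmann compactification of a regular neighborhood of a $d$-complex inside a $(2d-1)$-manifold built from a Davis complex; $N$ itself carries no cubical structure, and the lemma is in any case stated for arbitrary $N$. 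Third, Bestvina--Feighn needs its flaring/acylindricity hypotheses verified, which is extra work the paper never has to do. The moral is that one should exploit the \emph{outgoing} collapse map $q$ rather than try to build $\pi_1(\Lhc{N}{\d})$ up from pieces; pushing forward to $\hc{\d}$ reduces everything to the already-understood strictly hyperbolized case.
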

\begin{proof}
    A priori $\hc{\Lk_{A}(v)}$ may be disconnected, and hence the image of the unnatural embedding into $\Lhc{N}{\d}$ may be disconnected as well.
    However there is a distinguished component of the image containing $v$.
    This component is unnaturally embedded, and the embeddings of the other components are unchanged, i.e. including these into $\Lhc{N}{\d}$ and composing with the collapse map $q: \Lhc{N}{\d} \to \hc{\d}$ agrees with the natural embedding into $\hc{\d}$.
    The intersection of the distinguished component with $N$ is the subcomplex $\St(v)$, which is contractible, and therefore the composition with the collapse map is homotopic to the natural embedding into $\hc{\d}$.
    Therefore, the restriction of $q_\ast: \pi_1(\Lhc{N}{\d}) \to \pi_1(\hc{\d})$ to $\phc{\Lk_A(v)}$ is an isomorphism for each component of $\hc{\Lk_A(v)}$.

    Since the groups $\pi_1(\Lhc{N}{\d})$ and $\pi_1(\hc{\d})$ are hyperbolic and $\hc{\Lk_A(v)}$ is totally geodesic in $\hc{d}$, the quasiconvexity claim follows from the following general lemma:
    \begin{lemma}\label{l:quasiconvex}
        Let $G$ and $H$ be hyperbolic groups, $\phi: G \to H$ a homomorphism, and $K$ a subgroup of $G$.
        If $\phi(K)$ is quasiconvex in $H$ and $\phi|_K$ is an isomorphism, then $K$ is quasiconvex in $G$.
    \end{lemma}
    \begin{proof}
        Note that $K \cong \phi(K)$ which is quasiconvex in $H$ and hence finitely generated.
        Given any subgroup $K$ of $G$, the inclusion map is Lipschitz with respect to the word metrics, so we only have to check lower bounds.
        Let $k$ and $k'$ be elements in $K$.
        Then $d_G(k, k')$ is linearly bounded from below by $d_H(\phi(k), \phi(k'))$.
        Since $\phi(K)$ is quasiconvex, $d_H(\phi(k), \phi(k'))$ is bounded from below by a linear function in $d_{\phi(K)}(\phi(k), \phi(k'))$.
        Since $\phi|_{K}$ is an isomorphism it induces a quasi-isometry between $K$ and $\phi(K)$.
        These combine to give a lower bound for $d_G(k, k')$ by a linear function in $d_{\phi(K)}(\phi(k), \phi(k'))$.
    \end{proof}

    \renewcommand{\qedsymbol}{}
\end{proof}

\subsection*{Virtual specialness of resolved hyperbolic cones} To extend Theorem \ref{t:hcone special} to the resolved case, we need the following well-known lemma:
\begin{lemma}[\cite{b98a}*{Proposition 1.2}]\label{l:splitting}
    Let $G$ be a hyperbolic group which splits as a finite graph of groups.
    If each edge group is quasiconvex, then each vertex group is quasiconvex.
\end{lemma}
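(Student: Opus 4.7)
The plan is to work with the Bass--Serre tree $T$ for the given splitting, on which $G$ acts simplicially and cocompactly (by finiteness of the graph of groups), with vertex stabilizers the vertex groups and edge stabilizers the edge groups. Fix a basepoint $v_0 \in T$ whose stabilizer is the vertex group $H$ we wish to show is quasiconvex, and note that the orbit map $\pi\colon G \to T$, $g \mapsto g v_0$, is coarsely Lipschitz with respect to the word metric on $G$ and the simplicial metric on $T$. Given $h_1, h_2 \in H$ and a Cayley-graph geodesic $\alpha$ from $h_1$ to $h_2$, its image $\pi \circ \alpha$ is a loop at $v_0$ in the tree $T$ and so necessarily backtracks on itself.

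To exploit this, I would locate an outermost excursion: an edge $e$ of $T$ and two points $x_1, x_2 \in \alpha$ such that $\pi(x_1)$ and $\pi(x_2)$ lie on opposite sides of $e$, with the far-side excursion maximal in depth. Then $x_1$ and $x_2$ both lie within bounded distance of a common coset $g G_e$ of an edge stabilizer. By quasiconvexity of $G_e$ (hence of $g G_e$), the subarc of $\alpha$ between $x_1$ and $x_2$ lies in a uniform neighborhood of $g G_e$, with the bound depending only on the quasiconvexity constant of the edge groups (finitely many conjugacy classes by cocompactness of the action) and the hyperbolicity constant $\delta$ of $G$. This allows one to ``shortcut'' the excursion by replacing the subarc with a path in $g G_e$, strictly reducing the tree-complexity of the projection.

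Iterating this straightening, we reduce to the case where $\pi \circ \alpha$ is trivial, meaning every vertex of $\alpha$ lies within bounded distance of the $H$-orbit of $v_0$; cocompactness of the $H$-action on this orbit then forces $\alpha$ itself into a uniform neighborhood of $H$ in the Cayley graph. The main obstacle I anticipate is ensuring the constants do not degrade under iteration, since the number of excursions can be comparable to the length of $\alpha$. This is resolved by the hyperbolic stability of quasi-geodesics: the modified path obtained from $\alpha$ by a sequence of bounded local replacements remains a uniform quasi-geodesic, and therefore tracks $\alpha$ itself with a bound depending only on $\delta$ and the universal quasiconvexity constant. Thus a single additive constant works for all $\alpha$, giving quasiconvexity of $H$. (Alternatively, one can invoke a tree-of-spaces model built from the splitting, in which vertex groups act cocompactly on totally geodesic subcomplexes, and deduce quasiconvexity directly from the Bestvina--Feighn combination-theorem framework.)
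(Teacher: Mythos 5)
The paper does not give a proof; it cites this as \cite{b98a}*{Proposition 1.2}. Your outline correctly identifies the key ingredients --- the Bass--Serre tree, tracking the projection of a geodesic $\alpha$ between $h_1,h_2\in H$, the role of quasiconvex edge groups at crossing points --- but the iterative straightening step has a genuine gap, and it is precisely the one you flag and then dismiss. Each excursion replacement incurs an additive length cost; with $k$ excursions (and $k$ can be comparable to $\mathrm{len}(\alpha)$), the straightened path is a priori only a $(\lambda, kC)$-quasi-geodesic, so the Morse-lemma fellow-traveling bound grows with the length of $\alpha$. Appealing to ``hyperbolic stability of quasi-geodesics'' presupposes uniform constants, which is exactly what is in question, so it does not close the gap.

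The standard proof avoids iteration entirely by choosing the edge more carefully. Work in a tree-of-spaces model $\widetilde X$ for the splitting, with vertex space $\widetilde X_{v_0}$ on which $H$ acts cocompactly and walls $W_e$ over midpoints of tree edges $e$. Given $x$ on a geodesic $\beta$ between two points of $\widetilde X_{v_0}$ with $x$ projecting to a vertex $w\neq v_0$ of $T$, let $e$ be the \emph{first} edge on $[v_0,w]$ --- not the deepest one. Since $e$ is adjacent to $v_0$, its stabilizer $G_e$ is a subgroup of $H$, and the wall $W_e$ is attached directly to $\widetilde X_{v_0}$, so $W_e$ lies in a uniformly bounded neighborhood of $\widetilde X_{v_0}$. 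The wall $W_e$ separates $x$ from both endpoints of $\beta$, so $\beta$ crosses $W_e$ at points $p$ before $x$ and $q$ after; a single application of quasiconvexity of $G_e$ together with $\delta$-hyperbolicity places $\beta[p,q]\ni x$ within bounded distance of $W_e$, hence of $\widetilde X_{v_0}$, hence of $H$. The constants depend only on $\delta$, the finitely many quasiconvexity constants of the edge groups, and the graph-of-spaces construction: no shortcutting, no iteration, no accumulating error. Your parenthetical tree-of-spaces suggestion is the right instinct; the observation you are missing is that the first-edge wall is already uniformly close to $H$, so one quasiconvexity step suffices.
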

\begin{theorem}
    If $\pi_{1}(N)$ is a virtually special hyperbolic group, then $\pLhc{N}{\d}$ is virtually special hyperbolic.
\end{theorem}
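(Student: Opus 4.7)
The plan is to verify the hypotheses of Groves--Manning (Theorem~\ref{t:gm}) for $\pLhc{N}{\d}$ by establishing hyperbolicity together with a graph-of-groups decomposition whose vertex groups are virtually special hyperbolic and whose edge groups are quasiconvex.

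For hyperbolicity, the map $(w,x) \mapsto x$ defines a topological retraction $\M = \cu((\zz/2)^{\abs{\d^0}}, \Lhc{N}{\d}) \to \Lhc{N}{\d}$, so $\pLhc{N}{\d}$ is a retract of $\pi_1(\M)$. A retract subgroup is automatically quasi-isometrically embedded (both the inclusion and the retraction are Lipschitz and compose to the identity), and hence is quasiconvex in the hyperbolic group $\pi_1(\M)$ from Theorem~\ref{t:hypdavistrick}(4). Therefore $\pLhc{N}{\d}$ is itself hyperbolic.

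For the decomposition I would mirror the inductive proof of Theorem~\ref{t:hcone special}, strengthening the claim to: for every full subcomplex $A \subseteq \d$, each component of $\Lhc{N}{A}$ has virtually special hyperbolic fundamental group. The plan is to induct on $\abs{A^0}$, with hyperbolicity of the intermediate $\pi_1(\Lhc{N}{A})$ obtained from the face retraction $\Lhc{N}{\d} \to \Lhc{N}{A}$ induced by the cellular retraction $\Cone{\d} \to \Cone{A}$ that projects $[0,1]^{\d^0} \to [0,1]^{A^0}$ onto the $A$-coordinates. The base case is $A$ empty or a simplex: $A$ is then contractible, $\Lhc{N}{A}$ is homotopy equivalent to the pushout of $N$ and $\hc{A}$ along $A$, and van Kampen presents $\pi_1(\Lhc{N}{A})$ as a free product of $\pi_1(N)$ (virtually special hyperbolic by hypothesis) with the $\pi_1$'s of components of $\hc{A}$ (virtually special hyperbolic by Theorem~\ref{t:hcone special}); Theorem~\ref{t:gm} applied to the Bass--Serre tree of this free product handles the base case. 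For the inductive step with $A$ not a simplex, pick $v \in A$ with $\St_A(v) \ne A$, decompose $A = \St_A(v) \cup_{\Lk_A(v)} (A - v)$, and cut $\Lhc{N}{A}$ along the unnaturally embedded wall $\hc{\Lk_A(v)}$ provided by Lemma~\ref{l:qclinks}. The resulting graph of groups has vertex groups coming from components of $\Lhc{N}{A - v}$ (covered by the inductive hypothesis) and of $\hc{\St_A(v)}$ (covered by Theorem~\ref{t:hcone special}), and edge groups coming from components of $\hc{\Lk_A(v)}$ which are virtually special hyperbolic by Theorem~\ref{t:hcone special} and quasiconvex by Lemma~\ref{l:qclinks}. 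Theorem~\ref{t:gm} on the Bass--Serre tree finishes the inductive step, and taking $A = \d$ is the theorem.

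The main obstacle is the inductive step: the single copy of $N$ inside $\Lhc{N}{A}$ is attached along the whole of $A$, which straddles the cut along $\hc{\Lk_A(v)}$, so placing $N$ together with its full attachment on the $(A-v)$-side while leaving $\hc{\St_A(v)}$ on the other side requires care. This is precisely what the unnatural embedding of Lemma~\ref{l:qclinks} makes possible, since a neighborhood of its cone vertex maps into $\St_\d(v)$ rather than into the rest of $\d N$, and the verification uses the collapse map $q : \Lhc{N}{\d} \to \hc{\d}$ from the proof of that lemma. Once this bookkeeping is justified, the Groves--Manning application is routine.
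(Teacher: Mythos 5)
Your overall strategy — inductively cutting $\Lhc{N}{A}$ along unnaturally embedded walls $\hc{\Lk_A(v)}$, feeding the resulting graph of groups into Groves--Manning, and using the retraction $\M \to \Lhc{N}{\d}$ to get hyperbolicity of $\pLhc{N}{\d}$ — is the same as the paper's, and you correctly identified the key subtlety about the unnatural embedding. However, two intertwined gaps remain.

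First, the ``face retraction'' $\Lhc{N}{\d} \to \Lhc{N}{A}$ does not exist. The cubical fold $\Cone{\d} \to \Cone{A}$ is indeed a retraction, but strict hyperbolization does not take cubical retractions to retractions: a $k$-cube is replaced by a $k$-dimensional face of $\CD^n$, and there is no map from a $k$-face of $\CD^n$ onto one of its $\ell$-faces ($\ell<k$) restricting to the identity — already for $\CD^2$, a positive-genus surface with corners does not retract onto one of its boundary arcs. So this route to hyperbolicity of the intermediate $\pi_1(\Lhc{N}{A})$ fails.

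Second, your application of Theorem~\ref{t:gm} at each inductive step requires the \emph{vertex} stabilizers of the Bass--Serre tree to be quasiconvex, but you only argued quasiconvexity of the \emph{edge} groups (via Lemma~\ref{l:qclinks}). The missing ingredient is Lemma~\ref{l:splitting}: once $\pi_1(X_A)$ is known to be hyperbolic and the edge groups quasiconvex, Lemma~\ref{l:splitting} promotes quasiconvexity to the vertex groups. This also repairs the first gap, since quasiconvexity of $\pi_1(X_{A-v})$ inside the hyperbolic group $\pi_1(X_A)$ gives its hyperbolicity for free, with $\pi_1(X_\d)=\pLhc{N}{\d}$ (a retract of $\pi_1(\M)$) as the top of this top-down chain. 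With Lemma~\ref{l:splitting} inserted, your argument matches the paper's.
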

\begin{proof}
    We inductively cut along these unnaturally embedded walls in $\Lhc{N}{\d}$.
    At each inductive step, we are cutting $\Lhc{N}{A}$ along an unnaturally embedded $\hc{\Lk_A(v)}$ for some full subcomplex $A$.
    Let $X_A$ denote the component of $\Lhc{N}{A}$ containing $N$.
    The components of $\Lhc{N}{A}$ other than $X_A$ are the same as components of $\hc{A}$, and hence have virtually special fundamental group by Theorem \ref{t:hcone special}.
    Therefore, we focus on the cutting's effect on $X_A$.

    Removing $\hc{\Lk_A(v)}$ decomposes $X_A$ into a graph of spaces where the edge spaces are components of $\hc{\Lk_A(v)}$, and the vertex spaces are either components of $\hc{\St_A(v)}$ or $X_{A-v}$.
    The components of $\hc{\Lk_A(v)}$ and $\hc{\St_A(v)}$ again have virtually special $\pi_1$ by Theorem \ref{t:hcone special}, and $\pi_1(X_{A-v})$ is virtually special by induction on the number of vertices; the base case here is when $A$ is a disjoint union of simplices, so $\pi_1(X_A)$ is a free product of $\pi_1(N)$ and fundamental groups of Charney--Davis pieces, and hence is hyperbolic and virtually special.
    By Lemma \ref{l:qclinks} the edge groups are quasiconvex, and hence by Lemma~\ref{l:splitting} so are the vertex groups.
    Again, by Theorem \ref{t:gm} we are done.
\end{proof}

\subsection*{Hyperbolic reflection group trick preserves virtual specialness (Proof of Theorem \ref{t:hypdavistrick}(5))}

We now extend this to our hyperbolic reflection group trick $\M$.
\begin{prevthm}
    {\ref{t:hypdavistrick} (5)}\label{t:manifoldspecial}
    If $\pi_1(N)$ is virtually special hyperbolic, then $\pi_1(\M)$ is virtually special hyperbolic.
\end{prevthm}
\begin{proof}
    We have that $\M$ is obtained by reflecting around $\Lhc{N}{\d}$ using $(\zz/2)^{\abs{\d N^0}}$.
    We saw in Section \ref{s:hrt} that $\widetilde \M$ is a basic construction obtained by reflecting around the universal cover of $\Lhc{N}{\d}$, where the RACG $\widetilde W$ has generators corresponding to components of preimages of mirrors in $\Lhc{N}{\d}$.
    There is a natural action of $\pLhc{N}{\d}$ on $\widetilde W$ induced by the action on the universal cover of $\Lhc{N}{\d}$, and $\pi_1(\M)$ is a finite index subgroup of $\widetilde W \rtimes \pLhc{N}{\d}$.

    Now, let $\widetilde \gS$ denote the corresponding Davis complex for $\widetilde W$, equipped with its Coxeter cellulation (in particular, the $1$-skeleton of $\widetilde \gS$ is identified with the Cayley graph of $\widetilde W$).
    This gives $\widetilde \gS$ the structure of a finite dimensional, locally infinite, CAT(0) cube complex.

    The group $\pLhc{N}{\d}$ acts on $\widetilde \gS$ via its action permuting the generators of $\widetilde W$, and this extends to an action of $\widetilde W \rtimes \pLhc{N}{\d}$ on $\widetilde \gS$.
    There is one orbit of vertices, and a finite number of orbits of edges since there are a finite number of $\pLhc{N}{\d}$-orbits of mirrors upstairs.
    Since $\widetilde \gS$ is finite dimensional this implies the action is cocompact.
    The stabilizer of a vertex is conjugate to $\pLhc{N}{\d}$.
    We have shown that this is virtually special, and since $\widetilde W \rtimes \pLhc{N}{\d}$ retracts onto $\pLhc{N}{\d}$, it is quasiconvex.
    Therefore, we are done by Theorem \ref{t:gm}.
\end{proof}

\subsection*{Virtual retractions}

The same argument applied to $\hc{L}$ in place of $\Lhc{N}{\d}$ shows that the fundamental group of each $hP_L$ is hyperbolic and virtually special.
We record a corollary of the fact that the $hP_L$ have virtually special fundamental groups and work of Haglund--Wise \cite{hw08}*{Sections 6 and 7} that we will use in the next section in order to prove Mayer--Vietoris inequalities for homology growth of manifolds obtained by the hyperbolic reflection group trick.

We need a few simple observations, which are true for general complexes.
Given a map $f: Z \to X$ we say that $Z$ is a \emph{virtual retract} of $X$ if there exist a finite cover $p:X^{r} \to X$, an injective lift $g:Z \into X^{r}$ of $f$, and a retraction $r: X^{r} \to Z$ such that $rg=id_{Z}$.
\begin{lemma}\label{l:simple1}
    Suppose $f: Z \to X$ and $Z$ consists of finitely many components $Z_i$.
    If each $Z_i$ is a virtual retract of $X$, then $Z$ is a virtual retract of $X$.
\end{lemma}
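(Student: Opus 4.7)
The plan is to build the witnessing cover for $Z$ as a disjoint union of the covers that witness each component $Z_i$ being a virtual retract of $X$. The key observation is that the definition of virtual retract only requires a finite cover (which need not be connected) together with an injective lift, so placing the $Z_i$'s into different components of a disjoint union gives injectivity of a piecewise-defined map essentially for free.

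First, for each $i$ let $p_i\colon X^{r_i}\to X$, $g_i\colon Z_i\hookrightarrow X^{r_i}$, and $r_i\colon X^{r_i}\to Z_i$ be the data witnessing that $Z_i$ is a virtual retract of $X$ with respect to $f|_{Z_i}$. I would form
\[
X^r := \bigsqcup_{i} X^{r_i},
\]
which is a finite covering of $X$ under the combined map $p=\bigsqcup_i p_i$. If one prefers constant-degree covers, replace each $X^{r_i}$ by $D/\deg p_i$ disjoint copies of itself, where $D=\operatorname{lcm}_i\deg p_i$, so that every component of $X^r$ covers $X$ with the same degree.

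Second, I would define the lift $g\colon Z \to X^r$ componentwise, embedding $Z_i$ into one distinguished summand $X^{r_i}$ of $X^r$ via $g_i$. This $g$ is a lift of $f$ because $p_i\circ g_i = f|_{Z_i}$ for each $i$, and it is injective because each $g_i$ is injective and the images $g_i(Z_i)$ lie in pairwise disjoint components of $X^r$.

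Third, I would define the retraction $r\colon X^r\to Z$ piecewise: on each summand $X^{r_i}$ of $X^r$, use the retraction $r_i\colon X^{r_i}\to Z_i$ followed by the inclusion $Z_i\hookrightarrow Z$. Since the image $g(Z_i)$ lies entirely inside the distinguished copy of $X^{r_i}$, we get $r\circ g|_{Z_i} = r_i\circ g_i = \operatorname{id}_{Z_i}$ for every $i$, hence $r\circ g = \operatorname{id}_Z$. There is no serious obstacle here; the whole lemma is a formal consequence of the flexibility of the definition (covers need not be connected), and the only trivial bookkeeping is the optional degree-matching handled by taking disjoint copies.
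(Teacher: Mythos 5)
Your proof is correct and follows the same approach as the paper: both take the disjoint union of the witnessing covers $X^{r_i}$, assemble the lifts $g_i$ into a lift $Z = \coprod Z_i \hookrightarrow \coprod X^{r_i}$, and define the retraction piecewise. The optional degree-equalization step you add is not needed, since the definition of virtual retract only requires a finite cover and $\coprod X^{r_i} \to X$ already is one.
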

\begin{proof}
    By assumption, there are finite covers $X_i\to X$, lifts $g_{i}: Z_i \into X_i$ and retractions $r_i:X_i\to Z_i$.
    These assemble to a lift $Z=\coprod Z_i\into \coprod X_i=:X^{r}$ and a retraction $X^{r}\to Z$.
\end{proof}
\begin{lemma}\label{l:vr}
    Suppose $f: Z \rightarrow X$ is a virtual retract.
    Then, in the notation of the above definition:
    \begin{enumerate}
        \item If $q:X' \to X$ is a finite cover of $X$, then the pullback $q^{*}(Z) = f^{*}(X')$ is a virtual retract of $X'$.
        \item If $t: Z' \rightarrow Z$ is a finite cover of $Z$, then $Z'$ is a virtual retract of $X^{r}$.
        \item $f^{*}(C_X)$ is a cofinal subset of $C_{Z}$.
        \item If $f: Z\into X$ is a retract of $X$, then $f^{*}(C_X)=C_{Z}$.
    \end{enumerate}
\end{lemma}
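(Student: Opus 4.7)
All four parts will follow from routine pullback/fiber-product constructions; I handle (2) first (it is the cleanest), then (1), (3), and (4).

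For (2), regard $X^r$ as a space over $Z$ via the retraction $r$, and form the fiber product $t^*(X^r) := X^r \times_Z Z' = \{(x^r, z') \in X^r \times Z' : r(x^r) = t(z')\}$. Projection onto the first coordinate makes this a finite cover of $X^r$, hence of $X$. The map $z' \mapsto (g(t(z')), z')$ is an injective lift of $g \circ t : Z' \to X^r$ into $t^*(X^r)$, since $r(g(t(z'))) = t(z')$; the second-coordinate projection then retracts $t^*(X^r)$ onto this lifted copy of $Z'$, proving (2).

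For (1), take $(X')^r := q^*(X^r) = X^r \times_X X'$, which is a finite cover of $X'$ via the second-coordinate projection. The map $(z, x') \mapsto (g(z), x')$ embeds $f^*(X') = q^*(Z)$ injectively into $(X')^r$ and covers the map $q^*(Z) \to X'$. To get the retraction $(X')^r \to q^*(Z)$, I pass to fundamental groups: the virtual retract hypothesis gives a finite-index subgroup $G' \leq \pi_1(X)$ containing $f_*\pi_1(Z)$ together with a retraction $\rho: G' \to \pi_1(Z)$, and then $G' \cap \pi_1(X')$ is finite-index in $\pi_1(X')$, contains $\pi_1(q^*(Z))$, and admits the restriction of $\rho$ as a retraction onto $\pi_1(q^*(Z))$. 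The main technical point---ensuring the topological retraction $r$ interacts coherently with the covering $q$---is precisely what the $\pi_1$-translation bypasses, so (1) will be the only step with any subtlety.

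For (3), cofinality must be checked: given any finite cover $t: Z' \to Z$ corresponding to $K \leq \pi_1(Z)$, let $H$ be the normal core in $\pi_1(X)$ of $\rho^{-1}(K)$, where $\rho$ is the $\pi_1$-retraction from the virtual retract hypothesis. Then $H$ is finite-index and normal in $\pi_1(X)$, and since $H \leq \rho^{-1}(K)$, we have $\pi_1(Z) \cap H \leq \pi_1(Z) \cap \rho^{-1}(K) = K$ (using $\rho|_{\pi_1(Z)} = \mathrm{id}$). A short double-coset computation then shows every component of the pullback $f^*(X^H)$ has fundamental group contained in $K$, so $f^*(X^H)$ is a cover of $Z^K = Z'$, as required. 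Finally, (4) is immediate: when $f$ is an honest retract with retraction $r: X \to Z$, any finite cover $t: Z' \to Z$ pulls back to a finite cover $r^*(Z') \to X$, and a direct computation using $r \circ f = \mathrm{id}_Z$ gives $f^*(r^*(Z')) \cong Z'$; hence $f^*(C_X) = C_Z$.
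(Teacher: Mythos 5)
Parts (2) and (4) of your argument are essentially identical to the paper's. In (2) you both form $X^r\times_Z Z'$ (the paper writes $t^*(X^r)=r^*(Z')$), use the first projection as the finite cover of $X^r$, the map $z'\mapsto(g(t(z')),z')$ as the injective lift, and the second projection as the retraction. In (4) you both compute $f^*(r^*(Z'))\cong Z'$ directly from $r\circ f=\mathrm{id}_Z$. In (3) you translate to fundamental groups and take a normal core of $\rho^{-1}(K)$, whereas the paper argues purely with spaces: it regards $r^*(Z')$ as a finite cover of $X^r$ (hence of $X$), picks a regular cover $X'\to X$ dominating it, and checks directly that $f^*(X')=g^*p^*(X')$ covers $Z'$. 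Your version is a reasonable $\pi_1$-level repackaging, but note it tacitly assumes $f$ is $\pi_1$-injective (so that ``$\pi_1(Z)\cap H$'' really is the fundamental group of the relevant pullback component), while the paper's space-level argument does not need this.

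The genuine gap is in (1) --- ironically exactly where you flagged ``the only step with any subtlety.'' With $H:=\pi_1(X^r)$ and $K:=f_*\pi_1(Z)$, you assert that the restriction of $\rho:H\to K$ to $H\cap\pi_1(X')$ is a retraction onto $K\cap\pi_1(X')$. This is false in general, because $\rho$ is a retraction to $K$ inside $H$ and there is no reason for it to send elements of $\pi_1(X')$ back into $\pi_1(X')$; thus $\rho(H\cap\pi_1(X'))$ may escape $K\cap\pi_1(X')$. Concretely, take $X=S^1\vee S^1$ with $\pi_1(X)=F_2=\langle a,b\rangle$, $Z$ the $a$-circle so $K=\langle a\rangle$, and $X^r=X$ with the retraction $r$ that wraps the $b$-loop once around $Z$, so $\rho(a)=a$, $\rho(b)=a$. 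Let $q:X'\to X$ be the double cover with $G':=\pi_1(X')=\ker\bigl(F_2\to\Z/2,\ a\mapsto 1,\ b\mapsto 0\bigr)$. Then $K\cap G'=\langle a^2\rangle$, yet $b\in H\cap G'$ and $\rho(b)=a\notin G'$, so $\rho(H\cap G')\not\subseteq K\cap G'$ and $\rho$ does not restrict to a retraction on that subgroup. The correct fix is to pass to the deeper finite-index subgroup $H':=\rho^{-1}(K\cap G')\cap G'\leq G'$: it still contains $K\cap G'$ (since $\rho$ fixes $K\cap G'$ pointwise), it satisfies $\rho(H')\subseteq K\cap G'$ by construction, and $[G':H']<\infty$ because $[H:\rho^{-1}(K\cap G')]=[K:K\cap G']\leq[G:G']$; so $\rho|_{H'}$ is the desired retraction. (The published proof of (1) is also terse here --- it writes ``retracts onto $q^*(Z)$ via $q^*(r)$,'' but since $r$ is not a map over $X$ the pullback $q^*(r)$ is not literally defined --- yet your explicit identification of the retracting subgroup as $H\cap\pi_1(X')$ is the concrete step that fails.)
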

\begin{proof}

    Under the assumptions of (1), the pullback $q^{*}(X^{r})=p^{*}(X')$ is a cover of $X'$ which retracts onto $q^{*}(Z)$ via $q^{*}(r)$.
    This proves (1).

    Under the assumptions of (2), the pullback $t^{*}(X^{r})=r^*(Z')$ is a finite cover of $X^{r}$ which retracts onto $Z'$ via $t^{*}(r)$.
    This proves (2).

    Starting with a finite cover $Z'$ of $Z$, we can think of $r^*(Z')$ as a finite cover of $X$.
    Let $X'$ be a further regular cover of $X$.
    Since $X'$ is regular and covers $X^{r}$, $p^*(X')$ is a disjoint union of copies of $X'$ covering $X^r$.
    Since $g^{*}(r^{*}(Z'))=Z'$, regarding $X'$ as a cover of $X^{r}$, $g^*(X')$ is a cover of $Z$ which covers $Z'$.
    Therefore, $f^*(X') = g^*p^*(X')=\coprod g^{*}(X')$ covers $Z'$ as well.
    This proves (3).

    If $Z$ is a retract of $X$, then $Z' = f^*r^*(Z')$ for any finite cover $Z'$ of $Z$, hence $(4)$.
\end{proof}

The following lemma is a direct consequence of the homotopy extension property.
\begin{lemma}\label{l:simple2}
    A subcomplex $i:Z\hookrightarrow X$ is a retract if and only if the inclusion $i$ has a left homotopy inverse $r:X\to Z$.
\end{lemma}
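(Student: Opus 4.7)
The plan is to prove the nontrivial direction (left homotopy inverse implies retract) by a standard application of the homotopy extension property for CW pairs; the forward direction is immediate.

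First I would note that if $r:X\to Z$ is a genuine retraction, then $r\circ i = \id_{Z}$, and in particular $r$ is a left homotopy inverse of $i$, so one direction is free.

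For the reverse direction, suppose $r:X\to Z$ satisfies $r\circ i\simeq \id_{Z}$, and let $H:Z\times I\to Z$ be a homotopy with $H_{0}=r\circ i$ and $H_{1}=\id_{Z}$. Since $(X,Z)$ is a CW pair it satisfies the homotopy extension property, so the data of the map $r:X\to Z$ together with the homotopy $H$ on $Z\times I$ extends to a homotopy $\widetilde H:X\times I\to Z$ with $\widetilde H_{0}=r$ and $\widetilde H|_{Z\times I}=H$. Define $r':=\widetilde H_{1}:X\to Z$. Then $r'\circ i = \widetilde H_{1}|_{Z}=H_{1}=\id_{Z}$, exhibiting $Z$ as a strict retract of $X$ with retraction $r'$.

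There is no real obstacle here; the only thing to be careful about is using HEP correctly, namely that one extends a homotopy defined on $Z\times I$ together with a map on $X\times\{0\}$ to a homotopy on $X\times I$, and that such an extension exists precisely because $(X,Z)$ is a CW pair (equivalently, the inclusion $i:Z\hookrightarrow X$ is a cofibration). The lemma is then a direct corollary.
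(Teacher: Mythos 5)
Your proof is correct and uses exactly the approach the paper intends; the paper states the lemma as "a direct consequence of the homotopy extension property" without writing out the details, and your argument is precisely the standard HEP extension argument they are implicitly invoking.
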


Our source of virtual retracts is provided by work of Haglund--Wise.
\begin{theorem}[\cite{hw08}*{Theorem 7.3}]\label{t:hwvr}
    Let $G$ be a hyperbolic virtually special group and $H < G$ a quasiconvex subgroup.
    Then there is a finite index subgroup of $G$ which retracts onto $H$.
\end{theorem}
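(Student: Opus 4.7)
The plan is to realize $H$ as the fundamental group of a locally convex compact subcomplex of a special cube complex on which a finite-index subgroup of $G$ acts, and then invoke the Haglund--Wise canonical completion and retraction construction.

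First I would pass to a finite-index normal subgroup $G_0 \lhd G$ that is cocompact special: $G_0$ acts freely and cocompactly on a CAT(0) cube complex $X$, and the quotient $X_0 := X/G_0$ is a compact special cube complex. Set $H_0 := H \cap G_0$; it is finite-index in $H$ and still quasiconvex in the (still hyperbolic) group $G_0$. Since $G_0$ is hyperbolic and $H_0$ is quasiconvex, the cubical convex hull $Y \subseteq X$ of an $H_0$-orbit is a convex subcomplex on which $H_0$ acts freely and cocompactly (hyperbolicity controls the convex hull of a quasiconvex subset of the 1-skeleton). The induced map of quotients $\iota : Y/H_0 \to X_0$ is then a local isometry of compact cube complexes: local isometry because $Y$ is cubically convex in $X$, and $Y/H_0$ inherits specialness from $X_0$ as a locally convex subcomplex.

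Next, I would invoke the canonical completion and retraction construction of Haglund--Wise (\cite{hw08}*{Theorem 6.1}): any local isometry $A \hookrightarrow B$ of compact special cube complexes lifts to an injective map into a finite cover $\widehat{B} \to B$ whose image is a retract of $\widehat{B}$; the construction is combinatorial and uses the hyperplane coloring witnessing specialness of $B$. Applying this to $\iota$ produces a finite cover $\widehat{X}_0 \to X_0$ containing a copy of $Y/H_0$ as a retract. Translating back to fundamental groups yields a finite-index subgroup $G_1 \leq G_0$ with $H_0 \leq G_1$ together with a group retraction $r : G_1 \twoheadrightarrow H_0$.

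Finally I would promote the conclusion from $H_0$ to $H$: intersect $G_1$ with its $H$-conjugates to obtain an $H$-invariant finite-index subgroup $G_1' \leq G_0$ still containing the normal subgroup $H_0 = H \cap G_0 \lhd H$. Then $G_1' \cdot H \leq G$ is a finite-index subgroup containing $H$; after arranging that $r|_{G_1'}$ is $H$-equivariant (by averaging $r$ over the finite group $H/H_0$, or by working equivariantly with the canonical completion), the rule $gh \mapsto r(g)h$ gives a well-defined retraction $G_1' H \twoheadrightarrow H$ restricting to the identity on $H$. The main obstacle is the canonical completion and retraction step, which is the technical core of the argument and the only place where specialness, rather than mere CAT(0) cubulability, is genuinely needed; by contrast, the convex-hull step uses only hyperbolicity plus quasiconvexity, and the final $H_0$-to-$H$ promotion is purely algebraic bookkeeping.
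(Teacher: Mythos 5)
The paper does not prove this statement; it is a bare citation to Haglund--Wise, so there is no in-paper argument to compare against. Your outline does reconstruct the Haglund--Wise strategy: pass to a finite-index cocompact special subgroup $G_0$, build a compact convex core for the quasiconvex subgroup to get a local isometry into the special quotient $X/G_0$, and apply canonical completion and retraction.

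The genuine gap is in the promotion from $H_0 = H\cap G_0$ to $H$. Your first suggestion, averaging $r$ over $H/H_0$, is a non-starter: there is no averaging operation on homomorphisms into a group. Your second suggestion, making the canonical completion $H$-equivariant, is the correct idea, but your setup does not supply it: you only have $G_0$ acting on $X$, so neither $G$ nor $H$ acts on $X$, the convex core $Y$ you take is only $H_0$-invariant, and the local isometry $Y/H_0\to X/G_0$ carries no $H/H_0$-action with respect to which the completion could be equivariant. What is needed is to arrange up front that $G$ itself acts properly cocompactly on a CAT(0) cube complex $X$, with a finite-index normal $G_0\lhd G$ acting freely and with $X/G_0$ special; then one can take $Y$ to be the convex core of an $H$-orbit (hence $H$-invariant), the local isometry $Y/H_0\to X/G_0$ is $H/H_0$-equivariant, and naturality of the canonical completion yields an $H$-equivariant retraction $r\colon K\to H_0$ on a suitable finite-index $H$-invariant $K\le G_0$ with $K\cap H = H_0$. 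Only then does your formula $kh\mapsto r(k)h$ give a well-defined homomorphism $KH\to H$ that restricts to the identity on $H$. So the architecture is right, but the final $H_0$-to-$H$ step is not purely algebraic bookkeeping: it requires setting up the cubulation so that $H$, and not merely $H_0$, acts on the relevant cube complex.
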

\begin{corollary}
    If $A \subset L$ is a full subcomplex, then $hP_A$ is a virtual retract of $hP_L$.
\end{corollary}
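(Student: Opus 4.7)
\emph{Plan.} The plan is to exhibit $hP_A$ as a totally geodesic subcomplex of $hP_L$ whose components are quasiconvex, and then invoke Haglund--Wise. Because $A \subset L$ is full, the Davis complex $\gS_A$ is a totally geodesic convex subcomplex of $\gS_L$. I claim this descends to an embedding $P_A \hookrightarrow P_L$; the check reduces to the identity $[W_L,W_L] \cap W_A = [W_A,W_A]$. Both sides equal the kernel of the abelianization $W_A \to W_A^{\mathrm{ab}} = (\Z/2)^{\abs{A^0}}$: for the left-hand side this is because restricting $W_L \to W_L^{\mathrm{ab}} = (\Z/2)^{\abs{L^0}}$ to $W_A$ factors through $W_A^{\mathrm{ab}}$, since $A^0 \subset L^0$. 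Hence $P_A = \gS_A/[W_A,W_A] \hookrightarrow \gS_L/[W_L,W_L] = P_L$ is a totally geodesic cubical embedding, and by Theorem~\ref{t:hK}(1) and~(4), $hP_A = q^{-1}(P_A)$ embeds as a totally geodesic subcomplex of $hP_L$.

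As noted in the remark just before the corollary, $\pi_1(hP_L)$ is virtually special hyperbolic (the same argument used in the proof of Theorem~\ref{t:hypdavistrick}(5) applies with $N$ a point). Since $hP_L$ is locally CAT$(-1)$, the fundamental group of each component of the totally geodesic subcomplex $hP_A$ is quasiconvex in $\pi_1(hP_L)$. Applying Theorem~\ref{t:hwvr} component by component yields, for each such component, a finite-index subgroup of $\pi_1(hP_L)$ retracting onto its fundamental group. Asphericity of $hP_A$ (it is locally CAT$(-1)$) lifts these group retractions to topological retractions of the corresponding finite covers of $hP_L$ onto each component of $hP_A$. Finally, Lemma~\ref{l:simple1} combines the per-component virtual retracts into a single virtual retraction onto all of $hP_A$.

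The main technical obstacle is verifying that $P_A$ really embeds as a subcomplex of $P_L$ rather than merely mapping in---equivalently, the commutator subgroup identity above. Once this is in hand, everything else follows routinely from Theorem~\ref{t:hK}, Theorem~\ref{t:hwvr}, Lemma~\ref{l:simple1}, and asphericity, and no new ingredients beyond those already developed in this section are needed.
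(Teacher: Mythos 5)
Your proof is correct and follows essentially the same route as the paper: reduce via Lemma~\ref{l:simple1} and asphericity (Lemma~\ref{l:simple2}) to a per-component statement on fundamental groups, observe quasiconvexity from the totally geodesic embedding, and conclude with Theorem~\ref{t:hwvr}. The extra verification that $P_A \hookrightarrow P_L$ (via the identity $[W_L,W_L]\cap W_A = [W_A,W_A]$) is correct but not strictly needed—the paper takes the embedding as given from the basic-construction description of $P_\d$ and from Theorem~\ref{t:hK}(1),(4), since for a full subcomplex $A$ the stabilizer data in $\cu((\Z/2)^{|A^0|},K_A)$ coincides with that inherited from $\cu((\Z/2)^{|L^0|},K_L)$.
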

\begin{proof}
    By Lemma \ref{l:simple1}, it is enough to show that each component of $hP_A$ is a virtual retract, and since the components of all the spaces involved are aspherical, Lemma \ref{l:simple2} implies that it is enough to construct a virtual retract on the level of fundamental groups.
    Since each component $hP_A$ is a totally geodesic subspace of $hP_L$, its fundamental group $H$ is a quasiconvex subgroup of the fundamental group $G$ of $hP_L$.
    So, we are done by Theorem~\ref{t:hwvr}.
\end{proof}
\begin{corollary}\label{c:vretract}
    For a full subcomplex $A$ of a link in $\d$, let $hP_A\into hP^N_{\d}$ be the unnatural embedding.
    Then $hP_A$ is a virtual retract of $hP^N_{\d}$.
\end{corollary}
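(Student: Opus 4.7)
The plan is to follow the template of the preceding corollary, accommodating the fact that the unnatural embedding is not totally geodesic. By Theorem~\ref{t:hypdavistrick}(4)--(5) and the standing assumption that $\pi_{1}(N)$ is virtually special hyperbolic, the group $\pi_{1}(hP^{N}_{\d})$ is virtually special hyperbolic; hence Theorem~\ref{t:hwvr} converts quasiconvex subgroups into $\pi_{1}$-level virtual retracts. Since all spaces in sight are aspherical, Lemma~\ref{l:simple2} upgrades such $\pi_{1}$-retractions to topological ones, and Lemma~\ref{l:simple1} assembles the pieces. The whole corollary therefore reduces to showing: each component of the unnaturally embedded $hP_{A}$ is $\pi_{1}$-injectively included in $\pi_{1}(hP^{N}_{\d})$ with quasiconvex image.

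To verify quasiconvexity, the plan is to route through the collapse map $q\colon hP^{N}_{\d}\to hP_{\d}$ that crushes each copy of $N$ to its cone vertex, arguing as in Lemma~\ref{l:qclinks}. Let $v\in\d$ be the vertex with $A\subset\Lk_{\d}(v)$. The unnaturally embedded chamber $\hc{A}\hookrightarrow\Lhc{N}{\d}$ meets the copy of $N$ glued at $v$ only inside the contractible subcomplex $\St(v)\subset\d N\subset N$, so $q$ restricted to this chamber embedding is homotopic to the natural embedding $\hc{A}\hookrightarrow\hc{\d}$. Both embeddings are equivariant for the subgroup of reflections $(\zz/2)^{|A^{0}|}\subset(\zz/2)^{|\d^{0}|}$, so the homotopy extends equivariantly through the reflection trick; on the level of basic constructions, $q$ restricted to each component of the unnaturally embedded $hP_{A}$ is thus homotopic to the natural embedding $hP_{A}\hookrightarrow hP_{\d}$. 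By the preceding corollary, the latter is a virtual retract, hence $\pi_{1}$-injective with quasiconvex image in the hyperbolic group $\pi_{1}(hP_{\d})$. Applying Lemma~\ref{l:quasiconvex} to $q_{*}\colon\pi_{1}(hP^{N}_{\d})\to\pi_{1}(hP_{\d})$ then transports quasiconvexity upstairs.

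The main obstacle I anticipate is careful bookkeeping at the equivariance step: one must verify that the reflection-group structures on $hP^{N}_{\d}$ and on $hP_{\d}$ match in such a way that the single-chamber homotopy from Lemma~\ref{l:qclinks} extends to a homotopy of the full unnaturally embedded $hP_{A}$, and that $q$ intertwines the two basic constructions on the nose. Once this compatibility is in place, the remainder is just a direct combination of Haglund--Wise (Theorem~\ref{t:hwvr}) with the assembly Lemmas~\ref{l:simple1} and~\ref{l:simple2}, precisely mirroring the proof of the preceding corollary.
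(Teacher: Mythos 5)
Your argument is correct, but it takes a genuinely different route from the paper, and it is worth comparing the two. The paper's proof is a single sentence: pull back the virtual retraction of $hP_{\d}$ onto the \emph{naturally} embedded $hP_A$ (from the preceding corollary) by the collapse map $q\colon hP^N_{\d}\to hP_{\d}$. Concretely, if $Y'\to hP_{\d}$ is a finite cover retracting onto the natural $hP_A$, then $q^*(Y')\to hP^N_{\d}$ is a finite cover, and the unnaturally embedded $hP_A$ lifts to it because $q$ composed with the unnatural embedding is homotopic to the natural one (the key observation from Lemma~\ref{l:qclinks}); the composite $q^*(Y')\to Y'\to hP_A$ is then a homotopy left inverse of the inclusion, and Lemma~\ref{l:simple2} upgrades this to an honest retraction. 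What you do instead is establish quasiconvexity of the unnaturally embedded $\pi_1(hP_A)$ inside $\pi_1(hP^N_{\d})$ by transporting through $q_*$ via Lemma~\ref{l:quasiconvex}, and then re-run the Haglund--Wise $\to$ Lemma~\ref{l:simple2} $\to$ Lemma~\ref{l:simple1} pipeline used for the previous corollary. Both proofs hinge on exactly the same geometric fact (the $q\circ f\simeq \iota$ homotopy), so the substance is the same.

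The trade-off: your route imports the extra hypothesis that $\pi_1(N)$ is virtually special hyperbolic, since Theorem~\ref{t:hwvr} is applied to $G=\pi_1(hP^N_{\d})$; the paper's pullback sidesteps this entirely, because the finite cover and its retraction live over $hP_{\d}$, whose fundamental group is always hyperbolic and virtually special regardless of $N$. In every place the paper actually invokes this corollary your hypothesis is satisfied, so this is not a gap, but it is a real loss of generality, and one worth noticing since the statement of the corollary makes no such assumption. Also, your worry about extending the chamber-level homotopy equivariantly through the basic construction is exactly the point that the pullback formulation avoids having to address explicitly (the pullback happens at the level of the already-assembled spaces $hP^N_\d\to hP_\d$, not chamber-by-chamber); this is a second way in which the paper's approach is cleaner. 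Finally, a minor caution: for the passage through Lemma~\ref{l:simple2} you only need $hP_A$ (the target of the would-be retraction) to be aspherical, not $hP^N_{\d}$; this matters because in the application to Theorem~\ref{rational} the seed $N$ is merely rationally aspherical, so ``all spaces in sight are aspherical'' is not literally true and should be replaced by ``$hP_A$ is aspherical.''
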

\begin{proof}
    Pull back the virtual retraction produced in the previous corollary by the collapse map $hP^N_{\d}\to hP_{\d}$.
\end{proof}
\begin{remark}

    A recent result \cite{lr22} of Lafont and Ruffoni also shows that the manifolds $hP_{S^r}$ have virtually special fundamental groups.
    In fact, since any flag complex $L$ embeds as a full subcomplex of some flag $S^N$, the components of $hP_L$ are totally geodesic inside of $hP_{S^N}$, and hence their results show these have virtually special fundamental groups.
    More generally, they show that if $X$ is an $n$-dimensional locally CAT(0) cube complex with each cube contained in an $n$-cube, and every $(n-1)$-cube contained in at least two $n$-cubes, then $hX$ is virtually special.
    To do this, they show that the fundamental group of the hyperbolization $hX$ acts on a certain cube complex with stabilizers that are quasi-convex subgroups in the arithmetic hyperbolic $n$-manifolds $A^n$, prove that cube complex is CAT(0), and apply Theorem \ref{t:gm}.
    Our argument does not construct such a complex.
    Instead, we observe that (since the initial complex has the form $P_L$) the fundamental group of $hP_L$ acts on a Davis complex (which is definitely a CAT(0) cube complex) with vertex stabilizers that are retracts (hence quasi-convex) and isomorphic to fundamental groups of hyperbolized cones $\hc{L}$ (which may not embed in arithmetic hyperbolic $n$-manifolds), show these stabilizers are virtually special by the cutting argument in Theorem \ref{t:hcone special}, and then apply Theorem \ref{t:gm}.
\end{remark}

\section{Mayer--Vietoris arguments\label{mvsection}
} Our goal in this section is to control the homology growth of a space $X$ in terms of the homology growth of simpler pieces the space can be cut into.
This will be used to compute the homology growth of the manifold $\M$ that we constructed in Section \ref{s:hrt} in terms of the seed manifold $N$.
\subsection*{Mayer--Vietoris inequalities for homology growth}
\begin{lemma}[Restricted Mayer--Vietoris inequalities]\label{l:newmv}
    Suppose $X=A_1\cup_B A_2$.
    \begin{enumerate}
        \item If $\ub_{k-1}^{X}(B)=0$ then
        \[
        \ub_{k}(X) \leq \ub_{k}^{X}(A_1)+\ub_{k}^{X}(A_2).
        \]
        \item If $\ub_{k}^{X}(B)=0$ then
        \begin{align*}
            \lb_{k}^{X}(A_{i}) &\leq \lb_{k}(X) ,\\
            \ub_{k}^{X}(A_{i}) &\leq \ub_{k}(X).
        \end{align*}
    \end{enumerate}
\end{lemma}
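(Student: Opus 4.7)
The plan is to derive both parts from the classical Mayer--Vietoris exact sequence applied to finite covers of $X$, and then translate the resulting Betti number inequalities into limit inequalities via Lemma \ref{l:posetineq}.

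First, for a finite cover $\pi\colon X'\to X$ of degree $d$, restrict $\pi$ to obtain covers $A_i'\to A_i$ and $B'\to B$, all of degree $d$, satisfying $X' = A_1'\cup_{B'}A_2'$. The Mayer--Vietoris exact sequence
\[
\cdots \to H_k(B') \to H_k(A_1') \oplus H_k(A_2') \to H_k(X') \to H_{k-1}(B') \to \cdots
\]
immediately yields the two Betti number estimates
\[
b_k(X') \leq b_k(A_1') + b_k(A_2') + b_{k-1}(B'), \qquad b_k(A_i') \leq b_k(X') + b_k(B').
\]
Dividing by $d$, these become inequalities of functions on the poset $C_X$ of finite covers of $X$:
\[
f_X \leq f_{A_1} + f_{A_2} + g_B, \qquad f_{A_i} \leq f_X + f_B,
\]
where $f_Y$ (respectively $g_Y$) sends $X'\in C_X$ to the normalized $k$-th (respectively $(k-1)$-th) $\F$-Betti number of the pullback of $X'$ to $Y$.

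By definition of restricted homology growth, $\overline{f_Y}=\ub_k^X(Y)$ and $\underline{f_Y}=\lb_k^X(Y)$ (and analogously for $g_B$ in degree $k-1$); in particular $\overline{f_X}=\ub_k(X)$ and $\underline{f_X}=\lb_k(X)$. For (1), apply monotonicity and subadditivity of $\overline{\,\cdot\,}$ from Lemma \ref{l:posetineq} to the first inequality, giving
\[
\ub_k(X) = \overline{f_X} \leq \overline{f_{A_1}} + \overline{f_{A_2}} + \overline{g_B} = \ub_k^X(A_1) + \ub_k^X(A_2) + \ub_{k-1}^X(B),
\]
and the hypothesis $\ub_{k-1}^X(B)=0$ finishes the proof. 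For (2), apply monotonicity together with the subadditivity $\overline{f+g}\leq \overline{f}+\overline{g}$ and the almost additivity $\underline{f+g}\leq \underline{f}+\overline{g}$ from Lemma \ref{l:posetineq} to the second inequality, obtaining
\[
\ub_k^X(A_i)\leq \ub_k(X) + \ub_k^X(B), \qquad \lb_k^X(A_i)\leq \lb_k(X) + \ub_k^X(B);
\]
the hypothesis $\ub_k^X(B)=0$ kills the error term in both.

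The only technical point worth checking is the identification $\overline{f_Y}=\ub_k^X(Y)$, which requires that the poset of pullback covers of $Y$ has the same tails, up to cofinality, as $C_X$; this uses the observation that any further cover of a pullback cover is itself a pullback, namely of the corresponding fibre product over $X$. Apart from this bookkeeping, the argument is a mechanical application of the poset limit machinery, and I do not anticipate any real obstacle.
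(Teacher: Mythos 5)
Your proof is correct and follows essentially the same route as the paper: apply the Mayer--Vietoris exact sequence to pulled-back covers, normalize, and pass to upper/lower limits over $C_X$ using the poset inequalities from Lemma~\ref{l:posetineq}. The only cosmetic difference is that you use $b_k(A_i') \leq b_k(X') + b_k(B')$ where the paper uses the stronger $b_k(A_1') + b_k(A_2') \leq b_k(X') + b_k(B')$, but both give the conclusion, and your final remark about identifying $\overline{f_Y}$ with $\ub_k^X(Y)$ is just unwinding the definition of restricted homology growth as a limit over $C_X$.
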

\begin{proof}

    For any cover $X'\to X$ denote the induced covers by $'$.
    The Mayer--Vietoris sequence
    \[
    \dots \to H_k(B')\to H_k(A_1')\oplus H_k(A_2')\to H_k(X')\to H_{k-1}(B') \to \dots
    \]
    implies that on the poset $C_X$ the normalized Betti functions satisfy
    \[
    \nb_{k}(X) \leq \nb_{k}(A_{1}) + \nb_{k}(A_{2}) + \nb_{k-1}(B).
    \]
    Taking the upper limit of this inequality over $C_X$ and noting that $\ub_{k-1}^{X}(B )=0$ proves part~(1) of the lemma.
    The same Mayer--Vietoris sequence implies that on $C_X$
    \[
    \nb_{k}(A_{1}) + \nb_{k}(A_{2}) \leq \nb_{k}(B) + \nb_{k}(X).
    \]
    Taking the upper or lower limit of this inequality over $C_X$ and using $\ub_{k}^{X}(B)=0$ and subadditivity Lemma \ref{l:posetineq}\eqref{i:sub} proves~(2).
\end{proof}
Applying the second part of the above Lemma repeatedly gives the following.
\begin{corollary}\label{l:homologygrowthmv}
    Suppose $X$ has a filtration $A_{0}= X_0\subset X_1\subset\dots\subset X_m=X$ with $X_i=X_{i-1}\cup_{B_i} A_i$.
    If $\ub^{X}_k(B_i)=0$ for all $i$, then
    \[
    \lb_k^{X}(A_0) \leq\lb_k(X),
    \]
    \[
    \ub_k^{X}(A_0) \leq\ub_k(X).
    \]
\end{corollary}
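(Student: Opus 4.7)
The plan is to iterate Lemma \ref{l:newmv}(2) along the filtration. The one bookkeeping subtlety is that the hypothesis gives vanishing of $\ub_{k}^{X}(B_{i})$ with respect to the \emph{outermost} ambient $X$, whereas a direct application of Lemma \ref{l:newmv}(2) to the decomposition $X_{i}=X_{i-1}\cup_{B_{i}}A_{i}$ would ask for vanishing of $\ub_{k}^{X_{i}}(B_{i})$, an a priori weaker assumption that we do not have in hand.

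To sidestep this, I would first record a slight generalization of Lemma \ref{l:newmv}(2): whenever $X=A_{1}\cup_{B}A_{2}$ sits inside a larger space $Y$ with $\ub_{k}^{Y}(B)=0$, one has $\lb_{k}^{Y}(A_{i})\leq \lb_{k}^{Y}(X)$ and $\ub_{k}^{Y}(A_{i})\leq \ub_{k}^{Y}(X)$. The proof is a verbatim copy of the proof of Lemma \ref{l:newmv}(2): the Mayer--Vietoris sequence gives the pointwise inequality $\nb_{k}(A_{1})+\nb_{k}(A_{2})\leq \nb_{k}(B)+\nb_{k}(X)$ on all of $C_{Y}$, and Lemma \ref{l:posetineq}\eqref{i:sub}, applied on the subposet of covers pulled back from $Y$ together with the assumption $\overline{\nb_{k}(B)}=0$ on this subposet, produces both inequalities.

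With this in hand, I apply the generalization to each step of the filtration with the ambient space $Y=X$ fixed once and for all. Using $\ub_{k}^{X}(B_{i})=0$, I obtain
\[
\lb_{k}^{X}(X_{i-1})\leq \lb_{k}^{X}(X_{i}), \qquad \ub_{k}^{X}(X_{i-1})\leq \ub_{k}^{X}(X_{i}),
\]
for every $i=1,\dots,m$. Chaining these inequalities from $i=1$ up to $i=m$, and noting that $\lb_{k}^{X}(X)=\lb_{k}(X)$ and $\ub_{k}^{X}(X)=\ub_{k}(X)$ (every cover of $X$ is tautologically pulled back from itself), yields $\lb_{k}^{X}(A_{0})=\lb_{k}^{X}(X_{0})\leq \lb_{k}(X)$ and likewise $\ub_{k}^{X}(A_{0})\leq \ub_{k}(X)$. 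The only real obstacle is conceptual: one must keep the ambient $Y$ fixed at $X$ across all iterations, rather than letting it shrink to $X_{i}$ at stage $i$ and thereby running into an incompatible vanishing hypothesis.
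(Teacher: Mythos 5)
Your proof is correct and follows the same route the paper intends — the paper's one-line proof ("apply Lemma \ref{l:newmv}(2) repeatedly") implicitly relies on precisely the observation you make explicit, namely that the Mayer--Vietoris inequality must be taken over the single poset $C_X$ at every stage of the iteration rather than over the shrinking posets $C_{X_i}$. Your generalization of Lemma \ref{l:newmv}(2) is valid (the Mayer--Vietoris pointwise inequality $\nb_k(A_1)+\nb_k(A_2)\leq \nb_k(B)+\nb_k(X)$ holds on all of $C_Y$ since pullback commutes with the pushout, and then Lemma \ref{l:posetineq}(1) and (2) give both inequalities), and the chaining is then routine. One small quibble: $\ub_k^{X_i}(B_i)=0$ is not really an ``a priori weaker'' hypothesis than $\ub_k^{X}(B_i)=0$ — since $C^X_{B_i}\subset C^{X_i}_{B_i}$ need not be cofinal, the two quantities are simply incomparable in general — but this does not affect the substance of your argument, which is that one has the latter vanishing, not the former, so the ambient poset must be kept at $C_X$.
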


These inequalities are only useful when we can get rid of restriction on the covers, and this is one thing the virtual retractions from Section \ref{s:special} are good for.
Parts (3) and (4) of Lemma \ref{l:vr} together with Lemma \ref{l:posetineq}\eqref{i:cofinal} imply:
\begin{corollary}[Virtual retractions]\label{c:virtual}
    Suppose $A$ is a virtual retract of $X$.
    Then
    \[
    \lb(A) \leq \lb^{X}(A) \leq \ub^{X}(A) \leq \ub(A).
    \]
    If $A$ is a retract of $X$, then the left and right inequalities are equalities.
\end{corollary}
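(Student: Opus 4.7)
The plan is to recognize that this is essentially an immediate application of the cofinality results already established: Lemma~\ref{l:vr}(3) says that the pulled-back covers $f^{*}(C_{X})$ form a cofinal subset of $C_{A}$, and Lemma~\ref{l:posetineq}(3) says that restriction to a cofinal subset sandwiches the upper and lower limits between the unrestricted lower and upper limits.

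First I would dispense with the middle inequality $\lb^{X}(A) \leq \ub^{X}(A)$, which holds pointwise before taking limits and hence on any subposet; alternatively it is the middle inequality in Lemma~\ref{l:posetineq}(3) applied to $D = f^{*}(C_{X})$. For the outer inequalities, I would apply Lemma~\ref{l:vr}(3) to the map $f: A \to X$ witnessing the virtual retraction to conclude that $f^{*}(C_{X})$ is cofinal in $C_{A}$. Then the normalized Betti function $\nb_{k}(A;\F)$ on $C_{A}$ satisfies, by Lemma~\ref{l:posetineq}(3),
\[
\lb(A) = \underline{\nb_{k}(A;\F)} \leq \underline{\nb_{k}(A;\F)|_{f^{*}(C_{X})}} \leq \overline{\nb_{k}(A;\F)|_{f^{*}(C_{X})}} \leq \overline{\nb_{k}(A;\F)} = \ub(A),
\]
and the middle two quantities are by definition $\lb^{X}(A)$ and $\ub^{X}(A)$ respectively.

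For the retract case, Lemma~\ref{l:vr}(4) strengthens cofinality to the equality $f^{*}(C_{X}) = C_{A}$, so the restricted and unrestricted posets coincide and both $\lb^{X}(A) = \lb(A)$ and $\ub^{X}(A) = \ub(A)$ follow immediately. There is no real obstacle here; the content of the corollary lies entirely in the previously established Lemmas~\ref{l:vr} and~\ref{l:posetineq}, and the role of this statement is simply to package their combined output in a form that can be fed into the Mayer--Vietoris arguments of the present section.
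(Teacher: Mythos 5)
Your proof is correct and is precisely the paper's argument: the paper derives this corollary directly by combining Lemma~\ref{l:vr}(3)--(4) with Lemma~\ref{l:posetineq}(3), exactly as you do. Nothing further to add.
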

Applying Corollary~\ref{c:virtual} to Lemma~\ref{l:newmv} and Corollary~\ref{l:homologygrowthmv} results in the following two corollaries about homology growth that we will use later in the paper.
\begin{corollary}[Absolute Mayer--Vietoris inequalities]\label{c:mvinequalities}
    Suppose $X=A_1\cup_B A_2$.
    Suppose further that $A_1,A_2$ and $B$ are virtual retracts of $X$.
    Let $d$ be the degree of a cover of $X$ which retracts to $A_{1}$.
    \begin{enumerate}
        \item If $\ub_{k-1}(B)=0$, then
        \[
        \ub_k(X)\leq \ub_k(A_1)+\ub_k(A_2).
        \]
        \item If $\ub_k(B)=0$, then

        \[
        \lb_k(A_i) \leq \lb_k(X),
        \]
        and
        \[
        \ub_k(A_1) \leq d\cdot\ub_k(X).
        \]
    \end{enumerate}
\end{corollary}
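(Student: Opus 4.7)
The plan is to combine the restricted Mayer--Vietoris inequalities from Lemma \ref{l:newmv} with the virtual retraction bounds from Corollary \ref{c:virtual}. Part (1) and the first inequality of part (2) are largely formal: since $B$ is a virtual retract of $X$ and $\ub_{k-1}(B)=0$ (respectively $\ub_k(B)=0$) by hypothesis, Corollary \ref{c:virtual} upgrades this to $\ub_{k-1}^X(B)=0$ (respectively $\ub_k^X(B)=0$), and the corresponding part of Lemma \ref{l:newmv} applies, bounding $\ub_k^X(A_i)$ (respectively $\lb_k^X(A_i)$) in terms of $\ub_k(X)$ (respectively $\lb_k(X)$). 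A second application of Corollary \ref{c:virtual}, now to $A_i$, converts the restricted growth on the left into absolute growth: $\ub_k^X(A_i)\leq \ub_k(A_i)$ and $\lb_k(A_i)\leq \lb_k^X(A_i)$. Chaining gives $\ub_k(X)\leq \ub_k(A_1)+\ub_k(A_2)$ in (1) and $\lb_k(A_i)\leq \lb_k(X)$ in (2).

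The second inequality in (2) is the only part that requires a genuine argument, because of the multiplicative factor $d$. The plan is to pass to the cover $p\colon X^r\to X$ of degree $d$ in which $A_1$ is a genuine retract, and to pull the whole decomposition back: $X^r=A_1^r\cup_{B^r}A_2^r$ with $A_i^r:=p^{-1}(A_i)$ and $B^r:=p^{-1}(B)$. Lemma \ref{l:vr}(1) makes $B^r$ a virtual retract of $X^r$, and multiplicativity gives $\ub_k(B^r)=d\cdot\ub_k(B)=0$, so $\ub_k^{X^r}(B^r)=0$ by Corollary \ref{c:virtual}. Applying Lemma \ref{l:newmv}(2) at the level of $X^r$ then yields
\[
\ub_k^{X^r}(A_1^r)\leq \ub_k(X^r)=d\cdot\ub_k(X),
\]
the last equality being multiplicativity of $\ub$ in finite covers.

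It remains to descend from $A_1^r$ to the distinguished lift $A_1\subset A_1^r$. Since $A_1$ is a retract of $X^r$, the retract case of Corollary \ref{c:virtual} provides the equality $\ub_k(A_1)=\ub_k^{X^r}(A_1)$, so what remains is the comparison $\ub_k^{X^r}(A_1)\leq \ub_k^{X^r}(A_1^r)$. This will follow pointwise on pullback covers: for any cover $X''\to X^r$ the pullbacks $X''|_{A_1}$ and $X''|_{A_1^r}$ are both normalized by $|X''\to X^r|$, and $b_k(X''|_{A_1})\leq b_k(X''|_{A_1^r})$ once one knows that $X''|_{A_1}$ is a union of connected components of $X''|_{A_1^r}$. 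The step I expect to need most care is precisely this component identification: it reduces to showing that $A_1$ sits inside $A_1^r\cong A_1\times_X X^r$ as a union of components, which uses $\pi_1$-injectivity of $A_1\hookrightarrow X^r$ (automatic since $A_1$ is a retract) so that the component of $A_1^r$ containing the lifted basepoint is precisely $A_1$ itself.
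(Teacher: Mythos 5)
Your proof is correct and follows the same route as the paper: apply the restricted Mayer--Vietoris inequalities (Lemma \ref{l:newmv}) upgraded by Corollary \ref{c:virtual} for part (1) and the first inequality of (2), and for the second inequality of (2) pass to the degree-$d$ cover $X^r$ in which $A_1$ is a retract, pull back the decomposition, and use that the section of $A_1^r\to A_1$ exhibits $A_1$ as a union of components of $A_1^r$. You fill in a few details the paper elides (that $B^r$ is a virtual retract of $X^r$ by Lemma \ref{l:vr}(1) and that $\ub_k(B^r)=0$ by multiplicativity, and the component identification); the invocation of $\pi_1$-injectivity in the last step is unnecessary, since the existence of a continuous section of a covering map directly makes its image open and closed, hence a union of components.
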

\begin{proof}
    (1) and the first inequality in (2) follows immediately from the restricted version and Corollary~\ref{c:virtual} applied to $A_{i}$ and $B$.

    For the second part of (2), pass to a finite cover $X^{r}\to X$ which retracts onto $A_{1}$.
    Then the induced cover $A'_{1} \to A_{1}$ has a section.
    In other words, $A'_{1}$ contains a copy of $A_{1}$, so
    \[
    \ub_{k}^{X^{r}}(A_{1}) \leq \ub_{k}^{X^{r}}(A'_{1}).
    \]
    Since $A_{1}$ is a retract of $X^{r}$, $ \ub_{k}^{X^{r}}(A_{1}) = \ub_{k}(A_{1})$.
    Now apply restricted version to decomposition $X^{r}=A'_1\cup_{B'} A'_2$ and use multiplicativity to get
    \[
    \ub_{k}(A_{1}) \leq \ub_{k}^{X^{r}}(A'_{1}) \leq \ub_{k}(X^{r}) = d\cdot\ub_{k}(X) .
    \]
\end{proof}
\begin{corollary}[Cutting down to a virtual retract]\label{homologygrowthmv}
    Suppose that $X$ has a filtration $A_{0}= X_0\subset X_1\subset\dots\subset X_m=X$ with $X_i=X_{i-1}\cup_{B_i} A_i $, and that $A_0$ and each $B_i$ are virtual retracts of $X$.
    Let $d$ be the degree of a cover of $X$ which retracts to $A_{0}$.
    If $\ub_k(B_i)=0$ for all $i$, then
    \[
    \lb_k(A_0)\leq\lb_k(X),
    \]
    \[
    \ub_k(A_0)\leq d\cdot \ub_k(X).
    \]

    In particular, if $\ub_k(A_0) \ne 0$ then $\ub_k(X) \ne 0$.
\end{corollary}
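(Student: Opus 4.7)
The plan is to chain together the restricted Mayer--Vietoris filtration inequality (Corollary \ref{l:homologygrowthmv}) with the virtual retraction bounds (Corollary \ref{c:virtual}). For the lower inequality this will be immediate, while for the upper inequality the virtual retraction bound runs the wrong way, so I would replay the filtration argument upstairs in the retracting cover $X^r$.

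For the lower bound, I would first convert the hypothesis $\ub_k(B_i)=0$ into its restricted counterpart. Since each $B_i$ is a virtual retract of $X$, Corollary \ref{c:virtual} gives $\ub_k^X(B_i) \leq \ub_k(B_i) = 0$, so the hypothesis of Corollary \ref{l:homologygrowthmv} is satisfied and we obtain $\lb_k^X(A_0) \leq \lb_k(X)$. Virtual retractness of $A_0$ combined with Corollary \ref{c:virtual} then gives $\lb_k(A_0) \leq \lb_k^X(A_0)$, and concatenating these yields the desired $\lb_k(A_0)\leq \lb_k(X)$.

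For the upper bound, the same chain only gives $\ub_k^X(A_0) \leq \ub_k(X)$, and the virtual retraction inequality runs in the direction $\ub_k^X(A_0) \leq \ub_k(A_0)$, which is useless for deducing the target. To fix this I would pass to the cover $X^r \to X$ of degree $d$ that retracts onto $A_0$, pull the filtration back to a filtration $A_0' \subset X_1' \subset \dots \subset X_m' = X^r$ with gluings along the pullbacks $B_i'$, and check the hypotheses are preserved: by Lemma \ref{l:vr}(1) each $B_i'$ is still a virtual retract of $X^r$; multiplicativity of $\ub$ in finite covers forces $\ub_k(B_i') = |B_i'\to B_i|\cdot \ub_k(B_i)=0$, and hence $\ub_k^{X^r}(B_i')=0$ by Corollary \ref{c:virtual}. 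Then Corollary \ref{l:homologygrowthmv} applied to $X^r$ yields $\ub_k^{X^r}(A_0') \leq \ub_k(X^r)$. The retraction $X^r \to A_0$ provides a section exhibiting $A_0$ as a subspace (in fact, a retract) of the pullback $A_0'$, so $\ub_k^{X^r}(A_0) \leq \ub_k^{X^r}(A_0')$, and Lemma \ref{l:vr}(4) combined with Corollary \ref{c:virtual} gives $\ub_k^{X^r}(A_0)=\ub_k(A_0)$. Multiplicativity gives $\ub_k(X^r) = d\cdot \ub_k(X)$, and assembling these produces $\ub_k(A_0) \leq d\cdot \ub_k(X)$.

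The main conceptual obstacle, as just noted, is the asymmetry between restricted and unrestricted limits under virtual retractions: $\lb \leq \lb^X$ and $\ub^X \leq \ub$. This is why the lower bound drops out directly from the restricted version but the upper bound requires bootstrapping through $X^r$ so that $A_0$ becomes an honest retract rather than a virtual one; the rest of the argument is bookkeeping with the pullback of the filtration and multiplicativity in the finite cover $X^r \to X$.
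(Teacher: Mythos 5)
Your proof is correct and follows essentially the same route the paper intends: the paper does not write out a separate proof of this corollary, but declares it to follow by applying Corollary~\ref{c:virtual} to the restricted filtration inequality (Corollary~\ref{l:homologygrowthmv}), with the upper-bound case handled exactly as in the proof of Corollary~\ref{c:mvinequalities}(2), namely by bootstrapping to the retracting cover $X^{r}$, pulling back the filtration and walls via Lemma~\ref{l:vr}(1), and using multiplicativity together with the fact that $A_0$ is an honest retract of $X^r$. Your identification of the directional mismatch $\ub^X(A_0)\leq \ub(A_0)$ as the reason the upper bound requires this extra step is precisely the paper's motivation for the pass to $X^r$.
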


\subsection*{Mayer--Vietoris inequalities via skew fields}

One of the nice features of the skew field $D_{\F G}$ described in Section \ref{s:skewfields} is that---in situations when it is available---it provides a simple, alternate framework for doing Mayer--Vietoris cutting arguments.
We record this here.
(It is not needed in the proofs below, but can be used as an alternative to the inequalities above in special cases.)

Suppose $X$ has residually torsion-free nilpotent fundamental group $G$ and let $D=D_{\F G}$ be the skew field from Section \ref{s:skewfields}.
If $X=A_1\cup_B A_2$ then (only in this subsection) use $\hat{}$ to denote the induced $G$-covers.
Then we have a Mayer--Vietoris sequence

\[
\dots \to H^G_k(\hat{B};D)\to H^G_k(\hat{A}_1;D)\oplus H^G_k(\hat{A}_2;D)\to H^G_k(\hat{X};D)\to H^G_{k-1}(\hat{B};D) \to \dots
\]
for homology with coefficients in $D$.
If $B\into X$ is $\pi_1$-injective, then the $A_i$ are as well, and then we can identify the $D$-dimensions of the $D$-vector spaces appearing in this sequence with lower homology growth by Corollary \ref{c:sccomp}:
\begin{align*}
    b^G_k(\hat{B};D)&=\lb_k(B;\F),\\
    b^G_k(\hat{A}_i;D)&=\lb_k(A_i;\F).
\end{align*}
Therefore, the Mayer--Vietoris sequence gives the usual inequalities ((1) and (2) below) for lower homology growth.
Using multiplicativity of $\lb$ extends these inequalities to complexes $X$ with virtually residually torsion-free nilpotent fundamental groups.
In summary, we get
\begin{lemma}
    Suppose a finite complex $X$ with virtually residually torsion-free nilpotent fundamental group decomposes as a union $X=A_1\cup_B A_2$ with $\pi_1$-injective intersection.
    \begin{enumerate}
        \item If $\lb_{k-1}(B;\F )=0$ then $\lb_k(X;\F )\leq \lb_k(A_1;\F )+\lb_k(A_2;\F )$,
        \item If \hphantom{${}_{-1}$}$ \lb_{k}(B; \F) =0$ then $\lb_{k}(X; \F) \geq \lb_{k}(A_1; \F)+\lb_k(A_2;\F )$.
    \end{enumerate}
\end{lemma}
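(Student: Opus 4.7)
The plan is to apply Mayer--Vietoris with coefficients in the skew field $D=D_{\F G}$ of Section~\ref{s:skewfields} and then translate the resulting (in)equalities of $D$-dimensions into inequalities between lower homology growths using Corollary~\ref{nosup}.

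I would start by reducing to the case when $G=\pi_1(X)$ is itself residually torsion-free nilpotent. Let $G'<G$ be such a subgroup of finite index, take the corresponding finite cover $X'\to X$, and let $A_i'$ and $B'$ be the preimages of $A_i$ and $B$. Since $\pi_1$-injectivity passes to covers, $X'=A_1'\cup_{B'}A_2'$ inherits all the hypotheses of the lemma, and by multiplicativity of $\lb$ we have $\lb_k(Y')=[G:G']\lb_k(Y)$ for each $Y\in\{X,A_1,A_2,B\}$, so it suffices to prove the lemma for $X'$.

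Assume now that $G$ is residually torsion-free nilpotent, let $\hat X$ be the universal cover of $X$, and set $\hat A_i=A_i\times_X\hat X$ and $\hat B=B\times_X\hat X$. The $\pi_1$-injectivity hypothesis makes each component of $\hat A_i$ and $\hat B$ simply connected, being a copy of the universal cover of a component of $A_i$ or $B$. Since the cellular chain complexes of these $G$-covers are free over $\F G$ in each degree, the Mayer--Vietoris short exact sequence of $\F G$-chain complexes for $\hat X=\hat A_1\cup_{\hat B}\hat A_2$ stays exact after applying $D\otimes_{\F G}-$, yielding the long exact sequence
\[
\cdots\to H^G_k(\hat B;D)\to H^G_k(\hat A_1;D)\oplus H^G_k(\hat A_2;D)\to H^G_k(\hat X;D)\to H^G_{k-1}(\hat B;D)\to\cdots.
\]

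The step I expect to be the most delicate is identifying each term with an $\lb$, because the pieces $\hat A_i$ and $\hat B$ are typically disconnected and the subgroups arising as components' fundamental groups need not be normal in $G$. For each component $C$ of $A_i$ (respectively $B$), with $H:=\pi_1(C)<G$ (residually torsion-free nilpotent as a subgroup of such a group), the corresponding $G$-orbit in $\hat A_i$ (respectively $\hat B$) has the form $G\times_H\widetilde C$. Combining the Induced Representation identity from Section~\ref{s:skewfields} with Change of Coefficients from $D=D_{\F G}$ to the sub-skew field $D_{\F H}$ (which, in both the amenable and the bi-orderable constructions, is the division closure of $\F H$ in $D$), we get $b^G_*(G\times_H\widetilde C;D)=b^H_*(\widetilde C;D_{\F H})$, and Corollary~\ref{nosup} applied to $C$ identifies this with $\lb_*(C;\F)$. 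Summing over components and invoking additivity of $\lb$ over disjoint unions yields $b^G_k(\hat A_i;D)=\lb_k(A_i;\F)$ and $b^G_k(\hat B;D)=\lb_k(B;\F)$, while Corollary~\ref{nosup} applied to $X$ directly gives $b^G_k(\hat X;D)=\lb_k(X;\F)$. The two inequalities then drop out of the long exact sequence: if $\lb_{k-1}(B;\F)=0$ the map $H^G_k(\hat A_1;D)\oplus H^G_k(\hat A_2;D)\to H^G_k(\hat X;D)$ is surjective, giving (1); if $\lb_k(B;\F)=0$ the same map is injective, giving (2).
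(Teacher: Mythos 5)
Your proposal is correct and is essentially the paper's own proof. The paper uses exactly the same strategy: reduce to the residually torsion-free nilpotent case by multiplicativity of $\lb$, apply Mayer--Vietoris with coefficients in the skew field $D_{\F G}$ to the induced $G$-covers, and identify each $D$-dimension with the corresponding $\lb$ before reading off the inequalities from the long exact sequence. The only cosmetic difference is that where the paper cites Corollary~\ref{c:sccomp} directly to identify $b^G_k(\hat B;D)$ and $b^G_k(\hat A_i;D)$ with $\lb_k(B;\F)$ and $\lb_k(A_i;\F)$, you unwind that corollary back into its ingredients (Lemma~\ref{l:connected}, the Induced Representation and Change of Coefficients observations, Corollary~\ref{nosup}, and additivity over disjoint unions). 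One detail you state but do not justify is that $\pi_1$-injectivity of $B\hookrightarrow X$ forces the components of $\hat A_i$ to be simply connected; this rests on the (true) fact, also left implicit in the paper, that $\pi_1$-injectivity of $B$ implies $\pi_1$-injectivity of the $A_i$, via Bass--Serre / van Kampen normal forms for the graph-of-groups decomposition.
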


Iterating the second part of the lemma gives
\begin{corollary}\label{c:rtfnthmv}
    Suppose a finite complex $X$ with virtually residually torsion-free nilpotent fundamental group has a filtration $A_{0}= X_0\subset X_1\subset\dots\subset X_m=X$ with $X_i=X_{i-1}\cup_{B_i} A_i $, and that each $B_i\into X$ is $\pi_1$-injective.
    If $\lb_k(B_i;\F )=0$ for all $i$, then
    \[
    \lb_k(A_0;\F )\leq\lb_k(X;\F ).
    \]
\end{corollary}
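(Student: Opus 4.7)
The plan is to argue by a straightforward induction on the length $m$ of the filtration, using part (2) of the preceding Mayer--Vietoris lemma as the inductive step. For the base case $m=0$, the inequality is trivial as $X=A_0$. For the inductive step, I would apply the lemma to the top decomposition $X=X_m=X_{m-1}\cup_{B_m}A_m$: the hypothesis $\lb_{k}(B_m;\F)=0$ gives
\[
\lb_k(X;\F)\geq\lb_k(X_{m-1};\F)+\lb_k(A_m;\F)\geq\lb_k(X_{m-1};\F),
\]
and then the inductive hypothesis applied to the truncated filtration $A_0\subset X_1\subset\cdots\subset X_{m-1}$ yields $\lb_k(A_0;\F)\leq\lb_k(X_{m-1};\F)$, chaining to the desired conclusion.

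The only real content is the bookkeeping needed to verify that the hypotheses of the lemma are preserved on each intermediate decomposition $X_i=X_{i-1}\cup_{B_i}A_i$. First, I would note that any $B_j\hookrightarrow X_i$ is $\pi_1$-injective, because the composition $B_j\hookrightarrow X_i\hookrightarrow X$ is $\pi_1$-injective by hypothesis, and $f\circ g$ injective forces $g$ injective. Second, an iterated van Kampen argument using the $\pi_1$-injectivity of $B_i\hookrightarrow X_{i-1}$ and $B_i\hookrightarrow A_i$ expresses $\pi_1(X_i)$ as an amalgamated product $\pi_1(X_{i-1})*_{\pi_1(B_i)}\pi_1(A_i)$, so each factor embeds; chaining these embeddings shows $\pi_1(X_i)\hookrightarrow\pi_1(X)$ for every $i$. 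Since $\pi_1(X)$ is virtually residually torsion-free nilpotent and this property passes to subgroups (intersect with the finite-index residually torsion-free nilpotent subgroup), each $\pi_1(X_i)$ inherits it.

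The main (and essentially only) obstacle is this hypothesis-preservation step; once in hand, the inequality comes out as a direct iteration, exactly as advertised in the text immediately preceding the statement. I do not anticipate any additional subtlety beyond checking connectedness of the $B_i$ so that van Kampen applies cleanly --- and in the intended applications (to the cutting arguments for $\M$ in later sections) this is automatic from how the decomposition is built.
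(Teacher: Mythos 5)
Your proof is correct and follows exactly the route the paper intends: the paper's proof is the single line ``iterating the second part of the lemma gives,'' and your induction, together with the bookkeeping check that $\pi_1$-injectivity of $B_j\hookrightarrow X$ descends to $B_j\hookrightarrow X_i$ and that virtual residual torsion-free nilpotence of $\pi_1(X)$ passes to the subgroups $\pi_1(X_i)$, is precisely that iteration spelled out in full.
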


\subsection*{A spectral sequence in even dimensions}

If we have a large collection of subcomplexes, it is more convenient to use a spectral sequence rather than inductively cutting and using Mayer--Vietoris.
We record the spectral sequence we will use below, in our setting we only need to consider an ambient manifold and a collection of codimension-one submanifolds.

Suppose $M$ is a closed manifold, and $\{X_s\}$ is a finite collection of closed codimension-one submanifolds, such that the pair $(M, \bigcup X_{s})$ looks locally like a hyperplane arrangement in $\rr^n$.

Removing a regular neighborhood of $\bigcup X_{s}$ from $M$ produces a manifold with boundary $(Z, \d Z )$, the cut-open $M$.
We call the collection $\{X_s\}$ \emph{tractable} if, in addition, the inclusions of $Z$ and of each intersection $X_{J}$ of the walls into $M$ are virtual retractions.
Note that by Lemma~\ref{l:simple1} this is equivalent to the disjoint union of $Z$ and all $X_{J}$ being a virtual retract.
\begin{lemma}[Even chopping]\label{l:ss}
    Let $(M^{2n}, \{X_{s}\})$ be a tractable collection of codimension one submanifolds and $(Z, \d Z )$ the resulting cut-open manifold.
    Suppose all intersections of walls have the upper $\F$-Singer property, and that $Z$ is a retract of a $d$-fold cover of $M$.
    Then for $k>n$,
    \[
    \ub_{k}(M; \F ) \leq \ub_{k}(Z; \F ) \leq d\cdot \ub_{k}(M; \F ),
    \]
    and
    \[
    \lb_{k}(Z; \F )\leq\lb_{k}(M; \F ).
    \]
    In particular, $M^{2n}$ has the upper $\F$-Singer property if and only if $Z^{2n}$ has.
\end{lemma}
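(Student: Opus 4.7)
The plan is to apply the absolute Mayer--Vietoris inequalities of Corollary~\ref{c:mvinequalities} to the decomposition $M = Z \cup Y$, where $Y$ is a closed regular neighborhood of $\bigcup_s X_s$ in $M$, so that $Z \cap Y = \d Z$ and $Y$ deformation retracts onto $\bigcup_s X_s$. Tractability gives that $Z$ and each $X_J$ are virtual retracts of $M$; combining this with Lemma~\ref{l:simple1} and passing to common finite covers via Lemma~\ref{l:vr} promotes the virtual-retract structure to $Y$ and $\d Z$, so that Corollary~\ref{c:mvinequalities} indeed applies to the triple $(M, Z, Y)$.

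The lemma then reduces to two vanishing statements: $\ub_k(Y; \F) = 0$ for $k > n$ and $\ub_k(\d Z; \F) = 0$ for $k \geq n$. Granting these, for $k > n$ one has $k-1 \geq n$, so $\ub_{k-1}(\d Z) = 0$ and Corollary~\ref{c:mvinequalities}(1) yields $\ub_k(M) \leq \ub_k(Z) + \ub_k(Y) = \ub_k(Z)$. Likewise $\ub_k(\d Z) = 0$ and Corollary~\ref{c:mvinequalities}(2) gives $\lb_k(Z) \leq \lb_k(M)$ together with $\ub_k(Z) \leq d\cdot\ub_k(M)$. The ``if and only if'' conclusion for the upper $\F$-Singer property follows immediately.

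For the vanishing on $Y$, I iterate the inequality $\ub_k(A \cup B) \leq \ub_k(A) + \ub_k(B) + \ub_{k-1}(A \cap B)$ over the walls. Each $X_s$ is $(2n-1)$-dimensional with the upper $\F$-Singer property, so $\ub_k(X_s) = 0$ for $k \geq n$. The intersection $\bigl(\bigcup_{s \leq m} X_s\bigr) \cap X_{m+1}$ is itself a union of walls inside the lower-dimensional manifold $X_{m+1}$, with intersections again among the given $X_J$ and hence inheriting the upper Singer hypothesis; a nested induction interleaving even- and odd-dimensional versions of the same vanishing statement reduces the computation to Singer at strictly lower strata. The analogous analysis for $\d Z$ uses its natural stratification: over each open stratum $X_J^\circ$ the space $\d Z$ is locally $S^{|J|-1} \times X_J^\circ$, and Mayer--Vietoris combined with the Gysin or K{\"u}nneth identities reduces the claim to the Singer vanishing on the $X_J$.

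\emph{The main obstacle} is this nested induction, especially for $\d Z$: the Gysin shift $k \mapsto k - |J| + 1$ can push the degree into the middle range of a deep-codimension stratum where Singer does not directly apply, and iterated dimension drops in the Mayer--Vietoris amplify this effect. Overcoming it requires careful bookkeeping with a double index on codimension and stratum dimension, and may need the restricted Mayer--Vietoris version in Lemma~\ref{l:newmv} applied on covers induced from $M$ (rather than unrestricted homology growth) so that the virtual-retract structure can be leveraged to absorb the would-be middle-degree contributions into lower-stratum terms.
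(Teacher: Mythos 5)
Your reduction of the lemma to the two vanishing statements $\ub_k(Y;\F)=0$ for $k>n$ and $\ub_k(\d Z;\F)=0$ for $k\ge n$ is where the gap lies: neither of those is what the Singer hypothesis on the $X_J$ delivers, and the second is not something the paper ever needs. The spectral sequence for the union $X=\bigcup_s X_s$ gives
\[
\ub_k^{M}(X)\;\le\;\sum_{|J|=\ell}\;\sum_{i+\ell-1=k}\ub_i(X_J),
\]
and the closed manifold $X_J$ of dimension $2n-\ell$ (with Singer plus Poincar\'e duality) has $\ub_i(X_J)$ possibly nonzero only when $\ell$ is even and $i=n-\ell/2$, which contributes at $k=n+\ell/2-1$. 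So the potentially nonzero terms pile up precisely at $k=n, n+1, n+2,\dots$ as $\ell=2,4,6,\dots$. In other words, the spectral-sequence vanishing holds for $k<n$, not for $k>n$. There is no bookkeeping fix here: a four-fold intersection with $\ub_{n-2}\neq 0$ genuinely contributes at $k=n+1$, so one cannot hope to show $\ub_{>n}(Y)=0$ directly, and your suspicion that the degree shifts ``push into the middle range'' is exactly this phenomenon.

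The paper's resolution is not a finer induction but Lefschetz duality. One works with the long exact sequence of the pair $(M, N(X))$ excised to $(Z,\d Z)$ (your $M=Z\cup Y$ Mayer--Vietoris is just this sequence in disguise), establishes $\ub_k^M(X)=0$ only for $k<n$, deduces $\ub_k^M(Z,\d Z)=\ub_k(M)$ and $\lb_k^M(Z,\d Z)\le\lb_k(M)$ for $k<n$, and then applies Poincar\'e--Lefschetz duality to convert these into statements about $\ub_k^M(Z)$ and $\lb_k^M(Z)$ for $k>n$; virtual retraction then removes the restriction superscript. No analysis of $\d Z$ or its strata is required. The cover $M^r$ retracting onto $Z$ is used at the very end, exactly as you anticipated, to get the $d\cdot\ub_k(M)$ bound, via the same excision/duality argument applied to the preimage of the wall collection in $M^r$.

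So: your decomposition is essentially the right one, and your identification of the degree shift as the crux is correct, but the missing idea is to establish vanishing below the middle dimension and flip across it with duality, rather than to chase vanishing above the middle dimension through a stratification of $\d Z$.
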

\begin{proof}
    Let $X:=\bigcup X_{s}$ and $X_{J}:=\bigcap_{s \in J} X_{s}$.
    First we claim that $\ub_{k}^{M} ( X) =0 $ in degrees $k<n$.

    To see this, consider the spectral sequence for the homology of the union $X=\bigcup X_{s}$.
    Then the $k$-th Betti number of $X$ is bounded from above by the sum of dimensions of the terms $E_{i, j}^{1} = \bigoplus_{\abs{J}=j+1} H_{i}(X_{J}; \F) $ with $i+j=k$.
    Taking covers, normalizing and taking the upper limit over $C_{M}$ gives
    \[
    \ub_{k}^{M} ( X) \leq \sum_{i+j=k} \sum_{\abs{J}=j+1} \ub_{i}^{M}(X_{J}).
    \]
    Since all $X_{J}$ are virtual retracts of $M$, we have
    \[
    \ub_{k}^{M} ( X) \leq \sum_{i+j=k} \sum_{\abs{J}=j+1} \ub_{i}(X_{J}).
    \]
    Since all $X_{s}$ are odd-dimensional and have the upper $\F$-Singer property, all the terms in this sum with $j=0$ are $0$.
    Similarly, since the components of a $(j+1)$-fold intersection $X_{J}$ are manifolds of dimension $\geq 2n-(j+1)$ and have the upper $\F$-Singer property, for $j\geq 1$, $ \ub_{i}(X_{J})=0$ for $i< (2n - (j+1))/2$.
    This inequality is equivalent to $i+j < n + (j -1)/2$, and the claim follows.

    Next, let $N(X)$ denote a closed regular neighborhood of $X$ and note that the pair $(M, N(X))$ excises to $(Z, \d Z)$, so by Lemma \ref{l:posetineq} and the long exact sequence
    \[
    \to H_{k}(X ) \to H_{k}(M) \to H_{k}(Z, \d Z) \to H_{k-1}(X ) \to
    \]
    we have
    \begin{align*}
        \ub_k(M) &\leq \ub_{k}^{M}( X ) + \ub_k^{M}(Z,\d Z),\ \intertext{and} \ub_k^{M}(Z,\d Z) &\leq \ub_k(M) + \ub_{k-1}^{M} ( X),\\
        \lb_k^{M}(Z,\d Z) &\leq \lb_k(M) + \ub_{k-1}^{M} ( X).
    \end{align*}
    By the above claim the terms with $X$ vanish for $k<n$.
    Thus, for $k<n$,
    \[
    \ub_k^{M}(Z,\d Z) = \ub_k(M), \quad \lb_k^{M}(Z,\d Z) \leq \lb_k(M).
    \]
    Therefore by Poincar\'e duality, for $k>n$,
    \[
    \ub_k^{M}(Z) = \ub_k(M), \quad \lb_k^{M}(Z) \leq \lb_k(M).
    \]
    Since $Z$ is a virtual retract of $M$, Corollary~\ref{c:virtual} implies $\ub_k(M) \leq \ub_k(Z)$ and $\lb_k(Z) \leq \lb_k(M)$ for $k>n$.

    For the remaining inequality, we first pass to a degree $d$ cover $M^{r}$ which retracts onto $Z$.
    By Lemma~\ref{l:vr}(1) the preimage of $\{X_{s}\}$ is a tractable collection in $M^{r}$.
    It cuts down $M^{r}$ to the preimage $Z'$ of $Z$, which consists of $Z$ and other components.
    A similar argument as above shows that for $k > n$,
    \[
    \ub_k(Z) = \ub_{k}^{M^{r}} (Z) \leq \ub_k^{M^{r}}(Z') = \ub_k(M^{r}) = d\cdot\ub_k(M).
    \]
\end{proof}

\subsection*{A $\ub$-acyclic covering lemma}

There is a class of spaces, generalizing Salvetti complexes of right-angled Artin groups, for which a Mayer--Vietoris argument gives a complete computation of both upper and lower homology growth, and shows that these computations agree.
We record this here.
The proof is essentially the one given in \cite{aos21}.
\begin{lemma}\label{ubacyclic}
    Suppose $X$ is a complex covered by finitely many subcomplexes $\{U_i\}$ and let $\cn$ be the nerve of this cover.
    Suppose each non-empty intersection $U_{\gs}:=\bigcap_{i\in\gs}U_i$ is either $\ub^X$-acyclic or a point.
    Let $\cl$ be the subcomplex of the nerve consisting of simplices $\gs$ with $\ub^X$-acyclic $U_{\gs}$.
    Then
    \[
    \lb_k(X;\F)=b_{k}(\cn,\cl;\F)=\ub_k(X;\F).
    \]
\end{lemma}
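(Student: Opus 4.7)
The plan is to analyze the Mayer--Vietoris (nerve) spectral sequence for the pullback of $\{U_i\}$ on every finite cover $X'\to X$, and to pass to the $\ub^X$-limit in order to identify the surviving contributions with the relative simplicial chain complex $C_*(\cn,\cl;\F)$. For a finite cover $X'\to X$ of degree $d=\abs{X'\to X}$, the pulled-back cover $\{U'_i\}$ of $X'$ has the same nerve $\cn$, and the associated spectral sequence
\[
E^1_{p,q}(X') = \bigoplus_{\sigma\in\cn,\ \abs{\sigma}=p+1} H_q(U'_\sigma;\F) \Longrightarrow H_{p+q}(X';\F)
\]
is the basic tool.

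First I would normalize the $E^1$-page by $d$ and apply $\ub^X$ termwise. For $\sigma\notin\cl$, $U_\sigma$ is a single point, so $H_q(U'_\sigma;\F)/d$ equals $1$ if $q=0$ and $0$ otherwise. For $\sigma\in\cl$, the $\ub^X$-acyclicity hypothesis (interpreted as all reduced $\ub^X$-Betti numbers vanishing) gives $\ub^X_q(U_\sigma;\F)=0$ for every $q\geq 0$. Since $\cn$ has only finitely many simplices, for each $\delta>0$ there is a single finite cover $X_\delta\to X$ so that for every further cover $X''\to X_\delta$ and every $\sigma\in\cl$, the normalized Betti numbers $b_q(U''_\sigma;\F)/\abs{X''\to X}$ are at most $\delta$. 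The $E^1$-differential on the $q=0$ row restricts on $\bigoplus_{\sigma\notin\cl}H_0(U'_\sigma)$ to the simplicial boundary for the pair $(\cn,\cl)$ (the $H_0$-maps between point-intersections being identifications on components), so in the $\ub^X$-limit the spectral sequence collapses onto $C_*(\cn,\cl;\F)$.

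The upper bound $\ub_k(X;\F)\leq b_k(\cn,\cl;\F)$ then follows from the standard inequality $b_k(X';\F)\leq\sum_{p+q=k}\dim E^2_{p,q}(X')$ together with subadditivity of $\ub$ (Lemma~\ref{l:posetineq}(2)), since the $q>0$ rows contribute $0$ and the $q=0$ row contributes $\dim H_p(\cn,\cl;\F)$ in the limit. For the matching lower bound $\lb_k(X;\F)\geq b_k(\cn,\cl;\F)$ I would produce explicit cycles: fix $\delta>0$, pick $X_\delta$ as above, and for $X''\to X_\delta$ lift each simplicial $k$-cycle of $C_k(\cn,\cl;\F)$ to an approximate chain-level cycle in the total complex on $X''$ whose boundary lies in $\coprod_{\sigma\in\cl}U''_\sigma$. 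Because the total normalized homology of these error pieces is $O(\delta)$, a linear-algebra count shows that at least $(b_k(\cn,\cl;\F)-O(\delta))\abs{X''\to X}$ independent classes survive in $H_k(X'';\F)$. Letting $\delta\to 0$ gives the inequality, and the sandwich $\lb\leq\ub$ pinches both quantities to $b_k(\cn,\cl;\F)$. The main obstacle will be controlling the higher differentials in the normalized spectral sequence so that the $E^\infty$-page really computes (not merely bounds) $b_k(\cn,\cl;\F)$ in the limit; the $\ub^X$-acyclicity of the finitely many $U_\sigma$ with $\sigma\in\cl$ is precisely what lets one take $X_\delta$ deep enough to simultaneously pinch the relevant contributions in all degrees $\leq k$.
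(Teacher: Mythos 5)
Your proposal is correct and follows essentially the same route as the paper: analyze the nerve spectral sequence for pulled-back covers, show the $E^1$-page is concentrated on the $q=0$ row up to an error that is sublinear in the degree of the cover (this is where $\ub^X$-acyclicity of the finitely many $U_\sigma$ with $\sigma\in\cl$ enters), and identify the surviving $q=0$ part with $C_*(\cn,\cl;\F)\otimes\F^{d}$. The paper obtains the matching lower bound more cleanly than your explicit-cycle sketch by noting that the collapse map $C_*(\cn;H_0(U'_\sigma))\to C_*(\cn;V'_\sigma)$ is surjective with sublinear kernel (so both homologies are sublinearly close) and that the higher differentials $d^r$ for $r\ge 2$ out of $E^r_{k,0}$ all land in rows $q\ge 1$ of sublinear normalized dimension, so $E^\infty_{k,0}$ and $E^2_{k,0}$ also differ only sublinearly — but your proposal already identifies precisely this as the point to control, and your linear-algebra count would deliver the same estimate.
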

\begin{proof}
    It is enough to consider connected covers.
    Let $X'\to X$ be a finite connected cover of $X$.
    The Mayer--Vietoris spectral sequence for the finite covering $\{U_i'\}$ of $X'$ by preimages of the $U_i$ converges to $H_*(X';\F)$ and the assumptions imply that its $E^1$ page
    \[
    E^1_{j,k}(X')=C_j(\cn;H_k(U'_{\gs};\F))
    \]
    is concentrated on the $E^1_{j,0}$ line up to an error that is sublinear in the degree $|X'\to X|$.
    Set
    \[
    V'_{\gs}:=
    \begin{cases}
        H_0(U'_{\gs};\F) &\text{ if } U_\gs \text{ is a point},\\
        0 &\text{ if } U_{\gs} \text{ is }\ub^X\text{-acyclic}.
    \end{cases}
    \]
    The chain map $C_j(\cn;H_0(U'_{\gs};\F))\to C_j(\cn;V'_{\gs})$ which collapses the $\ub^X$-acyclic part of the coefficients is onto and has kernel of dimension that is sublinear in $|X'\to X|$.
    Putting these two observations together shows that $\ub(X;\F)$ and $\lb(X;\F)$ are the upper and lower limits of the normalized Betti numbers $b_*(\cn;V'_{\gs})/|X'\to X|$.
    Finally, the chain complex $C_*(\cn;V'_{\gs})$ can be identified with $C_*(\cn,\cl;\F)\otimes_{\F}\F[\pi_1(X)/\pi_1(X')]$, so these normalized Betti numbers are all equal to $b_*(\cn,\cl;\F)$ independent of the choice of $X'\to X$.
    This implies the equations in the statement of the lemma.
\end{proof}

If the $U_{\gs}$ are virtual retracts of $X$ then Corollary \ref{c:virtual} implies that we can replace $\ub^X$-acyclic by $\ub$-acyclic.
The basic example to which this Lemma applies is the cover of the Salvetti complex of a right angled Artin group $A_L$ by maximal tori.
In that case all intersections are retracts, either non-trivial tori (hence $\ub$-acyclic) or points, the nerve is contractible, and the subcomplex $\cl$ is homotopy equivalent to $L$.
This gives the computation from \cite{aos21} mentioned in the introduction:
\[
\lb_k(A_L;\F)=\tilde b_{k-1}(L;\F)=\ub_k(A_L;\F).
\]
If one only has $\ub^X$-acyclicity up to an error $\gd$, then the equalities in Lemma \ref{ubacyclic} hold up to an error on the order of $|\cn|\gd$.
Applying this to the classifying space of a graph product $G_L$ of finite groups $\zz/m$ (and noting that each of the vertex groups $G_v\cong\zz/m$ is a retract of $G_L$ and $\ub$-acyclic up to error $1/m$) gives an alternative way of obtaining estimates for homology growth of these graph products similar to the ones we got in Section \ref{s:graphproducts}.

\section{Inductive structure of \texorpdfstring{$\F$}{F}-Singer}\label{s:induction}

Suppose that $(N,\d N)$ is a compact $n$-manifold with boundary, $\d$ is a flag triangulation of the boundary and $\M=hP^N_{\d}$ is the closed $n$-manifold constructed from this data via the hyperbolic reflection group trick.
\begin{proposition}\label{p:singerdavistrick}
    Either $\lb_k(N;\F )\leq\lb_k(\M;\F )$ or there is a full subcomplex $L$ of a flag triangulated $S^{n-2}$ such that $\ub_{k}(hP_L;\F )\neq 0$.
\end{proposition}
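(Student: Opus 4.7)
The plan is to cut $\M = hP^{N}_{\d}$ along its walls by iterated Mayer--Vietoris, using the restricted homology growth of Section \ref{mvsection} together with the retract identity $\lb_{k}(N;\F) = \lb^{\M}_{k}(N;\F)$ that comes from Theorem \ref{t:hypdavistrick}(1) and Corollary \ref{c:virtual}.

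First I identify the walls. The manifold $\M$ is a basic construction obtained by reflecting the resolved chamber $\Lhc{N}{\d}$ via $(\Z/2)^{\abs{\d^{0}}}$; up to the unnatural embedding of Corollary \ref{c:vretract}, its walls are $hP_{\Lk_{\d}(v)}$ for $v \in \d^{0}$, and intersections of walls are $hP_{\Lk_{\d}(\sigma)}$ for simplices $\sigma \subset \d$. Because $\d$ is a flag triangulation of the closed $(n-1)$-manifold $\d N$, each vertex link $\Lk_{\d}(v)$ is a flag triangulation of $S^{n-2}$ and for any $v \in \sigma$ the link $\Lk_{\d}(\sigma)$ is a full subcomplex of $\Lk_{\d}(v)$. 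So every wall or intersection of walls has the form $hP_{L}$ with $L$ a full subcomplex of a flag triangulated $S^{n-2}$, and Corollary \ref{c:vretract} makes each such $hP_{L}$ a virtual retract of $\M$.

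The proof now splits into cases. If some such $hP_{L}$ has $\ub_{k}(hP_{L};\F) \neq 0$ the second alternative holds. Otherwise $\ub_{k}(hP_{L};\F) = 0$ for every $L = \Lk_{\d}(\sigma)$. I build a filtration $N = A_{0} \subset A_{1} \subset \dots \subset A_{m} = \M$ by first attaching the Charney--Davis pieces of $\Lhc{N}{\d}$ around $N$ one simplex at a time, and then attaching the reflected copies of $\Lhc{N}{\d}$ across walls. Each gluing intersection $B_{i}$ is one of three kinds: a contractible subcomplex of $\d N$ such as a star $\St_{\d}(v)$ (so $\ub_{k}(B_{i};\F) = 0$ for $k \geq 1$); a hyperbolized internal face $\hc{L}$, which is a retract of the wall $hP_{L}$ via the folding of the basic construction and therefore satisfies $\ub_{k}(\hc{L};\F) \leq \ub_{k}(hP_{L};\F) = 0$; or a wall $hP_{L}$ itself, for which $\ub_{k}(hP_{L};\F) = 0$ by assumption. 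Since $\ub^{\M}_{k}(B_{i};\F) \leq \ub_{k}(B_{i};\F) = 0$ in every case, Corollary \ref{l:homologygrowthmv} gives $\lb^{\M}_{k}(N;\F) \leq \lb_{k}(\M;\F)$, and combining with $\lb_{k}(N;\F) = \lb^{\M}_{k}(N;\F)$ yields the first alternative.

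The main obstacle I foresee is arranging the filtration so that every intersection falls into one of the three categories above, and verifying the auxiliary fact that a retract $A \into X$ with retraction $r$ forces $\ub_{k}(A;\F) \leq \ub_{k}(X;\F)$, since this is what propagates vanishing of $\ub_{k}(hP_{L})$ to vanishing of $\ub_{k}(\hc{L})$. For the retract fact, I would use that the pullback $r^{*}(A')$ of any cover $A' \to A$ is a cover of $X$ of the same degree retracting onto $A'$, so $\nb(A)(A') \leq \nb(X)(r^{*}(A'))$; combined with cofinality of the pullback covers in $C_{X}$ (Lemma \ref{l:vr}(3)) and the fact that $\nb(X)^{\sup}$ is decreasing, one obtains $\ub(A) \leq \ub(X)$.
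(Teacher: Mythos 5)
Your proposal introduces a finer filtration than the paper's, and this forces you to control upper homology growth of spaces that the proposition's second alternative does not mention. Concretely, your gluing intersections of type (b) are hyperbolized faces $\hc{L}$ (and likewise the mirrors $h(K_v)\cong\hc{\Lk(v)}$ that appear when you attach reflected chambers), and you deduce $\ub_k(\hc{L};\F)\leq\ub_k(hP_L;\F)$ from the fact that the chamber $\hc{L}$ is a retract of the basic construction $hP_L$. That implication is false: upper homology growth is \emph{not} monotone under retracts. For a counterexample, take $A=S^1\vee S^1$ and $X=A\times S^1$. Then $A$ is a retract of $X$ via the projection, but $\ub_1(A;\F)=1$ (a connected degree-$d$ cover of $A$ has $b_1=d+1$), while $\ub_1(X;\F)=0$ because $X$ is the mapping torus of $\id_A$ and Theorem~\ref{mappingtorustheorem} applies. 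Your justification breaks at the cofinality step: Lemma~\ref{l:vr}(3) provides cofinality of $i^*(C_X)$ in $C_A$ for the \emph{inclusion} $i:A\into X$, not cofinality of $r^*(C_A)$ in $C_X$ for the \emph{retraction} $r$. Covers of the form $r^*(A')$ all have $\ker(r_*)$ inside their fundamental group and so never refine a cover $X'$ of $X$ unless $\ker(r_*)\subset\pi_1(X')$; in the example the covers $r^*(A')$ have group $\pi_1(A')\times\Z$, which fails to be cofinal. Equivalently, the fold retraction $hP_L\to\hc{L}$ does not lift to the covers you would need, and the induced covers of $\hc{L}$ can have much larger normalized Betti numbers than the ambient covers of $hP_L$.

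The paper sidesteps this with a coarser filtration: rather than first attaching Charney--Davis pieces and then reflected chambers, it removes vertices $v$ of $\d$ one at a time, cutting $\M$ at each step along the \emph{unnaturally embedded} $hP_{\Lk_A(v)}$ (this is the cutting scheme from Section~\ref{s:special}, pushed to the reflected space via Corollary~\ref{c:vretract}). In that scheme every gluing locus $B_i$ is a component of some $hP_L$ with $L$ a full subcomplex of a vertex link $\Lk(v)=S^{n-2}$, which is exactly what the proposition's second alternative is phrased to control. If you replace your filtration with the vertex-removal one, the rest of your outline---Corollary~\ref{c:vretract} for virtual retractions, the identity $\lb_k(N;\F)=\lb_k^{\M}(N;\F)$ from $N$ being a retract of $\M$, and Corollary~\ref{homologygrowthmv} to conclude---goes through as you describe.
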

\begin{proof}
    Removing vertices of $\d$ one at time (and using the unnatural embedding of $hP_{\Lk(v)}$ to cut) shows, by Corollary \ref{homologygrowthmv}, that $\lb_{k}(\M;\F )\geq \lb_{k}(N;\F )>0$ as long as $\ub_k(hP_{L};\F )=0$ for all full subcomplexes $L$ of links of vertices $\Lk(v)=S^{n-2}$ in $\d$.
\end{proof}

So, to prove Theorem \ref{maintheorem}(1), we need to understand the upper homology growth of the hyperbolizations $hP_L$.
That is one of the main subjects of this section.
But, before we get there, we first analyze some special hyperbolic manifolds.

\subsection*{On $\F$-Singer for hyperbolic manifolds of simplest type}

Let us start by showing for the class of closed arithmetic hyperbolic manifolds of simplest type that if there is a counterexample to upper $\F$-Singer property, the smallest one occurs in odd dimension.

We recall the definition.
Let $k < \rr$ be a totally real algebraic number field, let $\co$ denote its ring of integers.
Suppose $(V,Q)$ a quadratic vector space over $k$ of dimension $(n+1)$, such that $(V \otimes_{k^{\gs}} \rr, Q^{\gs})$ is positive definite for each nontrivial place $\gs$, and have signature $(n,1)$ for the trivial one.
Let $L$ be a $\zz$-lattice in $V$, i.e. $L$ is a free abelian subgroup $V$ such that $L\otimes \qq = V$.
Following \cite{v93}*{p.
217} we will call a group $\gG$ an \emph{arithmetic group of simplest type}\footnote{Also called of simple type, Type I, or standard.}
if $\gG$ is commensurable to the stabilizer $\Stab_{O(Q)}(L)$.
In fact, this definition is independent of the choice of the lattice $L$ as the following lemma shows.
\begin{lemma}
    If $L$ and $L'$ are $\zz$-lattices in $V$, then $\Stab_{O(Q)}(L)$ is commensurable to $\Stab_{O(Q)}(L')$.
\end{lemma}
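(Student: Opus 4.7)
The plan is to use the standard congruence subgroup trick. First I would observe that any two $\zz$-lattices $L$ and $L'$ in the finite-dimensional $\qq$-vector space $V$ (viewed via $L \otimes \qq = V$) are commensurable as abelian groups: there exists a positive integer $N$ with $NL \subseteq L'$ and $NL' \subseteq L$. This is a standard fact about full-rank lattices in a $\qq$-vector space, obtained by writing the elements of a basis of $L'$ in a basis of $L$ with rational coefficients, and then clearing denominators (and vice versa).

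Next I would introduce the principal congruence subgroup $\Gamma := \{g \in \Stab_{O(Q)}(L) \mid g \equiv \mathrm{id} \pmod{N^2 L}\}$. This is the kernel of the natural homomorphism from $\Stab_{O(Q)}(L)$ into the finite group $\mathrm{Aut}(L/N^2L)$, so it has finite index in $\Stab_{O(Q)}(L)$.

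The key step is to verify $\Gamma \subseteq \Stab_{O(Q)}(L')$. For $g \in \Gamma$ and $l' \in L'$, we have $Nl' \in L$, so $g(Nl') \in L$ and $g(Nl') - Nl' \in N^2 L$. Since $N^2 L \subseteq N L'$ (because $N L \subseteq L'$), dividing by $N$ inside the $\qq$-vector space $V$ gives $g(l') - l' \in L'$, hence $g(l') \in L'$. This produces a finite index subgroup $\Gamma$ of $\Stab_{O(Q)}(L)$ contained in $\Stab_{O(Q)}(L) \cap \Stab_{O(Q)}(L')$, and the symmetric argument with $L$ and $L'$ interchanged gives a finite index subgroup of $\Stab_{O(Q)}(L')$ contained in the same intersection. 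Commensurability follows.

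The argument is short and essentially routine; the only mildly delicate point is the division by $N$, which requires working in the ambient $\qq$-vector space $V$ where scalar division is available (and uses that $V$ is torsion free as an abelian group). I do not expect any serious obstacle.
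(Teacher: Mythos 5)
Your proof is correct, and it takes a somewhat different route from the paper's. The paper first reduces to nested lattices $L' \subset L$ by passing to $L \cap L'$, then observes that $\Stab_{O(Q)}(L)$ permutes the finitely many index-$[L:L']$ sublattices of $L$ (so the stabilizer of $L'$ has finite index in $\Stab_{O(Q)}(L)$), and closes the loop with the scaling identity $\Stab_{O(Q)}(L) = \Stab_{O(Q)}(mL)$ applied to $mL \subset L'$. You instead bypass the reduction entirely: you pick $N$ with $NL \subseteq L'$ and $NL' \subseteq L$, write down an explicit principal congruence subgroup $\Gamma = \ker\bigl(\Stab_{O(Q)}(L) \to \Aut(L/N^2L)\bigr)$, and verify by a direct computation that $\Gamma$ stabilizes $L'$, then argue symmetrically. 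Your version is more explicit at the one spot the paper leaves to the reader (producing the finite index subgroup of $\Stab(L)$ inside $\Stab(L')$); the paper's version is a bit slicker because the scaling trick makes the reverse direction formally identical to the forward one. One thing you use implicitly and might want to flag: your computation shows $g(L') \subseteq L'$ for each $g \in \Gamma$, and the equality $g(L') = L'$ then follows because $\Gamma$ is a subgroup, so $g^{-1} \in \Gamma$ gives the reverse inclusion. This is routine, but the word ``stabilizes'' should mean equality, not just containment.
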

\begin{proof}
    By taking intersections, it is enough to consider the case when $L'<L$.
    Then a finite index subgroup of $\Stab_{O(Q)}(L)$ stabilizes $L'$.
    Since there exists $m \in \nn$ such that $mL < L'$, and $\Stab_{O(Q)}(L)=\Stab_{O(Q)}(mL)$, a finite index subgroup of $\Stab_{O(Q)}(L')$ stabilizes $L$, and the claim follows.
\end{proof}

We also need the following.
\begin{lemma}
    Let $\gG< O(Q)$ be an arithmetic group of simplest type.
    If $U$ is a $k$-subspace of $V$ such that $Q|_{U^{\perp}}$ is positive definite, then $\Stab_{\gG}(U)$ is an arithmetic group of simplest type.
\end{lemma}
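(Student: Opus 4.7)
The plan is to produce a specific $\zz$-lattice $L_U\subset U$ for which $\Stab_{\gG}(U)$ is commensurable with $\Stab_{O(Q|_U)}(L_U)$; by the preceding lemma the choice of lattice is then irrelevant, and this will exhibit $\Stab_\gG(U)$ as arithmetic of simplest type in $O(Q|_U)$. Since $\gG$ is commensurable with $\gG':=\Stab_{O(Q)}(L)$, I can work with $\gG'$ throughout.

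The first step is to take $L_U:=L\cap U$ and check that it is a $\zz$-lattice in $U$. Picking any $k$-basis $e_1,\dots,e_d$ of $U$ and clearing denominators via $L\otimes\qq=V$ shows that we may assume $e_i\in L$, so $\co e_1+\dots+\co e_d\subset L_U$ is a free abelian subgroup of $\zz$-rank $d[k:\qq]=\dim_\qq U$ which spans $U$ over $\qq$; thus $L_U\otimes\qq=U$. By construction every element of $\Stab_{\gG'}(U)$ preserves both $L$ and $U$, hence preserves $L_U$, and its restriction to $U$ lies in $O(Q|_U)$. This yields a restriction homomorphism
\[
\rho:\Stab_{\gG'}(U)\longrightarrow\Stab_{O(Q|_U)}(L_U).
\]

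The heart of the argument is to show $\rho$ has finite kernel and finite-index image. The kernel consists of isometries acting trivially on $U$, which extend by the identity to identify with a discrete subgroup of $O(Q|_{U^\perp})$. Since $Q|_{U^\perp}$ is positive definite at the distinguished place by hypothesis and at every other place because $Q$ itself is, the real group $\prod_\gs O(Q|_{U^\perp})^\gs$ is compact, so a discrete subgroup of it is finite. For the image, observe that $\Stab_{\gG'}(U)=\gG'\cap\Stab_{O(Q)}(U)$ is an arithmetic subgroup of the $k$-algebraic group $\Stab_{O(Q)}(U)$, which fits into an exact sequence of $k$-algebraic groups
\[
1\to O(Q|_{U^\perp})\to\Stab_{O(Q)}(U)\xrightarrow{\rho}O(Q|_U)\to 1.
\]
Passing to $\qq$-groups via $\mathrm{Res}_{k/\qq}$ and invoking the classical fact that projections whose kernel is anisotropic (compact) at every archimedean place send arithmetic subgroups to arithmetic subgroups, $\rho(\Stab_{\gG'}(U))$ is arithmetic in $O(Q|_U)$, hence commensurable with $\Stab_{O(Q|_U)}(L_U)$ by the previous lemma applied inside $O(Q|_U)$.

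The main obstacle I anticipate is the last step: justifying cleanly, through restriction of scalars from $k$ to $\qq$, both that $\Stab_{\gG'}(U)$ is arithmetic in the $k$-algebraic group $\Stab_{O(Q)}(U)$ and that a projection with archimedeanly anisotropic kernel preserves arithmeticity. Both facts are classical in the theory of arithmetic groups, but require careful bookkeeping with $\mathrm{Res}_{k/\qq}$ before they can be cited off the shelf.
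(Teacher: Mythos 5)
Your proof is correct, but it takes a genuinely different and noticeably heavier route than the one in the paper. You fix the lattice $L_U = L\cap U$ coming from the original lattice $L$ and then analyze the restriction homomorphism $\rho:\Stab_{\gG'}(U)\to\Stab_{O(Q|_U)}(L_U)$. The finiteness of $\ker\rho$ is elementary (a discrete subgroup of a compact orthogonal group), but the finite-index statement for the image is the hard part: you correctly identify $\Stab_{O(Q)}(U)\cong O(Q|_U)\times O(Q|_{U^\perp})$ and invoke the theorem that projecting an arithmetic subgroup along a factor that is anisotropic at every archimedean place again yields an arithmetic group, which requires passing through $\mathrm{Res}_{k/\qq}$ and citing Borel/Platonov--Rapinchuk-type results. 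The paper sidesteps all of this by exploiting the freedom established in the preceding lemma: instead of using the given $L$, it \emph{chooses} the lattice adapted to the orthogonal decomposition, setting $L:=L'\oplus L''$ with $L'\subset U$ and $L''\subset U^\perp$. With that choice, $\Stab_{\gG'}(U)$ literally splits as the direct product $\Stab_{O(Q|_U)}(L')\times\Stab_{O(Q|_{U^\perp})}(L'')$, and the second factor is a finite group on the nose; no projection-of-arithmetic-groups machinery is needed. So both arguments work, but the paper's choice of lattice trades the restriction-of-scalars bookkeeping you flagged as the main obstacle for a one-line finiteness observation. If you want to keep your route, you should make explicit which theorem you are invoking for the surjection with anisotropic kernel (e.g. a corollary of Borel--Harish-Chandra together with the fact that compact-at-infinity kernels force the image to be discrete and of finite covolume), since that is where all the actual content of your argument lies.
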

\begin{proof}
    Let $L'<U$ and $L''< U^{\perp}$ be $\zz$-lattices.
    Then $L=L'\oplus L''$ is a $\zz$-lattice in $V$.
    Therefore $\gG':=\Stab_{O(Q)}(L)$ is commensurable with $\gG$.
    We have $\Stab_{\gG'}(U)=\Stab_{O(Q|_{U} ) }(L') \times \Stab_{O( Q|_{U^{\perp}} ) }(L'') $.
    Since $Q|_{U^{\perp}}$ is positive definite, the second factor is finite, and we are done.
\end{proof}

In particular, if we identify $V$ with $k^{n+1}$, scale the form $Q$ so that it has coefficients in $\co$ and take $L=\co^{n+1}$, the stabilizer $\Stab_{O(Q)}(\co^{n+1})$ gets identified with the orthogonal group $O(Q,\co)$ of matrices preserving the form with entries in $\co$.
Thus, up to commensurability, we recover the definition in \cite{hw12}.

Note that, by taking the trivial place, $O(Q)< O(n,1)$, and $\Stab_{O(Q)}(L)$ is a lattice in $O(n,1)$.
A standard application of Mahler's compactness criterion, cf. \cite{m15}*{Proposition 5.3.4}, shows that this lattice is uniform if and only if $0$ is the only solution in $\co^{n+1}$ to the equation $Q(x)=0$.

We will say that a hyperbolic manifold $H =\hh^n/\gG$ is \emph{of simplest type} if $\gG$ is an arithmetic group of simplest type.
\begin{lemma}\label{l:hw}
    Let $H$ be a compact hyperbolic manifold of simplest type.
    Then there is a finite cover of $H$ with a tractable collection of codimension-one submanifolds of simplest type whose complement is a disjoint union of open discs, and each intersection in this collection is also a manifold of simplest type.
\end{lemma}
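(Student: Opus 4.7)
The plan is to use the virtual specialness of $\gG$ and the explicit cubulation of uniform arithmetic lattices of simplest type due to Bergeron--Haglund--Wise to produce a $\gG$-invariant family of totally geodesic codimension-one hyperplanes in $\hh^n$ whose images in a suitable finite cover of $H$ give the required wall collection.

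First I would invoke the cubulation underlying \cite{hw12} (building on \cite{bhw11}) to obtain a finite index subgroup $\gG' < \gG$ acting properly and cocompactly on a finite-dimensional CAT$(0)$ cube complex $X$ whose hyperplanes are in $\gG'$-equivariant bijection with a locally finite family $\mathcal{H}$ of totally geodesic codimension-one hyperplanes in $\hh^n$. Each member of $\mathcal{H}$ has the form $v^{\perp}\cap\hh^n$ for some primitive $v\in V$ with $Q(v)>0$; by the second lemma preceding the present one, its stabilizer in $\gG'$ is arithmetic of simplest type acting cocompactly on $v^{\perp}\cap\hh^n$. Cocompactness of the $\gG'$-action on $X$ together with finite dimensionality of $X$ forces the chambers (components of $\hh^n\setminus \bigcup \mathcal{H}$) to be bounded convex polytopes with trivial $\gG'$-stabilizer.

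Second, I would pass to a further torsion-free normal subgroup $\gG''\triangleleft\gG'$ of finite index, which exists by residual finiteness (itself a consequence of virtual specialness), chosen so that each wall and each chamber embeds injectively into $H' := \hh^n / \gG''$. Standard separability arguments, applied to the finite list of $\gG'$-orbits of walls and chambers, produce such a cover. The images of the hyperplanes give a finite collection $\{X_s\}$ of embedded codimension-one closed submanifolds of $H'$, each of simplest type, and the complement $H'\setminus\bigcup X_s$ is a disjoint union of open discs. For any non-empty intersection $X_J$, the corresponding subspace $\bigcap_{s\in J} v_s^{\perp}$ has signature $(n-|J|,1)$ (non-emptiness of the intersection inside $\hh^n$ forces this), so its orthogonal complement $\mathrm{span}(v_s : s\in J)$ is positive definite and the second lemma again shows that the stabilizer is arithmetic of simplest type.

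Finally, for tractability, $\pi_1(H')$ is virtually special (the property passes to finite index subgroups), each $X_J$ is totally geodesic in the hyperbolic manifold $H'$ and therefore has quasiconvex fundamental group, so Haglund--Wise's Theorem~\ref{t:hwvr} implies that $X_J$ is a virtual retract of $H'$. The cut-open manifold $Z$ is a disjoint union of contractible components; by Lemma~\ref{l:simple2} every contractible subcomplex is a homotopy retract, and a strict retraction exists by the HEP, so $Z$ is a virtual retract as well. The hard part will be the first step: locating or adapting the Bergeron--Haglund--Wise cubulation so that the walls are manifestly of simplest type and the chambers are bounded. Once this wallspace is in hand, the remaining covering and quasiconvexity arguments are routine.
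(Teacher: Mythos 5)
Your approach via the Bergeron--Haglund--Wise cubulation is genuinely different from the paper's, which avoids cubulation machinery entirely in favor of a Dirichlet domain: one fixes a $k$-rational vector $v \in V$ with $Q(v) < 0$, identifies $\hh^n$ with the hyperboloid sheet through $v$, and observes that the bounding hyperplanes of the Dirichlet domain for the orbit $\gG v$ have normal vectors of the form $\gg v - v$, which are automatically $k$-rational. Taking all $\gG$-translates of these finitely many hyperplanes produces a locally finite $\gG$-invariant family of $k$-rational walls, and the chambers are bounded for free, since each component of the complement is contained in a translate of the (compact) Dirichlet domain. This makes both the rationality of the walls and the boundedness of the chambers completely explicit, whereas your route requires unpacking the BHW wallspace to verify these facts.

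The genuine gap in your argument is the claim that ``cocompactness of the $\gG'$-action on $X$ together with finite dimensionality of $X$ forces the chambers to be bounded convex polytopes with trivial $\gG'$-stabilizer.'' That is not the right reason and, as stated, does not follow: cocompactness of the dual cube complex alone is compatible with unbounded chambers (e.g.\ a single orbit of geodesics on a surface gives a cocompact action on the Bass--Serre tree, yet the chambers are unbounded half-planes). What you actually need is \emph{properness} of the action on $X$, which gives finite (hence, for torsion-free $\gG'$, trivial) chamber stabilizers, and then a separate compactness argument: if a chamber $C$ were unbounded, cocompactness of $\gG'$ on $\hh^n$ plus local finiteness of the walls would force infinitely many $\gg$ with $\gg C = C$, contradicting triviality of $\Stab(C)$. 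You flag the first step as ``the hard part'' and leave it unresolved; the Dirichlet domain construction is precisely what lets the paper sidestep it. Once you replace the first step with a correct argument, the rest of your proposal (separability to embed walls and chambers, quasiconvexity plus Theorem~\ref{t:hwvr} for the $X_J$, and contractibility of chambers for $Z$ via Lemmas~\ref{l:simple1} and~\ref{l:simple2}) is sound and closely parallels the paper.
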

\begin{proof}
    Suppose $\gG=\pi_{1}(H)$ is associated to a quadratic space $(V,Q)$ over a field $k$.
    Choose $v \in V$ with $Q(v)< 0 $, and identify $\hh^{n}$ with the sheet of the hyperboloid $Q(x)=Q(v)$ in $V \otimes_{k} \rr$ containing $v$.
    Consider the Dirichlet domain corresponding to the orbit $\gG v$ which contains $v$.
    The bounding hyperplanes of this domain come from linear hyperplanes in $V \otimes_{k} \rr$ whose normal vectors has the form $\gg v - v$.
    Therefore these linear hyperplanes are $k$-rational subspaces (have the form $U \otimes_{k} \rr$ for $U$ a subspace of $V$).
    Hence the $\gG$-stabilizers of the bounding hyperplanes are cocompact arithmetic groups of simplest type and the bounding hyperplanes project to immersed compact submanifolds of $H$.
    Hence, taking the full preimage i.e. all $\gG$-translates of these bounding hyperplanes, gives a locally finite $\gG$-invariant collection of $k$-rational hyperplanes.

    Since the stabilizers of hyperplanes are quasiconvex subgroups, by Theorem \ref{t:hwvr} each is a virtual retract of $\gG'$.
    This implies that by passing to a further finite cover $H'$, we can arrange that each hyperplane from this collection maps to an embedded submanifold and is a virtual retract.
    Note that the components of the complement of this collection in $\hh^n$ are all disks as they are finite intersections of halfspaces.
    Since each component is contained in a translate of the Dirichlet domain, it is bounded, and hence each component of the complement downstairs in $H'$ is also a disk.
    Thus we obtain a tractable collection in $H'$.
    Finally, we note that the intersection of $k$-rational hyperplanes is $k$-rational, so all intersections in this collection are hyperbolic manifolds of simplest type.
\end{proof}
\begin{remark}
    As explained in~\cite{bbks21}, in general not all totally geodesic closed submanfolds of $H$ arise from $k$-rational subspaces.
    Such submanifolds are still of simplest type, but the field of definition of the restricted quadratic form may increase.
\end{remark}

Applying Lemma \ref{l:ss} and noting that vanishing of $\overline\beta$ is a commensurability invariant gives
\begin{corollary}\label{c:odddim}
    If there is a closed arithmetic hyperbolic manifold of simplest type that does not have the upper $\F$-Singer property, then there is such a manifold of odd dimension.
\end{corollary}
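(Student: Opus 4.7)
The plan is to prove the contrapositive by strong induction on dimension: I will show that if every closed arithmetic hyperbolic manifold of simplest type of odd dimension satisfies the upper $\F$-Singer property, then so does every such manifold in every dimension. Since $\ub_k$ is multiplicative in finite covers, vanishing passes both up and down along commensurability, so the upper $\F$-Singer property is a commensurability invariant and the contrapositive conclusion will suffice.

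Fix an even dimension $2n$, let $H$ be a closed arithmetic hyperbolic manifold of simplest type of dimension $2n$, and assume inductively that every closed arithmetic hyperbolic manifold of simplest type of dimension strictly less than $2n$ has the upper $\F$-Singer property (using the standing hypothesis for odd dimensions below $2n$ and the induction itself for even dimensions below $2n$). By Lemma \ref{l:hw}, after passing to a finite cover (which is harmless by commensurability invariance), I may equip $H$ with a tractable collection $\{X_s\}$ of codimension-one submanifolds of simplest type whose complement is a disjoint union of open disks, and where every nonempty intersection $X_J = \bigcap_{s \in J} X_s$ is again a closed hyperbolic manifold of simplest type. Each such $X_J$ has dimension $2n - |J| < 2n$, so by the induction hypothesis satisfies upper $\F$-Singer. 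The cut-open manifold $Z$ consists of contractible components, so $\ub_k(Z;\F) = 0$ for every $k \geq 1$, and in particular $Z$ satisfies upper $\F$-Singer. The hypotheses of the Even chopping lemma (Lemma \ref{l:ss}) are now in place, and it directly yields that $H$ has the upper $\F$-Singer property, completing the inductive step.

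The argument is essentially bookkeeping around Lemmas \ref{l:hw} and \ref{l:ss}; the only genuine point of care is to set the induction up so that the hypothesis is applied not just to odd dimensions but to all dimensions below $2n$, so that every intersection $X_J$ (regardless of parity of $\dim X_J$) is covered. The base cases $d=0$ (a point) and $d=1$ (a circle, whose first Betti number is constant along finite covers) are trivial, so no further obstacle arises.
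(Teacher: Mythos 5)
Your proof is correct and takes essentially the same route as the paper: the paper's one-line justification is exactly "applying Lemma \ref{l:ss} and noting that vanishing of $\ub$ is a commensurability invariant," and your argument fleshes this out as a strong induction on dimension, invoking Lemma \ref{l:hw} to produce the tractable collection of walls of simplest type and Lemma \ref{l:ss} to transfer the upper $\F$-Singer property from the intersections and disk pieces to $H$. The only cosmetic remark is that the base case $d=1$ is already covered by the standing hypothesis on odd dimensions, so the only genuine base case for the even induction is the point.
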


By Agol's fibering theorem for closed hyperbolic $3$-manifolds, the dimension of a potential counterexample is at least 5. Now we are ready to discuss hyperbolizations.

\subsection*{On $\F $-Singer property for hyperbolizations}

If $L$ is a full subcomplex of a flag triangulated sphere $S^{m-1}$, then $hP_{L}$ has a natural thickening in $hP_{S^{m-1}}$, which is an $m$-manifold with boundary.
We do not know whether or not there are hyperbolizations $hP_{L}$ that do not have upper $\F$-Singer property, but we will show that if there are such examples then the lowest dimensional ones can be realized by closed manifolds of the form $hP_{S^{n-1}}$.
Moreover, if the lowest dimension $n$ is even, then $hP_{O\gD^{n-1}}$ does not have upper $\F$-Singer property.
To that end fix an initial Charney--Davis piece of some large dimension $N$, a field $\F$, and for $n\leq N$ consider the progressively stronger statements
\begin{itemize}
    \item[($1_n$)] $\ub_k(hP_{O\gD^{n-1}};\F )=0$ for $k>n/2$,
    \item[($ 2_n$)] $\ub_k(hP_{S^{n-1}};\F )=0$ for $k>n/2$, for flag triangulated spheres $S^{n-1}$,
    \item[($ 3_n$)] $\ub_k(hP_{L};\F )=0$ for $k>n/2$, for full subcomplexes $L$ in flag $S^{n-1}$.
\end{itemize}

Statement $ 2_n$ is the upper $\F $-Singer property for the closed manifolds $hP_{S^{n-1}}$, while $3_n$ is the upper $\F$-Singer property for a thickening of $hP_L$ inside of $hP_{S^{n-1}}$.

Finally, since $hP_{O\gD^{n-1}}$ is just a finite cover of the initial Charney--Davis manifold $A^{n}$, statement $1_n$ is the $\F $-Singer property for a particular arithmetic hyperbolic $n$-manifold of simplest type.

As a consequence of the Mayer--Vietoris inequalities given in Corollary \ref{c:mvinequalities} we get the following proposition.
\begin{proposition}
    In all dimensions $n$ we have
    \[
    2_{\leq n}\iff 3_{\leq n},
    \]
    and in even dimensions we have
    \[
    3_{<2d} \text{ and } 1_{2d} \iff 3_{\leq 2d}.
    \]
\end{proposition}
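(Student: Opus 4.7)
The first equivalence follows from a virtual-retract argument. The direction $3_{\leq n}\Rightarrow 2_{\leq n}$ is tautological, as $2_m$ is the case $L=S^{m-1}$ of $3_m$. For the converse, any full subcomplex $L$ of a flag triangulation of $S^{m-1}$ sits inside $hP_{S^{m-1}}$ as a totally geodesic subspace (compare the proof of Theorem~\ref{t:hcone special}), so its fundamental group is quasi-convex in the hyperbolic virtually special $\pi_1(hP_{S^{m-1}})$; Theorem~\ref{t:hwvr} then makes $hP_L$ a virtual retract of $hP_{S^{m-1}}$, and Corollary~\ref{c:virtual} yields $\ub_k(hP_L;\F)\leq\ub_k(hP_{S^{m-1}};\F)$. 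Hypothesis $2_m$ now forces vanishing in the required degrees.

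For the second equivalence, one direction is immediate. For the converse, Part~1 reduces the task to establishing $2_{2d}$: for every flag triangulation of $S^{2d-1}$, $\ub_k(hP_{S^{2d-1}};\F)=0$ for $k>d$. The plan is two parallel applications of the even chopping Lemma~\ref{l:ss}, both using the codimension-one walls $hP_{\Lk(v)}$. Their iterated intersections are $hP_{\Lk(\sigma)}\cong hP_{S^{2d-1-|\sigma|}}$ for simplices $\sigma$; these satisfy upper $\F$-Singer by $3_{<2d}$ and are virtual retracts by Corollary~\ref{c:vretract}. Applied to $hP_{S^{2d-1}}$ itself, the lemma reduces the problem to $\ub_k(hK_{S^{2d-1}};\F)=0$ for $k>d$. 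Applied in parallel to the distinguished flag sphere $O\Delta^{2d-1}$ and combined with $1_{2d}$, the same cutting yields $\ub_k(CD^{2d};\F)=0$ for $k>d$ (since $CD^{2d}$ is a retract of the cut-open $hK_{O\Delta^{2d-1}}$); running the same argument in every lower dimension $j<2d$ using $3_{<2d}$ delivers $\ub_k(CD^j;\F)=0$ for $k>j/2$ for every face dimension $j\leq 2d$.

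To conclude I will prove the refined claim that $\ub_k(hK_L;\F)=0$ for $k>m/2$ for every full subcomplex $L$ of a flag $S^{m-1}$ with $m\leq 2d$, by a nested induction: outer on the ambient dimension $m$, inner on the number of vertices of $L$. The inductive step applies Corollary~\ref{c:mvinequalities}(1) to the full-subcomplex decomposition
\[
hK_L = hK_{\St(v)} \cup_{hK_{\Lk(v)}} hK_{L-v}
\]
at a vertex $v$ chosen so that $\St(v)\neq L$; the three pieces are full subcomplexes of $L$, hence totally geodesic in $hK_L$ by the Link Condition and virtual retracts of $hK_L$ by Theorem~\ref{t:hwvr}. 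The outer induction at dimension $m-1$ gives $\ub_{k-1}(hK_{\Lk(v)};\F)=0$ in the required range, and the inner induction supplies the vanishing of $\ub_k(hK_{\St(v)};\F)$ and $\ub_k(hK_{L-v};\F)$. The base of the inner induction is the case $L=\bar\sigma$, for which $hK_{\bar\sigma}=CD^{|\sigma|}$ was handled above; the outer base $m<2d$ is furnished by $3_{<2d}$ together with the same even-chopping reduction from $hP$ to $hK$ used in the previous paragraph.

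The main obstacle is choosing the cutting vertex $v$: removing a cone apex leaves $\St(v)=L$ and fails to decrease the vertex count. However, whenever $L$ is not a single simplex it contains a pair of non-adjacent vertices, and cutting at either one gives a genuine decrease in both $\St(v)$ and $L-v$; this is automatic for the principal case $L=S^{2d-1}$, since a flag sphere of positive dimension is never a simplicial cone.
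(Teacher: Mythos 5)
The argument for the first equivalence has a genuine gap. Corollary~\ref{c:virtual} states, for a virtual retract $A$ of $X$, the chain $\lb(A)\leq\lb^{X}(A)\leq\ub^{X}(A)\leq\ub(A)$, i.e.\ it compares \emph{restricted} growth to absolute growth. It does not give $\ub(A)\leq\ub(X)$, which is what you invoke. In fact that inequality is simply false for (virtual) retracts: a closed hyperbolic surface $\Sigma$ is a retract of $\Sigma\times S^{1}$, yet $\ub_{1}(\Sigma;\Q)=b_{1}^{(2)}(\Sigma)=-\chi(\Sigma)>0$ while $\ub_{1}(\Sigma\times S^{1};\Q)=b_{1}^{(2)}(\Sigma\times S^{1})=0$ by the $L^{2}$-K\"unneth formula. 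No multiplicative constant rescues this, since the right side is $0$. So the sentence ``Corollary~\ref{c:virtual} yields $\ub_k(hP_L;\F)\leq\ub_k(hP_{S^{m-1}};\F)$'' is unsupported, and with it the purported implication $2_m\Rightarrow 3_m$. The inequality you want is Corollary~\ref{c:mvinequalities}(2), $\ub_k(A_1)\leq d\cdot\ub_k(X)$, but that requires the extra hypothesis $\ub_k(B)=0$ on the cutting locus; the paper produces $hP_L$ from $hP_{S^{m-1}}$ by removing vertices one at a time and feeds in the inductive assumption $3_{<m}$ to kill $\ub_k(hP_{\Lk(v)})$. Hence the correct step is $3_{<n}\text{ and }2_n\Rightarrow 3_{\leq n}$, not the unconditional $2_m\Rightarrow 3_m$.

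There is a second, related, problem in the part using $hK$'s. Your nested induction needs Corollary~\ref{c:mvinequalities}(1), which requires $\ub_{k-1}(hK_{\Lk(v)})=0$. At odd ambient dimension $m$ and the boundary degree $k=(m+1)/2$, this asks for $\ub_{(m-1)/2}(hK_{\Lk(v)})=0$ with $\Lk(v)\subset S^{m-2}$; but the outer induction at $m-1$ only supplies vanishing in degrees $>(m-1)/2$, missing precisely the degree $(m-1)/2$. (Concretely: at $m=2d-1$, $k=d$, you need $\ub_{d-1}=0$ for a subcomplex of $S^{2d-3}$, while the inductive statement at $2d-2$ gives vanishing only from degree $d$ on.) Similarly, the intermediate claim $\ub_k(CD^{j})=0$ for odd $j$ cannot be obtained from Lemma~\ref{l:ss}, which is stated only for even-dimensional closed manifolds. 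The paper avoids both problems by staying with $hP_L$ throughout and working in the single even ambient dimension $2d$, where the degree ranges in Corollary~\ref{c:mvinequalities}(1) and (2) match $3_{<2d}$ exactly; the passage to the arithmetic manifold $hP_{O\Delta^{2d-1}}$ is again done by vertex removal with Corollary~\ref{c:mvinequalities}(2), not via the chambers $hK$.
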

\begin{proof}

    First, note that for a full subcomplex $L$ of $S^{n-1}$, removing the vertices of $S^{n-1}$ that are not contained in $L$ one at a time and applying Corollary \ref{c:mvinequalities}(2) at each step shows that
    \begin{itemize}
        \item if $hP_{S^{n-1}}$ satisfies upper $\F$-Singer, and
        \item for full subcomplexes $L'$ of links in $S^{n-1}$, $hP_{L'}$ satisfies upper $\F$-Singer,
    \end{itemize}
    then $hP_L$ does as well.
    In other words,
    \[
    3_{<n} \text{ and } 2_n\implies 3_{\leq n}.
    \]

    Inductively, starting with the fact that $3_1$ is true, this shows
    \[
    2_{\leq n}\implies 3_{\leq n}.
    \]
    The other direction is clear.
    This proves the first part of the proposition.

    Now, look in even dimensions $n=2d$.
    Suppose $3_{<2d}$ is true, and observe two things.

    First, if there is a counterexample $L\subset S^{2d-1}$ to $3_{2d}$ then there is a counterexample to $3_{2d}$ that is a $(2d-1)$-simplex: If $L$ is not a simplex, then it contains a vertex $v\in L$ such that $\St(v)$ is a proper subcomplex of $L$.
    Now, $3_{<2d}$ and Corollary \ref{c:mvinequalities}(1) implies that either $\St(v)$ or $L-v$ is a counterexample to $3_{2d}$ with fewer vertices.
    Repeating this, we obtain a counterexample that is a simplex.
    Since $3_{<2d}$ is true, the simplex must have dimension $2d-1$.
    Second, removing vertices from $O\gD^{2d-1}$ that are not contained in $\gD^{2d-1}$ one at a time and applying Corollary \ref{c:mvinequalities}(2) using $3_{<2d}$ at each step, we conclude that $hP_{\gD^{2d-1}}$ satisfies upper $\F$-Singer if $hP_{O\gD^{2d-1}}$ does.
    Thus,
    \[
    3_{<2d} \text{ and } 1_{2d} \implies 3_{2d}.
    \]

    The converse is clear.
\end{proof}

Since $hP_{O\gD^{2d-1}}$ is an arithmetic hyperbolic $(2d)$-manifold of simplest type, Corollary \ref{c:odddim} implies that if $1_{2d}$ does not hold, then there is an odd-dimensional arithmetic hyperbolic manifold of simplest type that does not satisfy upper $\F$-Singer.
Together with Agol's fibering theorem this implies:
\begin{corollary}\label{odddim}
    If there is a full subcomplex $L$ in a flag triangulated sphere $S^{m-1}$ such that $\ub_k(hP_L;\F )>0$ for some $k>m/2$, then there is a closed $n$-manifold either of the form $hP_{S^{n-1}}$, or arithmetic hyperbolic of simplest type, that does not satisfy the upper $\F $-Singer property, for some odd $n$ in the interval $[5,m]$.
\end{corollary}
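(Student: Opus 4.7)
The plan is to extract a minimal-dimensional counterexample, split on parity, and apply the preceding proposition together with Corollary \ref{c:odddim}. Under the hypothesis, statement $3_m$ fails; let $n$ be the least integer in $[1,m]$ for which $3_n$ fails, so that $3_{<n}$ holds by minimality.

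Suppose first that $n$ is odd. Applying the equivalence $2_{\leq j} \iff 3_{\leq j}$ from the preceding proposition at $j = n - 1$ and at $j = n$ forces $2_n$ to fail, yielding a flag triangulated sphere $S^{n-1}$ with $\ub_k(hP_{S^{n-1}}; \F) > 0$ for some $k > n/2$; this is the desired closed $n$-manifold of the form $hP_{S^{n-1}}$. If instead $n = 2d$ is even, then the second equivalence of that proposition, combined with the fact that $3_{<2d}$ holds while $3_{2d}$ fails, forces $1_{2d}$ to fail. Consequently $hP_{O\gD^{2d-1}}$, which is a finite cover of $A^{2d}$ and hence an arithmetic hyperbolic manifold of simplest type, fails upper $\F$-Singer. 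Corollary \ref{c:odddim} then yields a closed, odd-dimensional arithmetic hyperbolic manifold of simplest type without the upper $\F$-Singer property; inspecting the proof of that corollary (via Lemma \ref{l:hw} plus iteration of Lemma \ref{l:ss}) shows that the output dimension is at most $2d - 1 \leq m - 1$.

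It remains only to verify that the odd dimension produced is at least $5$. The case $n = 1$ is trivial: $hP_{S^0}$ is a circle, with $\ub_1 = 0$, and similarly any closed arithmetic hyperbolic manifold of simplest type in dimension one is a circle. For $n = 3$, the manifold $hP_{S^2}$ is a closed, locally CAT$(-1)$, aspherical $3$-manifold, hence Gromov hyperbolic and, by Perelman's geometrization theorem, Riemannian hyperbolic. Agol's fibering theorem then produces a virtual fibration over $S^1$ with a closed surface fiber (which is of type $FP_\infty(\F)$), and Theorem \ref{mappingtorustheorem} forces $\ub_k(hP_{S^2}; \F) = 0$ for all $k \geq 1$, contradicting the failure of upper $\F$-Singer in dimension $3$. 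The identical argument---Agol plus the mapping torus theorem---also excludes dimension $3$ in the arithmetic case of Case 2, since every closed arithmetic hyperbolic $3$-manifold is Riemannian hyperbolic and hence virtually fibers. I anticipate the main obstacle to be the dimension bookkeeping in Case 2: one must confirm that iteration of Corollary \ref{c:odddim} terminates at an odd dimension in $[5, m]$, which ultimately rests on Agol's theorem to rule out dimension $3$.
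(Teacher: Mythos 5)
Your proposal is correct and follows essentially the same route as the paper: take a minimal dimension $n$ in which $3_n$ fails, use the preceding proposition to locate the failure in either $2_n$ (odd $n$) or $1_{2d}$ (even $n=2d$), and in the even case invoke Corollary \ref{c:odddim} together with Agol's fibering theorem. Your write-up is a bit more explicit than the paper in two places worth noting: you spell out the strict dimension decrease in the iteration behind Corollary \ref{c:odddim} (so the odd output dimension is $\leq 2d-1 \leq m-1$), and you explicitly handle the exclusion of $n=3$ in the $hP_{S^{n-1}}$ branch by appealing to geometrization to see that $hP_{S^2}$ is Riemannian hyperbolic (the paper's remark invoking Agol is phrased only for arithmetic manifolds of simplest type, but the same conclusion is needed here and your geometrization step supplies it).
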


\subsection*{Proofs of Theorems \ref{maintheorem}(1) and \ref{nofibertheorem}}

Now we can prove the first part of our main theorem:
\begin{prevthm}
    {\ref{maintheorem}(1)} For any odd prime $p$, there is a closed, aspherical, $n$-manifold $\M^n$ of dimension either $n=5$ or $n=7$ with special hyperbolic fundamental group such that $\lb_k(\M;\Fp)>0$ for some $k$.
\end{prevthm}
\begin{proof}
    Fix an odd prime $p$.
    Let $(N,\d N)$ be the compact aspherical $7$-manifold with boundary provided by Theorem \ref{t:hypmanifold}.
    It has special hyperbolic fundamental group and $\lb_4(N;\Fp)>0$.
    Pick a flag triangulation of the boundary $\d$ and let $\M=hP^N_{\d}$ be the result of applying the hyperbolic reflection group trick using this triangulation.
    We showed that this is a closed aspherical $7$-manifold with virtually special hyperbolic fundamental group.

    By Proposition \ref{p:singerdavistrick}, either this manifold has $\lb_4(\M;\Fp)\geq\lb_4(N;\Fp)>0$, or there is a full subcomplex $L$ of a flag triangulation of $S^5$, such that $\ub_4(hP_L; \Fp)>0$.
    In the latter case, by Corollary \ref{odddim}, there is a closed aspherical $5$-manifold $H^{5}$ with virtually special hyperbolic fundamental group, which doesn't have upper $\Fp$-Singer property.
    So, in summary, after passing to a special finite cover, we obtain a closed aspherical manifold of dimension $7$ (if it is $\M$) or $5$ (if it is $H^{5}$) with special hyperbolic fundamental group and non-vanishing lower $\Fp$ homology growth (if it is $\M$) or upper $\Fp$-homology growth (if it is $H^{5}$).
    Therefore, Corollaries \ref{nosup} and \ref{c:Babove} imply Theorem \ref{maintheorem}(1).
\end{proof}

Theorem \ref{mappingtorustheorem} then immediately implies:
\begin{prevthm}
    {\ref{nofibertheorem}} There exists a closed, odd-dimensional, aspherical manifold $\M$ with special hyperbolic fundamental group that does not virtually fiber over a circle.
\end{prevthm}

One somewhat unsatisfying aspect of this line of argument is that it does not say a given odd dimensional manifold does not virtually fiber, but only that there is a such a manifold of a potentially lower odd dimension.
The last section of the paper is occupied with addressing this problem.

\section{Hyperbolic reflection group trick via barycentric subdivisions\label{s:proofs}
}

We have enough information from the skew field approach to give---for large primes $p$---explicit $7$-dimensional, aspherical examples that do not satisfy $\Fp$-Singer, and also $7$-dimensional $\Q$-aspherical examples that do not satisfy $\Q$-Singer.

The tool for doing this is a version of the hyperbolic reflection group trick that preserves $\Q$-homology growth above the middle dimension, and also $\Fp$-homology growth above the middle dimension for large primes.
\begin{prevthm}
    {\ref{t:barytrick}}[Better hyperbolic reflection group trick]

    For each dimension $n$ there is a choice of Charney--Davis piece $CD^n$ and a corresponding finite collection of exceptional primes $S_n$, such that for any compact $n$-manifold with boundary $(N,\d N)$ and any triangulation $\d$ which is a barycentric subdivision of a triangulation of the boundary, the hyperbolic reflection group trick $\M=hP_{\d}^{N}$ satisfies the following inequalities for $k>n/2$:
    \begin{enumerate}
        \item $b_k^{(2)}(N)\leq b_k^{(2)}(\M)$,

        \item $\lb_{k}(N;\Q)\leq \lb_k(\M;\Q)$ and $\ub_{k}(N;\Q)\leq \ub_k(\M;\Q)$,

        \item $\lb_k(N;\Fp)\leq \lb_k(\M;\Fp)$ and $\ub_k(N;\Fp)\leq \ub_k(\M;\Fp)$ for $p \notin S_n$.
    \end{enumerate}
\end{prevthm}
\begin{proof}

    When the triangulation of $\d N$ is a barycentric subdivision of another triangulation, i.e. $\d=bT$, then the vertex removal process in Proposition \ref{p:singerdavistrick} can be carried out by removing centers $v_{\gs}$ of simplices $\gs$ of $T$, starting by removing barycenters of all $n-1$ simplices of $T$, then barycenters of all $n-2$ simplices of $T$, and so on.
    The links that appear in this process are precisely the barycentric subdivisions of the boundaries of these simplices $\gs$ embedded as full subcomplexes in the link of the vertex $v_{\gs}$ in the ambient manifold $\d$ i.e.     
    \[
    b\d\gs^{i}=\Lk_{bT^{(i)}}(v_{\gs^{i}})\subset \Lk_{\d}(\gs^{i})=S^{n-2}
    \]
    where $T^{(i)}$ is the $i$-skeleton of the triangulation $T$, and $bT^{(i)}$ is its barycentric subdivision, and $\gs^{i}$ is some $i$-simplex.

    Note that the $hP_{b\d\gs}$ are hyperbolizations of a particular finite collection of manifolds (they are called Tomei manifolds) of dimension $\leq n-1$.
    It follows from the smooth hyperbolization technology of Ontaneda \cite{o20}*{Main Theorem} that for any $\ge>0$ we can pick an appropriate\footnote{The normal injectivity radius of all walls needs to be large, the existence of such a $CD^n$ is in \cite{o20}*{Theorem 9.1}.}
    initial Charney--Davis piece $CD^n$ for which all these manifolds will have smooth Riemannian metrics whose sectional curvature is pinched between $-1$ and $-1-\ge$.
    When $\ge$ is sufficiently small, a result of Donnelly--Xavier \cite{dx84} implies that the even dimensional $hP_{b\d\gs}$ satisfy the Singer conjecture while the odd dimensional $hP_{b\d\gs}$ satisfy it except possibly in the middle two dimensions.
    Consequently for all these manifolds of dimension $\leq n-1$, we have
    \[
    b^{(2)}_{>n/2}(hP_{b\d\gs})=0,
    \]
    We conclude by Corollaries \ref{specialp} and \ref{c:Babove} and virtual specialness of the $\pi_1(hP_{b\d\gs})$ that for large primes $p$,
    \[
    \ub_{>n/2}(hP_{b\d\gs};\Fp)=\lb_{>n/2}(hP_{b\d\gs};\Fp)=\lb_{>n/2}(hP_{b\d\gs};\Q)=0.
    \]

    After cutting along the $hP_{b\d\gs}$, we are left with copies of $N$ and copies of $hP_{b\gs}$.
    Each copy of $N$ is a retract of $\M$ by Theorem \ref{t:hypdavistrick}, and $hP_{b\d\gs}$ and $hP_{b\gs}$ are quasiconvex in $\M$ by Lemma \ref{l:quasiconvex}.
    Hence the end result of the cutting procedure and the components of $hP_{b\d\gs}$ are virtual retracts of $\M$ by Corollary \ref{c:vretract}.
    We apply Corollary \ref{homologygrowthmv} to finish the proof of \eqref{i:lbQ} and \eqref{i:lb}.

    For \eqref{i:b2} we use an $L^{2}$-version of Corollary \ref{c:rtfnthmv}, which works for \emph{any} fundamental group.
\end{proof}
\begin{prevthm}
    {\ref{maintheorem} (2)}

    For large primes, there is a closed, aspherical, $7$-manifold $\M^7$ with special hyperbolic fundamental group such that $\lb_k(\M;\Fp)>0$ for some $k$.
\end{prevthm}
\begin{proof}

    Let $N^7$ be the seed manifold obtained form Theorem \ref{t:hypmanifold} for a prime $p\notin S_7$.
    It has special hyperbolic fundamental group and $\lb_4(N;\Fp)\neq 0$, so applying the hyperbolic reflection group trick with barycentrically subdivided boundary gives a closed, aspherical $7$-manifold $\M$ with virtually special hyperbolic fundamental group and $\lb_4(\M; \Fp) \neq 0$.
    Passing to a finite cover, if necessary, gives such a manifold $\M'$ with special fundamental group and with $\lb_4(\M';\Fp)\neq 0$.
\end{proof}
Note that this proof avoids the inductive arguments of Section \ref{s:induction}.
We now prove Theorem {\ref{rational}}.
\begin{prevthm}
    {\ref{rational}} There is a closed, rationally aspherical $7$-manifold $\M$ with virtually special hyperbolic fundamental group and $b_4^{(2)}(\M)\neq 0$.
\end{prevthm}
\begin{proof}
    By Theorem \ref{t:nosquare} there is a flag no-square triangulation of the $3$-sphere.
    Denote it by $S^3$.
    By Corollary \ref{c:modpl2}, if the finite groups used to define the graph product $G_{S^3}$ are large enough then $\lb_4(G_{S^3};\Q)>0$.
    Fix such a graph product $G_{S^3}$.
    Note that it is virtually of finite type, and a virtual $4$-dimensional duality group by Theorem \ref{dualitycriterion}.
    Therefore, \cite{a18}*{Theorem 18} implies that there is a compact $\Q$-aspherical $7$-manifold with boundary $(N^7,\d)$ whose fundamental group is a finite index torsion-free subgroup $\gG <G_{S^3}$.
    Let $\M^7=hP^N_{\d}$ be the result of applying the hyperbolic reflection group trick (with barycentric subdivisions) to this manifold.
    Then
    \[
    \lb_4(\M;\Q)\geq \lb_4(N;\Q)=\lb_4(\gG ;\Q) = [G_{S^3}:\gG ] \lb_4(G_{S^3};\Q)>0,
    \]
    where the first inequality is Theorem \ref{t:barytrick}\eqref{i:b2} and the first equality follows because the rational homology of $\gG $ can be computed from the action on a $\Q$-acyclic instead of contractible complex.
    Finally, since the triangulation we picked was no-square, the group $G_{S^3}$ (and hence $\gG $) is hyperbolic and virtually special.
    So by Theorem \ref{t:manifoldspecial}, the fundamental group of $\M$ is hyperbolic and virtually special.
    In particular, it is residually finite, so L\"uck approximation implies $b^{(2)}_4(\M)=\lb_4(\M;\Q)>0$.
    Replacing $\M$ by a finite cover if necessary, we obtain such a manifold with special fundamental group.
\end{proof}

\appendix
\renewcommand{\thesection}{\Roman{section}}

\section{Rational homology growth and \texorpdfstring{$L^2$}{L\^2}, qualitatively} The connection between $\Q$-Betti number growth and $L^2$ rests on two pillars.
The first is Kazhdan's crucial observation that if the $\Q$-Betti numbers grow fast in a chain of regular finite covers converging to the universal cover, then the universal cover has a non-vanishing $L^2$-harmonic cycle.
The second is L\"uck's converse to Kazhdan's criterion.
L\"uck showed, by an ingenious approximation argument, that $L^2$-harmonic cycles in the universal cover lead to fast $\Q$-Betti number growth in \emph{any} chain of regular finite covers converging to it.
He also obtained a quantitative statement, showing that the analytically defined $L^2$-Betti numbers agree with the limit of the normalized $\Q$-Betti numbers of any such chain \cite{l94}.

In this appendix we sketch qualitative versions of these two arguments, in order to highlight the basic ideas behind them.
The natural setting for these arguments are not chains of regular finite covers, but covers of sufficiently large universal\footnote{The \emph{universal injectivity radius} is the smallest $R$ such that the map from the universal cover $\widetilde X\to X$ is injective on $R$-balls.}
injectivity radius.

\subsection*{Kazhdan's criterion}

We start with the following (obvious) linear algebraic lemma.
\begin{lemma}
    Any $n$-dimensional subspace of $\R^N$ has a unit vector with a coordinate of size $\geq n/N$.
\end{lemma}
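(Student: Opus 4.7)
The plan is to use the orthogonal projection onto the subspace and average in the standard basis. Let $V\subset\R^N$ be the $n$-dimensional subspace and let $P:\R^N\to\R^N$ denote orthogonal projection onto $V$. Since $P$ is a projection of rank $n$, its trace equals $n$, so expanding in the standard basis $\{e_1,\dots,e_N\}$ gives
\[
n=\operatorname{tr}(P)=\sum_{i=1}^N\langle Pe_i,e_i\rangle=\sum_{i=1}^N\|Pe_i\|^2.
\]
By averaging, there exists an index $i$ with $\|Pe_i\|^2\geq n/N$.

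Next I would normalize: set $v:=Pe_i/\|Pe_i\|$, a unit vector in $V$. Its $i$-th coordinate is
\[
\langle v,e_i\rangle=\frac{\langle Pe_i,e_i\rangle}{\|Pe_i\|}=\frac{\|Pe_i\|^2}{\|Pe_i\|}=\|Pe_i\|\geq\sqrt{n/N}.
\]
Since $n/N\leq 1$, we have $\sqrt{n/N}\geq n/N$, which yields the claimed bound (in fact with the stronger constant $\sqrt{n/N}$).

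There is essentially no obstacle here; the only thing to be careful about is the edge case $V=0$, in which the statement is vacuous (no unit vector exists), and the case $\|Pe_i\|=0$, which cannot occur at the chosen index $i$ since $\|Pe_i\|^2\geq n/N>0$ whenever $n\geq 1$. The proof is a direct trace/averaging argument and needs no additional machinery from the paper.
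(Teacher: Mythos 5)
Your proof is correct and follows essentially the same route as the paper's: take the orthogonal projection $P$ onto the subspace, use $\operatorname{tr}(P)=n$ and averaging over the standard basis to find a good index $i$, and normalize $Pe_i$. The only difference is cosmetic: you observe $\langle Pe_i,e_i\rangle=\|Pe_i\|^2$, which yields the (slightly stronger) bound $\sqrt{n/N}$, whereas the paper states the weaker $n/N$ directly.
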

\begin{proof}
    Let $v_i$ be an orthonormal basis for $\R^N$ and $P$ be the orthogonal projection to the $n$-dimensional subspace.
    Its trace is $n=\sum_{i=1}^N\langle Pv_i,v_i\rangle $, so at least one of vectors $Pv_i$ has a coordinate of size $\geq n/N$.
    The unit vector $Pv_i/||Pv_i||$ has a bigger coordinate.
\end{proof}
The Lemma implies that if the dimensions of spaces of harmonic cycles grow linearly in the degree of the cover, then in each cover we can find such cycles of unit norm and uniformly bounded away from zero on some cell.

Now, suppose we have a sequence of finite covers $X_k\to X$, each with a unit norm harmonic cycle $z_k$ and a cell $e_k$ such that $\langle z_k,e_k\rangle \geq c>0$.
Pick a subsequence so that all the cells $e_k$ lie over the same cell $e$ in $X$.
Suppose, in addition, that the covering map $\widetilde X\to X_k$ is injective on balls of radius $k$.
Then, we have an isometry of $k$-neighborhoods $\phi:B_k(\widetilde e)\cong B_k(\widetilde e_k)\cong B_k(e_k)$, where the first map is a covering translation and the second map is the projection $\widetilde X\to X_k$.
Using this isometry we pick lifts $\widetilde z_k=\phi^{-1}(z_k)$.
They have $L^2$-norm $=1$ and satisfy $\langle \widetilde z_k,\widetilde e\rangle =\langle z_k,e_k\rangle \geq c$.
The $\widetilde z_k$ may not be harmonic (in fact, they are just chains, not cycles), but since $z_k$ is harmonic, this failure happens outside the $(k-1)$-neighborhood of $\widetilde e$.
So, we can pick a subsequence of $\widetilde z_k$ that converges to a harmonic cycle of $L^2$-norm $\leq 1$ and non-vanishing at $\widetilde e$.
We have proved
\begin{theorem}[Kazhdan's criterion \cite{k75}] Let $X$ be a finite complex and $X_k\to X$ a sequence of finite covers whose universal injectivity radius goes to infinity.
    If the $i^{th}$-Betti number grows linearly, i.e.     
    \[
    \limsup_k \frac{b_i(X_k;\Q)}{\abs{X_k\to X}}>0,
    \]
    then the universal cover $\widetilde X$ has an non-zero $L^2$ harmonic $i$-cycle.
\end{theorem}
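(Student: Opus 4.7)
The plan is to follow the sketch given just before the theorem statement. First I would realize the space of harmonic $i$-cycles of $X_k$ as a $b_i(X_k;\Q)$-dimensional subspace of the finite-dimensional inner product space $C_i(X_k;\R)$ equipped with the orthonormal basis of $i$-cells. The preceding linear algebra lemma then yields a unit-norm harmonic $i$-cycle $z_k$ together with an $i$-cell $e_k$ of $X_k$ satisfying $\langle z_k,e_k\rangle\geq b_i(X_k;\Q)/N_k$, where $N_k$ is the number of $i$-cells of $X_k$. Since $N_k$ equals $|X_k\to X|$ times the (fixed) number of $i$-cells of $X$, the linear Betti-number growth hypothesis guarantees that along some subsequence this lower bound exceeds a constant $c>0$.

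Next, after passing to a further subsequence I would arrange that all cells $e_k$ project to the same $i$-cell $e$ in $X$. Fix a lift $\widetilde e$ of $e$ in $\widetilde X$. Since the universal injectivity radius of $X_k$ tends to infinity, for large $k$ one can pick a lift $\widetilde e_k$ of $e_k$ and a covering translation of $\widetilde X$ mapping $\widetilde e$ to $\widetilde e_k$; composing with the projection $\widetilde X\to X_k$ produces an isometric identification $\phi_k\colon B_k(\widetilde e)\xrightarrow{\cong}B_k(e_k)$. Define $\widetilde z_k\in \ell^2 C_i(\widetilde X;\R)$ by pulling back the restriction $z_k|_{B_k(e_k)}$ through $\phi_k$ and extending by zero. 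Then $\|\widetilde z_k\|_{\ell^2}\leq 1$ and $\langle\widetilde z_k,\widetilde e\rangle=\langle z_k,e_k\rangle\geq c$.

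The crucial observation is that although $\widetilde z_k$ is typically not a cycle, the combinatorial Laplacian $\Delta\widetilde z_k$ vanishes on $B_{k-1}(\widetilde e)$: the value of $\Delta\widetilde z_k$ at any $i$-cell $p\in B_{k-1}(\widetilde e)$ depends only on the values of $\widetilde z_k$ at cells adjacent to $p$, all of which lie in $B_k(\widetilde e)$, and the isometry $\phi_k$ identifies these values with those of $z_k$ at cells around $\phi_k(p)$, on which $\Delta z_k=0$. The sequence $\widetilde z_k$ is bounded in $\ell^2 C_i(\widetilde X;\R)$, so after extracting a subsequence it converges weakly to some $\widetilde z_\infty$ still satisfying $\langle\widetilde z_\infty,\widetilde e\rangle\geq c>0$; in particular $\widetilde z_\infty\neq 0$. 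For any finitely supported test cochain $\alpha$, once $k$ is large enough that the support of $\alpha$ lies in $B_{k-1}(\widetilde e)$ we have $\langle\widetilde z_\infty,\Delta\alpha\rangle=\lim_k\langle\widetilde z_k,\Delta\alpha\rangle=\lim_k\langle\Delta\widetilde z_k,\alpha\rangle=0$, so $\Delta\widetilde z_\infty=0$ and $\widetilde z_\infty$ is the desired non-zero $L^2$-harmonic $i$-cycle on $\widetilde X$.

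The main obstacle is this last step: extracting a weak limit and checking that it remains harmonic even though the approximants are only almost-harmonic. The trick, which sidesteps the delicate quantitative approximation L\"uck uses for the reverse direction, is that the growing universal injectivity radius pushes the failure of harmonicity of $\widetilde z_k$ far from $\widetilde e$, so by locality of $\Delta$ any fixed finitely supported test cochain is eventually unaffected by this error.
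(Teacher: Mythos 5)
Your proposal follows the paper's sketch step for step: find unit-norm harmonic cycles $z_k$ with a coordinate bounded below, lift their restrictions isometrically to $\widetilde X$ using the growing injectivity radius, and pass to a limit. You fill in the limiting step carefully (weak $\ell^2$ convergence plus locality of $\Delta$ against finitely supported test cochains), but this is the same argument, just spelled out more rigorously than the paper's informal sketch.
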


\subsection*{L\"uck approximation}

L\"uck proved a converse to Kazhdan's criterion:
\begin{itemize}
    \item \emph{$L^2$ harmonic cycles on the universal cover produce linear Betti number growth.}
\end{itemize}
The goal is to bound below the trace of the orthogonal projection to the kernel of the Laplacian in covers, and L\"uck's idea is to approximate the orthogonal projection by polynomials in the combinatorial Laplacian, and exploit that this Laplacian is a bounded operator with integer entries.

Let $X$ be a finite complex with fundamental group $G=\pi_1(X)$ and $X_k\to X$ a finite cover for which $\pi:\widetilde X\to X_k$ is injective on $k$-balls.
Running our earlier argument backwards, if there are $L^2$-harmonic cycles, then there is a $i$-cell $\widetilde e$ such that, for the orthogonal projection $\widetilde P$ to the space of harmonic $L^2$ chains we have $\langle \widetilde P\widetilde e,\widetilde e\rangle > c>0$.
Approximate $\widetilde P$ by a polynomial $f(\widetilde \gD)$ in the combinatorial Laplacian $\widetilde \gD$.
Since it is a bounded operator, we can do this with $f(0)=1$ and $f\geq 0$ on the spectrum of $\widetilde\gD$, so we also get $\langle f(\widetilde \gD)\widetilde e,\widetilde e\rangle \geq c$.
The advantage of polynomials in $\widetilde\gD$ over the orthogonal projection $\widetilde P$ is that $f(\widetilde\gD)\widetilde e$ is a \emph{finite} chain, so if the injectivity radius $k$ of $X_k$ is greater than the diameter of $f(\widetilde\gD)\widetilde e$, then all the translates $f(\widetilde\gD)\gamma\widetilde e$ embed isometrically in $X_k$.
This implies $\langle f(\gD_k)\pi(\gamma \widetilde e),\pi(\gamma\widetilde e)\rangle \geq c$ for all $\gamma\in G$.
There are $\abs{X_k\to X}$ different $i$-cells of the form $\pi(\gamma\widetilde e)$ in $X_k$.
Summing over all of them, we get a lower bound for the normalized trace
\[
\frac{\tr(f(\gD_k))}{\abs{X_k\to X}}\geq c.
\]
We are almost done.
We just need to get back to the orthogonal projection $P_k$ in the finite cover.
For this, one needs an estimate the number $N_{\ge, k}$ of small but nonzero eigenvalues ($\leq\ge$) of $\gD_k$ compared to the number of $i$-cells $N_k$ in the cover.
This is given by another linear algebraic lemma, namely Lemma \ref{smalleigs} in Section \ref{s:homologygrowth}.
The final important point is that---for a finite complex $X$---the norms of both the combinatorial Laplacian in the universal cover and the combinatorial Laplacians in all the finite covers are bounded by a single finite constant $D$ (see \cite{l94}*{Lemma 2.5.}).
Therefore, the upper bound on the normalized number of $\ge$-small, nonzero eigenvalues coming from Lemma \ref{smalleigs} does not depend on the cover.
From here it is easy to see that the normalized trace of $f(\gD_k)$ is close to the normalized trace of the orthogonal projection $P_k$ once $f$ is close enough to the characteristic function $\chi_0$ on the spectrum of $\gD_k$.

\section{Residual finiteness}

In this appendix, we show that if our seed manifold $(N, \d N)$ has residually finite fundamental group then the output $\M$ of our hyperbolic reflection group trick has residually finite fundamental group as well.
In particular, we don't need to assume that $\pi_1(N)$ is hyperbolic and virtually special.
Along the way, we give a general criterion for basic constructions of mirrored spaces to be residually finite.
\subsection*{Profinite topology}

The profinite topology on a group $G$ has as a basis the collection of cosets of finite index normal subgroups of $G$.
Multiplication by a group element and taking inverses induce continuous maps with this topology.

A subset $C$ of $G$ is called \emph{separable} if it is closed in the profinite topology, i.e. for any $g \in G - C$ there is a finite index normal subgroup $N$ such that $Cg^{-1} \cap N = \emptyset$.
We will express this by saying $g$ can be separated from $C$.

If $H$ is a subgroup of $G$ then separability is equivalent to the statement that $H$ is an intersection of finite index subgroups of $G$, i.e. for any $g \in G - H$ there is a finite index subgroup $N$ such that $H \subset N$ and $g \notin N$.
In particular, $G$ is residually finite if and only if $\{1\}$ is separable (or, equivalently, the profinite topology is Hausdorff.)

We shall need the following theorem, which combines work of Haglund--Wise \cite{hw08} and Minasyan \cite{m06}.
\begin{theorem}[\cite{hw08}+\cite{m06}]\label{t:hwm}
    If $G$ is a hyperbolic, virtually special group, then every quasiconvex subgroup is separable.
    Furthermore, if $H$ and $K$ are two quasiconvex subgroups, then the double coset $HK$ is separable.
\end{theorem}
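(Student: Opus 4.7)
The plan is to reduce to the compact special case via a finite-index subgroup and then assemble the two cited results. First I would pass to a finite-index subgroup $G_{0}\leq G$ which is the fundamental group of a compact special cube complex in Haglund--Wise's sense. For any quasiconvex subgroup $H\leq G$, the intersection $H_{0}:=H\cap G_{0}$ has finite index in $H$ and is quasiconvex in $G_{0}$ (quasiconvexity passes to intersections with finite-index subgroups, since the word metrics are quasi-isometric).

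Next I would invoke Haglund--Wise \cite{hw08}*{Theorem 1.3} (quasiconvex subgroups of a virtually compact special hyperbolic group are separable in the ambient special subgroup) to conclude $H_{0}$ is separable in $G_{0}$, hence is an intersection of finite-index subgroups of $G_{0}$. Since $G_{0}$ has finite index in $G$, each such finite-index subgroup of $G_{0}$ is finite-index in $G$, and therefore $H_{0}$ is separable in $G$ as well. Finally, $H$ is a finite union of cosets $g_{i}H_{0}$ where the $g_{i}$ range over coset representatives of $H_{0}$ in $H$; a finite union of translates of a separable set is separable (translation is a homeomorphism of the profinite topology and finite unions of closed sets are closed), so $H$ itself is separable in $G$. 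This proves the first assertion.

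For the double coset statement, I would apply Minasyan's theorem \cite{m06}*{Theorem 1.1}: in a hyperbolic group in which every quasiconvex subgroup is separable, any finite product of quasiconvex subgroups is separable. The hypothesis of Minasyan's theorem holds for $G$ by what we just proved, so $HK$ is separable in $G$.

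The only real content beyond quoting the two theorems is the finite-index transfer, and the main (minor) subtlety there is to check that separability of $H_{0}$ in $G_{0}$ implies separability of $H_{0}$ (and hence of $H$) in $G$. This follows immediately from the fact that finite-index subgroups of $G_{0}$ remain finite-index in $G$, together with the elementary observation that a subset is separable in $G$ iff it is an intersection of preimages of finite quotients and that finite unions of closed sets in the profinite topology are closed. I do not anticipate any genuine obstacle; the statement is essentially a packaging of \cite{hw08} and \cite{m06}.
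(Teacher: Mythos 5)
The paper states Theorem \ref{t:hwm} as a citation-only result (no proof is supplied in the text), and your proposal correctly fills in the details by assembling the two cited theorems. Your route---reduce to a finite-index compact special $G_0$, get separability of $H_0=H\cap G_0$ in $G_0$ from Haglund--Wise, transfer separability up the finite-index extension $G_0\leq G$, and write $H$ as a finite union of left translates of $H_0$---is sound, and then feeding the resulting GFERF hypothesis into Minasyan's theorem handles the double coset. One small remark: the finite-index reduction is not strictly necessary. The paper elsewhere records \cite{hw08}*{Theorem 7.3}, that any quasiconvex subgroup of a hyperbolic virtually special group $G$ is a virtual retract, and a virtual retract of a residually finite group is automatically separable (take preimages of finite quotients of the retract image). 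This gives the first assertion in one step before invoking \cite{m06}; but either path is correct, and the two are essentially the same content since the separability theorem in \cite{hw08} is itself proved via canonical completion and retraction.
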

Note that, since conjugation preserves quasiconvexity in hyperbolic groups \cite{g97a}*{Lemma 1.9}, this theorem implies that the double cosets $HgK$ are also separable for each $g \in G$.

We shall also need the following easy lemma.
\begin{lemma}\label{l:separable}
    Suppose that $G$ is a residually finite group, and $H$ is a closed subset of $G$ in the profinite topology.
    Let $\gF$ be a finite subset of $G - H$.
    Then there is a finite index normal subgroup $\gG \lhd G$ such that $\gF \cap H\gG = \emptyset$.
\end{lemma}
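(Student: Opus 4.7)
The plan is to unwind what it means for $H$ to be closed in the profinite topology and then take an intersection over the finite set $\gF$. Since every element of $\gF$ lies in the open complement $G\setminus H$, and the basic open sets in the profinite topology are cosets of finite index normal subgroups, for each $g\in\gF$ we can find a finite index normal subgroup $N_g\lhd G$ such that $gN_g\subset G\setminus H$. Equivalently, $gN_g\cap H=\emptyset$, which is the same as $g\notin HN_g$.

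Once that is done, set
\[
\gG:=\bigcap_{g\in\gF} N_g.
\]
This is a finite intersection of finite index normal subgroups, hence itself finite index and normal in $G$. For each $g\in\gF$, the containment $\gG\subset N_g$ gives $g\gG\subset gN_g$, so $g\gG\cap H\subset gN_g\cap H=\emptyset$, and therefore $g\notin H\gG$. Ranging over $g\in\gF$ yields $\gF\cap H\gG=\emptyset$, as required.

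The argument is essentially a one-liner; there is no real obstacle, and in fact the residual finiteness hypothesis on $G$ is not strictly needed for the conclusion (it ensures the profinite topology is Hausdorff but is not used in the above step). The content of the lemma is simply that openness in the profinite topology passes from single points to finite sets by intersecting the witnessing finite index subgroups.
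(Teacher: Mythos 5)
Your proof is correct and takes essentially the same approach as the paper's: for each $\gf\in\gF$ produce a finite index normal subgroup $N_\gf$ with $\gf\notin HN_\gf$, then intersect. The only cosmetic difference is that the paper translates $H$ by $\gf^{-1}$ and separates the identity from the closed set $H\gf^{-1}$, while you separate $\gf$ from $H$ directly via the open complement; your aside that the residual finiteness of $G$ is not actually used here is also correct.
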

\begin{proof}
    Since $H$ is closed, for each $\gf \in \gF$ the translate $H\gf^{-1}$ is closed and does not contain the identity.
    Therefore, there is a finite index normal subgroup $\gG_{\gf}$ such that $H\gf^{-1} \cap \gG_{\gf} = \emptyset$.
    Since $\gG_{\gf}$ is normal, $\gG_{\gf} H = H \gG_{\gf}$, hence $\gf \notin H\gG_{\gf}$.
    Taking $\gG = \bigcap_{\gF} \gG_{\gf}$ works.
\end{proof}

\subsection*{On residual finiteness of basic constructions}

Recall from Section \ref{s:hrt} that if $X$ is a mirrored complex, then the universal cover of $X$ is also a mirrored complex.
The mirrors in the universal cover are precisely the path components of preimages of mirrors in $X$.
Let $(W, S)$ be the RACG corresponding to the mirrored complex $X$, and $(\widetilde W, \widetilde S)$ the RACG corresponding to the universal cover.
\begin{theorem}\label{t:davisresf}
    Let $X$ be a finite mirrored complex and $W$ the associated right-angled Coxeter group.
    Let $\widetilde S$ and $\widetilde W$ as above.
    Suppose $G = \pi_1(X)$ is residually finite, and that all the double cosets $\Stab(\tilde s)\Stab(\tilde t)$ for $\tilde s, \tilde t \in \widetilde S$ are separable in $G$.
    Then $\widetilde W \rtimes G$ is residually finite.
\end{theorem}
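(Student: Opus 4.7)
The plan is to separate a non-trivial $(\tilde w, g) \in \widetilde W \rtimes G$ from the identity by a map to a finite group. If $g \neq 1$, compose the projection $\widetilde W \rtimes G \twoheadrightarrow G$ with a finite quotient of $G$ separating $g$, provided by residual finiteness of $G$. So assume $g = 1$ and $\tilde w \neq 1$.

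For any finite index normal $\Gamma \lhd G$, the quotient $X_\Gamma := \widetilde X/\Gamma$ is a finite mirrored complex whose mirrors $S_\Gamma$ are the components of preimages of mirrors of $X$; let $W_\Gamma$ be the associated (finitely generated, hence residually finite) right-angled Coxeter group. Sending each $\tilde s \in \widetilde S$ to the component $s_\Gamma \in S_\Gamma$ containing $\pi_\Gamma(\tilde s)$ defines a group homomorphism $\phi_\Gamma: \widetilde W \to W_\Gamma$: the relation $\tilde s^2 = 1$ maps to $s_\Gamma^2 = 1$, and if $\tilde s \cap \tilde t \neq \emptyset$ then $s_\Gamma, t_\Gamma$ are equal or adjacent in $X_\Gamma$, hence commute in $W_\Gamma$. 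Moreover $\phi_\Gamma$ is $G/\Gamma$-equivariant (since $\Gamma$ acts by deck transformations on $X_\Gamma$ preserving components of mirror preimages), so it extends to a homomorphism $\widetilde W \rtimes G \to W_\Gamma \rtimes (G/\Gamma)$. As $G/\Gamma$ is finite and $W_\Gamma$ is residually finite, it suffices to find $\Gamma$ with $\phi_\Gamma(\tilde w) \neq 1$ in $W_\Gamma$.

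Write $\tilde w = \tilde s_1 \cdots \tilde s_k$ as a reduced word in $\widetilde W$ and let $T := \{\tilde s_1, \dots, \tilde s_k\}$. The parabolic subgroup $\langle T\rangle \leq \widetilde W$ is a RACG on $T$ with its induced adjacency pattern, and $\tilde w \neq 1$ in it. It is enough to choose $\Gamma$ so that $\phi_\Gamma|_{\langle T\rangle}$ is an isomorphism onto $\langle \phi_\Gamma(T)\rangle$, equivalently: (a) distinct $\tilde s, \tilde t \in T$ have distinct images $s_\Gamma \neq t_\Gamma$, and (b) non-adjacent $\tilde s, \tilde t \in T$ have non-adjacent images in $W_\Gamma$.

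The crucial technical input, which leverages compactness of $X$, is that for each pair $\tilde s, \tilde t \in T$, writing $H_{\tilde s} := \Stab_G(\tilde s)$, the set
\[
B_{\tilde s, \tilde t} := \{h \in G : h\tilde s \cap \tilde t \neq \emptyset\}
\]
is a finite union of $(H_{\tilde t}, H_{\tilde s})$-double cosets $H_{\tilde t} h_k H_{\tilde s}$: $H_{\tilde t}$ acts cocompactly on $\tilde t$, so has only finitely many orbits on the components of the intersection of $\tilde t$ with $\pi^{-1}$ of a mirror of $X$, and these orbits parametrize $H_{\tilde t}\backslash B_{\tilde s, \tilde t}/H_{\tilde s}$. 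When $\tilde s \cap \tilde t = \emptyset$, a direct calculation (using that $h_j \in H_{\tilde t}$ fixes $\tilde t$ and $h_i \in H_{\tilde s}$ fixes $\tilde s$) gives $h_k \notin H_{\tilde t} H_{\tilde s}$ for every $k$. Using normality of $\Gamma$, condition (b) becomes the finite list of conditions $h_k \notin H_{\tilde t} H_{\tilde s}\Gamma$, and condition (a) for a pair $\tilde t = g\tilde s$ becomes $g \notin H_{\tilde s}\Gamma$ (the subgroup $H_{\tilde s}$ is closed since $H_{\tilde s} H_{\tilde s} = H_{\tilde s}$ is covered by the hypothesis). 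Lemma \ref{l:separable} together with the double coset separability hypothesis yields a single $\Gamma$ achieving all these conditions. The main hurdle is the double-coset finiteness statement for $B_{\tilde s, \tilde t}$, which pinpoints precisely what separability input is needed.
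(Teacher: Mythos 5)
Your proof is correct and takes essentially the same route as the paper: both reduce to finding a finite index normal $\Gamma \lhd G$ for which the quotient map to the RACG on $\Gamma$-orbits of $\widetilde S$ (your $W_\Gamma$ is the paper's $\overline W$) is injective on the finitely generated parabolic $\langle T\rangle$, and both arrange this using the finite double-coset structure of $\{h\in G : h\tilde s\cap\tilde t\neq\emptyset\}$, the double-coset separability hypothesis, and Lemma~\ref{l:separable}. The differences are purely presentational — you realize the quotient RACG via the covering $X_\Gamma$ and extend the homomorphism over all of $\widetilde W\rtimes G$ via $G/\Gamma$ rather than restricting to $\widetilde W\rtimes\Gamma$, and you spell out the double-coset finiteness a bit more explicitly than the paper does.
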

\begin{proof}
    Let $g \in \widetilde W \rtimes G$.
    If $g$ maps nontrivially to $G$, then since $G$ is residually finite we can find a finite quotient of $G$ where $g$ survives.
    So, we can assume that $g \in \widetilde W$.
    Write $g$ as a product of generators of $\widetilde W$, and let $\widetilde T \subset \widetilde S$ denote the finite set of generators appearing in the product.
    Hence $g$ is contained in the finitely generated special subgroup $W_{\widetilde T}$.

    We claim that there is a finite index subgroup $\gG$ of $G$ which preserves disjointness of the mirrors in this collection, i.e. if $\tilde s, \tilde t \in \widetilde T$ and $\tilde s \cap \tilde t = \emptyset$ then $\tilde s \cap \gg\tilde t = \emptyset$ for all $\gg \in \gG$.

    Since $\widetilde T$ is finite, it is enough to find such a subgroup for each pair $\tilde s, \tilde t \in \widetilde T$, then we can take $\gG$ to be the intersection of all these subgroups.

    So, suppose we have two disjoint mirrors $\tilde s$ and $\tilde t$.
    Since the stabilizer $\Stab(\tilde s)$ acts cocompactly on $\tilde s$, up to $\Stab(\tilde s)$-action there are at most finitely many $G$-translates of $\tilde t$ which intersect $\tilde s$.
    Choose a finite set $\gF \subset G$ of group elements representing these translates.

    Note that $\gF \cap \Stab(\tilde s)\Stab(\tilde t) = \emptyset$, since if $\gf = g_{\tilde s}g_{\tilde t}$ where $g_{\tilde s} \in \Stab(\tilde s)$ and $g_{\tilde t} \in \Stab(\tilde t)$, then $\gf\tilde t = g_{\tilde s}\tilde t$, and $g_{\tilde s}$ preserves disjointness between $\tilde s$ and $\tilde t$.

    Since the double coset $\Stab(\tilde s)\Stab(\tilde t)$ is separable by assumption, Lemma \ref{l:separable} implies that we can find a finite index $\gG \lhd G$ so that
    \[
    \gF \cap \Stab(\tilde s) \Stab(\tilde t) \gG = \emptyset.
    \]
    Since $\gG$ is normal, $\Stab(\tilde s) \Stab(\tilde t) \gG = \Stab(\tilde s) \gG \Stab(\tilde t) $, and hence
    \[
    \gG \cap \Stab(\tilde s) \gF \Stab(\tilde t) = \emptyset.
    \]

    We claim that $\tilde s \cap \gg\tilde t = \emptyset$ for each $\gg \in \gG$.
    Indeed, if $\gg\tilde t$ intersects $\tilde s$, then $\gg \tilde t = g \gf \tilde t$ for some $\gf \in \gF$, and $g \in \Stab(\tilde s)$.
    Therefore, $\gf^{-1}g^{-1} \gg \in \Stab(\tilde t)$, so $\gg$ is contained in the double coset $\Stab(\tilde s) \gf \Stab(\tilde t)$.

    So, now we have a finite index normal subgroup $\gG$ of $G$ which preserves disjointness of mirrors corresponding to generators in the finitely generated Coxeter subgroup $W_{\widetilde T}$.
    Let $\overline S$ be the set of $\gG$-orbits of elements of $\widetilde S$.
    For $\bar s\neq \bar t \in \overline S$ define $m_{\bar s \bar t} = 2$ if there exist representatives of $\bar s$ and $\bar t$ which intersect, and set $m_{\bar s \bar t} = \infty$ otherwise.
    Let $\overline W$ be the corresponding finitely generated RACG.

    The natural quotient homomorphism $\widetilde W \to \overline W$ is $\gG$-equivariant with respect to the left action of $\gG$ on $\widetilde W$ and the trivial action on $\overline W$.
    Therefore, it induces a map $\widetilde W \rtimes \gG \to \overline W \times \gG$.
    Composing with the projection onto the first factor gives a map $\widetilde W \rtimes \gG \to \overline W$, which by construction is injective on $W_{\widetilde T}$, and in particular maps $g$ nontrivially.
    Since $\overline W$ is a finitely generated RACG, it is residually finite, hence we can detect the image of $g$ in a finite quotient.
    This induces a finite quotient of $\widetilde W \rtimes \gG$ where $g$ survives.
    Since $\widetilde W \rtimes \gG$ is a finite index subgroup of $\widetilde W \rtimes G$, this induces a finite quotient of $\widetilde W \rtimes G$ where $g$ survives, so we are done.
\end{proof}
\begin{remark}
    If $X$ has only one mirror $X_s$, then $\cu(W, X)$ is the double of $X$ over $X_s$.
    Therefore, this theorem can be seen as a generalization of the classical fact that the double $G *_H G$ of a residually finite group over a separable subgroup is residually finite.
\end{remark}

\subsection*{Davis trick preserves residual finiteness}

If $W$ is infinite, then $\pi_1(\cu(W,X))$ is infinitely generated if $X$ is not simply connected.
In this case, a finite index torsion-free subgroup $\gG$ of $W$ still acts properly and cocompactly on $\cu(W,X)$, and we can form the quotient $Y = \cu(W,X)/\gG$.
If $X$ is a compact aspherical manifold with boundary and the $X_s$ are dual cells to a flag $PL$ triangulation of $\d X$, then $Y$ is a closed aspherical manifold.
This construction of $Y$ is called the \emph{Davis reflection group trick} applied to $X$.
In any case, $\pi_1(Y)$ is finite index in $\widetilde W \rtimes \pi_1(X)$.
Explicitly, let $\pi : \widetilde W \to W$ as above, and set $\widetilde \gG = \pi^{-1}(\gG)$.
Then $\widetilde \gG$ is torsion-free, finite index in $\widetilde W$, and stable under the action of $\pi_1(X)$.
It turns out that $\pi_1(Y)$ is precisely the semi-direct product $\widetilde \gG \rtimes \pi_1(X)$.

Note that for the usual reflection group trick, the mirrors $X_s$ are contractible, so we get an immediate corollary:
\begin{corollary}
    Let $X$ be a compact aspherical manifold with boundary with $\pi_1(X)$ residually finite.
    Then the fundamental group of any Davis reflection trick $Y$ applied to $X$ is residually finite.
\end{corollary}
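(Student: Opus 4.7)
The plan is to deduce this corollary as a direct consequence of Theorem \ref{t:davisresf} applied to $X$ viewed as a mirrored complex. Since $X$ is a compact manifold, it has a finite CW structure compatible with the mirror structure, so the hypotheses concerning finiteness are immediate. The remaining work is therefore to verify the separability hypothesis on double cosets $\Stab(\tilde s)\Stab(\tilde t)$ in $G = \pi_1(X)$.

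The key observation is that in the standard Davis reflection group trick the mirrors $X_s$ are dual cells to a flag triangulation of $\d X$, hence each $X_s$ is contractible. Since each mirror $\tilde s \subset \widetilde X$ is a path component of $\pi^{-1}(X_s)$ under the universal covering $\pi \colon \widetilde X \to X$ and $X_s$ is simply connected, each such component $\tilde s$ maps homeomorphically onto $X_s$. The stabilizer $\Stab(\tilde s) < G$ is then identified with the image of $\pi_1(X_s) \to \pi_1(X)$, which is trivial. Consequently, for every pair $\tilde s, \tilde t \in \widetilde S$,
\[
\Stab(\tilde s)\,\Stab(\tilde t) = \{1\}.
\]
Since $G$ is residually finite by assumption, $\{1\}$ is a separable subset of $G$, so the double coset hypothesis in Theorem \ref{t:davisresf} is satisfied automatically.

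Theorem \ref{t:davisresf} then gives that $\widetilde W \rtimes G$ is residually finite. By the description recalled above, $\pi_1(Y) \cong \widetilde \gG \rtimes \pi_1(X)$ is a subgroup of $\widetilde W \rtimes G$, and residual finiteness is inherited by subgroups, which completes the proof.

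There is essentially no obstacle in this argument beyond correctly invoking the previous theorem; the entire point is that in the \emph{usual} (non-hyperbolic) reflection group trick, contractibility of the mirrors collapses the separability condition to a trivial statement. The contrast with the hyperbolic reflection group trick is precisely that there the mirrors are hyperbolized cones with nontrivial fundamental group, which is why the full strength of Theorem \ref{t:hwm} (and hence virtual specialness of $\pi_1(N)$) is needed in that setting.
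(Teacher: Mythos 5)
Your proposal is correct and takes exactly the approach the paper intends: the paper dispatches this corollary with the one-line remark that the mirrors $X_s$ in the usual reflection trick are contractible, and you have simply unpacked that observation—contractible mirrors give trivial stabilizers $\Stab(\tilde s)$, so the double coset $\Stab(\tilde s)\Stab(\tilde t)=\{1\}$ is separable precisely because $G$ is residually finite, and then Theorem~\ref{t:davisresf} together with the fact that $\pi_1(Y)$ is a (finite-index) subgroup of $\widetilde W\rtimes G$ finishes the argument.
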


\subsection*{Hyperbolic reflection group trick preserves residual finiteness}

Let $L$ be a flag complex.
We know from Theorem \ref{t:hcone special} that $\hc{L}$ has hyperbolic and virtually special fundamental group (we'll assume that $\hc{L}$ is connected, otherwise take the component containing the cone vertex).

Now, let $N$ be a simplicial complex containing $L$.
Similarly to Section \ref{s:special}, let $hK_L^N$ be obtained from the hyperbolized cone $hK_L$ by removing a small enough $\ge$-ball centered at the cone vertex $o$ and gluing in $N$.
As in the proof of Theorem~\ref{t:hypdavistrick}(1), $hK_L^N$ retracts onto $N$, so its fundamental group splits as a semidirect product:

\[
\pLhc{N}{L}= \pi_1(hK_{\widehat L}^{\widetilde N}) \rtimes \pi_1 (N),
\]
where $\widetilde N\to N$ is the universal cover of $N$ and $\widehat L\to L$ is the induced cover of $L$.
Note that $\pi_1(\hc{\widehat L}^{\widetilde N})=\pi_1(hK_{\widehat L})$, since $\widetilde N$ is simply connected.

The space $hK_{\widehat L}^{\widetilde N}$ looks like a bunch of hyperbolic row-houses sitting on $\widehat L$ in $\widetilde N$, see Figure~\ref{f:cover}.
Note that a cover $N'\to N$ induces a cover $hK^{N'}_{L'}\to hK^N_L$ of the same degree.
\begin{figure}

    \tikzset{ house/.pic={
    \draw (0,-1)--(4,-1)--(4,3) to [out=180, in=-45] (2,4) to [out=-135, in=0] (0,3)--cycle;
    \draw (1,1) to [out=0, in=-45] (1.1,2) to [out=135, in=180] (2,3) to [out=0, in=45] (2.9,2) to [out=-135, in=180] (3,1) ;
    \draw (1.5,2) to [out=-30, in=-150] (2.5,2) ;
    \draw (1.6,1.95) to [out=30, in=150] (2.4,1.95) ; }}

    \tikzset{ rhouse/.pic={

    \draw (0,0)--++(54:6)--++(126:6)--++(-126:6)--cycle;

    \draw (-1,5) to [out=0, in=-45] (-.9,7) to [out=135, in=180] (0,8) to [out=0, in=45] (.9,7) to [out=-135, in=180] (1,5) ;
    \draw (-.5,7) to [out=-30, in=-150] (.5,7) ;
    \draw (-.4,6.95) to [out=30, in=150] (.4,6.95) ; } }
    \begin{tikzpicture}[scale=.2]
        \coordinate (shift) at (-18,20);
        \begin{scope}[scale=1.5, shift=(shift)]

            \foreach \i in {0,4,...,20}
            \path (\i,0) pic[ transform shape] {house};

            \draw[fill = lightgray] (0,-1) -- (24,-1) -- (24, -3) -- (0, -3) -- (0,-1);
            \node at (12,-2) {\footnotesize $\widetilde N$};
            \node at (26, -1) {$\Lhc{\widetilde N}{\hat L}$};
            \node at (-1, -1) {$\hat L$};
        \end{scope}

        \foreach \i in {0,72,...,360}
        \path (0,0) pic[transform shape, rotate=\i] {rhouse};

        \draw[fill = lightgray] (0,0) circle [radius = 3];
        \draw[fill = white] (0,0) circle [radius = 1];
        \node at (0,1.75) {\footnotesize $N$};
        \node at (11, 1) {$\Lhc{N}{L}$};
        \draw [->] (0, 15) -- (0, 10);
    \end{tikzpicture}

    \caption{The space $hK_L^N$ and its $\pi_1(N)$-cover.}\label{f:cover}
\end{figure}
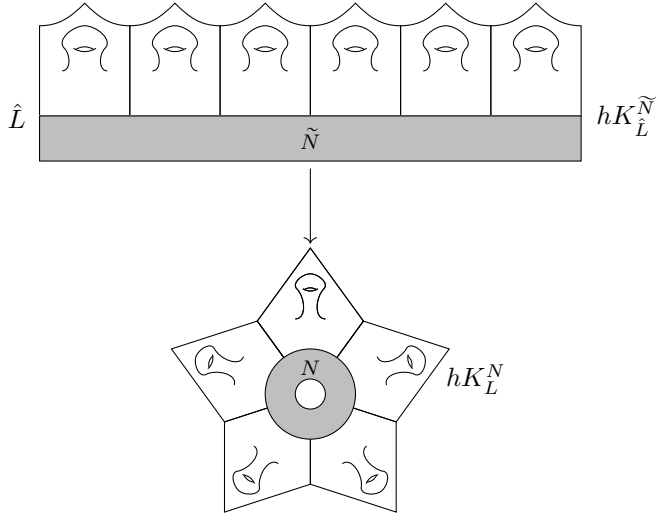
\begin{proposition}\label{p:branched}
    Let $N$ be a finite complex with residually finite fundamental group and $L\subset N$ a flag subcomplex.
    Let $s, t$ be vertices of $L$, and let $H_s$ and $H_t$ denote any conjugates of the fundamental groups of components of the hyperbolized mirrors $h(K_{s})$ and $h(K_{t})$ respectively.
    Then we have:
    \begin{enumerate}
        \item $\pLhc{N}{L}$ is residually finite.
        \item $H_s, H_t$ and $H_sH_t$ are separable subsets in $\pLhc{N}{L}$.
    \end{enumerate}
\end{proposition}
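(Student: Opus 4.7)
The plan is to establish (1) and (2) simultaneously by induction on the number of vertices in $L$, running in parallel with the cutting argument behind Theorem \ref{t:hcone special}. The base case is when $L$ has no vertices, in which case $\Lhc{N}{L}$ deformation retracts onto $N$, so (1) reduces to the hypothesis on $\pi_1(N)$ and (2) is vacuous.

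For the inductive step we pick $v\in L$ with $\St(v)\ne L$, decompose $L = \St(v) \cup_{\Lk(v)} (L-v)$, and cut $\Lhc{N}{L}$ along the unnaturally embedded $\hc{\Lk(v)}$ of Section \ref{s:special}. This produces a finite graph of spaces whose edge spaces are the components of $\hc{\Lk(v)}$, and whose vertex spaces are the components of $\hc{\St(v)}$ together with $\Lhc{N}{L-v}$. By Theorem \ref{t:hcone special}, the fundamental groups of the components of $\hc{\St(v)}$ and $\hc{\Lk(v)}$ are virtually special hyperbolic, so by Theorem \ref{t:hwm} all their quasiconvex subgroups and quasiconvex double cosets are separable. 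By the inductive hypothesis, $\pi_1(\Lhc{N}{L-v})$ is residually finite with the required double coset separability. Finally, Lemma \ref{l:qclinks} ensures that every edge group embeds as a quasiconvex subgroup into the vertex groups on either side.

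Each conjugate $H_s$ of $\pi_1$ of a component of $h(K_s)$ lies quasiconvexly inside a single vertex group of the decomposition: $\pi_1(\hc{\St(v)})$ if $s=v$, and $\pi_1(\Lhc{N}{L-v})$ otherwise. Its separability inside that vertex group is either given by the inductive hypothesis or is a direct consequence of Theorem \ref{t:hwm}. To conclude (1) and (2) we combine residual finiteness and separability across the graph of spaces: given an element $g$ not in $H_s$ (respectively not in $H_sH_t$), we use the Bass--Serre tree to reduce to a finite subgraph, pass to compatible finite-index subgroups in each vertex group using quasiconvex virtual retractions (Theorem \ref{t:hwvr} on the virtually special hyperbolic sides, and the inductive hypothesis on the $\Lhc{N}{L-v}$ side) that restrict to a common finite-index subgroup of the edge group, and take the resulting finite quotient to separate $g$ from $H_s$ or $H_sH_t$.

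The main obstacle will be this combination step: verifying that quasiconvex double coset separability propagates through an amalgamation of a residually finite vertex group with a virtually special hyperbolic vertex group over an infinite quasiconvex separable edge group. We expect to handle it by adapting the Haglund--Wise retraction machinery used in Theorem \ref{t:davisresf} together with Minasyan's separability argument underlying Theorem \ref{t:hwm}, exploiting that every quasiconvex subgroup in sight is a virtual retract inside its hyperbolic vertex group and is separable inside its residually finite vertex group, so that the required finite-index data can be aligned across the edge groups to produce the single finite quotient detecting the element.
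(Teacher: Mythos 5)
Your plan diverges substantially from the paper's argument, and the divergence matters: the obstacle you flag at the end is not a technicality to be smoothed over but the entire content of the proposition, and your route does not actually clear it.

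The paper does not run an induction over a graph-of-groups decomposition of $\pLhc{N}{L}$. Instead it exploits the semidirect-product splitting $\pLhc{N}{L}\cong \pi_1(hK_{\widehat L})\rtimes\pi_1(N)$, where $\widehat L$ is the preimage of $L$ in the universal cover $\widetilde N$. Given $g\neq 1$ (resp.\ $g\notin H_s$ or $g\notin H_sH_t$), one first tries to detect $g$ via the retraction to $\pi_1(N)$; if $g$ lies in the kernel, it is supported on a \emph{finite} union of hyperbolized cones over a finite full subcomplex $Q\subset\widehat L$. Residual finiteness of $\pi_1(N)$ then supplies a finite cover $N'\to N$ in which $Q$ injects into the induced cover $L'$ of $L$, and all the separation takes place inside $\phc{L'}$, which is hyperbolic and virtually special by Theorem~\ref{t:hcone special}, so Theorem~\ref{t:hwm} applies directly. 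No combination theorem for residual finiteness of amalgams is ever needed.

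Your approach, by contrast, produces a graph of groups in which one vertex group, $\pi_1(\Lhc{N}{L-v})$, is only residually finite and in particular need not be hyperbolic (the standing hypothesis on $\pi_1(N)$ is merely residual finiteness). This creates two problems you have not resolved. First, ``quasiconvex'' is not defined in $\pLhc{N}{L}$ in this generality, and Lemma~\ref{l:qclinks} (which you invoke for the edge groups) is proved under the hypothesis that $\pLhc{N}{\d}$ is hyperbolic; it is not available here. Second, and more seriously, residual finiteness and double-coset separability do \emph{not} in general pass to an amalgam $A*_C B$ of a residually finite group $A$ with a virtually special hyperbolic group $B$ over a separable edge group $C$ — separability of $C$ on both sides is necessary but far from sufficient; one needs compatible filtrations that you have not constructed. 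You name this as the ``main obstacle'' and say you ``expect to handle it,'' but that expectation is precisely the missing proof, and the Haglund--Wise/Minasyan machinery you gesture at lives in the hyperbolic world and does not transfer to the non-hyperbolic vertex group without new ideas. The paper's argument is structured specifically to avoid ever having to combine residual finiteness across such an amalgam.
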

\begin{proof}
    We first prove statement (1).
    Let $\gg \in \pi_1(\hc{L}^N)$, $\gg \neq 1$.
    We want to separate $\gg$ from 1 by a finite index subgroup.
    Let $p: \pi_1(\hc{L}^N) \to \pi_{1}(N)$ denote the retraction.
    We have two cases:

    Case 1. $\gg \notin \ker p$.
    Then we can separate $p(\gg)$ from 1 in $\pi_{1}(N)$, which is residually finite, and take preimages.

    Case 2. $\gg \in \ker p$.

    Then $\gg$ lifts to a nontrivial loop $\widehat\gg$ in $\pi_1(\hc{\widehat L}^{\widetilde N})=\pi_1(hK_{\widehat L})$, which is contained in a finite union of row houses.
    We think of this union as $\hc{Q}$ for a finite full subcomplex $Q$ of $\widehat L$.
    Since $\pi_1(N)$ is residually finite, we can choose a finite cover $N'\to N$, so that $Q$ projects injectively onto a full subcomplex of the induced cover $L' \to L$.

    Let $\gg''$ denote the image of $\widehat\gg$ in $\pLhc{N'}{ L'}$.
    Then we have a commutative diagram:
    \[
    \begin{tikzcd}[ in/.style = {draw=none,"\in" description, sloped}, ni/.style = {draw=none,"\ni" description, sloped}, md/.style = {mapsto, dashed}, ] \overline\gg \ar[dr, in] \ar[dd, md] \ar[rrr, md]&[-1em] & &[-3em] \gg'\neq 1 \ar[ni]{ld} \\[-1em]
        & \phc{Q} \ar[d, hook] \ar[hook]{r} & \phc{ L'} \\
        \widehat\gg \ar[r, in] \ar[d, md] & \phc{\widehat L} \ar{ur} \\
        [-1em] \widehat\gg \ar[r, in] \ar[dd, md] \ar[ddrrr, md, bend right=3] &\pLhc{\widetilde N}{\widehat L} \ar[rd,hook] \ar[equal]{u} \ar[d,hook] \\
        & \pLhc{N}{L} & \pLhc{N'}{ L'} \ar[hook']{l}{\text{f.i.}
        } \ar{uuu} \\[-1em]
        \gg \ar[in]{ur} &&& \gg'' \ar[ni]{ul} \ar[uuuuu, md] \ar[lll, md]
    \end{tikzcd}
    \]

    By construction of $Q$ the element $\widehat\gg$ is in the image of the top left vertical map, let $\overline\gg$ denote its preimage.
    The top horizontal map is injective since $Q$ maps injectively to a full subcomplex of $L'$, therefore $\overline\gg$ maps to a non-trivial element $\gg'$ in $\phc{ L'}$.
    By Theorem \ref{t:hcone special} $\phc{ L'}$ is virtually special, hence residually finite, so we can separate $\gg'$ from $1$.
    Since the diagram commutes, $\gg'$ is also the image of $\gg''$ under the right vertical map, so taking preimages we get a finite index subgroup of $\pLhc{N'}{ L'}$ separating $\gg''$ from $1$.
    Finally, since $\pLhc{N'}{ L'}$ is a finite index subgroup of $\pLhc{N}{ L}$ and $\gg''$ maps to $\gg$ under the bottom map, the same subgroup separates $\gg$ from $1$ in $\pLhc{N}{ L}$.

    For statement (2), we will only prove the statement for $H_s$, as the double coset argument is identical.
    We have that $H_s$ is contained in $\ker p$ and $\gg \in \pLhc{N}{L} - H_s$.
    Again, we have two cases:

    If $\gg \notin \ker p$, then $\gg$ is separated from $H_s$ by $\ker q p$, where $q$ is a finite quotient of $\pi_1(N)$ such that $q(p(\gg)) \ne 1$.

    So, we assume that $\gg \in \ker p$.
    Since $H_s$ is generated by finitely many loops, by the same argument as above we have that $H_s$ and $\gg$ are contained in $\phc{Q}$ for some finite, full subcomplex $Q$ of $\widehat L$, Hence, $H_s$ and $\gg$ map injectively with distinct images to $\phc{L'}$ for a certain finite cover $L'$ of $L$.
    The image of $h(K_{s})$ is totally geodesic in $\hc{L'}$, hence the image $H_s'$ of $H_s$ is a quasiconvex subgroup of $\phc{L'}$.
    By Theorem \ref{t:hwm} the image $ \gg'$ can be separated from $H_s'$ (or from the image of a double coset) by a finite index subgroup of $\phc{L'}$.
    This pulls back to a finite index subgroup of $\pLhc{N'}{L'}$ which separates $\gg$ and $H_s$, and the same subgroup separates $\gg$ from $H_s$ in $\pLhc{N}{L}$.
\end{proof}

\subsection*{Proof of Theorem \ref{t:hypdavistrick}(6)}

We now prove
\begin{prevthm}
    {\ref{t:hypdavistrick}(6)} Suppose $N$ is a compact aspherical manifold with a flag triangulation $\d$ of the boundary, and $\pi_1(N)$ is residually finite.
    Let $\M$ be the result of applying the hyperbolic reflection group trick.
    Then $\pi_1(\M)$ is residually finite.
\end{prevthm}
\begin{proof}
    The manifold $\M$ is a basic construction, where the seed manifold is $\Lhc{N}{\d}$, the Coxeter group is $(\zz/2)^{\abs{\d^{(0)}}}$, and for each $s \in \d^{(0)}$, the $s$-mirror is $\hc{\Lk s}$.
    By Proposition \ref{p:branched} $\Lhc{N}{\d}$ has residually finite fundamental group, and each component of the mirrors has separable fundamental group inside of $\pLhc{N}{\d}$.
    The stabilizers of components of lifts of mirrors in the universal cover of $\Lhc{N}{\d}$ are conjugates of these subgroups; therefore we can apply Theorem \ref{t:davisresf} to conclude that $\pi_1(\M)$ is residually finite.
\end{proof}

\section{Embedding Octahedralizations}\label{a:embedding}

Recall that the octahedralization $OL$ of a flag complex $L$ is the complex which has $2^{k+1}$ $k$-simplices $v_0^{\pm}*\dots*v_k^{\pm}$ for each $k$-simplex $v_0*\dots*v_k$ of $L$.
Alternatively, it is the link of the special vertex in the Salvetti complex $\bigcup_{v_0*\dots*v_k\subset L} S^1_{v_0}\times\dots\times S^1_{v_k}$ of the right-angled Artin group $A_L$.
In this appendix, we will sketch an alternate proof of the following result from \cite{ados16}, which is the main step in constructing low dimensional thickenings of Salvetti complexes.
\begin{theorem}[\cite{ados16}]\label{octaemb}
    If $d\neq 2$ and $L$ is a $d$-dimensional flag complex with $b_d(L;\F _2)=0$, then $OL$ piecewise linearly embeds in $S^{2d}$.
\end{theorem}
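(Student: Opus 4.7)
The plan is to prove this via van Kampen's obstruction-theoretic approach to embeddings, fitting the appendix's stated theme. Recall that for a finite $d$-dimensional complex $K$ with $d\neq 2$ (so that one is in the metastable range), the Haefliger--Weber theorem asserts that $K$ PL embeds in $\R^{2d}$ (hence in $S^{2d}$) if and only if the primary van Kampen obstruction $\mathfrak{o}(K)\in H^{2d}_{\Z/2}(K^{[2]};\widetilde{\Z})$ vanishes, where $K^{[2]}:=(K\times K)\setminus \Delta$ carries the swap action and $\widetilde\Z$ denotes the sign-twisted integer coefficients. Concretely, $\mathfrak{o}(K)$ is represented by the equivariant cocycle $(\sigma,\tau)\mapsto \#(f(\sigma)\cap f(\tau))$ counting transverse intersection points of a generic PL map $f:K\to\R^{2d}$ over pairs of disjoint top simplices of $K$. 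My goal is to identify $\mathfrak{o}(OL)$ with a class controlled by $H_d(L;\F_2)$, and show it vanishes in our hypothesis.

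First I would describe the top-dimensional simplices of $OL$ as pairs $(\sigma,\epsilon)$ with $\sigma\in L^{(d)}$ and $\epsilon:V(\sigma)\to\{\pm 1\}$ a sign function. Two such simplices $(\sigma,\epsilon)$ and $(\tau,\delta)$ are disjoint in $OL$ precisely when $\epsilon(v)\neq\delta(v)$ for every vertex $v\in\sigma\cap\tau$. The group $(\Z/2)^{V(L)}$ acts freely and cellularly on $OL$ by flipping signs, commutes with the swap involution on $OL^{[2]}$, and descends to a free action on the deleted product. This extra symmetry makes $\mathfrak{o}(OL)$ $2$-torsion, so it suffices to work with $\F_2$ coefficients throughout.

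Next I would build a specific generic map and compute its intersection cocycle. Starting from a generic PL map $g:L\to\R^{2d}$ (which exists, as $L$ has dimension $d$ and $2d$ is in the general-position range for maps), I would lift $g$ to $f:OL\to\R^{2d}$ by placing the $2^{d+1}$ signed copies of each top simplex of $L$ as small coherent perturbations of $g(\sigma)$. Counting intersections mod $2$ between pairs of disjoint top simplices of $OL$ and summing equivariantly under the free $(\Z/2)^{V(L)}$-action collapses the intersection cocycle to an $\F_2$ cochain on $L^{(d)}\times L^{(d)}$. The expected conclusion is that, under Alexander/Poincar\'e--Lefschetz duality for the deleted join of $L$, this cochain represents a class pairing non-trivially against precisely the top fundamental class $[L]\in H_d(L;\F_2)$, so that $\mathfrak{o}(OL) \bmod 2$ is controlled by $b_d(L;\F_2)$.

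The main obstacle is this last cohomological identification: tracking the intersection cocycle through the equivariant deleted-product computation and producing the explicit duality isomorphism that pins the obstruction on $[L]\in H_d(L;\F_2)$. Once this is done, the hypothesis $b_d(L;\F_2)=0$ gives $\mathfrak{o}(OL)=0$, and Haefliger--Weber (valid since $d\neq 2$) produces the desired PL embedding $OL\hookrightarrow \R^{2d}\subset S^{2d}$. The restriction $d\neq 2$ is genuine by the counterexamples of \cite{ap21} discussed earlier in the paper, so this is the optimal dimensional range for the argument.
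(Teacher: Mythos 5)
Your proposal is broadly in the right tradition — van Kampen's embedding obstruction, constructing a generic map of $OL$ by perturbing a map of $L$ — which is indeed the paper's framework. But there are concrete errors and a missing core step.

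First, the claim that $(\Z/2)^{V(L)}$ acts \emph{freely} on $OL$ is false: a top simplex of $OL$ corresponding to $(\sigma,\epsilon)$ is fixed by every sign-flip supported away from $V(\sigma)$, i.e.\ by the subgroup $(\Z/2)^{V(L)\setminus V(\sigma)}$. So the transfer-style reasoning you gesture at has no foothold. Second, even granting some equivariance, the assertion that this "makes $\mathfrak{o}(OL)$ $2$-torsion" is not established anywhere; the van Kampen obstruction is not in general $2$-torsion (the $d$-skeleton of $\Delta^{2d+2}$ gives an infinite-order class), so you would need a genuine argument here. Third, and you acknowledge this, the heart of the matter — showing the (mod $2$) obstruction is controlled by $b_d(L;\F_2)$ via a duality pairing against $[L]$ — is left as an "expected conclusion." That identification is precisely what requires real work, and without it the proposal does not constitute a proof.

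The paper's actual route avoids the $2$-torsion/transfer idea entirely. It translates $b_d(L;\F_2)=0$ into the statement that $H^d(L;\Z)$ is finite of \emph{odd} order $2k+1$ (using that $L$ is $d$-dimensional, $H_d(L;\Z)$ is free and hence vanishes, and $H_{d-1}$ has no $2$-torsion). It then exploits the "odd fact" — composing with a reflection of $\R^{2d}$ and correcting by finger moves lets one multiply the intersection \emph{vector} of an immersion by any odd number — to start with an immersion of $L$ whose intersection vector is divisible by $2k+1$, and perturbs this to an immersion of $OL$. The crucial structural input, which your sketch lacks, is the invariance property: the perturbed immersion can be arranged so that $f(\sigma)\cap f(\tau)$ depends only on $\pi(\tau)\in L$ (for $\sigma$ disjoint from both $\tau,\tau'$ with $\pi(\tau)=\pi(\tau')$). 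This lets the intersection cochain on $C_d(OL)$ factor through $C_d(L)$, where divisibility by $2k+1=|H^d(L;\Z)|$ forces it to be a coboundary. The finger-move induction that follows is genuinely delicate — each move destroys the invariance property for some pairs but preserves it for the simplices over the same base simplex, which is what makes the process terminate — and you would have to reproduce something of this nature under any cohomological packaging of the argument.
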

From here, a ``codimension three local unknotting'' result of Akin \cite{a69} shows that the triangulation of $OL$ can be extended to a triangulation of the $S^{2d}$, and then partially subdivided to obtain $OL$ embedded inside a flag triangulation of $S^{2d}$ as a full subcomplex.
That is precisely the ``if'' direction of Theorem \ref{t:manifold}.

The proof given here avoids configuration spaces in favor of van Kampen's earlier (equivalent) approach to embedding obstructions, and clarifies the role that the moment map immersion plays in \cite{ados16}, replacing it with a special class of immersions of $OL$ that are obtained from a generic, linear immersion of the underlying complex $L$ by a small linear perturbation.

\subsection*{Classical embedding theory (van Kampen + Whitney trick)}

Everywhere in this section all maps are piecewise linear and we assume that $d\neq 2$.
(When $d=2$ things are more subtle.)

Let $L$ be a $d$-dimensional complex.
We want to determine whether it embeds in $\R^{2d}$.
To that end, start with a generic immersion $f:L\to \R^{2d}$, pick orientations on all the $d$-simplices in $L$ and look at the (signed) intersection numbers of images of disjoint $d$-simplices $\gs$ and $\tau$ in $L$:
\[
V_{f,\gs,\tau}:=f(\gs)\cap f(\tau).
\]
This is called the \emph{intersection vector} of $f$.
The intersection vector is symmetric when $d$ is even and anti-symmetric when $d$ is odd, i.e. $V_{f,\gs,\tau}=(-1)^dV_{f,\tau,\gs}$.
\begin{proposition}[\cites{k33,fkt94}]\label{alg}
    If $V_f=0$ then there is an embedding of $L$ in $\R^{2d}$.
\end{proposition}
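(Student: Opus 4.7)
The plan is to follow van Kampen's classical strategy: starting from the given generic immersion $f$, perform a finite sequence of local moves that preserve the intersection vector and successively eliminate all double points. First I would put $f$ in PL general position, so the singular set is a finite collection of transverse double points each lying in the interior of a pair of $d$-simplices. The double points split into two types: those between simplices that share at least one vertex (``adjacent'' pairs), and those between disjoint pairs. Only the latter contribute to $V_f$.

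Second, I would eliminate the adjacent-pair intersections by finger moves. If $\sigma, \tau$ share a common face and $f(\sigma) \cap f(\tau)$ contains a transverse point $p$ away from that face, I join $p$ to a point of the shared face by a PL arc lying in $f(\tau)$, and push a small neighborhood of this arc across $f(\sigma)$. By choosing the arc close enough to the common face, the resulting modification is supported in a ball disjoint from all other simplex images, so it does not change any $V_{f,\sigma',\tau'}$ for disjoint $\sigma', \tau'$. Iterating, I may assume the only remaining self-intersections are between disjoint simplex pairs.

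Third, I would apply the Whitney trick for $d \geq 3$. For each disjoint pair $\sigma, \tau$, the hypothesis $V_{f,\sigma,\tau}=0$ lets me pair up the intersection points of $f(\sigma)$ and $f(\tau)$ into oppositely-signed pairs $\{p,q\}$. For each pair I build a Whitney disk: a PL 2-disk $W \subset \R^{2d}$ with $\partial W$ the union of an arc in $f(\sigma)$ from $p$ to $q$ and an arc in $f(\tau)$ from $q$ to $p$. Since $\R^{2d}$ is simply connected and $\dim W + \dim L = 2 + d < 2d$ when $d \geq 3$, general position lets me embed $W$ with interior disjoint from $f(L)$. The Whitney move supported near $W$ cancels the pair $\{p,q\}$ without creating new singularities, and finitely many such moves turn $f$ into an embedding.

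The remaining case $d=1$ is the Hanani--Tutte theorem: a graph admitting a drawing in $\R^2$ in which every pair of disjoint edges has zero algebraic crossing number is planar. I would handle it by checking directly that $K_5$ and $K_{3,3}$ admit no such drawing (a short parity argument on the complete bipartite/complete structure) and invoking Kuratowski's theorem. The main obstacle of the whole argument is precisely the middle dimension $d=2$: the Whitney disks live in an ambient $4$-manifold where PL general position fails to embed them disjointly from $f(L)$, which is exactly why the hypothesis $d \neq 2$ is enforced throughout the section.
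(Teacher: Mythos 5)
Your overall strategy is the one the paper sketches and attributes to van~Kampen: pair off disjoint-simplex intersections using the Whitney trick (valid when $d\neq 2$), and clean up the remaining self-intersections and intersections between adjacent simplices by local moves. Two remarks on the comparison. First, you run the steps in the opposite order from the paper (adjacent pairs first, then Whitney), which is fine in principle since the Whitney move in codimension $\geq 3$ can be made disjoint from the rest of $f(L)$, but you should say why it does not reintroduce adjacent-pair or self intersections. Your explicit treatment of $d=1$ via the integer Hanani--Tutte theorem is a genuine addition; the paper's sketch implicitly assumes $d\geq 3$ so that the Whitney disk can be embedded off $f(L)$.

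The gap is in your adjacent-pair step. You propose to join an intersection point $p$ of $f(\sigma)\cap f(\tau)$ to the shared face $\rho$ by an arc in $f(\tau)$ and then ``push a small neighborhood of this arc across $f(\sigma)$.'' This does not work as described: the two simplices are glued along $\rho$, so $f(\rho)\subset f(\sigma)\cap f(\tau)$, and sliding $p$ toward $\rho$ does not let it escape---it just lands on the locus where the two images are attached. Nor can the move be made local (``supported in a ball''): $p$ is wherever transversality put it, not necessarily near $\rho$, and the honest fix (sliding off a free boundary face, or rotating one sheet around the open-book binding $f(\rho)$) is not a compactly-supported finger move and needs care to avoid creating new intersections with other simplices. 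This is precisely the delicate point that caused the gap in van~Kampen's 1933 paper, corrected in his erratum and treated carefully by Freedman--Krushkal--Teichner, which is why the paper cites those sources rather than claiming a one-line finger move suffices.
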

\begin{proof}[Proof idea] For disjoint $d$-simplices $\gs$ and $\tau$, one cancels pairs of intersections with opposite intersection numbers using the Whitney trick (this requires $d\neq 2$).
    At the end, one is only left with self intersections of $f(\gs)$ and intersections between adjacent simplices.
    It turns out that these can be cancelled as well (but this requires extra arguments, see \cite{k33} and the erratum \cite{k33a}\footnote{For an annotated English translation, see \url{https://sites.google.com/site/tutamnguyenphan/home-1}}or \cite{fkt94}).
\end{proof}

So, we need ways to modify an immersion $f$ so as to make the intersection vector equal to zero.

\subsection*{Finger moves}

We can push a $d$-simplex $\gs$ with a finger to make it go around a $(d-1)$-simplex $e$.
In fact, we can do this to several $(d-1)$-simplices or do it several times to the same simplex.
In general, a function assigning an integer to every $(d-1)$-simplex $\rho:C_{d-1}(L) \to \Z$ specifies a way to modify an immersion $f$ on a given $d$-simplex $\gs$ (while keeping it fixed on all other simplices).
Call the resulting immersion $\widetilde f$.
The effect the modification has on the intersection vector is given by the following formula
\[
V_{\widetilde f,\gs,\tau}=V_{f,\gs,\tau}+\rho(\d\tau).
\]
If we interchange $\sigma$ and $\tau$ then (anti)-symmetry of intersection vectors gives
\[
V_{\widetilde f,\tau,\gs}=V_{f,\gs,\tau}+(-1)^d\rho(\d\tau).
\]
Finally, for a pair of simplices $\tau$ and $\tau'$ that are both different from $\sigma$, the intersection vector is unaffected by a finger move applied to $\sigma$:
\[
V_{\widetilde f,\tau,\tau'}=V_{f,\tau,\tau'}.
\]
Note that $\rho(\d\tau)=(\gd\rho)(\tau)$ where $\gd$ is the coboundary operator in cohomology.
So, the above formulas hint at a close connection between embedding theory and cohomology.
In fact, a quick consequence is the following.
\begin{proposition}
    If $H^d(L)=0$ then $L$ embeds in $\R^{2d}$.\label{cohvanish}
\end{proposition}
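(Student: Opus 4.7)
The plan is to start with a generic piecewise linear immersion $f:L\to\R^{2d}$ and use finger moves to drive the intersection vector $V_{f}$ to zero; Proposition \ref{alg} then completes the job. The cohomological input needed is the following: since $L$ is $d$-dimensional, the cochain complex satisfies $C^{d+1}(L)=0$, so every $d$-cochain is automatically a cocycle, and the hypothesis $H^{d}(L)=0$ therefore forces every $d$-cochain to be a coboundary.

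The construction proceeds by induction on the $d$-simplices of $L$, ordered as $\gs_{1},\dots,\gs_{N}$. At step $i$, having arranged that $V_{f_{i-1},\gs_{j},\tau}=0$ for all $j<i$ and all $\tau$ disjoint from $\gs_{j}$, define a $d$-cochain $c_{\gs_{i}}\in C^{d}(L)$ by setting $c_{\gs_{i}}(\tau):=V_{f_{i-1},\gs_{i},\tau}$ for $\tau$ disjoint from $\gs_{i}$ and arbitrarily (say zero) otherwise. By the cohomological input, $c_{\gs_{i}}=\gd\rho_{i}$ for some $\rho_{i}\in C^{d-1}(L)$. Performing the finger move on $\gs_{i}$ with cochain $-\rho_{i}$ produces a new immersion $f_{i}$ satisfying
\[
V_{f_{i},\gs_{i},\tau}=V_{f_{i-1},\gs_{i},\tau}-(\gd\rho_{i})(\tau)=0
\]
for every $\tau$ disjoint from $\gs_{i}$. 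The $\gs_{i}$-column vanishes by the $(-1)^{d}$-symmetry of the intersection vector, and the formula $V_{f_{i},\tau,\tau'}=V_{f_{i-1},\tau,\tau'}$ for $\tau,\tau'\neq\gs_{i}$ ensures that all previously cleared entries remain zero. After step $N$ we have $V_{f_{N}}=0$, and Proposition \ref{alg} produces the desired embedding.

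The hypothesis $H^{d}(L)=0$ is used in a pleasantly robust way: one does not need to verify any cocycle condition for $c_{\gs_{i}}$, which would require a Stokes-type analysis of the $1$-manifold $f(\gs_{i})\cap f(\eta)$ for $(d+1)$-simplices $\eta$ and would leave behind auxiliary terms $V_{f,\d\gs_{i},\eta}$ measuring how $f(\d\gs_{i})$ links $f(\d\eta)$ in $\R^{2d}$. Top-dimensionality of $L$ sidesteps this entirely, so the only real work is the inductive bookkeeping above; the dimension restriction $d\neq 2$ plays no role in this step and enters the overall argument only through Proposition \ref{alg}.
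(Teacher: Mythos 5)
Your proof is correct and follows the same route as the paper's: fix a $d$-simplex, observe that the intersection-with-$f(\gs_i)$ cochain is a coboundary because $H^d(L)=0$, apply the corresponding finger move to kill that row, and iterate over all $d$-simplices. Your extra observation that top-dimensionality ($C^{d+1}(L)=0$) is what makes the cocycle condition automatic, so that $H^d(L)=0$ immediately forces every $d$-cochain to be a coboundary, is implicit in the paper but worth spelling out; otherwise the arguments are the same.
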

\begin{proof}
    Fix a $d$-simplex $\gs_1$ in $L$ and look at the function given by intersections with the image of the simplex $f(\gs_1)\cap f(-):C_{d}(L) \to \Z$.
    Since $H^d(L)=0$ this function is a coboundary, i.e. $f(\gs_1)\cap f(-)=\gd\rho$ for some $\rho$.
    Now, do the finger move specified by $-\rho$ to $\gs_1$ to get a new immersion $\widetilde f$ with
    \begin{equation}\label{newint}
        V_{\widetilde f,\gs,\tau}=
        \begin{cases}
            0&\text{ if } \gs=\gs_1,\\
            0 &\text{ if } \tau=\gs_1,\\
            V_{f,\gs,\tau} &\text{ else.}
        \end{cases}
    \end{equation}
    Now repeat for all $d$-simplices of $L$ to obtain an immersion with zero intersection vector and apply the previous proposition.
\end{proof}
The next proposition says that on the level of intersection vectors we can get from any immersion to any other one via finger moves.
\begin{proposition}[\cite{k33}] For two immersions $f$ and $g$ there is a sequence of finger moves which changes $f$ to an immersion with the same intersection vector as $g$.
\end{proposition}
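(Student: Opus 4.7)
My plan is to interpret the collection of finger-move changes as a subgroup of coboundaries in a suitable cochain group, and then to reduce the statement, via a generic homotopy argument, to the classical fact that the van Kampen class of the intersection vector is independent of the generic immersion.

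First, I would introduce the abelian group $C$ of $(-1)^d$-symmetric $\Z$-valued functions on ordered pairs of disjoint oriented $d$-simplices of $L$, so that every intersection vector $V_f$ lies in $C$. The formula $V_{\widetilde f,\sigma,\tau} = V_{f,\sigma,\tau} + \rho(\partial\tau)$, together with its $(-1)^d$-symmetric counterpart, shows that a finger move of a $d$-simplex $\sigma$ by a cellular $(d-1)$-cochain $\rho$ adds to $V_f$ an element $\delta_\sigma(\rho) \in C$ which, up to symmetrization, is the evaluation of $\sigma^* \otimes \delta\rho$. The set of all changes achievable by finite sequences of finger moves is therefore precisely the subgroup $B \subseteq C$ obtained as the image of the natural map $\bigoplus_\sigma C^{d-1}(L;\Z) \to C$, and moreover every element of $B$ is a finite $\Z$-linear combination of elementary summands, each realized by a single finger move.

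Next, the claim reduces to showing $V_f - V_g \in B$ for any two generic immersions $f, g : L \to \R^{2d}$. Since $\R^{2d}$ is contractible, $f$ and $g$ can be joined by a PL-homotopy $\{f_t\}_{t\in[0,1]}$, which I would put in general position so that $f_t$ is a generic immersion for all but finitely many times $t_1 < \dots < t_N$, and at each $t_i$ exactly one codimension-one singular event occurs. It then suffices to show that $V_{f_t}$ jumps by an element of $B$ as $t$ crosses each $t_i$.

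Finally, I would analyze the singular events. The only ones that change $V_{f_t,\sigma,\tau}$ for some disjoint pair $(\sigma,\tau)$ of $d$-simplices are those in which a point of the interior of $f_t(\sigma)$ crosses $f_t(e)$ for some $(d-1)$-simplex $e \not\subset \sigma$; this is precisely an instantaneous finger move of $\sigma$ across $e$, contributing $\pm\delta_\sigma(e^*) \in B$ to the change of $V_{f_t}$. The main obstacle will be the bookkeeping at degenerate events---when the finger sweeps across a face $e$ that bounds several $d$-simplices, some of which are not disjoint from $\sigma$, or when an event involves simplices sharing a common face with $\sigma$ so that the relevant pairs are excluded from $C$---but in each case the restriction to disjoint pairs recombines into a sum of elementary finger-move changes. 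Summing these contributions across $t_1,\dots,t_N$ yields $V_f - V_g \in B$, and by the first paragraph this element is realized by an explicit finite sequence of finger moves applied to $f$, as required.
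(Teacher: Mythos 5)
Your strategy (track a PL homotopy from $f$ to $g$ and show that each change to the intersection vector is a finger-move change) is morally the same as the paper's, but the paper organizes it more cleverly and thereby sidesteps the very bookkeeping you flag as the main obstacle. The paper first normalizes $f$ and $g$ so that they agree on the $(d-1)$-skeleton (general position), then reduces to the case where they differ on a single $d$-simplex $\sigma$; under this normalization the homotopy $F$ is supported on $\sigma$, so the only thing moving is one $d$-simplex while the whole $(d-1)$-skeleton stays fixed, and the desired finger-move cochain is read off in one stroke as the intersection number $F(\sigma\times I)\cap f(-) : C_{d-1}(L)\to\Z$, with no case analysis of singular times needed. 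Your route instead puts a generic homotopy in general position and sums the contributions of codimension-one singular events; this is correct in principle, but it forces you to classify those events (crossings of a $d$-simplex through a moving $(d-1)$-simplex, tangencies between $d$-simplices that create or annihilate cancelling pairs, events near shared faces that fall outside the group $C$) and to verify that each one recombines into an elementary finger move. You acknowledge this as "the main obstacle," and indeed it is the substantive part of your argument that remains unchecked; the paper's normalization is precisely what eliminates it. Your first paragraph (packaging finger moves as a subgroup $B$ of symmetric cochains) is a useful conceptual framing and is entirely compatible with the paper's proof, just not something the paper makes explicit.
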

\begin{proof}[Proof idea] By general position, we can assume that $f$ and $g$ agree on the $(d-1)$-skeleton of $L$.
    Since we can move between any two such immersions by modifying them one $d$-simplex at a time, it is enough to prove the proposition when $f$ and $g$ only differ on a single $d$-simplex $\gs$.
    In that situation, there is a homotopy $F:L\times I \to \R^{2d}$ with $F|_{L\times 0}=f$ and $F|_{L\times 1}=g$ and $F(p, t)$ a constant function of $t$ for any point $p$ not in the interior of $\gs$.
    Now, the finger move we need to do to $\gs$ is given by looking at the intersection number $F(\gs\times I)\cap f(-):C_{d-1}(L) \to \Z$.
\end{proof}
\begin{example}
    The utilities graph $K_{3,3}$ does not embed in the plane: Pick an immersion $f:K_{3,3} \to \R^2$ for which $\sum f(\gs)\cap f(\tau)=1$ (mod $2$), where the sum is taken over all unordered, disjoint pairs of edges $\{\gs,\tau\}$.
    Note that this expression does not change if we apply finger moves to $f$.\footnote{The reason is that the collection of edges disjoint from a given edge $\gs$ form a cycle.}
    Therefore, there is no immersion with vanishing intersection vector.
\end{example}

\subsection*{An odd fact.}

The reason $\F _2$ is special in embedding theory is because we can replace the immersion $f$ by another immersion $f'$ whose intersection vector is any odd multiple of the original intersection vector:
\begin{proposition}
    $(2k+1)V_f=V_{f'}$ for some immersion $f'$.
\end{proposition}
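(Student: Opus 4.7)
The plan is to construct $f'$ by replicating $f$ into $(2k+1)$ parallel copies and combining them into a single PL immersion of $L$. Let $f_1,\dots,f_{2k+1}$ be generic translates of $f$ by large pairwise distinct vectors in $\mathbb{R}^{2d}$, so the images $f_i(L)$ are pairwise disjoint and each $f_i$ is a generic immersion with $V_{f_i}=V_f$. Their disjoint union $F:(2k+1)L\to\mathbb{R}^{2d}$ is then an immersion whose intersection vector restricted to any single copy is $V_f$ and which vanishes across distinct copies. The goal is to replace $F$ by a map defined on a single copy of $L$.

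I would next produce a PL map $\rho:L\to (2k+1)L$ such that $f':=F\circ\rho$, after a small generic perturbation, is a PL immersion of $L$ with $V_{f'}=(2k+1)V_f$. The natural candidate for $\rho$ is the analogue on $L$ of a degree-$m$ self-map of a disk rel boundary: $\rho$ should be the diagonal identification on the $(d-1)$-skeleton, and on each top-dimensional simplex $\sigma$ it should be a PL ``folding'' that hits the $(2k+1)$ parallel copies of $\sigma$ in the target with total algebraic multiplicity $2k+1$. For disjoint $d$-simplices $\sigma,\tau$, the intersection $f'(\sigma)\cap f'(\tau)$ then decomposes as a sum $\sum_{i,j}f_i(\sigma)\cap f_j(\tau)$, and since the copies $f_i(L)$ are pairwise disjoint only the diagonal terms $i=j$ survive, giving $(2k+1)V_{f,\sigma,\tau}$. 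Finger moves as in the proof of Proposition~\ref{cohvanish} can be used afterwards to absorb any coboundary error introduced by the perturbation.

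The main obstacle is constructing the folding map $\rho$ so that $F\circ\rho$ is genuinely an immersion of $L$ (not of a topologically different, homotopy-equivalent complex), while still producing the linear multiplier $(2k+1)$ rather than the quadratic multiplier $(2k+1)^2$ that a symmetric wrapping would yield. The linear factor is exactly what forces $\rho$ to be a folding through disjoint translates rather than a degree-$(2k+1)$ self-map of $L$: it is precisely the disjointness of the translates $f_i(L)$ that suppresses the off-diagonal intersection terms. Consistency of the fold along shared $(d-1)$-simplices is automatic because $\rho$ is the identity there, and the final PL immersion property follows by a standard codimension-$d$ general position perturbation, using that tubes and folds can be routed in thin collar neighborhoods of the individual $f_i(L)$.
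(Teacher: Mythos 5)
Your proposal takes a genuinely different route from the paper's, and it has a fatal flaw. The paper's proof uses a reflection $r$ of the ambient $\R^{2d}$ to produce an immersion $r\circ f$ with intersection vector $-V_f$, then invokes the preceding proposition to find a sequence of finger moves carrying $-V_f$ to $V_f$; since finger moves act on intersection vectors by a fixed additive shift, this sequence adds $2V_f$, and iterating it $k$ times starting from $f$ yields $(2k+1)V_f$. The oddness of $2k+1$ is built in from the start: the reflection trick only gives access to sign flips, so one can only reach odd multiples of $V_f$.

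Your construction, by contrast, makes no use of the parity of $2k+1$ anywhere: replacing $2k+1$ by an arbitrary positive integer $m$ throughout, it would purport to produce an immersion $f'$ with $V_{f'}=mV_f$ for \emph{every} $m$, including even ones. This is false. If $V_{f'}=2V_f$ were realizable, then by the earlier proposition connecting any two immersions by finger moves, $2V_f-V_f=V_f$ would be a coboundary, which would make $L$ embeddable; but for $L=K_{3,3}$ and $d=1$ (or its higher-dimensional analogues) the mod-$2$ van Kampen obstruction $\sum_{\{\sigma,\tau\}}V_{f,\sigma,\tau}\pmod 2$ is a nonzero invariant of all generic immersions, so even multiples of $V_f$ can never arise. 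Any argument for the proposition must therefore exploit oddness, and yours does not.

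Concretely, the step that fails is the construction of the folding map $\rho$. As a PL map $\rho: L\to(2k+1)L$ that is the identity on $L^{(d-1)}$ it cannot exist: continuity forces each top cell $\sigma$, whose boundary lands in a single connected component of the disjoint union, to have $\rho(\sigma)$ contained in that same component, so $\rho$ is just the inclusion $L\hookrightarrow(2k+1)L$ and $F\circ\rho=f_1$, giving $V_f$ rather than $(2k+1)V_f$. If you instead route $\sigma$ directly through $\R^{2d}$ via tubes connecting the copies $f_i(\sigma)$, those tubes are themselves $d$-dimensional and must, by general position, intersect the other images and each other in isolated points; the mod-$2$ argument above shows these tube contributions cannot be made to cancel. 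They are not a perturbative error that a ``standard codimension-$d$ general position perturbation'' or an after-the-fact finger move can absorb; they are exactly where the parity information lives.
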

\begin{proof}
    Let $r:\R^{2d}\to \R^{2d}$ be a reflection.
    The immersion $r\circ f$ has intersection vector $V_{r\circ f}=-V_f$ since changing the orientation of the ambient space changes the sign of the intersection number.
    On the other hand, there is a sequence of finger moves that takes the immersion $r\circ f$ to one with the same intersection vector as $f$.
    Algebraically this sequence of finger moves takes $-V_f$ to $V_f$.
    Applying this sequence of finger moves to $f$ produces an immersion with intersection vector $3V_f$.
    Applying it to $f$ $k$-times produces an immersion $f'$ with intersection vector $(2k+1)V_f$.
\end{proof}
\begin{corollary}
    If $H^d(L)$ is finite and of odd order, then $L$ embeds in $\R^{2d}$.
\end{corollary}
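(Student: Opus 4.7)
The plan is to combine the ``odd fact'' with the cohomology-vanishing argument of Proposition \ref{cohvanish}. Write $m = |H^d(L)|$, so $m = 2k+1$ is odd. The main idea is that multiplying the intersection vector of a generic immersion by the odd integer $m$ converts the hypothesis ``$H^d(L)$ has order $m$'' into a vanishing statement in $H^d(L;\Z)$, at which point the strategy of Proposition \ref{cohvanish} applies directly.

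Concretely, I will start with any generic immersion $f: L \to \R^{2d}$ and invoke the odd fact to produce a new immersion $f'$ whose intersection vector satisfies $V_{f'} = (2k+1)V_f = m V_f$. Next, I run the argument of Proposition \ref{cohvanish} applied to $f'$ rather than $f$. Fix a $d$-simplex $\gs$ of $L$, and look at the cochain $\alpha_{\gs} := f'(\gs) \cap f'(-) \in C^d(L;\Z)$. By the scaling relation, $\alpha_{\gs}$ equals $m$ times the corresponding cochain for $f$; as in the proof of \ref{cohvanish}, it is a cocycle, and its cohomology class in $H^d(L;\Z)$ is therefore annihilated by $m$. Since $|H^d(L;\Z)| = m$, that class vanishes, so $\alpha_{\gs} = \gd \rho_{\gs}$ for some $\rho_{\gs} \in C^{d-1}(L;\Z)$. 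The finger move on $\gs$ specified by $-\rho_{\gs}$ kills all intersections of $f'(\gs)$ with the other $d$-simplices of $L$. Iterating over all $d$-simplices, exactly as in Proposition \ref{cohvanish}, produces an immersion $\widetilde f$ with $V_{\widetilde f} = 0$; Proposition \ref{alg} then yields the desired embedding $L \hookrightarrow \R^{2d}$.

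The only technical point to check is that the iteration of finger moves over distinct $d$-simplices does not reintroduce nonzero intersections already cancelled, but this is precisely the combinatorial step already carried out in the proof of \ref{cohvanish}: a finger move on $\gs_i$ affects only intersections involving $\gs_i$, and at each stage the relevant cochain $\rho_{\gs_i}$ can be chosen to vanish on previously treated simplices (since the intersection numbers that need cancelling there are already zero). Thus there is no new obstacle beyond what the odd fact and Proposition \ref{cohvanish} have provided; the corollary is the cleanest combination of the two.
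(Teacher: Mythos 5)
Your proof is correct and takes essentially the same route as the paper: both apply the odd fact to produce an immersion whose intersection cochain is $m$-divisible (where $m = |H^d(L)|$), conclude from the order of $H^d(L)$ that the cochain is a coboundary, and then repeat the finger-move cancellation of Proposition \ref{cohvanish}, finishing with Proposition \ref{alg}. One small wording slip: saying the class of $\alpha_\gs$ is ``annihilated by $m$'' is vacuous in a group of order $m$; the correct statement is that $[\alpha_\gs] = m[\beta]$ where $\beta$ is the intersection cochain of $f$, and $m[\beta]=0$ because every element of $H^d(L)$ has order dividing $m$.
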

\begin{proof}
    Let $2k+1=\abs{H^d(L)}$ be the order of the cohomology group.
    By the odd fact, we can choose an initial immersion $f$ whose intersection vector is divisible by $2k+1$.
    Then we have $f(\gs_1)\cap f(-)=(2k+1)\phi$ for some cocycle $\phi:C_d(L) \to \Z$, which implies that $f(\gs_1)\cap f(-)$ is a coboundary, and we can proceed as in the proof of Proposition \ref{cohvanish}.
\end{proof}

\subsection*{Some special generic immersions for octahedralizations}

All this embedding theory is very classical.
It doesn't directly help embed the octahedralization $OL$ because $H^d(OL)$ is never finite: The octahedralizations of top dimensional simplices give infinite order homology, and thus also cohomology, in the top dimension.
The extra idea we used in \cite{ados16} was to start with an initial immersion $f:OL\to S^{2d}$ that is amenable to computation, namely the moment map specified by a particular\footnote{One first picks an ordering $<$ on $L$ and then an ordering $<$ on $OL$ for which the projection $\pi:OL\to L$ is an order-preserving map.
The moment map immersion $OL \to \R^{2d}$ is then defined on vertices by sending the $i$-th vertex in this ordering to $(i, i^2,\dots, i^{2d})$ and extending linearly to all of $OL$.}
ordering on $OL$.
Re-examining the proof in \cite{ados16}, we discovered that all we used was that it is a generic immersion with the following invariance property: Let $\pi:OL\to L$ be the projection map.
Then, for any $d$-simplices $\gs,\tau,$ and $\tau'$ such that $\gs$ is disjoint from both $\tau$ and $\tau'$
\begin{equation}\label{special}
    f(\gs)\cap f(\tau)=f(\gs)\cap f(\tau')\quad \text{ whenever }\quad \pi(\tau)=\pi(\tau').
\end{equation}
Given any such generic immersion, we can prove the first main result of this Appendix.

\subsection*{Proof of Theorem \ref{octaemb}}

The assumption $b_d(L;\F _2)=0$ implies that the top cohomology of $L$ is odd torsion, i.e. $\abs{H^d(L)}=2k+1$.
Let $\pi:OL\to L$ be the projection map.
Start with an immersion of $L$ in $S^{2d}$ whose intersection vector is divisible by $2k+1$ and perturb it to get an immersion $f:OL\to S^{2d}$.
Its intersection vector is divisible by $2k+1$ and has the additional property \eqref{special}.

Look at a $d$-simplex $\gs_1$ of $OL$.
Property \eqref{special} implies that
\[
f(\gs_1)\cap f(-):C_d(OL) \to \Z
\]
factors through the projection to $L$ as $C_d(OL)\stackrel{\pi}\to C_d(L)\stackrel{\varphi} \to \Z$.
The map $\varphi$ is divisible by $2k+1$ so, since $\abs{H^d(L)}=2k+1$, it is a coboundary (i.e $\phi=\gd\rho$), and thus $f(\gs_1)\cup f(-)=\pi^*\varphi$ is a coboundary, as well.
Applying the finger move $-\pi^*\rho$ to $\gs_1$ we obtain a new immersion $\widetilde f$ whose intersection vector is given by \eqref{newint}.
The new immersion $\widetilde f$ no longer satisfies property \eqref{special} for all $\gs$ in $OL$,\footnote{To see why, pick $\gs_1'\neq \gs_1$ with $\pi(\gs_1')=\pi(\gs_1)$ and look at a simplex $\gs$ disjoint from both $\gs_1$ and $\gs_1'$.
Then $\widetilde f(\gs)\cap\widetilde f(\gs_1)=0$ while $\widetilde f(\gs)\cap\widetilde f(\gs_1')=f(\gs)\cap f(\gs_1')=f(\gs)\cap f(\gs_1)$ may not be.}
but it does so for all $d$-simplices $\gs$ projecting to $\pi(\gs_1)$.
(The key point here is that there is only one $d$-simplex lying over $\pi_1(\gs_1)$ and disjoint from $\gs_1$.) So, we can repeat the same argument for all the $d$-simplices projecting to $\pi(\gs_1)$ to obtain a new immersion $\hat f$ with intersection vector
\[
V_{\hat f,\gs,\tau}=
\begin{cases}
    0&\text{ if } \pi(\gs)=\pi(\gs_1),\\
    0&\text{ if } \pi(\tau)=\pi(\gs_1),\\
    V_{f,\gs,\tau}&\text{ else.}
\end{cases}
\]
The immersion $\hat f$ does satisfy \eqref{special} for all $d$-simplices $\gs$ in $OL$, so we can repeat the argument for all simplices of $OL$, obtain an immersion with zero intersection vector and finally apply Proposition \ref{alg} to get an embedding.
This finishes the proof of Theorem \ref{octaemb}.

\subsection*{$2$-dimensional octahedralizations}

One might wonder if there is a more natural way to embed $OL$ in $S^{2d}$.
Recently \foreignlanguage{vietnamese}{T\^am Nguy\~\ecircumflex{}n Phan} and the first author showed that the $d\neq 2$ restriction in Theorem \ref{octaemb} cannot be removed.
\begin{theorem}[\cite{ap21}] For a sufficiently fine flag triangulation of the $2$-complex $D^2\cup_3S^1$, the octahedralization $O(D^2\cup_3S^1)$ does not $PL$ embed in $S^4$.
\end{theorem}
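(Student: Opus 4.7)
My approach would be to refine the integer-valued van Kampen obstruction theory used in the $d \neq 2$ embedding theorem (Theorem~\ref{octaemb}) by a dimension-2-specific invariant that survives after cancellation of algebraic intersections. The starting point is to apply the construction of Appendix~\ref{a:embedding} directly to $L = D^2 \cup_3 S^1$: since $H^2(L;\Z) = \Z/3$ has odd order, the ``odd fact'' together with the finger-move arguments produces a generic PL map $f\colon OL \to S^4$ whose integer intersection vector $V_{f,\sigma,\tau}$ vanishes on every disjoint pair of $2$-simplices $\sigma,\tau$ of $OL$. For $d \geq 3$, Proposition~\ref{alg} would now close the argument via Whitney's trick; the entire point is that in dimension $d=2$ the Whitney trick fails for $2$-spheres in $4$-manifolds, and one must squeeze a further obstruction out of the double point set of $f$.

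The plan is to use a Freedman--Krushkal--Teichner style Arf refinement. Algebraically paired double points of $f$ determine framed immersed Whitney disks whose boundaries carry $\pi_1$-classes in the complement of $f(OL)$; the Arf invariant of this collection of Whitney disks should give a $\F_2$-valued PL-cobordism invariant of $f$ which vanishes whenever $OL$ embeds. Two general facts would need to be set up carefully: (i) that this invariant depends only on $OL$ and not on the choice of generic $f$ (standard surgery-on-double-points arguments, adapted to a $2$-complex rather than a single surface), and (ii) that it is computable from purely combinatorial data on the cover $\pi\colon OL\to L$, so that ``sufficiently fine'' flag triangulations can be used to avoid extraneous crossings.

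The main step, and the step I expect to be the real obstacle, is to evaluate this refined invariant on $O(D^2\cup_3 S^1)$ and show it is nonzero. By construction the mod-$2$ reduction of any classical intersection count is already zero (this is exactly the content of $b_2(L;\F_2)=0$), so the obstruction has to come from a mismatch between the $\F_3$-homology of $L$ and the $\F_2$-valued Arf invariant. The natural strategy is to locate a preferred $1$-cycle of pairs of $2$-simplices in $OL$ which lifts the generator of $H^2(L;\Z/3)\cong H_0(L;\Z/3)^\vee$ through the projection $\pi$, and to argue that any generic $f$ must produce an \emph{odd} number of unpairable Whitney obstructions along this cycle, regardless of how the integer intersections have been cancelled by finger moves. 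Equivalently, one would try to show that the relevant configuration-space obstruction class in $H^4$ of the equivariant deleted join of $OL$ has a nonzero $\F_2$ refinement despite its mod-$2$ reduction being zero. Verifying this non-vanishing on an explicit flag triangulation, without accidentally killing it by inadvertent cancellations as one subdivides, is where I would expect most of the real work to lie.
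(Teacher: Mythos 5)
This theorem is cited in the paper from \cite{ap21} without proof, so there is no in-paper argument to compare against. Judged on its own terms, your proposal identifies the correct landscape: the classical van Kampen obstruction of $O(D^2\cup_3 S^1)$ does vanish (since $H^2(D^2\cup_3 S^1;\Z)\cong\Z/3$ is odd torsion, the ``odd fact'' plus finger moves kill the integer intersection vector, and indeed this is exactly the case excluded by the $d\neq 2$ hypothesis of Theorem~\ref{octaemb}), so any obstruction must be a secondary one in the spirit of \cite{fkt94}. That much is right and well-calibrated.

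But this is not yet a proof, and you flag the gap yourself: the entire mathematical content of the theorem is the non-vanishing of some secondary invariant, and your proposal does not produce it. Saying the obstruction ``should'' come from an Arf-type count along ``a preferred $1$-cycle of pairs of $2$-simplices lifting the generator of $H^2(L;\Z/3)$'' is a hypothesis, not a computation; nothing in the proposal establishes that such a well-defined $\F_2$-valued secondary invariant exists for the $2$-complex $OL$, is an isotopy (or PL-embeddability) invariant, or evaluates nontrivially. Two specific difficulties you would face if you tried to carry this out: (i) the Freedman--Krushkal--Teichner obstruction is set up for a single $2$-complex with a specific combinatorial form, and it is defined modulo a rather large indeterminacy coming from finger moves; you need to show the indeterminacy does not swallow your putative obstruction, and that requires actual control over the chain-level intersection data of $OL$, not just the homology of $L$; (ii) the role of ``sufficiently fine'' in the statement is not cosmetic --- for coarse triangulations the relevant full subcomplexes and deleted products may fail to be in the configuration needed, so the invariant you want may literally not be defined. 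Your proposal gestures at (ii) but gives no mechanism. In short, the strategy is reasonable and plausibly close to the spirit of \cite{ap21}, but what you have written is a research plan that stops precisely at the step that constitutes the theorem.
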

This seems to be evidence against the existence of a more natural or canonical embedding of $OL$ in higher dimensions.

\subsection*{Immersions of $OL$ via perturbation}

The goal of the remainder of this appendix is to show that we don't really need orderings or the moment map to obtain a generic immersion satisfying the invariance property \eqref{special}.
A small, generic, linear perturbation of a generic linear immersion of $L$ will do!

Suppose $L$ is linearly immersed in $\R^{2d}$.
For each vertex $v$ pick a vector $X_v$ in $\R^{2d}$.
For a simplex $\gs$ and subset of its vertices $A$, we will denote by $\gs_{\ge A}$ the linear simplex in $\R^{2d}$ obtained from the image of $\gs$ by moving each vertex $v$ in $A$ by the vector $\ge X_v$.
Denote by $\overline{\gs_{\ge A}}$ the plane spanned by this simplex.

The collection of vectors $\{X_v\}$ is a \emph{generic linear perturbation} of $L \to \R^{2d}$ if for every pair of $d$-simplices $\gs$ and $\tau$ in $L$ and partition containing their common vertices
\[
L^{(0)}\supset A\coprod B\supset (\gs\cap\tau)^{(0)}
\]
there is an $\ge'>0$ such that for each $0<\ge\leq\ge'$ the following two conditions hold.
\begin{itemize}
    \item $\overline{\gs_{\ge A}}$ and $\overline{\tau_{\ge B}}$ are $d$-planes that intersect in a single point $p_{\ge}$, and
    \item $p_{\ge}$ does not lie on the boundaries of the simplicies: $p_{\ge}\notin\d\gs_{\ge A}\cup\d\tau_{\ge B}$.
\end{itemize}
In particular, for a pair of simplices $\gs$ and $\tau$ that are disjoint in $L$, this is saying that their intersection in $\R^{2d}$ is transverse.

It is easy to see that generic linear perturbations of generic immersions exist, and in fact form an open dense subset of all possible choices of vectors $\{X_v\}$.

Genericity has several important consequences.
\begin{itemize}
    \item[(1)] The intersections form a half-open linear interval $p_{[\ge',0)}$ (because everything defining it is linear) converging to a point $p_s\to p_0\in\rr^{2d}$.
    In other words, there is a vector $W$ in $\R^{2d}$ such that for $0<\ge\leq \ge'$ we have $p_{\ge}=p_0+\ge W$.
    Next, pick a vertex $v$ of $\tau$ that is not contained in $A\coprod B$ and look at what happens to the intersection as we vary $\tau_{\ge B}$ by moving the vertex $v$ in the direction $tX_v$.
    This varies the vector $W$ linearly in $t$, i.e. we have for small enough $t$ that
    \[
    \overline{\gs_{\ge A}}\cap\overline{\tau_{tv\cup\ge B}}=p_0+\ge(W+tV)=p_{\ge}+\ge tV
    \]
    for some vector $V$ which can be expressed infinitesimally as
    \[
    V:=\frac{1}{\ge}\frac{d(\overline {\gs_{\ge A}}\cap\overline{\tau_{tv\cup\ge B}})}{dt} \big\rvert_{t=0}.
    \]
    In particular, for small enough $\ge$, the intersection between the planes spanned by $\gs_{\ge A}$ and $\tau_{\ge (v\cup B)}$ occurs at $p_{\ge}+\ge^2V$.
    \item[(2)] There is linear isomorphism $\pi:\overline{\gs_{\ge A}}\cong\overline{\gs}$ that identifies $\gs_{\ge A}$ with $\gs$ and is bilipschitz, with bilipschitz constant tending to one as $\ge\to 0$.
    The image $\pi(p_{[\ge',0)})$ is a linear family in $\overline\gs$ that does not meet $\d\gs$, which implies there is a constant $C'$ such that $d(\pi(p_{\ge}),\d\gs)>C'\ge$ for all $0<\ge\leq\ge'$.
    The same statement holds with $\gs_{\ge A}$ and $\gs$ replaced by $\tau_{\ge B}$ and $\tau$.
    Therefore, there is a positive constant $C$ such that
    \begin{equation}\label{far}
        d(p_{\ge},\d\gs_{\ge A}\cup\d\tau_{\ge B})>C\ge.
    \end{equation}
\end{itemize}
Now, let $P:=P^{d+1}$ be the $(d+1)$-plane spanned by $\tau_{\ge B}$ and $\tau_{\ge(v\cup B)}$, (or, equivalently, by $\tau_{\ge B}$ and $X_v$).
Parametrize $P\cap\overline{\gs_{\ge A}}$ as a line $L(s)$ with $L(0)=p_{\ge}$ and $L(1)=p_{\ge}+\ge^2 V$.
Note that
\begin{itemize}
    \item for small enough $\ge$ it follows from \eqref{far} that $L(0)\in\gs_{\ge A}$ if and only if $L(1)\in\gs_{\ge A}$.
\end{itemize}

We will need an analogous statement involving $\tau_{\ge B}$ and $\tau_{\ge(v\cup B)}$.
To that end, note that $P$ has linear retractions
\[
\overline{\tau_{\ge B}} \xleftarrow{r_1} P_{\ge}\xrightarrow{r_2} \overline{\tau_{\ge(v\cup B)}}
\]
whose Lipschitz constants are uniformly bounded as $\ge\to 0$ and which restrict to inverse isomorphisms $r_2|_{\overline{\tau_{\ge B}}}=r_1^{-1}|_{\overline{\tau_{\ge(v\cup B)}}}$ identifying $\tau_{\ge B}$ with $\tau_{\ge(v\cup B)}$.
\begin{lemma}
    For small enough $\ge>0$, we have $L(0)\in\tau_{\ge B}$ if and only if $L(1)\in\tau_{\ge(v\cup B)}$.
\end{lemma}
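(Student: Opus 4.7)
The plan is to transport everything into the single plane $\overline{\tau_{\ge(v\cup B)}}$ using the retraction $r_2$ and then compare $L(1)$ with $r_2(L(0))$. Since $r_2$ restricted to $\overline{\tau_{\ge B}}$ is an isomorphism onto $\overline{\tau_{\ge(v\cup B)}}$ sending $\tau_{\ge B}$ bijectively onto $\tau_{\ge(v\cup B)}$ (this is the inverse of $r_1|_{\overline{\tau_{\ge(v\cup B)}}}$), the membership question $L(0)\in \tau_{\ge B}$ is equivalent to $r_2(L(0))\in \tau_{\ge(v\cup B)}$. So it suffices to show that for $\ge$ small enough, $L(1)$ and $r_2(L(0))$ lie in the same side of $\d\tau_{\ge(v\cup B)}$ inside the plane $\overline{\tau_{\ge(v\cup B)}}$.

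To quantify this I would use two uniform estimates. First, because $L(1)\in \overline{\tau_{\ge(v\cup B)}}$, we have $r_2(L(1))=L(1)$, so
\[
\abs{L(1)-r_2(L(0))}=\abs{r_2(L(1))-r_2(L(0))}\leq K\abs{L(1)-L(0)}=K\ge^2\abs{V},
\]
where $K$ is an upper bound for the Lipschitz constant of $r_2$ valid for all sufficiently small $\ge$ (this uniformity is exactly the property of the retractions $r_1,r_2$ stated in the excerpt). Second, the genericity inequality \eqref{far} gives $d(L(0),\d\tau_{\ge B})>C\ge$, and composing with the bilipschitz identification $r_2|_{\overline{\tau_{\ge B}}}\colon\overline{\tau_{\ge B}}\to\overline{\tau_{\ge(v\cup B)}}$ (whose bilipschitz constants are also uniformly bounded as $\ge\to 0$) yields a uniform constant $C'>0$ with
\[
d\bigl(r_2(L(0)),\,\d\tau_{\ge(v\cup B)}\bigr)>C'\ge.
\]

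Combining these two estimates, once $\ge$ is small enough that $K\ge^2\abs{V}<C'\ge$, the point $L(1)$ sits in the open $C'\ge$-neighborhood of $r_2(L(0))$ inside $\overline{\tau_{\ge(v\cup B)}}$ and hence lies on the same side of $\d\tau_{\ge(v\cup B)}$ as $r_2(L(0))$. By the bijective correspondence between $\tau_{\ge B}$ and $\tau_{\ge(v\cup B)}$ induced by $r_2$, this gives $L(0)\in\tau_{\ge B}\iff L(1)\in\tau_{\ge(v\cup B)}$.

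The main obstacle, as in the earlier bullet of the same flavor for $\gs_{\ge A}$, is purely book-keeping of constants: one must verify that the Lipschitz and bilipschitz constants of $r_1,r_2$ and of the isomorphism between the two simplices can be chosen independent of $\ge$ for all sufficiently small $\ge$, so that the $O(\ge^2)$ displacement is dominated by the $O(\ge)$ distance to the boundary. This uniformity is a routine consequence of the fact that the planes $\overline{\tau_{\ge B}}$ and $\overline{\tau_{\ge(v\cup B)}}$ vary linearly in $\ge$ and converge, as $\ge\to 0$, to the common plane $\overline{\tau}$ transverse to the finite-dimensional direction spanned by $X_v$, so that no degeneration of the retractions occurs in the limit.
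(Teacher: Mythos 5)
Your argument is correct and is essentially the same as the paper's, up to a trivial symmetry: the paper applies $r_1$ to push $L(1)$ into $\overline{\tau_{\ge B}}$, bounds $d(L(0), r_1(L(1))) \leq |r_1|\,|V|\,\ge^2$, compares with \eqref{far} directly in that plane, and then uses $r_2 r_1 = \id$ to return; you instead push $L(0)$ forward via $r_2$ into $\overline{\tau_{\ge(v\cup B)}}$ and transport \eqref{far} through the uniformly bilipschitz identification. Both use the same ingredients (the retractions, their uniform Lipschitz bounds, the $O(\ge^2)$ displacement versus the $O(\ge)$ boundary-distance bound of \eqref{far}), so this is the paper's proof with the two planes' roles swapped.
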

\begin{proof}
    Note that $p_{\ge}=L(0)=r_1(L(0))$ and that $d(r_1(L(0)),r_1(L(1)))\leq\abs{r_1\abs{\abs{V}\ge^2$.
    Since $\abs{r_1}}V}$ is uniformly bounded independent of $\ge$, \eqref{far} implies for small enough $\ge$ that $p_{\ge}$ is in $\tau_{\ge B}$ if and only if $r_1(L(1))$ is in $\tau_{\ge B}$.
    This happens if and only if $r_2r_1(L(1))=L(1)$ is in $\tau_{\ge(v\cup B)}$.
    So, we are done.
\end{proof}
\begin{corollary}
    For small enough $\ge>0$, $\gs_{\ge A}$ intersects $\tau_{\ge B}$ if and only if $\gs_{\ge A}$ intersects $\tau_{\ge(B\cup v)}$.
    Moreover, the signs of the intersections are the same.
\end{corollary}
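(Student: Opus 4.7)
The plan is to derive the corollary directly from the preceding Lemma together with the bullet point immediately before it, using only the observation that generic linear perturbation forces both pairs of planes $(\overline{\gs_{\ge A}},\overline{\tau_{\ge B}})$ and $(\overline{\gs_{\ge A}},\overline{\tau_{\ge(v\cup B)}})$ to meet transversely in a single point of $\R^{2d}$, namely $L(0)=p_{\ge}$ and $L(1)=p_{\ge}+\ge^{2}V$ respectively. So $\gs_{\ge A}\cap \tau_{\ge B}\ne\emptyset$ iff $L(0)\in \gs_{\ge A}$ and $L(0)\in \tau_{\ge B}$, and similarly $\gs_{\ge A}\cap \tau_{\ge(v\cup B)}\ne\emptyset$ iff $L(1)\in \gs_{\ge A}$ and $L(1)\in \tau_{\ge(v\cup B)}$. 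The bullet point before the Lemma gives the first conjunct equivalence, and the Lemma gives the second, so the two non-emptiness statements are equivalent.

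For the sign assertion, I would use a continuity argument along the obvious linear interpolation between $\tau_{\ge B}$ and $\tau_{\ge(v\cup B)}$. Concretely, for $t\in[0,1]$ let $\tau^{t}$ be the simplex obtained from $\tau_{\ge B}$ by moving the vertex $v$ by $t\ge X_{v}$, so $\tau^{0}=\tau_{\ge B}$ and $\tau^{1}=\tau_{\ge(v\cup B)}$. By linearity $\overline{\gs_{\ge A}}\cap\overline{\tau^{t}}=\{L(t)\}=\{p_{\ge}+t\ge^{2}V\}$. The Lipschitz estimates used in the proof of the Lemma are uniform in $t\in[0,1]$ (they come from bounded retractions $r_{1},r_{2}$ on the fixed $(d+1)$-plane $P$), so the same argument shows that for small enough $\ge$, $L(t)\in\tau^{t}$ for all $t$ simultaneously whenever $L(0)\in \tau_{\ge B}$. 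Analogously (2) above gives $L(t)\in \gs_{\ge A}$ for all $t$ whenever $L(0)\in \gs_{\ge A}$.

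Thus, in the case when both intersections are non-empty, we have a continuous one-parameter family of transverse intersection points $L(t)\in \gs_{\ge A}\cap\tau^{t}$, with tangent planes of $\tau^{t}$ varying continuously in $t$ through transverse $d$-planes to $\overline{\gs_{\ge A}}$. The local intersection sign at a transverse point is the sign of the determinant of the matrix built from oriented bases of the two tangent spaces, and this determinant is a continuous, nowhere vanishing function of $t\in[0,1]$, hence constant. Therefore the signed intersection numbers at $L(0)$ and $L(1)$ agree.

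The content of the argument is really all in the previous Lemma plus the earlier bullet point; the only remaining work is packaging these into the claim about non-emptiness and recording the continuity argument for signs. The main (very minor) obstacle is verifying that the constants controlling the proof of the Lemma remain uniform along the interpolation $\tau^{t}$, but this is immediate because everything is linear in $t$ and the retractions $r_{1},r_{2}$ onto $\overline{\tau_{\ge B}}$ and $\overline{\tau_{\ge(v\cup B)}}$ extend to bounded linear retractions onto all intermediate $d$-planes $\overline{\tau^{t}}\subset P$ with Lipschitz constants bounded independently of $\ge$ and $t$.
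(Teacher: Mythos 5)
Your argument is correct and follows the same route as the paper: the first statement is obtained by combining the preceding bullet point (which controls $L(0)\in\gs_{\ge A}$ iff $L(1)\in\gs_{\ge A}$) with the Lemma (which controls $L(0)\in\tau_{\ge B}$ iff $L(1)\in\tau_{\ge(v\cup B)}$), using that each simplex intersection, if non-empty, is the single transverse intersection point of the spanning planes; and the sign statement follows because the signed intersection number of the planes $\overline{\gs_{\ge A}}\cap\overline{\tau_{tv\cup\ge B}}$ is defined and continuous in $t$, hence constant. The only minor redundancy is that your continuity argument for the sign also tracks whether $L(t)$ lies inside both simplices for all intermediate $t$, which is not needed — the sign is a property of the oriented transverse planes alone, and it suffices that they stay transverse along the linear interpolation.
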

\begin{proof}
    The bullet point and lemma above imply that for small enough $\ge$ we have $L(0)\in\gs_{\ge A}\cap\tau_{\ge B}$ if and only if $L(1)\in \gs_{\ge A}\cap\tau_{\ge(v\cup B)}$, which proves the first part.
    The second statement is clear because (for small enough $\ge$ and $0\leq t\leq\ge$) the signed intersection number $\overline{\gs_{\ge A}}\cap\overline{\tau_{tv\cup \ge B}}$ is defined and independent of $t$.
\end{proof}
\begin{proposition}[Invariance property] Suppose $f:L\to \R^{2d}$ is a generic linear immersion.
    Then for any generic linear perturbation $\{X_v\}$ there is an $\varepsilon>0$ such that the linear immersion $f_{\varepsilon}:OL \to \R^{2d}$ defined on vertices by $v^+\mapsto f(v),v^-\mapsto f(v)+\ge X_v$ is generic and its signed intersection numbers satisfy
    \[
    f_{\ge}(\gs)\cap f_{\ge}(\tau)=f_{\ge}(\gs)\cap f_{\ge}(\tau')
    \]
    whenever $\gs,\tau$ and $\tau'$ are $d$-simplices, $\tau$ and $\tau'$ are disjoint from $\gs$ and $\pi(\tau)=\pi(\tau')$.
\end{proposition}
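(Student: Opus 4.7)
My plan is to reduce the invariance to the preceding corollary by interpolating between $\tau$ and $\tau'$ through a sequence of $d$-simplices of $OL$ that differ at a single vertex lying outside $\pi(\gs)$. Each such elementary sign-flip preserves both disjointness from $\gs$ and the signed intersection with $\gs$, so telescoping will yield the desired equality. The choice of $\ge>0$ is dictated by the following finite collection of constraints: for each ordered pair of disjoint $d$-simplices of $OL$ (encoded as a partition $A\amalg B$ of shared projection vertices), the genericity condition on $\{X_v\}$ provides a positive threshold below which the two perturbed simplices meet transversely in their interiors, and each intended application of the preceding corollary also requires $\ge$ small. Taking the minimum over this finite list will simultaneously make $f_{\ge}$ a generic linear immersion and validate every forthcoming intersection equality.

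Then, setting $\bar\tau:=\pi(\tau)=\pi(\tau')$ and $\bar\gs:=\pi(\gs)$, I would write $\gs=\bar\gs_{\ge A}$, $\tau=\bar\tau_{\ge B}$, $\tau'=\bar\tau_{\ge B'}$, where $A\subset\bar\gs^{(0)}$ and $B,B'\subset\bar\tau^{(0)}$ record the ``minus'' vertices. Two $d$-simplices of $OL$ are disjoint iff the $\pm$-signs they assign disagree at every vertex of $L$ lying in both of their projections. Applying this to the pairs $(\gs,\tau)$ and $(\gs,\tau')$ forces $B$ and $B'$ to agree on $\bar\gs^{(0)}\cap\bar\tau^{(0)}$, so $B\triangle B'\subset\bar\tau^{(0)}\setminus\bar\gs^{(0)}$. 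Note also that the disjointness in $OL$ makes $A$ and $B$ disjoint subsets of $L^{(0)}$ whose union contains $(\bar\gs\cap\bar\tau)^{(0)}$, so the $A,B$ that arise are exactly the partitions appearing in the definition of a generic linear perturbation.

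Finally, I would enumerate $B\triangle B'=\{v_1,\dots,v_m\}$ and form the chain $B=B_0,B_1,\dots,B_m=B'$ by toggling one $v_i$ at each step. Each intermediate $\bar\tau_{\ge B_i}$ stays disjoint from $\gs$ because toggling a sign at $v_i\notin\bar\gs^{(0)}$ cannot create a shared lift. Since $v_i\notin\bar\gs^{(0)}$ we also have $v_i\notin A$, and after possibly swapping $B_{i-1}\leftrightarrow B_i$ we may assume $v_i\notin B_{i-1}$; the preceding corollary, applied with $\tau$ replaced by $\bar\tau_{\ge B_{i-1}}$ and $v:=v_i$, then yields
\[
f_{\ge}(\gs)\cap f_{\ge}(\bar\tau_{\ge B_{i-1}})=f_{\ge}(\gs)\cap f_{\ge}(\bar\tau_{\ge B_i}),
\]
and telescoping proves the proposition. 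The main obstacle will be the combinatorial bookkeeping, namely translating disjointness in $OL$ into the statement $B\triangle B'\cap\bar\gs^{(0)}=\varnothing$, together with the harmless remark that the corollary (literally stated for adding $v$ to $B$) also covers removal by symmetry; the genuine geometric content, localizing the intersection as a half-open linear interval and bounding its first-order motion under a vertex perturbation, is already packaged in the preceding lemma and corollary.
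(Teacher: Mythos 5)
Your proof is correct and follows essentially the same route as the paper: express $f_\ge(\gs)=\gs_{\ge A}$, $f_\ge(\tau)=\tau_{\ge B}$, $f_\ge(\tau')=\tau_{\ge B'}$, verify that $OL$-disjointness keeps $B\triangle B'$ outside $\bar\gs^{(0)}$ (hence outside $A$), and iterate the preceding corollary one vertex at a time. The only cosmetic difference is that the paper routes through $\tau_{\ge(B\cup B')}$ (first add $B'\setminus B$, then remove $B\setminus B'$), while you toggle the vertices of $B\triangle B'$ in arbitrary order; both are the same chain of single-vertex applications of the corollary.
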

\begin{proof}
    Given $\gs$, $\tau$ and $\tau'$ as in the statement of the proposition, $f_{\ge}(\gs)=\gs_{\ge A},$ $f_{\ge}(\tau)=\tau_{\ge B}$ and $f_{\ge}(\tau')=\tau_{\ge B'}$ for some $A\coprod B\supset(\gs\cap\tau)^{(0)}\subset A\coprod B'$.
    Repeatedly using the Corollary we conclude that for small enough $\ge$ there are equalities of signed intersection numbers
    \[
    \gs_{\ge A}\cap\tau_{\ge B}=\gs_{\ge A}\cap\tau_{\ge(B\cup B')}=\gs_{\ge A}\cap\tau_{\ge B'}
    \]
    which proves the proposition.
\end{proof}
\begin{remark}
    The embedding $OL\into S^{2d}$ produced by the methods of this appendix appears quite exotic from a metric perspective, because it starts with an immersion of $OL$ obtained by perturbing an immersion of $L$.
    In this immersion, two vertices $v^+$ and $v^-$ that correspond to the positive and negative directions of the loop $S^1_v$ are put very close together, as opposed to being antipodal in the sphere $S^{2d}$.
\end{remark}
%\bibliography{my}
% \bib, bibdiv, biblist are defined by the amsrefs package.
\begin{bibdiv}
    \begin{biblist}

\bib{abfg21}{arxiv}{
    author = {Abert, Miklos},
    author = {Bergeron, Nicolas},
    author = {Fraczyk, Mikolaj},
    author = {Gaboriau, Damien},
    title = {On homology torsion growth},
    journal = {to appear in J. Eur. Math. Soc.},
    date = {2021}, eprint={2106.13051},
    url = {https://arxiv.org/pdf/2106.13051.pdf}, }

\bib{a08}{article}{
    author = {Agol, Ian},
    title = {Criteria for virtual fibering},
    date = {2008},
    issn = {1753-8416},
    journal = {J. Topol.},
    volume = {1},
    number = {2},
    pages = {269\ndash 284},
    url = {https://mathscinet.ams.org/mathscinet-getitem?mr=2399130},
    review = {\MR{2399130}}, }

\bib{a13}{article}{
    author = {Agol, Ian},
    title = {The virtual {H}aken conjecture},
    date = {2013},
    issn = {1431-0635},
    journal = {Doc. Math.},
    volume = {18},
    pages = {1045\ndash 1087},
    url = {https://mathscinet.ams.org/mathscinet-getitem?mr=3104553}, note={With an appendix by Agol, Daniel Groves, and Jason Manning},
    review = {\MR{3104553}}, }

\bib{a69}{article}{
    author = {Akin, Ethan},
    title = {Manifold phenomena in the theory of polyhedra},
    date = {1969},
    issn = {0002-9947},
    journal = {Trans. Amer. Math. Soc.},
    volume = {143},
    pages = {413\ndash 473},
    review = {\MR{0253329 (40 \#6544)}}, }

\bib{a18}{article}{
    author = {Avramidi, Grigori},
    title = {Rational manifold models for duality groups},
    date = {2018},
    issn = {1016-443X},
    journal = {Geom. Funct. Anal.},
    volume = {28},
    number = {4},
    pages = {965\ndash 994},
    url = {https://doi.org/10.1007/s00039-018-0449-8},
    review = {\MR{3820436}}, }

\bib{ados16}{article}{
    author = {Avramidi, Grigori},
    author = {Davis, Michael~W.},
    author = {Okun, Boris},
    author = {Schreve, Kevin},
    title = {The action dimension of right-angled {A}rtin groups},
    date = {2016},
    issn = {0024-6093},
    journal = {Bull. Lond. Math. Soc.},
    volume = {48},
    number = {1},
    pages = {115\ndash 126},
    url = {http://dx.doi.org/10.1112/blms/bdv083},
    review = {\MR{3455755}}, }

\bib{aos21}{article}{
    author = {Avramidi, Grigori},
    author = {Okun, Boris},
    author = {Schreve, Kevin},
    title = {Mod {$p$} and torsion homology growth in nonpositive curvature},
    date = {2021},
    issn = {0020-9910},
    journal = {Invent. Math.},
    volume = {226},
    number = {3},
    pages = {711\ndash 723},
    url = {https://mathscinet.ams.org/mathscinet-getitem?mr=4337971},
    review = {\MR{4337971}}, }

\bib{ap21}{arxiv}{
    author = {Avramidi, Grigori},
    author = {Phan, T. Tam~Nguyen},
    title = {Fungible obstructions to embedding 2-complexes},
    date = {2021}, eprint={2105.10984},
    url = {https://arxiv.org/pdf/2105.10984.pdf}, }

\bib{b19}{article}{
    author = {Bartholdi, Laurent},
    title = {Amenability of groups is characterized by {M}yhill's theorem},
    date = {2019},
    issn = {1435-9855},
    journal = {J. Eur. Math. Soc. (JEMS)},
    volume = {21},
    number = {10},
    pages = {3191\ndash 3197},
    url = {https://doi.org/10.4171/JEMS/900}, note={With an appendix by Dawid Kielak},
    review = {\MR{3994103}}, }

\bib{b06}{article}{
    author = {Belegradek, Igor},
    title = {Aspherical manifolds, relative hyperbolicity, simplicial volume and assembly maps},
    date = {2006},
    issn = {1472-2747},
    journal = {Algebr. Geom. Topol.},
    volume = {6},
    pages = {1341\ndash 1354},
    url = {https://doi.org/10.2140/agt.2006.6.1341},
    review = {\MR{2253450}}, }

\bib{b07}{article}{
    author = {Belegradek, Igor},
    title = {Aspherical manifolds with relatively hyperbolic fundamental groups},
    date = {2007},
    issn = {0046-5755},
    journal = {Geom. Dedicata},
    volume = {129},
    pages = {119\ndash 144},
    url = {https://mathscinet.ams.org/mathscinet-getitem?mr=2353987},
    review = {\MR{2353987}}, }

\bib{bbks21}{arxiv}{
    author = {Belolipetsky, Mikhail},
    author = {Bogachev, Nikolay},
    author = {Kolpakov, Alexander},
    author = {Slavich, Leone},
    title = {Subspace stabilisers in hyperbolic lattices},
    date = {2021}, note = {arXiv:2105.06897}, }

\bib{bhw11}{article}{
    author = {Bergeron, Nicolas},
    author = {Haglund, Fr\'{e}d\'{e}ric},
    author = {Wise, Daniel~T.},
    title = {Hyperplane sections in arithmetic hyperbolic manifolds},
    date = {2011},
    issn = {0024-6107},
    journal = {J. Lond. Math. Soc. (2)},
    volume = {83},
    number = {2},
    pages = {431\ndash 448},
    url = {https://mathscinet.ams.org/mathscinet-getitem?mr=2776645},
    review = {\MR{2776645}}, }

\bib{blls14}{article}{
    author = {Bergeron, Nicolas},
    author = {Linnell, Peter},
    author = {L\"{u}ck, Wolfgang},
    author = {Sauer, Roman},
    title = {On the growth of {B}etti numbers in {$p$}-adic analytic towers},
    date = {2014},
    issn = {1661-7207},
    journal = {Groups Geom. Dyn.},
    volume = {8},
    number = {2},
    pages = {311\ndash 329},
    url = {https://doi.org/10.4171/GGD/227},
    review = {\MR{3231217}}, }

\bib{b12}{article}{
    author = {Bowditch, B.~H.},
    title = {Relatively hyperbolic groups},
    date = {2012},
    issn = {0218-1967},
    journal = {Internat. J. Algebra Comput.},
    volume = {22},
    number = {3},
    pages = {1250016, 66},
    url = {https://doi.org/10.1142/S0218196712500166},
    review = {\MR{2922380}}, }

\bib{b98a}{article}{
    author = {Bowditch, Brian~H.},
    title = {Cut points and canonical splittings of hyperbolic groups},
    date = {1998},
    issn = {0001-5962},
    journal = {Acta Math.},
    volume = {180},
    number = {2},
    pages = {145\ndash 186},
    url = {https://doi.org/10.1007/BF02392898},
    review = {\MR{1638764}}, }

\bib{b84}{article}{
    author = {Bugaenko, V.~O.},
    title = {Groups of automorphisms of unimodular hyperbolic quadratic forms over the ring {${\bf Z}[(\sqrt{5}+1)/2]$}},
    date = {1984},
    issn = {0579-9368},
    journal = {Vestnik Moskov. Univ. Ser. I Mat. Mekh.},
    number = {5},
    pages = {6\ndash 12},
    url = {https://mathscinet.ams.org/mathscinet-getitem?mr=764026},
    review = {\MR{764026}}, }

\bib{cd95c}{article}{
    author = {Charney, Ruth~M.},
    author = {Davis, Michael~W.},
    title = {Strict hyperbolization},
    date = {1995},
    issn = {0040-9383},
    journal = {Topology},
    volume = {34},
    number = {2},
    pages = {329\ndash 350},
    url = {https://mathscinet.ams.org/mathscinet-getitem?mr=1318879},
    review = {\MR{1318879}}, }

\bib{cw03}{article}{
    author = {Clair, Bryan},
    author = {Whyte, Kevin},
    title = {Growth of {B}etti numbers},
    date = {2003},
    issn = {0040-9383},
    journal = {Topology},
    volume = {42},
    number = {5},
    pages = {1125\ndash 1142},
    url = {https://doi.org/10.1016/S0040-9383(02)00046-0},
    review = {\MR{1978050}}, }

\bib{d08}{book}{
    author = {Davis, Michael~W.},
    title = {The geometry and topology of {C}oxeter groups},
    series = {London Mathematical Society Monographs Series}, publisher={Princeton University Press}, address={Princeton, NJ},
    date = {2008},
    volume = {32}, ISBN={978-0-691-13138-2; 0-691-13138-4},
    review = {\MR{2360474 (2008k:20091)}}, }

\bib{ddjmo10}{article}{
    author = {Davis, Michael~W.},
    author = {Dymara, Jan},
    author = {Januszkiewicz, Tadeusz},
    author = {Meier, John},
    author = {Okun, Boris},
    title = {Compactly supported cohomology of buildings},
    date = {2010},
    issn = {0010-2571},
    journal = {Comment. Math. Helv.},
    volume = {85},
    number = {3},
    pages = {551\ndash 582},
    url = {http://dx.doi.org/10.4171/CMH/205},
    review = {\MR{2653692}}, }

\bib{ddjo07}{article}{
    author = {Davis, Michael~W.},
    author = {Dymara, Jan},
    author = {Januszkiewicz, Tadeusz},
    author = {Okun, Boris},
    title = {Weighted {$L^2$}-cohomology of {C}oxeter groups},
    date = {2007},
    issn = {1465-3060},
    journal = {Geom. Topol.},
    volume = {11},
    pages = {47\ndash 138},
    url = {http://dx.doi.org/10.2140/gt.2007.11.47},
    review = {\MR{2287919 (2008g:20084)}}, }

\bib{djw01}{article}{
    author = {Davis, Michael~W.},
    author = {Januszkiewicz, Tadeusz},
    author = {Weinberger, Shmuel},
    title = {Relative hyperbolization and aspherical bordisms: an addendum to ``{H}yperbolization of polyhedra''},
    date = {2001},
    issn = {0022-040X},
    journal = {J. Differential Geom.},
    volume = {58},
    number = {3},
    pages = {535\ndash 541},
    url = {https://mathscinet.ams.org/mathscinet-getitem?mr=1906785},
    review = {\MR{1906785}}, }

\bib{dm02}{article}{
    author = {Davis, Michael~W.},
    author = {Meier, John},
    title = {The topology at infinity of {C}oxeter groups and buildings},
    date = {2002},
    issn = {0010-2571},
    journal = {Comment. Math. Helv.},
    volume = {77},
    number = {4},
    pages = {746\ndash 766},
    url = {https://doi.org/10.1007/PL00012440},
    review = {\MR{1949112}}, }

\bib{dx84}{article}{
    author = {Donnelly, Harold},
    author = {Xavier, Frederico},
    title = {On the differential form spectrum of negatively curved {R}iemannian manifolds},
    date = {1984},
    issn = {0002-9327},
    journal = {Amer. J. Math.},
    volume = {106},
    number = {1},
    pages = {169\ndash 185},
    review = {\MR{85i:58115}}, }

\bib{d83a}{thesis}{
    author = {Droms, Carl Gordon~Arthur},
    title = {Graph groups}, type={Ph.D. Thesis}, publisher={ProQuest LLC, Ann Arbor, MI},
    date = {1983},
    url = {http://gateway.proquest.com/openurl?url_ver=Z39.88-2004&rft_val_fmt=info:ofi/fmt:kev:mtx:dissertation&res_dat=xri:pqdiss&rft_dat=xri:pqdiss:8400770}, note={Thesis (Ph.D.)--Syracuse University},
    review = {\MR{2633165}}, }

\bib{dk92}{article}{
    author = {Duchamp, G.},
    author = {Krob, D.},
    title = {The lower central series of the free partially commutative group},
    date = {1992},
    issn = {0037-1912},
    journal = {Semigroup Forum},
    volume = {45},
    number = {3},
    pages = {385\ndash 394},
    url = {http://dx.doi.org/10.1007/BF03025778},
    review = {\MR{1179860 (93e:20047)}}, }

\bib{el87}{article}{
    author = {Eizenbud, A.},
    author = {Lichtman, A.~I.},
    title = {On embedding of group rings of residually torsion free nilpotent groups into skew fields},
    date = {1987},
    issn = {0002-9947},
    journal = {Trans. Amer. Math. Soc.},
    volume = {299},
    number = {1},
    pages = {373\ndash 386},
    url = {https://doi.org/10.2307/2000499},
    review = {\MR{869417}}, }

\bib{f98}{article}{
    author = {Farb, B.},
    title = {Relatively hyperbolic groups},
    date = {1998},
    issn = {1016-443X},
    journal = {Geom. Funct. Anal.},
    volume = {8},
    number = {5},
    pages = {810\ndash 840},
    url = {https://mathscinet.ams.org/mathscinet-getitem?mr=1650094},
    review = {\MR{1650094}}, }

\bib{fd93}{book}{
    author = {Farb, Benson},
    author = {Dennis, R.~Keith},
    title = {Noncommutative algebra},
    series = {Graduate Texts in Mathematics}, publisher={Springer-Verlag, New York},
    date = {1993},
    volume = {144}, ISBN={0-387-94057-X},
    url = {https://doi.org/10.1007/978-1-4612-0889-1},
    review = {\MR{1233388}}, }

\bib{f21}{arxiv}{
    author = {Fisher, Sam~P.},
    title = {Improved algebraic fibrings},
    date = {2021}, eprint={2112.00397},
    url = {https://arxiv.org/pdf/2112.00397.pdf}, }

\bib{fhl23}{article}{
    author = {Fisher, Sam~P.},
    author = {Hughes, Sam},
    author = {Leary, Ian~J.},
    title = {Homological growth of {A}rtin kernels in positive characteristic},
    date = {2023/07/06},
    journal = {Math. Ann.},
    url = {https://doi.org/10.1007/s00208-023-02663-1}, }

\bib{fkt94}{article}{
    author = {Freedman, Michael~H.},
    author = {Krushkal, Vyacheslav~S.},
    author = {Teichner, Peter},
    title = {van {K}ampen's embedding obstruction is incomplete for {$2$}-complexes in {${\bf R}^4$}},
    date = {1994},
    issn = {1073-2780},
    journal = {Math. Res. Lett.},
    volume = {1},
    number = {2},
    pages = {167\ndash 176},
    url = {http://dx.doi.org/10.4310/MRL.1994.v1.n2.a4},
    review = {\MR{1266755 (95c:57005)}}, }

\bib{fujiwara21}{arxiv}{
    author = {Fujiwara, Koji},
    title = {An example of a closed 5-manifold of nonpositive curvature that fibers over a circle},
    date = {2021}, eprint={2106.08549},
    url = {https://arxiv.org/pdf/2106.08549.pdf}, }

\bib{g20}{article}{
    author = {Gr\"{a}ter, Joachim},
    title = {Free division rings of fractions of crossed products of groups with {C}onradian left-orders},
    date = {2020},
    issn = {0933-7741},
    journal = {Forum Math.},
    volume = {32},
    number = {3},
    pages = {739\ndash 772},
    url = {https://doi.org/10.1515/forum-2019-0264},
    review = {\MR{4095506}}, }

\bib{gm18}{arxiv}{
    author = {Groves, Daniel},
    author = {Manning, Jason~F.},
    title = {Hyperbolic groups acting improperly},
    journal = {to appear in Geometry \& Topology},
    date = {2018}, eprint={1808.02325},
    url = {https://arxiv.org/pdf/1808.02325.pdf}, }

\bib{hw08}{article}{
    author = {Haglund, Fr{\'e}d{\'e}ric},
    author = {Wise, Daniel~T.},
    title = {Special cube complexes},
    date = {2008},
    issn = {1016-443X},
    journal = {Geom. Funct. Anal.},
    volume = {17},
    number = {5},
    pages = {1551\ndash 1620},
    url = {http://dx.doi.org/10.1007/s00039-007-0629-4},
    review = {\MR{2377497 (2009a:20061)}}, }

\bib{hw12}{article}{
    author = {Haglund, Fr\'{e}d\'{e}ric},
    author = {Wise, Daniel~T.},
    title = {A combination theorem for special cube complexes},
    date = {2012},
    issn = {0003-486X},
    journal = {Ann. of Math. (2)},
    volume = {176},
    number = {3},
    pages = {1427\ndash 1482},
    url = {https://mathscinet.ams.org/mathscinet-getitem?mr=2979855},
    review = {\MR{2979855}}, }

\bib{hk21}{article}{
    author = {Henneke, Fabian},
    author = {Kielak, Dawid},
    title = {Agrarian and {$L^2$}-invariants},
    date = {2021},
    issn = {0016-2736},
    journal = {Fund. Math.},
    volume = {255},
    number = {3},
    pages = {255\ndash 287},
    url = {https://doi.org/10.4064/fm808-4-2021},
    review = {\MR{4324826}}, }

\bib{h70}{article}{
    author = {Hughes, Ian},
    title = {Division rings of fractions for group rings},
    date = {1970},
    issn = {0010-3640},
    journal = {Comm. Pure Appl. Math.},
    volume = {23},
    pages = {181\ndash 188},
    url = {https://doi.org/10.1002/cpa.3160230205},
    review = {\MR{263934}}, }

\bib{imm22}{article}{
    author = {Italiano, Giovanni},
    author = {Martelli, Bruno},
    author = {Migliorini, Matteo},
    title = {Hyperbolic $5$-manifolds that fiber over {$S^1$}},
    date = {2022/07/28},
    journal = {Inventiones mathematicae},
    url = {https://doi.org/10.1007/s00222-022-01141-w}, }

\bib{g97a}{article}{
    author = {Gitik, Rita},
    title = {On quasiconvex subgroups of negatively curved groups},
    date = {1997},
    issn = {0022-4049},
    journal = {J. Pure Appl. Algebra},
    volume = {119},
    number = {2},
    pages = {155\ndash 169},
    url = {https://doi.org/10.1016/S0022-4049(96)00020-5},
    review = {\MR{1453217}}, }

\bib{j19}{incollection}{
    author = {Jaikin-Zapirain, Andrei},
    title = {{$L^2$}-{B}etti numbers and their analogues in positive characteristic},
    date = {2019},
    booktitle = {Groups {S}t {A}ndrews 2017 in {B}irmingham},
    series = {London Math. Soc. Lecture Note Ser.},
    volume = {455}, publisher={Cambridge Univ. Press, Cambridge},
    pages = {346\ndash 405},
    review = {\MR{3931420}}, }

\bib{j21}{article}{
    author = {Jaikin-Zapirain, Andrei},
    title = {The universality of {H}ughes-free division rings},
    date = {2021},
    issn = {1022-1824},
    journal = {Selecta Math. (N.S.)},
    volume = {27},
    number = {4},
    pages = {Paper No. 74, 33},
    url = {https://doi.org/10.1007/s00029-021-00691-w},
    review = {\MR{4292784}}, }

\bib{jl23}{arxiv}{
    author = {Jaikin-Zapirain, Andrei},
    author = {Linton, Marco},
    title = {On the coherence of one-relator groups and their group algebras},
    date = {2023},
    url = {https://arxiv.org/pdf/2303.05976.pdf}, }

\bib{js01}{article}{
    author = {Januszkiewicz, Tadeusz},
    author = {{{\'S}}wi{\polhk{a}}tkowski, Jacek},
    title = {Commensurability of graph products},
    date = {2001},
    issn = {1472-2747},
    journal = {Algebr. Geom. Topol.},
    volume = {1},
    pages = {587\ndash 603 (electronic)},
    url = {http://dx.doi.org/10.2140/agt.2001.1.587},
    review = {\MR{1875609 (2002k:20074)}}, }

\bib{js03}{article}{
    author = {Januszkiewicz, Tadeusz},
    author = {{{\'S}}wi{\polhk{a}}tkowski, Jacek},
    title = {Hyperbolic {C}oxeter groups of large dimension},
    date = {2003},
    issn = {0010-2571},
    journal = {Comment. Math. Helv.},
    volume = {78},
    number = {3},
    pages = {555\ndash 583},
    url = {http://dx.doi.org/10.1007/s00014-003-0763-z},
    review = {\MR{1998394 (2004h:20058)}}, }

\bib{k75}{inproceedings}{
    author = {Kajdan, D.~A.},
    title = {On arithmetic varieties},
    date = {1975},
    booktitle = {Lie groups and their representations ({P}roc. {S}ummer {S}chool, {B}olyai {J}\'{a}nos {M}ath. {S}oc., {B}udapest, 1971)}, publisher={Halsted, New York},
    pages = {151\ndash 217},
    url = {https://mathscinet.ams.org/mathscinet-getitem?mr=0486316},
    review = {\MR{0486316}}, }

\bib{k33}{article}{
    author = {van Kampen, E.~R.},
    title = {Komplexe in euklidischen {R}{\"a}umen},
    date = {1933},
    issn = {0025-5858},
    journal = {Abh. Math. Sem. Univ. Hamburg},
    volume = {9},
    number = {1},
    pages = {72\ndash 78},
    url = {http://dx.doi.org/10.1007/BF02940628},
    review = {\MR{3069580}}, }

\bib{k33a}{article}{
    author = {van Kampen, E.~R.},
    title = {Berichtigung},
    date = {1933},
    issn = {0025-5858},
    journal = {Abh. Math. Sem. Univ. Hamburg},
    volume = {9},
    number = {1},
    pages = {152\ndash 153},
    url = {https://doi.org/10.1007/BF02940638},
    review = {\MR{3069590}}, }

\bib{k20a}{article}{
    author = {Kielak, Dawid},
    title = {Residually finite rationally solvable groups and virtual fibring},
    date = {2020},
    issn = {0894-0347},
    journal = {J. Amer. Math. Soc.},
    volume = {33},
    number = {2},
    pages = {451\ndash 486},
    url = {https://doi.org/10.1090/jams/936},
    review = {\MR{4073866}}, }

\bib{klm88}{article}{
    author = {Kropholler, P.~H.},
    author = {Linnell, P.~A.},
    author = {Moody, J.~A.},
    title = {Applications of a new {$K$}-theoretic theorem to soluble group rings},
    date = {1988},
    issn = {0002-9939},
    journal = {Proc. Amer. Math. Soc.},
    volume = {104},
    number = {3},
    pages = {675\ndash 684},
    url = {https://doi.org/10.2307/2046771},
    review = {\MR{964842}}, }

\bib{lr22}{arxiv}{
    author = {Lafont, Jean-Fran{\c c}ois},
    author = {Ruffoni, Lorenzo},
    title = {Special cubulation of strict hyperbolization},
    date = {2022}, eprint={2206.03620},
    url = {https://arxiv.org/pdf/2206.03620.pdf}, }

\bib{l00}{article}{
    author = {Lichtman, A.~I.},
    title = {On universal fields of fractions for free algebras},
    date = {2000},
    issn = {0021-8693},
    journal = {J. Algebra},
    volume = {231},
    number = {2},
    pages = {652\ndash 676},
    url = {https://doi.org/10.1006/jabr.2000.8344},
    review = {\MR{1778164}}, }

\bib{lls11}{article}{
    author = {Linnell, Peter},
    author = {L\"{u}ck, Wolfgang},
    author = {Sauer, Roman},
    title = {The limit of {$\mathbb F_p$}-{B}etti numbers of a tower of finite covers with amenable fundamental groups},
    date = {2011},
    issn = {0002-9939},
    journal = {Proc. Amer. Math. Soc.},
    volume = {139},
    number = {2},
    pages = {421\ndash 434},
    url = {https://doi.org/10.1090/S0002-9939-2010-10689-5},
    review = {\MR{2736326}}, }

\bib{l93}{article}{
    author = {Linnell, Peter~A.},
    title = {Division rings and group von {N}eumann algebras},
    date = {1993},
    issn = {0933-7741},
    journal = {Forum Math.},
    volume = {5},
    number = {6},
    pages = {561\ndash 576},
    review = {\MR{MR1242889 (94h:20009)}}, }

\bib{lu22}{arxiv}{
    author = {Loeh, Clara},
    author = {Uschold, Matthias},
    title = {$l^2$-betti numbers and computability of reals},
    date = {2022}, eprint={2202.03159},
    url = {https://arxiv.org/pdf/2202.03159.pdf}, }

\bib{lmw19}{article}{
    author = {Lubotzky, Alexander},
    author = {Manning, Jason~Fox},
    author = {Wilton, Henry},
    title = {Generalized triangle groups, expanders, and a problem of {A}gol and {W}ise},
    date = {2019},
    issn = {0010-2571},
    journal = {Comment. Math. Helv.},
    volume = {94},
    number = {1},
    pages = {53\ndash 66},
    url = {https://doi.org/10.4171/CMH/454},
    review = {\MR{3941466}}, }

\bib{l94}{article}{
    author = {L{\"u}ck, Wolfgang},
    title = {Approximating ${L}\sp 2$-invariants by their finite-dimensional analogues},
    date = {1994},
    issn = {1016-443X},
    journal = {Geom. Funct. Anal.},
    volume = {4},
    number = {4},
    pages = {455\ndash 481},
    review = {\MR{95g:58234}}, }

\bib{l94a}{article}{
    author = {L{\"u}ck, Wolfgang},
    title = {${L}\sp 2$-{B}etti numbers of mapping tori and groups},
    date = {1994},
    issn = {0040-9383},
    journal = {Topology},
    volume = {33},
    number = {2},
    pages = {203\ndash 214},
    review = {\MR{95g:58235}}, }

\bib{m96}{article}{
    author = {Meier, John},
    title = {When is the graph product of hyperbolic groups hyperbolic?},
    date = {1996},
    issn = {0046-5755},
    journal = {Geom. Dedicata},
    volume = {61},
    number = {1},
    pages = {29\ndash 41},
    url = {https://mathscinet.ams.org/mathscinet-getitem?mr=1389635},
    review = {\MR{1389635}}, }

\bib{m06}{article}{
    author = {Minasyan, Ashot},
    title = {Separable subsets of {GFERF} negatively curved groups},
    date = {2006},
    issn = {0021-8693},
    journal = {J. Algebra},
    volume = {304},
    number = {2},
    pages = {1090\ndash 1100},
    url = {https://doi.org/10.1016/j.jalgebra.2006.03.050},
    review = {\MR{2264291}}, }

\bib{os16}{article}{
    author = {Okun, Boris},
    author = {Schreve, Kevin},
    title = {The {$L^2$}-(co)homology of groups with hierarchies},
    date = {2016},
    issn = {1472-2747},
    journal = {Algebr. Geom. Topol.},
    volume = {16},
    number = {5},
    pages = {2549\ndash 2569},
    url = {https://mathscinet.ams.org/mathscinet-getitem?mr=3572340},
    review = {\MR{3572340}}, }

\bib{m15}{arxiv}{
    author = {Morris, Dave~Witte},
    title = {Introduction to arithmetic groups},
    date = {2015},
    url = {https://arxiv.org/pdf/math/0106063.pdf}, }

\bib{o20}{article}{
    author = {Ontaneda, Pedro},
    title = {Riemannian hyperbolization},
    date = {2020},
    issn = {0073-8301},
    journal = {Publ. Math. Inst. Hautes \'{E}tudes Sci.},
    volume = {131},
    pages = {1\ndash 72},
    url = {https://doi.org/10.1007/s10240-020-00113-1},
    review = {\MR{4106793}}, }

\bib{o13}{article}{
    author = {Osajda, Damian},
    title = {A construction of hyperbolic {C}oxeter groups},
    date = {2013},
    issn = {0010-2571},
    journal = {Comment. Math. Helv.},
    volume = {88},
    number = {2},
    pages = {353\ndash 367},
    url = {https://doi.org/10.4171/CMH/288},
    review = {\MR{3048190}}, }

\bib{p82}{article}{
    author = {Passman, D.~S.},
    title = {Universal fields of fractions for polycyclic group algebras},
    date = {1982},
    issn = {0017-0895},
    journal = {Glasgow Math. J.},
    volume = {23},
    number = {2},
    pages = {103\ndash 113},
    url = {https://doi.org/10.1017/S0017089500004869},
    review = {\MR{663135}}, }

\bib{p77}{book}{
    author = {Passman, Donald~S.},
    title = {The algebraic structure of group rings},
    series = {Pure and Applied Mathematics}, publisher={Wiley-Interscience [John Wiley \& Sons], New York-London-Sydney},
    date = {1977}, ISBN={0-471-02272-1},
    url = {https://mathscinet.ams.org/mathscinet-getitem?mr=0470211},
    review = {\MR{0470211}}, }

\bib{ps09}{article}{
    author = {Przytycki, Piotr},
    author = {{{\'S}}wi{\polhk{a}}tkowski, Jacek},
    title = {Flag-no-square triangulations and {G}romov boundaries in dimension 3},
    date = {2009},
    issn = {1661-7207},
    journal = {Groups Geom. Dyn.},
    volume = {3},
    number = {3},
    pages = {453\ndash 468},
    url = {https://mathscinet.ams.org/mathscinet-getitem?mr=2516175},
    review = {\MR{2516175}}, }

\bib{s65a}{thesis}{
    author = {Siebenmann, Laurence~Carl},
    title = {The obstruction to finding a boundary for an open manifold of dimension greater than five}, type={Ph.D. Thesis}, publisher={ProQuest LLC, Ann Arbor, MI},
    date = {1965},
    url = {http://gateway.proquest.com/openurl?url_ver=Z39.88-2004&rft_val_fmt=info:ofi/fmt:kev:mtx:dissertation&res_dat=xri:pqdiss&rft_dat=xri:pqdiss:6605012},
    review = {\MR{2615648}}, }

\bib{s23}{article}{
    author = {Sikorav, Jean-Claude},
    title = {Fibered cohomology classes in dimension three, twisted {A}lexander polynomials and {N}ovikov homology},
    date = {2023},
    issn = {0373-0956},
    journal = {Ann. Inst. Fourier (Grenoble)},
    volume = {73},
    number = {1},
    pages = {279\ndash 306},
    url = {https://mathscinet.ams.org/mathscinet-getitem?mr=4588930},
    review = {\MR{4588930}}, }

\bib{s71}{article}{
    author = {Smith, P.~F.},
    title = {Localization and the {AR} property},
    date = {1971},
    issn = {0024-6115},
    journal = {Proc. London Math. Soc. (3)},
    volume = {22},
    pages = {39\ndash 68},
    url = {https://doi.org/10.1112/plms/s3-22.1.39},
    review = {\MR{294383}}, }

\bib{v93}{book}{ editor={Vinberg, {{\`E}}.~B.},
    title = {Geometry. {II}},
    series = {Encyclopaedia of Mathematical Sciences}, publisher={Springer-Verlag, Berlin},
    date = {1993},
    volume = {29}, ISBN={3-540-52000-7},
    url = {http://dx.doi.org/10.1007/978-3-662-02901-5},
    review = {\MR{1254931 (94f:53002)}}, }

\bib{z03}{article}{
    author = {\.{Z}uk, A.},
    title = {Property ({T}) and {K}azhdan constants for discrete groups},
    date = {2003},
    issn = {1016-443X},
    journal = {Geom. Funct. Anal.},
    volume = {13},
    number = {3},
    pages = {643\ndash 670},
    url = {https://doi.org/10.1007/s00039-003-0425-8},
    review = {\MR{1995802}}, }

\end{biblist}

\end{bibdiv}

\end{document}